\newtheorem{theorem}{Theorem}
\newtheorem{proposition}{Proposition}
\newtheorem{corollary}{Corollary}
\newtheorem{lemma}{Lemma}
\newtheorem{remark}{Remark}
\newtheorem{definition}{Definition}
\theoremstyle{remark}
\newtheorem{example}{Example}
\newcommand{\Id}{\text{\rm Id}}
\newcommand{\ad}{\text{\rm ad}}
\newcommand{\spn}{\text{\rm span}\,}
\newcommand{\tr}{\text{\rm tr}}
\newcommand{\rank}{\text{\rm rank}}
\newcommand{\GL}{\text{\rm GL}}
\newcommand{\Cl}{\text{\rm Cl}}
\newcommand{\SO}{\text{\rm SO}}
\newcommand{\Orth}{\text{\rm O}}
\newcommand{\End}{\text{\rm End}}
\newcommand{\diag}{\text{\rm diag}}
\newcommand{\Aut}{\text{\rm Aut}}
\newcommand{\Hom}{\text{\rm Hom}}
\newcommand{\so}{\mathfrak{so}}
\newcommand{\g}{\mathfrak{g}}
\newcommand{\h}{\mathfrak{h}}
\newcommand{\n}{\mathfrak{n}}
\newcommand{\orth}{\mathfrak{o}}
\newcommand{\R}{\mathbb{R}}
\DeclareMathOperator{\la}{\langle}
\DeclareMathOperator{\ra}{\rangle}
\begin{document}
\title[Pseudo-metric Lie algebras]{Pseudo-metric 2-step nilpotent Lie algebras}
\author[C.~Autenried, K.~Furutani, I.~Markina,  and A.~Vasil'ev]{Christian Autenried$^{\dag}$, Kenro Furutani, Irina Markina$^{\dag}$, and Alexander Vasil'ev$^{\dag}$}

\thanks{The authors$^{\dag}$ have been  supported by the grants of the Norwegian Research Council 
\#213440/BG; and EU FP7 IRSES program STREVCOMS, grant  no.
PIRSES-GA-2013-612669}
 
\subjclass[2010]{Primary: 15A66 17B30, 22E25} 

\keywords{Nilpotent Lie algebra, nilmanifold,  $H$-type Lie algebra, non-degenerate scalar product, isomorphism, Lie triple system, lattice}

\address{K.~Furutani:  Department of Mathematics, Faculty of Science and Technology, Science University of Tokyo, 2641 Yamazaki, Noda, Chiba (278-8510), Japan}
\email{furutani\_kenro@ma.noda.tus.ac.jp}

\
\address{C.~Autenried, I.~Markina, and A.~Vasil'ev: Department of Mathematics, University of Bergen, P.O.~Box 7800,
Bergen N-5020, Norway}
\email{christian.autenried@math.uib.no}
\email{irina.markina@math.uib.no}
\email{alexander.vasiliev@math.uib.no}

\begin{abstract}
The metric approach to studying 2-step nilpotent Lie algebras by making use of non-degenerate scalar products is realised. We show that any 2-step nilpotent Lie algebra is isomorphic to its standard pseudo-metric form, that is a 2-step nilpotent Lie algebra endowed with some standard non-degenerate scalar product compatible with Lie brackets.
This choice of the standard pseudo-metric form allows to study the isomorphism properties. If the elements of the centre of the standard pseudo-metric form constitute a Lie triple system of the pseudo-orthogonal Lie algebra, then the original 2-step nilpotent Lie algebra admits integer structure constants. Among  particular applications we prove that  pseudo $H$-type algebras have  bases with rational structural constants, which implies that the corresponding pseudo $H$-type groups admit lattices.\end{abstract}
\maketitle

%\tableofcontents

%%%%%%%%%%%%%%%%%%%%%%

\section{Introduction and statement of main results}

%%%%%%%%%%%%%%%%%%%%%%

The present article is inspired by two series of works devoted to the study of 2-step nilpotent Lie algebras by means of the scalar products. In 80' A.~Kaplan introduced  Lie algebras of Heisenberg type that he called $H$-type algebras~\cite{Kaplan, Kap2}, that are 2-step Lie algebras endowed with a positive-definite scalar product compatible with the Lie structure. $H$-type algebras and their groups became a fruitful source of research related to sub-elliptic operators and the geometry associated with these differential operators, which nowadays is called sub-Riemannian geometry, see e.g.,~\cite{CCFI,CDPT,CM,CDKR,Gaveau,Kor1,Riemann,Riemann1}. The metric approach was extended and generalised by P.~Eberlein~\cite{Eber04,Eber03,Eber02} to a study of arbitrary 2-step nilpotent Lie algebras and their Lie groups. He introduced a standard metric 2-step nilpotent Lie algebra, that is isomorphic to the direct sum $\mathbb R^m\oplus W$, with the centre $W\subset \so(m)$. The Lie brackets are uniquely defined by the Euclidean product on $\mathbb R^m$ and the trace product on $\so(m)$ by the equality
\begin{equation}\label{eq:intr}
(w(x),y)_{\mathbb R^m}=(w,[x,y])_{\so(m)},\quad x,y\in \mathbb R^m,\quad w\in W.
\end{equation}
One of his results states that any 2-step nilpotent Lie algebra is isomorphic to some standard metric 2-step nilpotent Lie algebra~\cite{Eber03}. The Heisenberg type Lie algebras are  standard metric Lie algebras related to the representations of the Clifford algebras $\Cl(\mathbb R^n)$. Namely, let the Euclidean space $\mathbb R^n$ generate the Clifford algebra $\Cl(\mathbb R^n)$, and let $J\colon \Cl(\mathbb R^n)\to \End(\mathbb R^m)$ be a representation of $\Cl(\mathbb R^n)$. We denote $W:=J(\mathbb R^n)\subset \so(m)\subset \End(\mathbb R^m)$. Then the Heisenberg type Lie algebra is the 2-step nilpotent Lie algebra $\mathbb R^m\oplus W$ with the Lie brackets defined by~\eqref{eq:intr}.

Later,  an analog to the Heisenberg type Lie algebra was introduced in~\cite{Ciatti,GKM}, and studied in~\cite{AFM,Ciatti1,CP,FM,JPP}. Since these type of algebras are related to the representations of Clifford algebras generated by a vector space with an indefinite scalar product, they were called pseudo $H$-type Lie algebras. The pseudo $H$-type Lie algebras naturally carry a pseudo-metric, and therefore, it would be inconvenient to consider them as  standard Lie algebras with a positive-definite scalar product. In the present work we extend the notion of a standard metric 2-step nilpotent Lie algebra, which allows us to consider  2-step nilpotent Lie algebras with an arbitrary non-degenerate scalar product. Particularly, we show results analogous to those of P.~Eberline~\cite{Eber03}, establishing an isomorphism between an arbitrary 2-step nilpotent Lie algebra and a standard (pseudo-) metric 2-step nilpotent Lie algebra. 

The structure of the work is as follows. We collect  notations and necessary definitions in Section~\ref{sec:prelim}. Section~\ref{sec:pseudo_metric} is devoted to the definition of a standard pseudo-metric form for a 2-step nilpotent Lie algebra. Here the main result states that any 2-step nilpotent Lie algebra is isomorphic to a properly chosen standard pseudo-metric 2-step nilpotent Lie algebra. In Section~\ref{sec:isom}, we formulate some properties of isomorphic Lie algebras in terms of a chosen pseudo-metric. In Section~\ref{sec:lts}, we collect useful facts about Lie triple systems of the pseudo-orthogonal Lie algebra $\so(p,q)$. We show that in the case when the Lie triple system has a trivial centre it forms a rational subspace of a specially chosen subalgebra $\mathcal L$ of $\so(p,q)$. In Section~\ref{sec:int_str}, we explain the construction of a 2-step nilpotent Lie algebra with the centre isomorphic to a Lie triple system of $\so(p,q)$. We prove that if the Lie triple system is a rational subspace of $\mathcal L$, then the constructed 2-step nilpotent Lie algebra has rational structural constants. It leads to the existence of a lattice on the corresponding Lie group.

%%%%%%%%%%%%%%%%%%%%%%

\section{Preliminaries}\label{sec:prelim}

%%%%%%%%%%%%%%%%%%%%%%

%%%%%%%%%%%%%%%%%%%%%%

\subsection{Clifford algebras and their representations}

%%%%%%%%%%%%%%%%%%%%%%

Let $V$ be a real vector space endowed with a non-degenerate quadratic form $Q(v)$, $v\in V$, which defines a symmetric bilinear form $(u,v)=\frac{1}{2}(Q(u+v)-Q(u)-Q(v))$ by polarization. The {\it Clifford algebra} $\Cl\big(V,(.\,,.)\big)$, named after the English geometer  William Kingdon Clifford~\cite{Cliff}, is an associative unital algebra with the unit $\mathds{1}$ freely generated by $V$ modulo the relations
$$
v\otimes v=-Q(v)\mathds{1}=-(v,v)\mathds{1}\quad\text{for all $v\in V$ }\qquad\text{or}
$$
\[
 u\otimes v+v\otimes u=-2(u,v)\mathds{1}\quad\text{for all $u,v\in V$}.
\]
For an introductory text, one may look at~\cite{Garling}. Every non-degenerate quadratic form $Q$ on an $n$-dimensional vector space $V$ is equivalent to the standard diagonal form
\[
Q_{r,s}(v)=v_1^2+v_2^2+\dots +v_r^2-v_{r+1}^2-\dots-v_{r+s}^2, \quad n=r+s.
\]
Using the isomorphism $(V,Q)\simeq (\mathbb R^{r,s},Q_{r,s})$ we will work with the Clifford algebra $\Cl_{r,s}=\Cl(\mathbb R^{r,s},Q_{r,s})$ that is isomorphic to $\Cl\big(V,(.\,,.)\big)$.  Starting with an orthonormal basis $\ell_1,\dots, \ell_n$ in $\mathbb R^{r,s}$, $r+s=n$, one defines 
a linear basis of $\Cl_{r,s}$ by the sequence
\[
1,\ell_1,\ldots,\ell_j,\ldots, (\ell_{k_1}\otimes \ldots\otimes  \ell_{k_j}),\,\,1\leq k_1<k_2<\dots<k_j\leq n,\,\,j=1,2,\dots, n.
\]
It follows that the dimension of $\Cl_{r,s}$ is $2^n$, $n=r+s$.

A {\it Clifford module} is a representation space of a Clifford algebra where the multiplication by elements of Clifford algebra is defined and satisfies certain axioms~\cite{Atiyah}. 
A Clifford  module for $\Cl_{r,s}$ is a finite-dimensional real space $U$ and a linear map $J\colon \Cl_{r,s}\to {\rm End}(U)$, satisfying the Clifford relation $J(v)\circ J(v)=J^2(v)=-(v,v)\Id_{U}$ or
$J(u)\circ J(v)+J(v)\circ J(u)=-2(u,v)\Id_{U}$ for $u,v\in\mathbb R^{r,s}$. The matrix representation is given by anti-commuting matrices. Here and in what follows, by saying {\it scalar product} we mean a
non-degenerate symmetric real bilinear form, and by {\it inner product}, a positive-definite one. 

Clifford modules $(U,J)$ and $(U',J')$ for $\Cl_{r,s}$ are isomorphic (or equivalent) if there is an isomorphism $\phi\colon U\to U'$, such that $\phi\circ J(v)=J'\circ\phi(v)$ for all $v\in U$. A linear subspace $W\subset U$ is a submodule if it is invariant under $J$. A Clifford module $(U,J)$ is irreducible if the only submodules are $U$ and $\{0\}$.

%%%%%%%%%%%%%%%%%%%%%%

\subsection{Pseudo $H$-type Lie algebras}\label{subseq:pseudoH}

%%%%%%%%%%%%%%%%%%%%%%

The Clifford modules introduced in the previous section lead to the construction of Lie algebras, that carry a scalar product, and in some sense, generalise the Heisenberg algebra. To reveal this relation, we start recalling known relations between the Clifford modules and the composition of quadratic forms.

\begin{definition}\label{def:adm_mod}
 Let $U$ be a vector space, $J\colon \Cl_{r,s} \to {\rm End}(U)$ a representation map, and let $(.\, ,.)_U$ be a scalar product on $U$. The module $(U,J)$ is called admissible with respect to $(.\, ,.)_U$ if the map $J$ is skew-symmetric  $(J(z)(u),v)_U=-(u,J(z)(v))_U$ for any $z\in \mathbb R^{r,s}$. We write $(U,J,(.\, ,.)_U)$ for an admissible $\Cl_{r,s}$-module.
\end{definition}

The next definition concerns with the {\it composition of quadratic forms}. 
Let $(W,(.\,,.)_{W})$, $(U,(.\,,.)_{U})$ be two vector spaces with corresponding quadratic forms, which we write as symmetric bilinear forms. 
\begin{definition}\label{def:comp}
A bilinear map $\mu\colon W\times U\to U$ is called a composition of $(W,(.\,,.)_{W})$ and $(U,(.\,,.)_{U})$ if  the following equality 
\begin{equation}\label{eq:composition}
\big(\mu(w,u),\mu(w,u)\big)_{U}=\big( w,w\big)_{W}\big(u,u\big)_{U},
\end{equation}
holds for any $w\in W$ and $u\in U$.
\end{definition}
Formula~\eqref{eq:composition} can be written in a non-symmetric form
\begin{equation}\label{eq:nonsym1}
\big(\mu(w,u),\mu(w',u)\big)_{U}=\big( w,w'\big)_{W}\big(u,u\big)_{U}\qquad\text{or}
\end{equation}
\begin{equation}\label{eq:nonsym2}
\big(\mu(w,u),\mu(w,u')\big)_{U}=\big( w,w\big)_{W}\big(u,u'\big)_{U}.
\end{equation}
We assume that there is $w_0\in W$ such that $(w_0,w_0)_{W}=1$ and $\mu(w_0,u)=u$. It always can be done by normalisation procedure of a quadratic form $(.\,,.)_{W}$ and redefinition of $\mu$, see~\cite{Lam}. Let us denote by $\mathcal Z$ the orthogonal complement to the space $W_0=\spn\{w_0\}$ with respect to $(.\,,.)_{W}$ and by $J$  the restriction of $\mu$ to $\mathcal Z$, thus $J\colon \mathcal Z\times U\to U$. We also write $J_z$ or $J(z)$ for a fixed value of $z\in \mathcal Z$. Applying formula~\eqref{eq:nonsym1}, we obtain
\begin{equation}\label{eq:orth_J}
\big(J_z(u),u\big)_{U}=\big(\mu(z,u),\mu(w_0,u)\big)_{U}=\big( z,w_0\big)_{W}\big(u,u\big)_{U}=0.
\end{equation}
The last equality shows that the map $J$ is skew-symmetric with respect to $(\cdot\,,\cdot)_{U}$ in the following sense
\begin{equation}\label{eq:skew_sym}
\big(J_z(u),u'\big)_{U}+\big( u,J_z(u')\big)_{U}=0
\end{equation}
for any $z\in \mathcal Z$ and $u,u'\in U$. Indeed, taking into account~\eqref{eq:orth_J}, we get
$$
0=(J_z(u+u'),u+u')=(J_z(u),u')+(J_z(u'),u).
$$
We conclude that having a normalised composition map, one can always construct a skew-symmetric map from it.

Now we state two theorems which describe relations between Definitions~\ref{def:adm_mod} and~\ref{def:comp}. For details of the proof of Theorems~\ref{CompositionToClifford} and~\ref{CliffordToComposition}, see~\cite[Theorem 5.5, Remark 5.7]{Lam}.

\begin{theorem}\label{CompositionToClifford}
Let $(W,(.\,,.)_{W})$ and $(U,(.\,,.)_{U})$ be two vector spaces with scalar products, and let $\mu\colon W\times U\to U$ be a composition of quadratic forms~\eqref{eq:composition} which is normalised by $\mu(w_0,u)=u$ with $W=\spn\{w_0\}\oplus_{\perp}\mathcal Z$. Then the Clifford algebra $\Cl(\mathcal Z,(.\,,.)_{\mathcal Z})$ admits a representation $J$ on $U$ and $(U,J,(.\, ,.)_U)$ is an admissible $\Cl(\mathcal Z,(.\,,.)_{\mathcal Z})$-module. Here $(.\,,.)_{\mathcal Z}$ is the restriction of $(.\,,.)_{W}$ to $\mathcal Z$.
\end{theorem}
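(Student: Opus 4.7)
The plan is to identify $J$ with the restriction $\mu|_{\mathcal{Z}\times U}$ viewed as a linear map $\mathcal{Z}\to\End(U)$, verify the Clifford relation $J_z^{2}=-(z,z)_{\mathcal{Z}}\Id_{U}$ directly from the composition identity, and then invoke the universal property of the Clifford algebra to extend $J$ to the full representation. Admissibility will be an immediate consequence of the skew-symmetry already derived in the paragraph preceding the theorem.

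The first step is to polarise the composition identity \eqref{eq:composition} in its second argument to obtain \eqref{eq:nonsym2} and then restrict to $w=z\in\mathcal{Z}$. This gives
\[
\big(J_{z}(u),J_{z}(u')\big)_{U}=(z,z)_{\mathcal{Z}}\,(u,u')_{U}
\]
for all $u,u'\in U$. Combining with the skew-symmetry \eqref{eq:skew_sym}, one rewrites the left-hand side as $-(u,J_{z}^{2}(u'))_{U}$, so that
\[
(u,J_{z}^{2}(u'))_{U}=-(z,z)_{\mathcal{Z}}\,(u,u')_{U}
\]
for all $u,u'\in U$. Non-degeneracy of $(.\,,.)_{U}$ forces $J_{z}^{2}=-(z,z)_{\mathcal{Z}}\Id_{U}$, which is exactly the Clifford relation on $\mathcal{Z}$. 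Since $J$ is linear in $z$ (being a slice of the bilinear map $\mu$), the universal property of $\Cl(\mathcal{Z},(.\,,.)_{\mathcal{Z}})$ then yields a unique unital algebra homomorphism, still denoted $J$, from $\Cl(\mathcal{Z},(.\,,.)_{\mathcal{Z}})$ to $\End(U)$ extending it. This is the desired representation.

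Finally, admissibility in the sense of Definition~\ref{def:adm_mod} requires that $J(z)$ be skew-symmetric with respect to $(.\,,.)_{U}$ for every $z\in\mathcal{Z}$, which is precisely the relation \eqref{eq:skew_sym} already obtained in the text from the normalisation $\mu(w_{0},u)=u$ and the identity $\big(J_{z}(u),u\big)_{U}=0$. I expect no serious obstacle: the only subtle point is to make sure that the normalisation $\mu(w_{0},u)=u$ together with $w_{0}\perp\mathcal{Z}$ is truly what delivers the skew-symmetry on $\mathcal{Z}$, so the argument is genuinely restricted to the orthogonal complement of $w_{0}$ and does not require any assumption on the sign of $(w_{0},w_{0})_{W}$ beyond the chosen normalisation.
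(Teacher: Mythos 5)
Your argument is correct and coincides with the paper's own proof: the paper likewise derives $\big(u,J_z^2u'\big)_U=-\big(J_zu,J_zu'\big)_U=-(z,z)_{\mathcal Z}(u,u')_U$ from the skew-symmetry~\eqref{eq:skew_sym} and the polarised composition identity~\eqref{eq:nonsym2}, concludes $J_z^2=-(z,z)_{\mathcal Z}\Id_U$ by non-degeneracy, and takes admissibility to be exactly the already-established skew-symmetry. No issues.
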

The proof follows from the following observation
\begin{equation}\label{eq:J_Cliff}
\big(u,J^2_zu'\big)_U=\big(u,J_z(J_zu')\big)_U=-\big( J_zu,J_zu'\big)_U=-(z,z)_{\mathcal Z}(u,u')_U.
\end{equation}
Since the scalar product $(.\,,.)_U$ is non-degenerate we get $J_z^2=-( z,z)_{\mathcal Z}\Id_{U}$, whenever $z\in \mathcal Z$.

\begin{theorem}\label{CliffordToComposition}
Let $(U,J,(.\,,.)_U)$ be an admissible $\Cl(\mathcal Z,(.\,,.)_{\mathcal Z})$-module.  
Then there exist a vector space $W=W_0\oplus \mathcal Z$, and a scalar product $(.\,,.)_{W}$, such that they admit a composition $\mu\colon W\times U\to U$ of $(.\,,.)_{W}$ and $(.\,,.)_{U}$. Here $J$ is the restriction of $\mu$ from $W$ to the space $\mathcal Z$, and $W_0$ is one dimensional vector space orthogonal to $\mathcal Z$ with respect to $(.\,,.)_{W}$.
\end{theorem}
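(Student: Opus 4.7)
The plan is to reverse the construction in Theorem~\ref{CompositionToClifford}: starting from an admissible Clifford module $(U,J,(.\,,.)_U)$, I would build $W$ by formally adjoining a one-dimensional piece to $\mathcal Z$ and then extend the Clifford action $J$ by the identity on that new direction.

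Concretely, I would set $W_0=\R w_0$ where $w_0$ is a newly introduced generator, and define $W=W_0\oplus \mathcal Z$. On $W$ I would define the symmetric bilinear form $(.\,,.)_W$ as the direct sum: declare $(w_0,w_0)_W=1$, $(w_0,z)_W=0$ for all $z\in\mathcal Z$, and take $(.\,,.)_W\vert_{\mathcal Z\times\mathcal Z}=(.\,,.)_{\mathcal Z}$. This is manifestly non-degenerate and makes $W_0\perp\mathcal Z$. Next I would define
\[
\mu\colon W\times U\to U,\qquad \mu(aw_0+z,u)=au+J_z(u),\quad a\in\R,\ z\in\mathcal Z,\ u\in U,
\]
so that $\mu(w_0,u)=u$ and $\mu\vert_{\mathcal Z\times U}=J$, which is the normalisation hypothesis of Theorem~\ref{CompositionToClifford}.

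The only real task is to check that $\mu$ satisfies~\eqref{eq:composition}. For $w=aw_0+z\in W$ and $u\in U$, expanding gives
\[
\bigl(\mu(w,u),\mu(w,u)\bigr)_U=a^{2}(u,u)_U+2a\bigl(u,J_z(u)\bigr)_U+\bigl(J_z(u),J_z(u)\bigr)_U.
\]
The cross term vanishes because the symmetry of $(.\,,.)_U$ combined with the skew-symmetry of $J_z$ forces $(u,J_z(u))_U=-(u,J_z(u))_U$. For the last term, skew-symmetry together with the Clifford relation $J_z^{2}=-(z,z)_{\mathcal Z}\Id_U$ (which is precisely~\eqref{eq:J_Cliff} read in reverse) yield
\[
\bigl(J_z(u),J_z(u)\bigr)_U=-\bigl(u,J_z^{2}(u)\bigr)_U=(z,z)_{\mathcal Z}(u,u)_U.
\]
Since $(w,w)_W=a^{2}+(z,z)_{\mathcal Z}$ by construction, the two sides match and~\eqref{eq:composition} holds.

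There is no serious obstacle here: the proof is essentially an existence-by-construction argument, and every identity required is already encoded in the definition of admissibility. The only point that deserves care is verifying that the symmetric bilinear form $(.\,,.)_W$ one writes down is genuinely non-degenerate and that the skew-symmetry hypothesis of Definition~\ref{def:adm_mod} is strong enough to kill the cross term $(u,J_z(u))_U$ in the pseudo-metric setting; both are immediate from the orthogonal direct sum structure of $W$ and from the symmetry of $(.\,,.)_U$.
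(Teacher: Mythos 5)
Your construction is correct and is exactly the standard argument behind the reference the paper cites for this theorem (the paper itself gives no in-text proof, deferring to Lam): adjoin a unit vector $w_0$ orthogonally to $\mathcal Z$, set $\mu(aw_0+z,u)=au+J_z(u)$, and verify \eqref{eq:composition} using the skew-symmetry from admissibility to kill the cross term and the Clifford relation $J_z^2=-(z,z)_{\mathcal Z}\Id_U$ for the quadratic term. All steps check out, including the non-degeneracy of $(.\,,.)_W$ as an orthogonal direct sum.
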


The skew-symmetric map $J\colon \mathcal Z\to \End(U)$, relating an admissible Clifford module and a composition of quadratics forms, can be used for the construction of 2-step nilpotent Lie algebras in the following way. Recall, that a Lie algebra $\mathfrak g$ is called 2-step nilpotent, if it satisfies the condition $[[\mathfrak g,\mathfrak g],\mathfrak g]=\{0\}$. Define the skew-symmetric bilinear form $[.\,,.]\colon U\times U\to\mathcal  Z$ by
\begin{equation}\label{eq:J}
(J_zu,u')_U = (z, [u,u'])_{\mathcal Z}.
\end{equation}
It is straightforward to verify that the map $[.\,,.]$ satisfies the Jacobi identity and that the set $\mathcal Z$ is the centre of the 2-step nilpotent Lie algebra $\mathfrak n=(U\oplus_{\perp}\mathcal Z, [.\,,.])$. The defining relation~\eqref{eq:J} shows that $J_{(\cdot)}(u)\colon \mathcal Z\to U$ is the adjoint map to the linear map $\ad_u(\cdot)\colon U\to \mathcal Z$ with respect to the metric $(.\,,.)=(.\,,.)_U+(.\,,.)_{\mathcal Z}$. Let us give some formal definitions.

\begin{definition}\label{def:general}\cite{GKM}
Let $\mathfrak n=\big(U\oplus_{\bot}\mathcal Z,[.\,,.],(.\,,.)\big)$ be a 2-step nilpotent Lie algebra, and let $\mathcal U_u$ be the orthogonal complement to the kernel $\ker(\ad_u)$ of the linear map $\ad_u(\cdot):=[u,\cdot]\colon U\to \mathcal Z$ for some $u\in U$. Then the Lie algebra $\mathfrak n$ is called of general $H$-type if 
$$
\ad_u\colon (\mathcal U_u, (.\,,.)_{\mathcal U_u})\to (\mathcal Z,(.\,,.)_{\mathcal Z})
$$ 
is a surjective isometry for all $u\in U$ with $(u,u)=1$, and a surjective anti-isometry for all $u\in U$ with $(u,u)=-1$. 
\end{definition}
In Definition~\ref{def:general}, it is assumed that the spaces $(\mathcal U_u, (.\,,.)_{\mathcal U_u})$, $(\mathcal Z,(.\,,.)_{\mathcal Z})$ are non-degenerate, where $(.\,,.)_{\mathcal U_u}$ and $(.\,,.)_{\mathcal Z}$ are the restrictions of $(.\,,.)$ to the spaces $\mathcal U_u$ and $\mathcal Z$, respectively. 

If the scalar product $(.\,,.)$ is positive-definite, then Definition~\ref{def:general} coincides with the definition of A.~Kaplan in~\cite{Kaplan}, who called these kind of algebras {\it $H$-type} in attempt to generalise the Heisenberg algebra. We emphasise, that the word ``general'' stands for the general scalar product, not only for  a positive definite. In works~\cite{GKM,Kaplan} it was shown, that given a general $H$-type Lie algebra, the Lie product $[.\,,.]$ defines a skew-symmetric operator $J\colon \mathcal Z\times U\to U$ by means of~\eqref{eq:J} that satisfies the orthogonality condition
\begin{equation}\label{eq:J_orth}
( J_z(u),J_z(v))_U=(z,z)_{\mathcal Z} ( u, v)_U.
\end{equation}
It can be lifted to the composition of quadratic forms~\eqref{eq:composition} of $(W=W_0\oplus \mathcal Z,(.\,,.)_W)$ and $(U(.\,,.)_U)$. The converse statement is also true. All  $H$-types algebras descend from a composition of some quadratic forms. The map $J(\cdot)(u)$ is not only the formal adjoint to $\ad_u(\cdot) $, but also the inverse map to the (anti-)isometry $\ad_u\colon \mathcal U\to \mathcal Z$, see~\cite{GKM}.

P.~Ciatti~\cite{Ciatti} introduced the following notion.

\begin{definition}\label{def:pseudo}\label{Ciatti}
A 2-step nilpotent Lie algebra $\mathfrak n=\big(U\oplus_{\bot}\mathcal Z,[\cdot\,,\cdot],(\cdot\,,\cdot)\big)$
is called of pseudo $H$-type, if 
the map $J\colon \mathcal Z\times U\to U$ defined by~\eqref{eq:J}
satisfies the orthogonality condition~\eqref{eq:J_orth}.
\end{definition} 

In the same work it was proved that a 2-step nilpotent Lie algebra $\mathfrak n=\big(U\oplus_{\bot}\mathcal Z,[.\,,.],(.\,,.)\big)$
is a pseudo $H$-type algebra, if and only if, $(U,J,(.\,,.)_U)$ is an admissible module for the Clifford algebra $\Cl(\mathcal Z,(.\,,.)_{\mathcal Z})$. Here the map $J$ is defined by~\eqref{eq:J} and the restrictions $(.\,,.)_U$ and $(.\,,.)_{\mathcal Z}$ of the scalar product $(.\,,.)$ are supposed to be non-degenerate.
All in all, we conclude the following equivalence between the definitions.
\begin{itemize}
\item{\it 
The general $H$-type algebra} exists, if and only if, {\it the composition of quadratic form} exists, see~\cite{GKM,Kaplan}.
\item
 {\it The composition of quadratic form} exists, if and only if,
{\it the admissible module} exists, see~\cite{Lam}.
\item
{\it The admissible module} exists, if and only if, 
{\it the pseudo $H$-type algebra} exists, see~\cite{Ciatti}.
\end{itemize}
Discussions about equivalence of Definitions~\ref{def:general} and~\ref{def:pseudo} can be found also in~\cite{AFM}. From now on, we will use the term pseudo $H$-type algebras, because it was introduced in~\cite{Ciatti} before the general $H$-type algebras~\cite{GKM}.

Relation~\eqref{eq:J} implies the skew symmetry~\eqref{eq:skew_sym} of the map $J$
that jointly with the orthogonality~\eqref{eq:J_orth} leads to the defining property of the Clifford module
\begin{equation}\label{eq:J2_Cliff}
J^2_z=-(z,z)_{\mathcal Z}\Id_{U}
\end{equation} 
as was shown in~\eqref{eq:J_Cliff}.
Moreover, any two conditions from~\eqref{eq:skew_sym},~\eqref{eq:J_orth}, and~\eqref{eq:J2_Cliff} imply the third one. 

\begin{remark}\label{rem:neutral}
Examples of non-admissible modules were constructed in~\cite{Ciatti}.

Due to the skew symmetry of $J_z$, the orthogonal complement of a submodule is again a submodule.

 If a $\Cl_{r,s}$-module is admissible for $s>0$, then the representation space is  neutral with respect to the scalar product, which means that the dimension of the representation space is even and the dimensions of maximal subspaces, where the scalar product is positive definite or negative definite, coincide. If $s=0$, then the module is admissible with respect to any inner product. 
\end{remark}

%%%%%%%%%%%%%%%%%%%%%%

\subsection{Lattices and nilmanifolds}

%%%%%%%%%%%%%%%%%%%%%%

One of the aims of this paper is to prove that the pseudo $H$-type groups admit  lattices, or equivalently, the  corresponding pseudo $H$-type algebras admit a basis with rational structural constants. We explain this relation.

\begin{definition} A subgroup $K$ of $G$ is called (co-compact) lattice if $K$ is discrete and the right quotient $K\backslash G$ is compact. The space $K\backslash G$ is called a compact nilmanifold or a compact $2$-step nilmanifold if $G$ is a $2$-step nilpotent Lie group.
\end{definition}

\begin{theorem}[Mal'cev criterion \cite{Malc}]\label{MC} The group $G$ admits a lattice $K$, if and only if,  the Lie algebra $\mathfrak g$ admits a basis $\mathcal B=\{b_1,\dots, b_n\}$
with rational structural constants $[b_i,b_j]=\sum_{k=1}^{n}C_{ij}^k b_k$, $C_{ij}^k\in\mathbb Q$.
\end{theorem}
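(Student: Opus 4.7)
The plan is to prove the two implications separately, relying on the exponential map $\exp\colon \mathfrak g\to G$, which for a simply connected nilpotent Lie group is a diffeomorphism, together with the Baker--Campbell--Hausdorff (BCH) formula, which for nilpotent $\mathfrak g$ terminates after finitely many iterated bracket terms and encodes the group law as a polynomial expression in the coordinates. Without loss of generality, $G$ may be assumed simply connected; otherwise one lifts to the universal cover, where rationality of structure constants is unchanged and a lattice in $G$ corresponds to a lattice containing the covering kernel.

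For the direction $(\Rightarrow)$, I would produce a so-called strong Mal'cev basis $\{X_1,\dots,X_n\}$ of $\mathfrak g$ adapted to the descending central series $\mathfrak g=\mathfrak g^1\supset\mathfrak g^2\supset\cdots\supset\mathfrak g^{s+1}=\{0\}$, with the property that every element of $K$ is uniquely of the form $\exp(a_1X_1)\cdots\exp(a_nX_n)$ with $a_i\in\mathbb Z$. The construction is inductive going down the central series: the image of $K$ in $G/[G,G]$ is a co-compact lattice in an abelian Lie group and therefore admits integer generators, while $K\cap \exp(\mathfrak g^2)$ is itself a lattice in a nilpotent group of smaller class, to which the induction applies. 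Once such a basis exists, expanding the product $\exp(X_i)\exp(X_j)\exp(-X_i)\exp(-X_j)$ via BCH and comparing Mal'cev coordinates for elements of $K$ forces the brackets $[X_i,X_j]$ to have rational coefficients in the $X_k$, with denominators controlled by $s!$.

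For the converse $(\Leftarrow)$, assume the constants $C_{ij}^k$ are rational. Let $d$ be a common denominator of all the $C_{ij}^k$ and all coefficients appearing in the truncated BCH series up to order $s$. Setting $X_i:=Nb_i$ for a sufficiently large integer $N$ depending on $d$ and $s$, the $\mathbb Z$-lattice $L:=\mathbb Z X_1+\cdots+\mathbb Z X_n\subset\mathfrak g$ satisfies that all iterated brackets of its generators arising in BCH have integer coefficients in the $X_k$. Consequently $K:=\exp(L)$, or more precisely the subgroup it generates, is a genuine subgroup of $G$. It is discrete since $\exp$ is a diffeomorphism and $L$ is a discrete $\mathbb Z$-module of full rank in $\mathfrak g$, and co-compact because a fundamental parallelepiped for $L$ in $\mathfrak g$ is compact and surjects onto $K\backslash G$ under $\exp$.

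The main obstacle is the simultaneous control of denominators. In the direction $(\Rightarrow)$ the delicate point is justifying that the inductive construction actually yields an adapted basis compatible with $K$ at every step; this rests on the non-trivial fact that in a simply connected nilpotent Lie group the intersection $K\cap\exp(\mathfrak g^j)$ is a lattice in $\exp(\mathfrak g^j)$ for every $j$, which is where the nilpotency of $G$ is used in an essential way through the closedness of each $\exp(\mathfrak g^j)$. In the direction $(\Leftarrow)$ the analogous difficulty is to keep track of the BCH denominators together with the denominators of the $C_{ij}^k$, ensuring that the single rescaling by $N$ is enough to force closure of $\exp(L)$ under the group operation rather than merely producing a discrete set spanning $\mathfrak g$.
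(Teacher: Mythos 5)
The paper does not actually prove this statement: it is quoted from Mal'cev with the reference \cite{Malc}, and the text that follows the theorem only records the dictionary between the two sides --- given a lattice $K$ one sets $\mathfrak g_{\mathbb Q}=\spn_{\mathbb Q}\log K$ and takes any $\mathbb Q$-basis of it, and conversely one takes a vector lattice $\Lambda\subset\spn_{\mathbb Q}\mathcal B$ and lets $K$ be the group generated by $\exp\Lambda$. Your proposal is therefore being compared with the classical argument rather than with anything in the paper, and as an outline of that argument it is correct and standard (it is essentially the route of Raghunathan and of Corwin--Greenleaf): a strong Mal'cev basis adapted to the descending central series plus the terminating Baker--Campbell--Hausdorff formula in the forward direction, and a rescaling $X_i=Nb_i$ to clear simultaneously the denominators of the $C_{ij}^k$ and of the BCH coefficients in the converse. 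The paper's two-line description is the slicker modern packaging of the same content ($\spn_{\mathbb Q}\log K$ is automatically a $\mathbb Q$-Lie algebra of full rank); your version makes explicit where that rationality actually comes from.

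Two points in your sketch carry the real weight and would need to be supplied in a complete proof. First, the fact that $K\cap\exp(\mathfrak g^j)$ is a lattice in $\exp(\mathfrak g^j)$ for every term of the central series is not an auxiliary technicality but the main content of Mal'cev's theorem; you correctly isolate it, but as written it is assumed rather than proved. Second, your justification of cocompactness in the converse --- that a fundamental parallelepiped for $L$ ``surjects onto $K\backslash G$ under $\exp$'' --- is not literally correct, because the projection $G\to K\backslash G$ is not linear in exponential coordinates of the first kind, so translating the parallelepiped by $L$ inside $\mathfrak g$ does not correspond to translating $\exp(P)$ by $K$ inside $G$. The standard repair is to choose the rational basis adapted to the descending central series (possible because each ideal $\mathfrak g^j$ is a rational subspace, being spanned by brackets of basis vectors) and to prove $G=K\cdot\{\exp(t_1X_1)\cdots\exp(t_nX_n)\mid t_i\in[0,1]\}$ by induction on the nilpotency class, reducing the coordinates modulo $1$ one at a time starting from the deepest layer. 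With those two lemmas in place your argument closes correctly.
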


We denote the
Lie exponent and the Lie logarithm by $\exp\colon \mathfrak g\to G$ and $\log\colon G\to \mathfrak g$ respectively.
Given a lattice $K$, one can construct the corresponding basis $\mathcal B$ as follows. Set $\mathfrak g_{\mathbb Q}=\spn_{\mathbb Q}\log K$, which is a Lie algebra over the field $\mathbb Q$. Denote by $\mathcal B_{\mathbb Q}$ a $\mathbb Q$-basis in $\mathfrak g_{\mathbb Q}$. Then it is also an $\mathbb R$-basis $\mathcal B$ in $\mathfrak g$.

Conversely, given a basis $\mathcal B$ defined as in Theorem~\ref{MC}, let $\Lambda$ be a vector lattice in  $\mathfrak g$, such that $\Lambda\subset \spn_{\mathbb Q}\mathcal B$. Then the lattice $K$ is generated by the elements $\exp \Lambda$, and $\spn_{\mathbb Q}(\log K)=\spn_{\mathbb Q}\mathcal B$.

%%%%%%%%%%%%%%%%%%%%%%%

\subsection{Standard metric 2-step nilpotent Lie algebras}\label{subseq:standard_metric}

%%%%%%%%%%%%%%%%%%%%%%%

In this section, we present shortly ideas from~\cite{Eber04,Eber03}, showing that any 2-step nilpotent Lie algebra can be endowed with a canonical positively definite scalar product and the choice of this inner product is unique up to the Lie algebra isomorphism. Inspired by the definition of general $H$-type algebras, in Section~\ref{sec:pseudo_metric} we generalise the ideas from~\cite{Eber04,Eber03}, showing that actually a non-degenerate scalar product of any index can be chosen. 

Through out the present work we assume that a 2-step nilpotent Lie algebra $\mathfrak g$ has a commutator ideal $[\g,\g]$ of dimension $n$  and its complement is of dimension $m$.
A basis $\mathcal B=\{v_1,\ldots,v_m,z_1,\ldots,z_n\}$ of the Lie algebra $\g$ is called {\it adapted} if $\{z_1,\ldots,z_n\}$ is a basis of $[\g,\g]$. Define the skew-symmetric $(m\times m)$-matrices $C^1,\ldots, C^n$ by 
\[
[v_\alpha,v_\beta]=\sum_{k=1}^{n}C^k_{\alpha\beta}z_k.
\]
Matrices $C^k$ are elements of the Lie algebra $\so(m)$, and they are linearly independent in $\so(m)$, see~\cite{Eber03}. 
Then the $n$-dimensional subspace $\mathcal C^n=\spn\{C^1,\ldots,C^n\}\subset\so(m)$ is isomorphic to $[\g,\g]=\spn \{z_1,\ldots,z_n\}$ and is called the structure space determined by the adapted basis $\mathcal B$. The vector space $$\spn\{v_1,\ldots,v_m\}\oplus\spn\{z_1,\ldots,z_n\}$$ of the 2-step nilpotent Lie algebra $\g$ is isomorphic to the direct sum $\R^m\oplus\mathcal C^n$. The $n$-dimensional subspace $\mathcal C^n\subset\so(m)$ depends on the choice of the adapted basis, nevertheless all possible subspaces defined by an arbitrary choice of an adapted bases form the set $\{A\mathcal C^n A^{\mathbf t}\mid\ A\in\GL(m)\}$, where $A^{\mathbf t}$ is the transpose of $A$.

The spaces $\mathbb R^m$ and $\mathcal C^n\subset\so(m)$ have a natural choice of inner products that will define the Lie algebra product on $\mathcal G=\mathbb R^m\oplus\mathcal C^n$. Denote by $\langle .\,,.\rangle_{\so(m)}$ the positive-definite product on $\so(m)$ defined by 
\[
\langle Z,Z'\rangle_{\so(m)}=-\tr(ZZ'),
\]
and denote by $\langle.\,,.\rangle_m$ the standard Euclidean inner product in $\R^m$. The notation $\langle .\,, .\rangle_{\so(m)}$ is also used for the restriction of this inner product on $\mathcal C^n\subset\so(m)$. Then the inner product $(.\,,.)=\langle .\,,. \rangle_m+\langle .\,, .\rangle_{\so(m)}$ makes the direct sum $\mathcal G=\R^m\oplus \mathcal C^n$ orthogonal. Let $[.\,,.]$ be a unique Lie product on $\mathcal G$, such that $\mathcal C^n$ belongs to the centre of $\mathcal G$, and 
\[
\langle Z(x),y\rangle_m=\langle Z,[x,y]\rangle_{\so(m)}\quad\text{for arbitrary}\quad x,y\in\R^m,\ \ Z\in\mathcal C^n,
\]
where $Z(x)$ simply denotes the action of $Z\in\mathcal C^n\subset\so(m)$ on a vector $x\in\R^m$ defined by matrix multiplication.
It is easy to see that $(\mathcal G,[\cdot\,,\cdot])$ is a 2-step nilpotent Lie algebra, such that $[\mathcal G,\mathcal G]=\mathcal C^n$ and, endowed with the inner product $(.\,,.)=\langle .\,,. \rangle_m+\langle .\,, .\rangle_{\so(m)}$, it is called a standard metric 2-step nilpotent Lie algebra. It was shown in~\cite{Eber04} that {\it any 2-step nilpotent Lie algebra $\g$ is isomorphic to a standard metric 2-step nilpotent Lie algebra $\mathcal G=\big(\R^m\oplus \mathcal C^n,[\cdot\,,\cdot],(\cdot\,,\cdot)\big)$.}

%%%%%%%%%%%%%%%%%

\section{Pseudo-metric on 2-step nilpotent Lie algebras}\label{sec:pseudo_metric}

%%%%%%%%%%%%%%%%%

In this section, we continue to develop the approach proposed in Section~\ref{subseq:standard_metric}. The choice of the Euclidean product in $\mathbb R^m$ is very natural, but it is also possible to choose the metric $\langle x,y \rangle_{p,q}=\sum_{i=1}^{p}x_iy_i-\sum_{i=p+1}^{p+q}x_iy_i$, $p+q=m$ of an arbitrary index $(p,q)$. It leads to the change of the structural space $\mathcal C\in\so(m)$ to the space $\mathcal D\subset\so(p,q)$, and of the positive definite metric on $\so(m)$ to the indefinite metric on $\so(p,q)$. The main motivation of this choice is the following. The standard  metric form for classical $H$-type algebras carries a positive definite scalar product and in this case the Lie algebras are  isometric also as  scalar product spaces. Meanwhile the pseudo $H$-type Lie algebras, introduced in Section~\ref{subseq:pseudoH} are isomorphic (and isometric) to a standard pseudo-metric form with an indefinite scalar product related to the scalar product of the underlying Clifford algebras. Notice, that being  2-step nilpotent Lie algebras, the pseudo $H$-type Lie algebras are also isomorphic to a standard metric form with a positive definite metric, see~\cite{Eber03}, but in this case they are not isometric and the isomorphism neglects the relation with the Clifford algebras generating pseudo $H$-type Lie algebras. The approach using indefinite scalar products also allows us to distinguish those Lie groups which admit  indefinite left invariant metrics.  We also aim to show that any 2-step nilpotent Lie algebra is isomorphic to some metric Lie algebra with an indefinite scalar product.

%%%%%%%%%%%%%

\subsection{Pseudo-orthogonal groups}

%%%%%%%%%%%%%

We start reminding the structure of the pseudo-orthogonal group and its Lie algebra.
We use the notation $\eta_{p,q}=\diag(I_{p},-I_{q})$ for diagonal $(m\times m)$-matrix, $m=p+q$, having the first $p$ entries on the main diagonal $1$ and the last $q$ equal to $-1$. Further we continue to use $I_p$ to denote the $(p\times p)$ unit matrix. Let $\langle.\,,.\rangle_{p,q}$ be a scalar product in $\mathbb R^m$, $p+q=m$, defined by the matrix $\eta_{p,q}$: $
\langle x,y\rangle_{p,q}=x^{\mathbf t}\eta_{p,q} y$ for $x,y\in\mathbb R^m$,
where $x^{\mathbf t}$ is the  transpose to $x$. We use the following notation
\begin{equation}\label{eq:nu}
\nu_i=\nu_i(p,q)=\begin{cases}
1,\quad &\text{if}\quad 1\leq i\leq p,
\\
-1,\quad&\text{if}\quad p+1\leq i\leq p+q=m,
\end{cases}
\end{equation}
to indicate the sign in the scalar product of the vectors from the orthonormal basis of $\mathbb R^{p,q}$.
A vector $x\in \mathbb R^{p,q}$ is called
\begin{itemize}
\item spacelike if $\langle x,x\rangle_{p,q}>0$ or $x= 0$,
\item timelike if $\langle x,x\rangle_{p,q}<0$,
\item null if $x\neq 0$ and $\langle x,x\rangle_{p,q}=0$.
\end{itemize}

We denote by $\Orth(p,q)$ the pseudo-orthogonal group
$$\Orth(p,q)=\{ X\in
GL(m)|\,\,X^{\mathbf t}\eta_{p,q} X=\eta_{p,q}\},
$$
where $X^{\mathbf t}$ is the matrix transposed to $X$. The pseudo-orthogonal group preserves the scalar product $\langle\cdot\,,\cdot\rangle_{p,q}$ in the following sense
$$
\langle Xx,Xy\rangle_{p,q}=x^{\mathbf t}X^{\mathbf t}\eta_{p,q} Xy=x^{\mathbf t}\eta_{p,q} y=\langle x,y\rangle_{p,q},\quad x,y\in\mathbb R^{p,q},\ \ p+q=m.
$$ 
The inverse $X^{-1}$ of $X$ is given by $X^{-1}=\eta_{p,q}X^{\mathbf t}\eta_{p,q}$. For any matrix $A$ define the matrix $A^{\eta_{p,q}}$ by $A^{\eta_{p,q}}:=\eta_{p,q} A^{\mathbf t}\eta_{p,q}$. Thus, if $X\in \Orth(p,q)$, then $X^{\eta_{p,q}}X=XX^{\eta_{p,q}}=I_m$, which implies $X^{\eta_{p,q}}=X^{-1}$.

If we replace $\eta_{p,q}$ by any symmetric  matrix $\tilde \eta$ with $p$ positive
 and $q$ negative eigenvalues, then we get a group isomorphic to $\Orth(p,q)$. Diagonalising the matrix $\tilde\eta$ gives a conjugation of this group with the standard group $\Orth(p,q)$. It follows from the definition that all matrices in $\Orth(p,q)$ have determinant equal to $\pm 1$.
 A matrix  $X\in\Orth(p,q)$ can be written in a block form as
\begin{equation*}\label{blocos}
X=\left[\begin{array}{c|c}
                     X_S&  B
                     \\
                     \hline
                     C  & X_T
                   \end{array}
\right],
\end{equation*}
where $X_S$ and $X_T$ are invertible $(p\times p)$  and  $(q\times q)$ matrices, respectively. An element $X \in \Orth(p,q)$ preserves (reverses) time orientation provided that $\det(X_T)>0$ ($<0$), and preserves (reverses) space orientation provided that  $\det(X_S)>0$ ($<0$). $\Orth(p,q)$ can then be split into four disjoint sets $\Orth^{++}(p,q)$, $\Orth^{+-}(p,q)$, $\Orth^{-+}(p,q)$, and $\Orth^{--}(p,q)$, indexed by the signs of the determinants of $X_S$ and $X_T$, in the respective order. The following three disconnected subgroups of $\Orth(p,q)$ define the orientation on $\mathbb R^{p,q}$:
\begin{equation}
\Orth^{++}(p,q)\cup \Orth^{--}(p,q),\quad \Orth^{++}(p,q)\cup \Orth^{+-}(p,q),\quad \Orth^{++}(p,q)\cup \Orth^{-+}(p,q).
\end{equation}
According to~\cite{ONeill}, we call the first group {\it orientation} preserving, the second one {\it space-orientation} preserving and the last one {\it time-orientation} preserving.
The connected component $\Orth^{++}(p,q)$ contains the identity, preserves time orientation, space orientation, and the orientation of $\mathbb R^{p,q}$. The component $\Orth^{++}(p,q)$ is, in some sense, an analogue of the special orthogonal subgroup $\SO(m)$ of the orthogonal group $\Orth(m)$, and therefore, we use the notation $\SO(p,q)=\Orth^{++}(p,q)$. The group $\Orth(p,q)$ is not compact, but contains the compact subgroups $\Orth(p)$ and $\Orth(q)$ acting on the subspaces on which the scalar product $\langle .\,,.\rangle_{p,q}$ is sign-definite. In fact, $\Orth(p) \times \Orth(q)$ is a maximal compact subgroup of $\Orth(p,q)$, while $S(\Orth(p) \times  \Orth(q))$ is a maximal compact subgroup of $\Orth^{++}(p,q)\cup \Orth^{--}(p,q)$. Likewise, $\SO(p) \times  \SO(q)$ is a maximal compact subgroup of the component $\SO(p, q)$. Thus up to homotopy, the spaces $S(\Orth(p) \times  \Orth(q))$ and $\SO(p) \times  \SO(q)$ are products of (special) orthogonal groups, from which algebraic-topological invariants can be computed.

The Lie algebra of  $\Orth(p,q)$, and thus of $\SO(p,q)$, equipped with the Lie bracket defined by the commutator $[\mathcal A,\mathcal B]=\mathcal A\mathcal B-\mathcal B\mathcal A$, is the set
$$\so(p,q)=\{ \mathcal A\in
\mathfrak{gl}(m)|\,\, \eta_{p,q}\mathcal A^{\mathbf t}\eta_{p,q}=-\mathcal A\}.$$
So, an element $\mathcal X\in\so(p,q)$ satisfies $\mathcal X^{\eta_{p,q}}=-\mathcal X$, and one has
 $
\mathcal X^{\eta_{p,q}}\mathcal X=\mathcal X\mathcal X^{\eta_{p,q}}=-\mathcal X^2$.
In general, for an arbitrary $\mathcal A \in \mathfrak{gl}(m)$ the following is true:
$
(\mathcal A^{\eta_{p,q}})^{\eta_{p,q}}=\mathcal A$ and  $(\mathcal A\mathcal B)^{\eta_{p,q}}=\mathcal B^{\eta_{p,q}}\mathcal A^{\eta_{p,q}}$. 

The Lie algebra $\so(p,q)$ can be equipped with the scalar product $\la .\,,.\ra_{\so(p,q)}$ defined by
 $\la\mathcal X,\mathcal Y\ra_{\so(p,q)}=\tr{(\mathcal X^{\eta_{p,q}}\mathcal Y)}=-\tr(\mathcal X\mathcal Y)$. The scalar product is positive-definite only for $q = 0$. Analogously to the causal structure in $\mathbb R^{p,q}$, we say that a non-zero element $\mathcal X\in \so(p,q)$ is timelike if $\la\mathcal X, \mathcal X\ra_{\so(p,q)}<0$,  is spacelike if $\la\mathcal X, \mathcal X\ra_{\so(p,q)}>0$, and  is null if $\la\mathcal X, \mathcal X\ra_{\so(p,q)}=0$. The zero element is declared to be spacelike. Matrices in $\so(p,q)$ can be written as
$$
\mathcal X=
\begin{pmatrix}
a_{p} & b
\\
b^{\mathbf t}  &a_{q}
\end{pmatrix},\qquad a_{p}\in\so(p),\quad a_{q}\in \so(q).
$$
So, for $\mathcal X\in \so(p,q)$ one has
$$
\la\mathcal X, \mathcal X\ra_{\so(p,q)}  =\tr(\mathcal X^{\eta_{p,q}}\mathcal X) =-\tr(\mathcal X^2)
=
-\tr(a_{p}^2+a_{q}^2)
-2\tr (bb^{\mathbf t}).
$$
As we see, the first term in the right hand side, involving the  skew-symmetric matrices $a_{p}$ and  $a_{q}$, is always positive and represents the spacelike part. The matrix $b$ is responsible for the timelike character of elements of the Lie algebra. The metric defined by trace has index $\big(\frac{p(p-1)+q(q-1)}{2},pq\big)$ as one can see from the dimensions of $\so(p)$ and $\so(q)$.

Notice that if $\mathcal X\in \so(p,q)$ and $x,y\in\mathbb R^m$, $p+q=m$, then
$$
\langle \mathcal Xx,y\rangle_{p,q}=x^{\mathbf t}\mathcal X^{\mathbf t}\eta_{p,q} y=-x^{\mathbf t}\eta_{p,q}\mathcal X y=-\langle x, \mathcal X y\rangle_{p,q}.
$$
Thus matrices from $\so(p,q)$ are skew-symmetric with respect to $\langle .\,,.\rangle_{p,q}$.

At the end of the section we consider a generalisation of the above constructions. Let $(V,\langle.\,,.\rangle_V)$ be a scalar product vector space. We denote by $\orth(V,\langle.\,,.\rangle_V)$ or shortly $\orth(V)$ the subspace of the space $\End(V)$ of linear maps $\mathcal A\colon V\to V$ such that 
\begin{equation}\label{eq:skew}
\langle \mathcal A v,w\rangle_V=-\langle v,\mathcal A w\rangle_V.
\end{equation}
We call $\orth(V)$ the space of {\it skew-symmetric $($with respect to $\langle.\,,.\rangle_V$$)$ maps} and note that it coincides with $\so(p,q)$ when $V=\mathbb R^{p,q}$, and $\langle.\,,.\rangle_V=\langle.\,,.\rangle_{p,q}$. In general, it can be shown that $\orth(V)$  is isomorphic to the space $\so(p,q)$ for any $m$-dimensional scalar product space $(V,\langle.\,,.\rangle_V)$ with a scalar product of index $(p,q)$, $p+q=m$.
We can endow the space $\orth(V)$ with the following scalar product 
$$
\langle \mathcal A,\mathcal B\rangle_{\orth(V)}=-\tr(\mathcal A\mathcal B).
$$
One can prove that the index of $\langle .\,,.\rangle_{\orth(V)}$ is $\big(\frac{p(p-1)+q(q-1)}{2},pq\big)$ by the isomorphism property with $\so(p,q)$.

%%%%%%%%%%%%%%%%%%%

\subsection{Lie product and compatible scalar product}

%%%%%%%%%%%%%%%%%%%

The relation between skew-symmetric representations of Clifford algebras and some class of 2-step nilpotent Lie algebras, namely, pseudo $H$-type Lie algebras was described  in Section~\ref{subseq:pseudoH} . This relation is actually more general and can be given for arbitrary skew-symmetric maps and 2-step nilpotent Lie algebras endowed with some scalar product. 
 
{\it From Lie algebras to skew-symmetric maps}. 
Let $(\g,[.\,,.],\langle.\,,.\rangle_{\g})$ be a 2-step Lie algebra with a centre $U$ and a scalar product $\langle.\,,.\rangle_{\g}$ on $\g$. We write $\g=V\oplus_{\perp}U$ where the decomposition is orthogonal with respect to $\langle.\,,.\rangle_{\g}$. Here we also assume that the restriction $\langle.\,,.\rangle_V$ of $\langle.\,,.\rangle_{\g}$ to $V$ is non-degenerate which also leads to the non-degeneracy of the space $U$ with respect to the restriction $\langle.\,,.\rangle_{U}$ of $\langle.\,,.\rangle_{\g}$ to $U$. As it was mentioned before, every $z\in U$ and the Lie product on $\g$ define a map $J_z\colon V\to V$ by
\begin{equation}\label{eq:defJ}
\langle J_zv,w\rangle_V=\langle z,[v,w]\rangle_{U}\quad\text{for all}\quad  v,w\in V\ \ \text{and all}\ \ z\in U.
\end{equation}
It is clear that $J_z$ satisfies~\eqref{eq:skew} and is linear with respect to both variables: $z\in U$ and $v\in V$. Therefore, we conclude that the scalar product and the Lie product together define a linear skew-symmetric map $J\colon U\to \orth(V)$.

{\it From skew-symmetric maps to Lie algebras}. Let now $(V,\langle.\,,.\rangle_V)$ and $(U,\langle.\,,.\rangle_U)$ be two scalar product spaces and $J\colon U\to \orth(V)$. Then the sum $\g=V\oplus U$ is orthogonal with respect to the non-degenerate scalar product $\langle.\,,.\rangle_{\g}=\langle.\,,.\rangle_V+\langle.\,,.\rangle_U$, and we are able to define the Lie bracket for $\g$ by making use of~\eqref{eq:defJ}. Then $\g=\big(V\oplus U,[.\,,.],\langle.\,,.\rangle_{\g}\big)$ becomes a 2-step Lie algebra endowed with a non-degenerate scalar product, where $U$ belongs to the centre.

The discussions above raise the following question. Let two finite dimensional vector spaces $U$ and $V$ are given, and let $J\colon U\to \End(V)$ be a linear map. When can one find a scalar product  $\langle.\,,.\rangle_V$ on $V$, such that $J_z$ satisfies~\eqref{eq:skew} for all $z\in U$? Thus, we are looking for a scalar product  $\langle.\,,.\rangle_V$ on $V$ such that $J\colon U\to \orth(V,\langle.\,,.\rangle_V)$. 

\begin{definition}\label{def:Winv}
Let $J\colon U\to \End(V)$ be a linear map. A scalar product $\langle.\,,.\rangle_V$ on $V$ satisfying  
$$
\langle J_z v,w\rangle_V=-\langle v,J_z w\rangle_V\quad \text{for all}\quad z\in U,
$$
we call {\it $W$-invariant}, where $W=J(U)$.
\end{definition}
If, moreover, a non-degenerate scalar product $\langle.\,,.\rangle_U$ on $U$ is given, then the decomposition $V\oplus U$ is orthogonal with respect to $\langle.\,,.\rangle_{\g}=\langle.\,,.\rangle_V+\langle.\,,.\rangle_U$, and we are able to define a Lie algebra structure on $V\oplus U$ by means of~\eqref{eq:defJ} as was described above.

%%%%%%%%%%%%%%

\subsection{Uniqueness properties}

%%%%%%%%%%%%%%

In this section we study the uniqueness of the choice of a scalar product invariant in the sense of Definition~\ref{def:Winv}. We start with a simple proposition.

\begin{proposition}\label{prop:uniq1} Let $(V,\langle.\,,.\rangle_V)$ and $(U,\langle.\,,.\rangle_U)$ be scalar product spaces, and let $J\colon U\to \orth(V)$. The multiplication of both scalar products $\langle.\,,.\rangle_V$ and $\langle.\,,.\rangle_U$ by a non-zero number $c$ does not change the brackets defined by the equality $\langle J_zv,w\rangle_V=\langle z,[v,w]\rangle_{U}$ for all $v,w\in V$, and for all $z\in U$. 
\end{proposition}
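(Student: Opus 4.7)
The plan is to observe that the defining identity
\[\langle J_zv,w\rangle_V \;=\; \langle z,[v,w]\rangle_U\]
depends on each of the two scalar products linearly, with the same total weight on the left and on the right. First I would check that the skew-symmetry condition defining $\orth(V)$ is invariant under scaling by a non-zero scalar: the equation $\langle J_z v,w\rangle_V + \langle v,J_z w\rangle_V = 0$ holds if and only if the same identity holds with $c\langle.\,,.\rangle_V$ in place of $\langle.\,,.\rangle_V$. Hence the hypothesis $J\colon U\to\orth(V)$ is preserved by the simultaneous rescaling, so it still makes sense to speak of the bracket determined by the scaled products.

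Next, let $[.\,,.]'$ denote the bracket associated with the scaled pair $\bigl(c\langle.\,,.\rangle_V,\,c\langle.\,,.\rangle_U\bigr)$. By the defining equation, for all $v,w\in V$ and all $z\in U$,
\[c\,\langle J_zv,w\rangle_V \;=\; c\,\langle z,[v,w]'\rangle_U .\]
Cancelling the common factor $c\neq 0$ recovers the original identity with $[v,w]'$ in place of $[v,w]$. Non-degeneracy of $\langle.\,,.\rangle_U$, implicit in the term \emph{scalar product space}, means that the linear form $z\mapsto\langle z,\zeta\rangle_U$ determines $\zeta\in U$ uniquely, so the bracket is uniquely characterised by the defining identity. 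Therefore $[v,w]'=[v,w]$ for all $v,w\in V$.

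The whole argument is essentially the one-line observation that the defining identity is homogeneous of the same degree in $\langle.\,,.\rangle_V$ and in $\langle.\,,.\rangle_U$, so a common non-zero rescaling is absorbed on both sides. There is no real obstacle here; the only point worth flagging is that non-degeneracy of $\langle.\,,.\rangle_U$ is precisely what allows one to speak of \emph{the} bracket, and this is guaranteed by the hypotheses of the proposition.
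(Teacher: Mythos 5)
Your argument is correct and is exactly the intended one: the defining identity is homogeneous of degree one in each scalar product, so the common factor $c\neq 0$ cancels, and non-degeneracy of $\langle.\,,.\rangle_U$ forces $[v,w]'=[v,w]$. The paper states this as a ``simple proposition'' and omits the proof, so your write-up supplies precisely the verification the authors had in mind, including the worthwhile preliminary remark that membership in $\orth(V)$ is unaffected by the rescaling.
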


\begin{lemma}\label{lem:uniq1}
Let $V$ and $U$ be finite dimensional vector spaces, and let $\langle.\,,.\rangle_U$ be a non-degenerate scalar product on $U$. Let  $J\colon U\to \End(V)$ be a linear map with $W=J(U)$ and $\langle.\,,.\rangle_V^1$ and $\langle.\,,.\rangle_V^2$ be two $W$-invariant scalar products of equal index and such that the set of spacelike (timelike and correspondingly null) vectors coincide. Assume that $[.\,,.]^1$ and $[.\,,.]^2$ are Lie products defined by~\eqref{eq:defJ} with respect to the scalar products $\langle.\,,.\rangle_V^1$ and $\langle.\,,.\rangle_V^2$ on $\g=V\oplus_{\perp}U$. Then the Lie algebras $(\g,[.\,,.]^1)$ and $(\g,[.\,,.]^2)$ are isomorphic.
\end{lemma}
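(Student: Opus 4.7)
The plan is to exhibit a Lie algebra isomorphism $\phi\colon(\g,[.\,,.]^2)\to(\g,[.\,,.]^1)$ of the form $\phi|_U=\Id$ and $\phi|_V=B$ for a suitable $B\in\End(V)$.

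First I would introduce the auxiliary operator $A\in\End(V)$ defined by
\[
\la Av,w\ra_V^1=\la v,w\ra_V^2\qquad\text{for all}\qquad v,w\in V.
\]
Non-degeneracy of $\la.\,,.\ra_V^1$ makes $A$ well defined, and non-degeneracy of $\la.\,,.\ra_V^2$ makes it invertible; the symmetry of both scalar products gives $\la Av,w\ra_V^1=\la Aw,v\ra_V^1$, i.e.\ $A$ is $\la.\,,.\ra_V^1$-symmetric. Next, the $W$-invariance of each scalar product says that $J_z$ is skew-symmetric with respect to both. Starting from $\la J_zv,w\ra_V^2=-\la v,J_zw\ra_V^2$ and applying the defining relation of $A$ on each side yields $\la AJ_zv,w\ra_V^1=\la J_zAv,w\ra_V^1$ for every $v,w\in V$; non-degeneracy of $\la.\,,.\ra_V^1$ then forces $AJ_z=J_zA$ for every $z\in U$, so $A$ commutes with every element of $W$.

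The cone hypothesis enters at the next step, where one needs a square root of $A$ that is still $\la.\,,.\ra_V^1$-symmetric and commutes with $W$. Since $\la Av,v\ra_V^1=\la v,v\ra_V^2$, the coincidence of the spacelike, timelike, and null sets implies that $\la Av,v\ra_V^1$ and $\la v,v\ra_V^1$ have the same sign for every $v\in V$. In the definite case this makes $A$ sign-definite with respect to $\la.\,,.\ra_V^1$; equivalently, its matrix in a $\la.\,,.\ra_V^1$-orthonormal basis is symmetric with eigenvalues all of one sign, so the usual functional calculus yields a $\la.\,,.\ra_V^1$-symmetric square root $B=A^{1/2}$, expressible as a polynomial in $A$, and such $B$ automatically commutes with every $J_z$. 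In the indefinite case the two quadratic forms $Q_i(v)=\la v,v\ra_V^i$ share a common non-trivial null cone, and the classical rigidity theorem---two non-degenerate real symmetric bilinear forms with a common null cone must be proportional---yields $\la.\,,.\ra_V^2=c\la.\,,.\ra_V^1$; comparing signs on any spacelike vector forces $c>0$, so $A=cI$ and $B=\sqrt{c}\,I$ does the job.

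Finally, setting $\phi(v+z)=Bv+z$ for $v\in V$, $z\in U$, the compatibility condition $\phi([v,w]^2)=[\phi(v),\phi(w)]^1$ needs only to be checked on pairs from $V$, the other cases being trivial because $U$ is central in both algebras. Pairing both sides against an arbitrary $z\in U$ via $\la.\,,.\ra_U$ and invoking the defining relations of the two brackets reduces the check to $\la J_zv,w\ra_V^2=\la J_zBv,Bw\ra_V^1$, which is a direct consequence of $B^2=A$, the $\la.\,,.\ra_V^1$-symmetry of $B$, and $BJ_z=J_zB$. The hardest step is the extraction of $B$: without the cone hypothesis the operator $A$ could be an arbitrary invertible element of the commutant of $W$ and need not admit a square root of the required type. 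The cone equality is exactly what is needed to force either sign-definiteness (in the definite case) or proportionality to the identity (in the indefinite case), and so to produce $B$.
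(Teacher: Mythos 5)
Your proof is correct, and its skeleton is the same as the paper's: you introduce the intertwining operator $A$ (the paper calls it $S$) defined by $\la Av,w\ra_V^1=\la v,w\ra_V^2$, show it is symmetric and commutes with every $J_z$, and build the isomorphism from a symmetric square root of $A$ commuting with $W$ -- the paper's map $\varphi=\lambda_k\Id_{V_k}$ on the eigenspaces is exactly such a square root. Where you genuinely diverge is in how the square root is produced. The paper argues spectrally in all cases at once: it proves the eigenvalues of $S$ are real and positive (the positivity on null eigenvectors requiring a small auxiliary construction with a vector $v-u$) and then decomposes $V$ into eigenspaces; note that it tacitly assumes $S$ is diagonalizable, which is not automatic for an operator that is merely self-adjoint with respect to an \emph{indefinite} form. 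Your definite/indefinite dichotomy sidesteps this: in the definite case diagonalizability is free and functional calculus gives $B=A^{1/2}$ as a polynomial in $A$; in the indefinite case the classical null-cone rigidity theorem (two non-degenerate forms with a common non-trivial null cone are proportional) collapses the situation to $A=cI$ with $c>0$, so no spectral analysis is needed at all. This is arguably cleaner and closes the diagonalizability gap, at the price of invoking the rigidity theorem as a black box (its proof is a few lines: in a $\la.\,,.\ra_V^1$-orthonormal basis $e_i,f_j$ evaluate $Q_2$ on the null vectors $e_i\pm f_j$ and $e_i\cos\theta+e_{i'}\sin\theta+f_j$). One small imprecision: in the definite case you say the eigenvalues of $A$ are ``all of one sign''; for the real symmetric square root to exist you need them all \emph{positive}, which is what your sign comparison $\la Av,v\ra_V^1=\la v,v\ra_V^2$ actually yields in both the positive- and negative-definite sub-cases.
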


\begin{proof}
We define the linear map $S\colon V\to V$ by 
\begin{equation}\label{eq:S}
\langle v, w\rangle_V^2=\langle Sv,w\rangle_V^1.
\end{equation}
Then $S$ is injective because assuming that there is $v\in V$, $v\neq 0$, such that $Sv=0$, we arrive at $\langle v, w\rangle_V^2=0$ by~\eqref{eq:S} for any $w\in V$. So we conclude that $v=0$ by the non-degeneracy of the scalar product, which contradicts the assumption.

The map $S$ is symmetric with respect to both scalar products. Indeed
\begin{eqnarray}\label{eq:symmetric}
&\langle Sv,w\rangle_V^1=\langle v,w\rangle_V^2=\langle w,v\rangle_V^2=\langle Sw,v\rangle_V^1,
\\
&\langle Sv,w\rangle_V^2=\langle w,Sv\rangle_V^2=\langle Sw,Sv\rangle_V^1=\langle Sv,Sw\rangle_V^1=\langle v,Sw\rangle_V^2.\nonumber
\end{eqnarray}

Note that the operator $S$ has only real eigenvalues. Denote by $A_1$ the matrix of the first scalar product: $
\langle u,w\rangle^1_V=u^{\mathbf t} A_1w$.
The real matrices $A_1$ and $S$ satisfy the relations: $A_1=A_1^{\mathbf t}$ and $S^{\mathbf t}A_1=A_1S$.
We claim that the number $\bar u^{\mathbf t}A_1Su$ is real for any vector $u\in V$. Indeed
$$
\overline{\bar u^{\mathbf t}A_1Su}=u^{\mathbf t}A_1S\bar u=(u^{\mathbf t}A_1S\bar u)^{\mathbf t}=\bar u^{\mathbf t}S^{\mathbf t}A_1^{\mathbf t}u=\bar u^{\mathbf t}S^{\mathbf t}A_1u=\bar u^{\mathbf t}A_1Su.
$$
In the same way we show that $\bar u^{\mathbf t}A_1u\in\mathbb R$. It implies that if $Su=\lambda u$, then $\lambda$ is actually real, because of the following relation
$$
\mathbb R\ni \bar u^{\mathbf t}A_1Su=\langle Su,u\rangle_V^1=\lambda(\bar u^{\mathbf t}A_1u).
$$

Moreover $S$ has only positive eigenvalues, because if $Su=\lambda u$ and $\langle u,u\rangle_V^i\neq 0$, $i=1,2$, then
$$
\lambda\langle u,u\rangle_V^1=\langle Su,u\rangle_V^1=\langle u,u\rangle_V^2.
$$
Since $\langle u,u\rangle_V^1$ and $\langle u,u\rangle_V^2$ have always the same sign by the assumption of the lemma, we conclude that $\lambda>0$. If $Su=\lambda u$ and $\langle u,u\rangle_V^i=0$, $i=1,2$, then we change the arguments. Let $\{e_1,\ldots,e_m\}$ be an orthonormal basis with respect to $\langle .\,,.\rangle_V^1$, that always exists since the scalar product is non-degenerate. Choose a basis vector $e_k$ such that 
$\langle e_k,u\rangle_V^1\neq 0$. Such kind of vector $e_k$ exists,  otherwise $u$ would be the zero vector which contradicts the requirement that $u$ is an eigenvector. Then
$\langle ce_k-u,ce_k-u\rangle_{V}^1=0$ for $c=2\langle e_k,e_k\rangle_V^1\langle e_k,u\rangle_V^1$. Write $v=ce_k$. Then $\langle v-u,v-u\rangle_V^i= 0$ for $i=1,2$. This implies
$$
0=\langle v-u,v-u\rangle_V^i=\langle v,v\rangle_V^i-2\langle v,u\rangle_V^i,
$$
and we conclude that the non-vanishing value of $\langle v,u\rangle_V^i$ has the same sign in both vector spaces. Thus,
$$
\lambda\langle u,v\rangle_V^1=\langle Su,v\rangle_V^1=\langle u,v\rangle_V^2,
$$
and we conclude that $\lambda>0$.

The map $S$ commutes with $J_z$ for any $z\in U$ by
\begin{equation}\label{eq:commute}
\langle J_zSv,w\rangle_V^1=-\langle Sv,J_zw\rangle_V^1=-\langle v,J_zw\rangle_V^2=\langle J_zv,w\rangle_V^2=\langle SJ_zv,w\rangle_V^1.
\end{equation}

Let $V_1,\ldots, V_N$ be eigenspaces of the map $S$ corresponding to different eigenvalues, which we denote by $\lambda_1^2,\ldots,\lambda_N^2$. Then   $V_1,\ldots, V_N$ are mutually orthogonal with respect to both scalar products because the map $S$ is symmetric with respect to them. Let us write $V\ni v=\sum_{k=1}^Nv_k$  and $V\ni w=\sum_{k=1}^Nw_k$, where $v_k,w_k\in V_k$. We claim that 
$$
[v_k,v_j]^i=0\quad\text{for}\quad v_k\in V_k,\ \  v_j\in V_j,\quad k\neq j,\quad i=1,2.
$$
First, observe that the subspaces $V_k$, $k=1,\ldots,N$, are invariant under $J_z$ for any $z\in U$ because $SJ_z=J_zS$. We calculate
$$
\langle z,[v_k,v_j]^i\rangle_U=\langle J_zv_k,v_j\rangle_V^i=\langle v'_k,v_j\rangle_V^i=0,\quad i=1,2,\quad v'_k\in V_k
$$
for any $z\in U$. The scalar product $\langle .\,,.\rangle_U$ is non-degenerate and we conclude that $[v_k,v_j]^i=0$.

We are ready to define the Lie algebra isomorphism $(V\oplus U,[.\,,.]^2)\to(V\oplus U,[.\,,.]^1)$. Set $\varphi\colon V\oplus U\to V\oplus U$ by
\begin{equation}\label{eq:phi_isom}
\varphi=\begin{cases}
\lambda_k\Id_{V_k}, \quad k=1,\ldots, N,\quad &\text{on $V$},
\\
\Id_U, \quad &\text{on $U$},
\end{cases}
\end{equation}
and  check $\varphi([v,w]^2)=[\varphi(v),\varphi(w)]^1$. We obtain on one hand
\begin{eqnarray*}
\langle z,\varphi([v,w]^2)\rangle_U & = &\langle z,[v,w]^2\rangle_U=
\sum_{k=1}^N\langle z,[v_k,w_k]^2\rangle_U=
\sum_{k=1}^N\langle J_zv_k,w_k\rangle_V^2
\\
& = &\sum_{k=1}^N\lambda_k^2\langle J_zv_k,w_k\rangle_V^1,
\end{eqnarray*}
since $\langle.\,,.\rangle^2_{V_k}=\lambda_k^2 \langle.\,,.\rangle^1_{V_k}$.
On the other hand,
$$
\langle z,[\varphi(v),\varphi(w)]^1\rangle_U=\sum_{k=1}^N\lambda_k^2\langle z,[v_k,w_k]^1\rangle_U=\sum_{k=1}^N\lambda_k^2\langle J_zv_k,w_k\rangle_V^1,
$$
that finishes the proof.
\end{proof}

\begin{corollary}
Let $(V,\langle.\,,.\rangle_V)$ and $(U,\langle.\,,.\rangle_U)$ be two scalar product spaces, and let $J\colon U\to\orth(V)$. Then every scalar product $\langle.\,,.\rangle_U$ on $U$ defines a unique 2-step nilpotent Lie algebra structure given by~\eqref{eq:defJ} on the vector space $\g=V\oplus_{\perp}U$.
\end{corollary}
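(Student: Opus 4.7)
The plan is to verify this corollary by a direct Riesz-type construction of the bracket, followed by routine checks of the Lie algebra axioms; there is no substantial obstacle to overcome, since most of the work was already absorbed into the hypothesis $J\colon U\to\orth(V)$.

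First I would fix $v,w\in V$ and observe that the assignment
$$
\Phi_{v,w}\colon z\mapsto\langle J_z v,w\rangle_V
$$
is a linear functional on $U$, because $J$ is linear in its argument. By non-degeneracy of $\langle.\,,.\rangle_U$, this functional is represented by a unique vector $[v,w]\in U$, so that $\langle z,[v,w]\rangle_U=\langle J_z v,w\rangle_V$ for every $z\in U$. This single step delivers both the existence and the uniqueness of the bracket at the level of the map $V\times V\to U$. I would then extend this bilinear map to the whole of $\g=V\oplus U$ by declaring $[u,x]=0$ whenever $u\in U$ and $x\in\g$, which immediately places $U$ in the centre of $\g$.

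Next I would verify the Lie algebra axioms. Bilinearity on $V\times V$ is transparent from the linearity of $J$ and of the scalar products. Skew-symmetry on $V\times V$ follows from $J_z\in\orth(V)$: the chain
$$
\langle z,[v,w]\rangle_U=\langle J_z v,w\rangle_V=-\langle v,J_z w\rangle_V=-\langle J_z w,v\rangle_V=-\langle z,[w,v]\rangle_U,
$$
combined with non-degeneracy of $\langle.\,,.\rangle_U$, yields $[v,w]=-[w,v]$. The Jacobi identity holds for free: every Jacobi triple on $\g$ contains a subbracket that pairs some element of $\g$ with an element lying in $U$, and such brackets vanish by centrality. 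Finally $[\g,\g]\subseteq[V,V]\subseteq U$ and $[\g,U]=0$ give 2-step nilpotency.

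The uniqueness of the entire Lie algebra structure then follows at once: any 2-step nilpotent bracket on $\g=V\oplus_\perp U$ with $U$ in its centre that satisfies~\eqref{eq:defJ} must coincide with the bracket constructed above on $V\times V$ (by the Riesz uniqueness) and must vanish on all other pairs of components (by centrality of $U$). The only mild subtlety worth flagging is that non-degeneracy of $\langle.\,,.\rangle_U$ is used twice, namely to define $[v,w]$ and to deduce its skew-symmetry; apart from that, the argument is purely formal, so the \emph{hard part is essentially bookkeeping}.
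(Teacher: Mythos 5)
Your argument is correct for the literal statement: in finite dimensions the non-degeneracy of $\langle.\,,.\rangle_U$ lets you represent the linear functional $z\mapsto\langle J_zv,w\rangle_V$ by a unique element $[v,w]\in U$, skew-symmetry of the bracket follows from $J_z\in\orth(V)$ together with non-degeneracy of $\langle.\,,.\rangle_U$, and the Jacobi identity and 2-step nilpotency are automatic once $[\g,\g]\subseteq U$ and $U$ is declared central. This is precisely the construction the paper sketches in the paragraph ``From skew-symmetric maps to Lie algebras'', so nothing in your write-up is wrong. Note, however, that the paper positions this statement as a corollary of Lemma~\ref{lem:uniq1}, which signals that the intended ``uniqueness'' is stronger than the well-definedness you establish: the point is that the isomorphism class of $(\g,[.\,,.])$ is pinned down by $\langle.\,,.\rangle_U$ and $J$ alone, independently of which $W$-invariant scalar product $\langle.\,,.\rangle_V$ (of the given index and with the same spacelike, timelike and null sets) is used on the left-hand side of \eqref{eq:defJ}; that independence is exactly what Lemma~\ref{lem:uniq1} provides, via the symmetric intertwiner $S$ and the rescaling isomorphism $\varphi=\lambda_k\Id_{V_k}\oplus\Id_U$, and it is not addressed by your Riesz-type argument, which fixes $\langle.\,,.\rangle_V$ once and for all. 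If the corollary is read literally, your proof is complete; if it is read as the authors apparently intend, you should add one sentence invoking Lemma~\ref{lem:uniq1} for the independence of the admissible choice of $\langle.\,,.\rangle_V$.
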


We generalise Lemma~\ref{lem:uniq1} in the following form.

\begin{proposition}
Let $V$ and $U$ be finite dimensional vector spaces, and let $\langle.\,,.\rangle_U$ be a non-degenerate scalar product on $U$. Let  $J\colon U\to \End(V)$ be a linear map with $W=J(U)$, and  let $\langle.\,,.\rangle_V^1$ and $\langle.\,,.\rangle_V^2$ be two $W$-invariant scalar products admitting a linear map $S\colon V\to V$ satisfying the following conditions.
\begin{itemize}
\item[1.] The eigenspaces $V_1,\ldots, V_N$ of the map $S$ are orthogonal with respect to both scalar products.
\item[2.] The eigenspaces are invariant under all maps $J_z\in W$.
\item[3.] The restriction $S\vert_{V_k}\colon V_k\to V_k$ is an isometry (or anti-isometry) for all $k=1,\ldots,N$.
\end{itemize}
Assume that $[.\,,.]^1$ and $[.\,,.]^2$ are Lie products defined by~\eqref{eq:defJ} with respect to the scalar products $\langle.\,,.\rangle_V^1$ and $\langle.\,,.\rangle_V^2$ on $\g=V\oplus_{\perp}U$. Then the Lie algebras $(\g,[.\,,.]^1)$ and $(\g,[.\,,.]^2)$ are isomorphic.
\end{proposition}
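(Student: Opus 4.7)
The plan is to follow closely the second half of the proof of Lemma~\ref{lem:uniq1}, with the three hypotheses on $S$ taking the place of the properties that were there derived from the defining relation~\eqref{eq:S}. The first step is to reduce the bracket computation to one eigenspace at a time: for $v_k\in V_k$, $w_j\in V_j$ with $k\neq j$ and any $z\in U$, the invariance of $V_k$ under $J_z$ (condition~2) together with orthogonality of $V_k$ and $V_j$ under both products (condition~1) gives $\langle z,[v_k,w_j]^i\rangle_U=\langle J_zv_k,w_j\rangle_V^i=0$ for $i=1,2$; non-degeneracy of $\langle.\,,.\rangle_U$ then forces $[v_k,w_j]^i=0$. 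Both Lie brackets thus respect the decomposition $V=\bigoplus_k V_k$.

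Condition~3 must next be turned into a concrete relation between the two products on each eigenspace. Since $V_k$ is an eigenspace of $S$, the restriction $S|_{V_k}=\mu_k\,\Id_{V_k}$ is a nonzero scalar (invertibility is forced by the (anti-)isometry hypothesis), and the (anti-)isometry condition, read as $\langle Sv,Sw\rangle_V^2=\pm\langle v,w\rangle_V^1$ for $v,w\in V_k$, yields $\langle.\,,.\rangle^2|_{V_k}=\varepsilon_k\mu_k^{-2}\,\langle.\,,.\rangle^1|_{V_k}$ for some sign $\varepsilon_k\in\{\pm 1\}$. Plugging this into~\eqref{eq:defJ} gives $[v_k,w_k]^2=\varepsilon_k\mu_k^{-2}[v_k,w_k]^1$ in $U$. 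I then define the candidate isomorphism $\varphi\colon(\g,[.\,,.]^1)\to(\g,[.\,,.]^2)$ by $\varphi|_V=S$ (so $\varphi|_{V_k}=\mu_k\,\Id$) and $\varphi|_U$ acting as $\varepsilon_k\,\Id$ on $U_k:=[V_k,V_k]^1\subset U$, extended to a linear automorphism on a complement. Thanks to the first step the bracket identity reduces to the single-eigenspace check $[\varphi v_k,\varphi w_k]^2=\mu_k^2[v_k,w_k]^2=\varepsilon_k[v_k,w_k]^1=\varphi([v_k,w_k]^1)$.

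The principal obstacle is the well-definedness of $\varphi|_U$ in the mixed-sign case, when different eigenspaces carry different $\varepsilon_k$. For the sign assignment on $\sum_k U_k$ to be consistent, the subspaces $U_k$ with distinct signs must be linearly independent; this is where conditions~1 and~2 should be used more subtly, arguing that if a nonzero $z$ lay in $U_k\cap U_j$ with $\varepsilon_k\neq\varepsilon_j$, then the $J_z$-invariance of $V_k,V_j$ together with their mutual orthogonality under both products would, via the non-degeneracy of $\langle.\,,.\rangle_U$, produce a contradiction between the two scalar descriptions of $z$. This compatibility is the one genuinely new ingredient beyond the argument of Lemma~\ref{lem:uniq1}, where $S$ had only positive eigenvalues $\lambda_k^2$ and no sign conflict could arise.
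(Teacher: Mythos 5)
Your first two steps coincide with the paper's proof: conditions 1 and 2 give $[v_k,w_j]^i=0$ for $k\neq j$ via $\langle z,[v_k,w_j]^i\rangle_U=\langle J_zv_k,w_j\rangle_V^i=0$ and non-degeneracy of $\langle.\,,.\rangle_U$, and condition 3 gives $\langle.\,,.\rangle^2\vert_{V_k}=\varepsilon_k\mu_k^{-2}\langle.\,,.\rangle^1\vert_{V_k}$, hence $[v_k,w_k]^2=\varepsilon_k\mu_k^{-2}[v_k,w_k]^1$. The paper, however, reads condition 3 with a single global sign: either $S\vert_{V_k}$ is an isometry for every $k$, or an anti-isometry for every $k$, and accordingly takes $\varphi\vert_U=\Id_U$ or $\varphi\vert_U=-\Id_U$ on all of $U$ (with $\varphi\vert_{V_k}=\lambda_k\Id$), after which the verification is the one from Lemma~\ref{lem:uniq1}. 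In either uniform case your construction reduces to exactly this, so that part of your argument is correct and essentially identical to the paper's.

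The genuine gap is in your treatment of the mixed case, where different eigenspaces carry different signs $\varepsilon_k$. Your map $\varphi\vert_U$, defined as $\varepsilon_k\Id$ on $U_k:=[V_k,V_k]^1$, is in general not well defined, and the rescue you sketch --- that $U_k\cap U_j=\{0\}$ whenever $\varepsilon_k\neq\varepsilon_j$ --- is simply false: nothing in conditions 1--3 separates the images $[V_k,V_k]^1$ inside $U$. For instance, take $U=\mathbb R$, $V=V_1\oplus V_2$ with each $V_j=\mathbb R^2$, $J_z$ acting as the standard rotation generator on both summands, $\langle.\,,.\rangle^1$ Euclidean on both, $\langle.\,,.\rangle^2$ Euclidean on $V_1$ and minus Euclidean on $V_2$, and $S=\Id_{V_1}\oplus(-\Id_{V_2})$; then $U_1=U_2=U$ while $\varepsilon_1=1$, $\varepsilon_2=-1$, so no sign assignment on $U$ of the form you propose intertwines the brackets (the two algebras here are still isomorphic, but only via a map that is not of your diagonal form, e.g.\ one permuting a basis of $V_2$). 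So your argument is complete only under the uniform reading of condition 3 --- which is the case the paper actually proves --- and the "one genuinely new ingredient" you flag for the mixed case does not exist as stated; that case would need an entirely different isomorphism.
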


\begin{proof}
The first two properties imply that $
[v_k,v_j]^i=0$ for all $v_k\in V_k$, $v_j\in V_j$, $k\neq j$, $i=1,2$. The isometry property gives
$$
\lambda_k^2\langle u,v\rangle_{V_k}^2=\langle Su,Sv\rangle_{V_k}^2=\langle u,v\rangle_{V_k}^1\quad\text{for all}\quad u,v\in V_k.
$$
In the case of anti-isometry we obtain
$$
\lambda_k^2\langle u,v\rangle_{V_k}^2=\langle Su,Sv\rangle_{V_k}^2=-\langle u,v\rangle_{V_k}^1\quad\text{for all}\quad u,v\in V_k,
$$
for each eigenspace $V_k$. In the case of isometry we define the Lie algebra isomorphism $\varphi$ as in~\eqref{eq:phi_isom}, and in the case of anti-isometry we set
\begin{equation}\label{eq:phi_anisom}
\varphi=\begin{cases}
\lambda_k\Id_{V_k}, \quad k=1,\ldots, N,\quad &\text{on $V$},
\\
-\Id_U, \quad &\text{on $U$}.
\end{cases}
\end{equation}
The proof finishes as in Lemma~\ref{lem:uniq1}.
\end{proof}

%%%%%%%%%%%%%%%%%%%

\subsubsection{2-step nilpotent Lie algebras with trivial abelian factor}

%%%%%%%%%%%%%%%%%%%

The map $J\colon U\to\orth(V)$ is not necessarily injective. Nevertheless, if it is so, the corresponding 2-step nilpotent Lie algebra possesses nice properties. Let $\g$ be a 2-step nilpotent Lie algebra. Then in the commutative ideal $[\g,\g]$, there can be  a proper subspace   of the centre $\mathcal Z$ of the Lie algebra $\g$. The case $[\g,\g]=\mathcal Z$ corresponds to the injective map $J\colon \mathcal Z\to\orth(V)$. We recall some results for arbitrary 2-step nilpotent Lie algebras in this direction.

\begin{proposition}\cite{Eber03}\label{prop:abelian_factor}
Let $\g$ be a 2-step nilpotent Lie algebra with a centre $\mathcal Z$. Then there is an ideal $\g^*$  and an abelian ideal $\mathfrak a$ of $\g$ with $\mathfrak a\subseteq \mathcal Z$, such that 
\begin{itemize}
\item[1.]{$\g=\g^*\oplus\mathfrak a$ and $\mathcal Z=[\g,\g]\oplus\mathfrak a$;}
\item[2.]{$\g^*$ is a 2-step nilpotent Lie algebra such that $[\g,\g]=[\g^*,\g^*]=\mathcal Z^*$, where $\mathcal Z^*$ is the centre of $\g^*$;}
\item[3.]{The ideals $\g^*$ and $\mathfrak a$ are uniquely defined up to an isomorphism by item 1;}
\item[4.]{If $\g$ has a basis $\mathcal B$ with rational structure constants, then $\g^*$ has a basis $\mathcal B^*$ with integer structure constants.}
\end{itemize}
\end{proposition}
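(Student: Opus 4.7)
The plan is to construct $\g^*$ and $\mathfrak a$ by choosing vector-space complements, and then verify the four properties in order. Since $\g$ is 2-step nilpotent, $[\g,\g]\subseteq\mathcal Z$. I would pick any complement $\mathfrak a$ of $[\g,\g]$ in $\mathcal Z$ and any complement $V$ of $\mathcal Z$ in $\g$, and set $\g^*:=V\oplus[\g,\g]$. Item 1 is then a routine check: $\mathfrak a\subseteq\mathcal Z$ is automatically a central (hence abelian) ideal; $\g^*$ is an ideal because $[\g,\g^*]\subseteq[\g,\g]\subseteq\g^*$; and the direct-sum decompositions of $\g$ and $\mathcal Z$ follow from the way the complements were chosen.

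For item 2, observe first that $[\g,\g]=[\g^*\oplus\mathfrak a,\g^*\oplus\mathfrak a]=[\g^*,\g^*]$ since $\mathfrak a$ is central. Next, any $x\in\mathcal Z^*$ satisfies $[x,\g]=[x,\g^*]+[x,\mathfrak a]=0$, so $\mathcal Z^*\subseteq\mathcal Z$; writing $x=v+z$ with $v\in V$ and $z\in[\g,\g]$ and using $z\in\mathcal Z$ forces $v\in V\cap\mathcal Z=\{0\}$, hence $x\in[\g,\g]$. Together with the automatic inclusion $[\g^*,\g^*]\subseteq\mathcal Z^*$, this gives $\mathcal Z^*=[\g,\g]$.

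For item 3, suppose $\g=\g^*\oplus\mathfrak a=\g^{**}\oplus\mathfrak a'$ are two such decompositions. Then $\mathfrak a$ and $\mathfrak a'$ are both vector-space complements of $[\g,\g]$ in $\mathcal Z$, so they have equal dimension and are isomorphic as abelian Lie algebras. To compare $\g^*$ with $\g^{**}$ I would let $\psi\colon\g^*\to\g^{**}$ send each $x\in\g^*$ to its $\g^{**}$-component under the second decomposition. The crucial observation is that for $x,y\in\g^*$ the bracket $[x,y]$ already lies in $[\g,\g]\subseteq\g^*\cap\g^{**}$, while the $\mathfrak a'$-components killed by $\psi$ are central and drop out of brackets; hence $\psi([x,y])=[x,y]=[\psi(x),\psi(y)]$. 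Injectivity follows from $\g^*\cap\mathfrak a'\subseteq\mathcal Z\cap\g^*\cap\mathfrak a'\subseteq[\g,\g]\cap\mathfrak a'=\{0\}$, and surjectivity from a dimension count. This uniqueness step is where I expect most of the bookkeeping: one has to keep straight that the brackets on $\g^*$ and $\g^{**}$ are the restrictions of the ambient bracket, and that $[\g,\g]$ lies inside every such complement so that $\psi$ genuinely intertwines the two Lie structures.

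For item 4, I would carry out the above constructions over $\mathbb Q$. Given a basis $\mathcal B$ of $\g$ with rational structure constants, $[\g,\g]$ has a rational basis (extracted from the brackets $[b_i,b_j]$), and $\mathcal Z$ is the solution set of a linear system with rational coefficients, so it also admits a rational basis. Standard linear algebra over $\mathbb Q$ then supplies rational complements $\mathfrak a\subseteq\mathcal Z$ and $V\subseteq\g$, and $\g^*$ inherits rational structure constants in the resulting basis. To upgrade to integer constants, I would rescale the basis vectors of $[\g,\g]$ uniformly by a common denominator of these constants: since $\g$ is 2-step, this multiplies every non-trivial structure constant of $\g^*$ by a positive integer, yielding the required integer basis $\mathcal B^*$.
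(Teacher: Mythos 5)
The paper does not prove this proposition at all---it is quoted from Eberlein's work \cite{Eber03} and used as a black box---so there is no internal proof to compare against. Your argument is a correct, self-contained replacement. The construction $\g^*=V\oplus[\g,\g]$ with $V$ a complement of $\mathcal Z$ and $\mathfrak a$ a complement of $[\g,\g]$ inside $\mathcal Z$ is the standard one, and each verification goes through: in item~2 the identification $\mathcal Z^*=[\g,\g]$ uses exactly $V\cap\mathcal Z=\{0\}$ as you say; in item~3 the key fact making the projection $\psi$ injective is that \emph{any} pair satisfying item~1 forces $\g^*\cap\mathcal Z=[\g,\g]$ (if $x=c+a\in\g^*\cap\mathcal Z$ with $c\in[\g,\g]\subseteq\g^*$ and $a\in\mathfrak a$, then $a\in\g^*\cap\mathfrak a=\{0\}$), which is what your chain of inclusions tacitly invokes and which does hold, so $\psi$ intertwines the brackets because $\mathfrak a'$ is central and $[\g,\g]$ sits inside both complements. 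Only one wording in item~4 deserves care: to turn rational constants into integers you must \emph{divide} the central basis vectors $z_k$ by the common denominator $d$ (so that $[v_i,v_j]=\sum_k dC^k_{ij}(z_k/d)$), not multiply them by it; your phrase ``rescale by a common denominator'' is ambiguous, but the effect you describe (structure constants get multiplied by a positive integer) shows you mean the right operation, and it agrees in spirit with the rescaling $v_i\mapsto\sqrt{d}\,v_i$ used in Theorem~\ref{th:last} of the paper. No gaps.
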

The factor $\mathfrak a$ in Proposition~\ref{prop:abelian_factor} is called an {\it abelian factor}. The proposition has the following useful corollary.
\begin{corollary}
Let $\g$ be a 2-step nilpotent Lie algebra with a centre $\mathcal Z$. Then $\g$ has a trivial abelian factor if and only if $[\g,\g]=\mathcal Z$.
\end{corollary}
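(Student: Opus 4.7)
The plan is to deduce the corollary directly from Proposition~\ref{prop:abelian_factor}, which already supplies a canonical decomposition $\g = \g^* \oplus \mathfrak a$ with the key relation $\mathcal Z = [\g,\g] \oplus \mathfrak a$. Both implications then collapse to reading this single identity in two ways, so no additional construction is needed beyond citing the proposition.

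For the forward direction, I would assume that the abelian factor $\mathfrak a$ is trivial, i.e.\ $\mathfrak a = \{0\}$. Substituting into item~1 of Proposition~\ref{prop:abelian_factor} immediately gives $\mathcal Z = [\g,\g] \oplus \{0\} = [\g,\g]$, as required. For the converse, assume $[\g,\g] = \mathcal Z$. Applying the decomposition $\mathcal Z = [\g,\g]\oplus \mathfrak a$ of item~1 and using that the sum is direct, we obtain $[\g,\g] = [\g,\g]\oplus\mathfrak a$, which forces $\mathfrak a = \{0\}$.

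The only subtlety worth mentioning is that the abelian factor $\mathfrak a$ must be understood up to the isomorphism of item~3 of the proposition; otherwise the phrase ``trivial abelian factor'' would not be intrinsically well-defined. Since uniqueness up to isomorphism in item~3 is already stated, this point causes no obstacle. Overall the argument is a one-line consequence of the proposition, and I do not anticipate any technical difficulty.
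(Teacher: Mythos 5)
Your argument is correct and is exactly the intended one: the paper states this corollary without proof, treating it as an immediate consequence of the decomposition $\mathcal Z=[\g,\g]\oplus\mathfrak a$ from item~1 of Proposition~\ref{prop:abelian_factor}, which is precisely how you read it in both directions. Your remark that ``trivial abelian factor'' is well-defined via the uniqueness in item~3 is a sensible observation and does not change the argument.
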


\begin{lemma}\label{lem:eqiv2}
Let $(\g,[.\,,.],\langle.\,,.\rangle_{\g})$ be a 2-step nilpotent Lie algebra with a centre $\mathcal Z$, and let a scalar product $\langle.\,,.\rangle_{\g}$ be such that its restrictions to $\mathcal Z$ and $[\g,\g]$ are non-degenerate. Let $V=\mathcal Z^{\perp}$, and let $J\colon \mathcal Z\to \orth(V)$ be the linear map defined by~\eqref{eq:defJ}. Then the following statements are equivalent: 
\begin{itemize}
\item[1.]{The commutative ideal $[\g,\g]$ has co-dimension $d\geq 0$ in $\mathcal Z$;}
\item[2.]{The kernel of $J$ has dimension $d$.}
\end{itemize}
\end{lemma}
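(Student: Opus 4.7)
The plan is to unravel the definition of $J$ together with the fact that, in a $2$-step nilpotent Lie algebra, every bracket lies in the centre. Once $\ker J$ is identified with a concrete orthogonal complement inside $\mathcal Z$, the dimension count follows from non-degeneracy of the restricted scalar product.

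First I would translate the condition $z\in\ker J$ into a pairing statement. By the defining formula $\langle J_zv,w\rangle_V=\langle z,[v,w]\rangle_{\mathcal Z}$ and the non-degeneracy of $\langle.\,,.\rangle_V$, the operator $J_z$ vanishes precisely when $\langle z,[v,w]\rangle_{\mathcal Z}=0$ for every $v,w\in V=\mathcal Z^{\perp}$. Next, since $\mathcal Z$ is the centre, every bracket in $\g$ reduces to a bracket of $V$-components, so
\[
[\g,\g]=\spn\{[v,w]\mid v,w\in V\}\subseteq \mathcal Z.
\]
Combining these two observations gives the identification
\[
\ker J \;=\; \bigl\{z\in\mathcal Z \;\bigm|\; \langle z, u\rangle_{\mathcal Z}=0 \text{ for all } u\in[\g,\g]\bigr\}\;=\;[\g,\g]^{\perp_{\mathcal Z}},
\]
the orthogonal complement of $[\g,\g]$ inside $\mathcal Z$ with respect to $\langle.\,,.\rangle_{\mathcal Z}$.

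Finally I would apply the two non-degeneracy hypotheses: $\langle.\,,.\rangle_{\g}$ restricts non-degenerately both to $\mathcal Z$ and to $[\g,\g]$. The first ensures that $\mathcal Z$ carries a genuine scalar product, while the second guarantees the orthogonal splitting
\[
\mathcal Z \;=\; [\g,\g]\oplus_{\perp}[\g,\g]^{\perp_{\mathcal Z}}.
\]
Taking dimensions yields $\dim \ker J = \dim\mathcal Z - \dim[\g,\g] = d$, which proves the equivalence of the two statements in both directions.

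The only delicate point is the use of the non-degeneracy assumption on the restriction to $[\g,\g]$: without it one could only conclude that $\ker J$ is the $\mathcal Z$-annihilator of $[\g,\g]$, whose dimension need not equal the codimension of $[\g,\g]$ in $\mathcal Z$. So I would emphasise that step — everything else is a direct unpacking of definitions.
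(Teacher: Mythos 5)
Your proof is correct and follows essentially the same route as the paper: identify $\ker J$ with the orthogonal complement $[\g,\g]^{\perp}$ inside $\mathcal Z$ via the defining relation~\eqref{eq:defJ} and the non-degeneracy of $\langle.\,,.\rangle_V$, then count dimensions. One small quibble with your closing remark: for a non-degenerate form on $\mathcal Z$ the annihilator of any subspace $U\subseteq\mathcal Z$ automatically has dimension $\dim\mathcal Z-\dim U$, so the dimension count already follows from non-degeneracy on $\mathcal Z$ alone; non-degeneracy of the restriction to $[\g,\g]$ is what upgrades this to the genuine orthogonal splitting $\mathcal Z=[\g,\g]\oplus_{\perp}[\g,\g]^{\perp}$, which you state but do not actually need for the equivalence.
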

\begin{proof}
Let us write $\mathcal Z=[\g,\g]\oplus[\g,\g]^{\perp}$. Then $\langle J_zv,w\rangle_V=\langle z,[v,w]\rangle_\mathcal Z$, and the non-degeneracy of the scalar products imply that $J_zv=0$, if and only if, $z\in[\g,\g]^{\perp}$, that proves the equivalence of items 1 and 2.
\end{proof}
An immediate corollary of Lemma~\ref{lem:eqiv2} follows.
\begin{corollary}\label{cor:trivial_factor}
Let $\g$ be a 2-step nilpotent Lie algebra with a centre $\mathcal Z$. Then the following statements are equivalent. 
\begin{itemize}
\item[1.] {The Lie algebra $\g$ has a trivial abelian factor;}
\item[2.] {Let $\g$ admit a non-degenerate scalar product,  such that its restriction to $\mathcal Z$ is non-degenerate. Let $V=\mathcal Z^{\perp}$, and let the linear map $J\colon \mathcal Z\to\orth(V)$ be defined by~\eqref{eq:defJ}. Then the map $J$ is injective.}
\end{itemize}
\end{corollary}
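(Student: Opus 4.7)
The plan is to chain two results that are already at hand in the section. The corollary immediately following Proposition~\ref{prop:abelian_factor} asserts that a 2-step nilpotent Lie algebra $\g$ has trivial abelian factor if and only if the commutator ideal coincides with the centre, i.e. $[\g,\g]=\mathcal Z$. Independently, Lemma~\ref{lem:eqiv2} provides, for any non-degenerate scalar product $\langle.\,,.\rangle_{\g}$ whose restriction to $\mathcal Z$ is non-degenerate, the dimensional identity $\mathrm{codim}_{\mathcal Z}[\g,\g] = \dim \ker J$, where $J\colon\mathcal Z\to\orth(V)$ is defined by \eqref{eq:defJ} on $V=\mathcal Z^{\perp}$.

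First I would observe that the hypothesis in item 2 is consistent: a non-degenerate scalar product on $\g$ whose restriction to $\mathcal Z$ is non-degenerate always exists (any positive-definite inner product does the job), so the statement is not vacuous. Note also that if $\langle.\,,.\rangle_{\g}|_{\mathcal Z}$ is non-degenerate then its restriction to $[\g,\g]\subseteq\mathcal Z$ need not be, but we do not need that: the decomposition $\mathcal Z=[\g,\g]\oplus [\g,\g]^{\perp_{\mathcal Z}}$ used in the proof of Lemma~\ref{lem:eqiv2} requires only non-degeneracy on $\mathcal Z$ itself (cf.\ the hypothesis actually used there).

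Then the proof consists of the chain of equivalences
\[
\text{trivial abelian factor}\;\Longleftrightarrow\;[\g,\g]=\mathcal Z\;\Longleftrightarrow\;\mathrm{codim}_{\mathcal Z}[\g,\g]=0\;\Longleftrightarrow\;\dim\ker J=0\;\Longleftrightarrow\;J\text{ is injective},
\]
where the first equivalence is the corollary of Proposition~\ref{prop:abelian_factor}, the middle one is a reformulation, the third one is Lemma~\ref{lem:eqiv2} with $d=0$, and the last one is the definition of injectivity for a linear map.

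There is no real obstacle here; the only conceptual point worth underlining is the directionality of the quantifiers between the two items. Item 1 is a purely Lie-algebraic property of $\g$, with no reference to any scalar product, while item 2 asserts injectivity of $J$ whenever a compatible scalar product is placed on $\g$. Because the dimensional identity of Lemma~\ref{lem:eqiv2} holds for every such scalar product, the equivalence reads correctly in both directions: the existence (equivalently, any choice) of such a scalar product yielding an injective $J$ is forced by, and forces, the condition $[\g,\g]=\mathcal Z$.
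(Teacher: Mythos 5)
Your proof is correct and follows exactly the route the paper intends: the paper states this as an ``immediate corollary'' of Lemma~\ref{lem:eqiv2}, combined with the earlier corollary that a trivial abelian factor is equivalent to $[\g,\g]=\mathcal Z$, which is precisely your chain of equivalences with $d=0$. Your side remark that only non-degeneracy on $\mathcal Z$ (not on $[\g,\g]$) is needed for the dimension count $\dim\ker J=\mathrm{codim}_{\mathcal Z}[\g,\g]$ is a correct and worthwhile clarification of the mild mismatch between the hypotheses of Lemma~\ref{lem:eqiv2} and item 2 of the corollary.
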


%%%%%%%%%%%%%%%%%%%

\subsection{Examples}

%%%%%%%%%%%%%%%%%%%

Now we give several examples of skew-symmetric maps and Lie algebras related to them. 

\begin{example}
Consider $\mathbb R^{p,q}$, $p+q=m$ with the metric $\langle x,y\rangle_{p,q}=x^{\mathbf t}\eta_{p,q}y$. Let $W$ be a non-zero subspace of $\so(p,q)$. The inclusion map $\iota\colon W\to\so(p,q)$ defines a skew-symmetric map in the following sense: if $z\in W$ and $\iota_z=\iota(z)=Z\in\so(p,q)$, then
$$
\la\iota_z(x),y\ra_{p,q}=\la Zx,y\ra_{p,q}=-\la x,Zy\ra_{p,q}=-\la x,\iota_z(y)\ra_{p,q}.
$$
If the restriction of the metric, defined by the trace on $\so(p,q)$, to the vector subspace $W\subset\so(p,q)$ is non-degenerate, then we can define a Lie algebra structure on $\mathbb R^{p,q}\oplus W$. If $W=\so(p,q)$, then the constructed Lie algebra on $\mathbb R^{p,q}\oplus \so(p,q)$ will be the free 2-step nilpotent Lie algebra that we denote by $F_2(p,q)$. Thus, $F_2(p,q)=\mathbb R^{p,q}\oplus \so(p,q)$, where the commutator on $\mathbb R^{p,q}$ is defined by 
\begin{equation}\label{eq:brack_R}
[w, v]_{F_2(p,q)}= -\frac{1}{2}(wv^{\mathbf t}-vw^{\mathbf t})\eta_{p,q}.
\end{equation}
For the standard basis $\{e_i\}$ of $\mathbb R^{p,q}$ we get $[e_i, e_j]_{F_2(p,q)}= -\frac{1}{2}(E_{ij} - E_{ji})\eta_{p,q}$, where $E_{ij}$ denote the $(m\times m)$ matrix with zero entries except of $1$ at the position $ij$. Since $F_2(p,q)$ is the 2-step nilpotent Lie algebra, we obtain that $\so(p,q)$ forms a centre. Particularly, if $q=0$, then we get the free Lie algebra $F_2(m)$ studied in~\cite{Eber03}.
\end{example}

The next example is closely related to Section~\ref{subseq:pseudoH}.
\begin{example}
Let $\g$ be a pseudo $H$-type Lie algebra. Then the linear map defined by~\eqref{eq:defJ} is skew-symmetric and defines a representation of the Clifford algebra. Conversely, given a representation $J\colon\Cl(U,\langle .\,,.\rangle_U)\to V$, which is also skew-symmetric with respect to a scalar product on $V$, we can construct a 2-step nilpotent Lie algebra that will be a general $H$-type Lie algebra. All details are described in Section~\ref{subseq:pseudoH}.
\end{example}

%%%%%%%%%%%%%%%%%%%

\subsection{Standard pseudo-metric 2-step nilpotent Lie algebras}

%%%%%%%%%%%%%%%%%%%

We present the construction of 2-step nilpotent Lie algebras with some standard choice of the metrics.

Let $(V,\langle .\,,.\rangle_V)$ be an $m$-dimensional scalar product space, and let $\orth(V)$ space of skew-symmetric maps with respect to $\langle .\,,.\rangle_V$. Equip the space $\orth(V)$ with the metric $\langle z,z'\rangle_{\orth(V)}=-\tr(zz')$, $z\in\orth(V)$. Observe that if the scalar product $\langle .\,,.\rangle_V$ has index $(p,q)$, $p+q=m$, then the scalar product $\langle .\,,.\rangle_{\orth(V)}$ has index $\big(\frac{p(p-1)+q(q-1)}{2},pq\big)$. Since the Lie algebra $\orth(V)$ is simple, then any symmetric bilinear form is a multiple of the Killing form.

Let $W$ be an $n$-dimensional subspace of $\orth(V)$, such that the restriction of $\langle .\,,.\rangle_{\orth(V)}$ to $W$ is non-degenerate. Let $\mathcal G=V\oplus W$ and $\langle.\,,.\rangle_{\mathcal G}=\langle .\,,.\rangle_V+\langle .\,,.\rangle_{\orth(V)}$. The direct sum $\mathcal G=V\oplus W$ is orthogonal with respect to $\langle.\,,.\rangle_{\mathcal G}$. Let $[.\,,.]_{\mathcal G}$ be the Lie product on $\mathcal G$ defined as follows. If $v,w\in V$, then $[v,w]_{\mathcal G}$ is a unique element of $W$, such that 
\begin{equation}\label{eq:standard}
\langle [v,w]_{\mathcal G},z\rangle_{\orth(V)}=\langle z(v),w\rangle_V
\end{equation}
for every $z\in W$. 
\begin{definition}\label{def:pseudo_standard}
We call the Lie algebra $\mathcal G$ constructed above {\it the standard pseudo-metric} 2-step nilpotent Lie algebra and write $\mathcal G=(V\oplus_{\bot} W, [.\,,.]_{\mathcal G},\langle.\,,.\rangle_{\mathcal G})$.
\end{definition} 
If $V=\mathbb R^{p,q}$ and $\langle .\,,.\rangle_{V}=\langle .\,,.\rangle_{p,q}$ is the scalar product defined by the matrix $\eta_{p,q}=\diag(I_p,-I_q)$, then we write $\so(p,q)$ for skew-symmetric maps, and the standard pseudo-metric 2-step nilpotent Lie algebra is $\mathcal G=(\mathbb R^{p,q}\oplus_{\bot}W,[.\,,.],\langle.\,,.\rangle_{\mathcal G})$ with $\langle.\,,.\rangle_{\mathcal G}=\langle .\,,.\rangle_{p,q}+\langle .\,,.\rangle_{\so(p,q)}$.

We also say that the standard pseudo-metric 2-step nilpotent Lie algebra is {\it involutive}, if $W$ is a subalgebra in $\orth(V,\langle .\,,.\rangle_V)$.
It is easy to see that $[\mathcal G,\mathcal G]=W$ and $W$ is the centre of $\mathcal G$, if and only if, for any $v\neq 0$, $v\in V$ there is $z\in W$ such that $z(v)\neq 0$.

%%%%%%%%%%%%%%

\begin{example} {\it Free standard pseudo-metric Lie algebra.} Let us equip the 2-step free Lie algebra $F_2(p,q)=\mathbb{R}^{p,q}\oplus\so(p,q)$ with the scalar product $\la.\,,.\ra=\la .\,,.\ra_{\so(p,q)}+\la  .\,,. \ra_{p,q}$. Then 
$$\la [w, v]_{F_2(p,q)}, Z \ra_{\so(p,q)}=\la Zw,v \ra_{p,q}$$ for all $w,v \in \mathbb{R}^{p,q}$ and all $Z \in \so(p,q)$,
where the Lie brackets are introduced in~\eqref{eq:brack_R}.
First we calculate $\la Zw,v \ra_{p,q}$ and obtain
\begin{eqnarray*}
\la Zw, v \ra_{p,q}= w^{\mathbf t}Z^{\mathbf t}\eta_{p,q} v=-w^{\mathbf t}\eta_{p,q} Z  v = - \tr(w^{\mathbf t}  \eta_{p,q} Z v)=- \tr(vw^{\mathbf t}  \eta_{p,q} Z),
\end{eqnarray*}
where $\tr(w^{\mathbf t}Z \eta_{p,q} v)=w^{\mathbf t}Z \eta_{p,q} v$ as $w^{\mathbf t}Z \eta_{p,q} v \in \mathbb{R}$, and where we used $Z^{\mathbf t} \eta_{p,q} = -\eta_{p,q} Z$ for all $Z \in \so(p,q)$. Moreover, since $Z\in\so(p,q)$ we also get 
$$
\la Zw, v \ra_{p,q}=-\la w, Zv \ra_{p,q}=\tr(wv^{\mathbf t}\eta_{p,q} Z).
$$
With these relations we calculate $\la [ w, v], Z \ra_{\so(p,q)}$ and obtain the desired equality
\begin{eqnarray*}
\la [ w, v], Z \ra_{\so(p,q)}&=&  -\tr\left(-\frac{1}{2}(wv^{\mathbf t}-vw^{\mathbf t})\eta_{p,q} Z \right)  =\frac{1}{2}\left(\tr(wv^{\mathbf t}\eta_{p,q} Z)-\tr(vw^{\mathbf t} \eta_{p,q} Z)\right) 
\\
&=&  \la Zw, v \ra_{p,q}.
\end{eqnarray*}
\end{example}

%%%%%%%%%%%%%%

\begin{example}\label{ex:Cl}
{\it Representation of Clifford algebras.}
Let $(\mathbb R^{r,s},\langle.\,,.\rangle_{r,s})$, and let $\Cl_{r,s}$ denote the Clifford algebra generated by $\mathbb R^{r,s}$. Let $J\colon \Cl_{r,s}\to\End(V)$ be a Clifford algebra representation on the finite-dimensional vector space $V$. We identify $V$ (or $V\oplus V$ if it is necessary) with $\mathbb R^{p,p}$, $2p=m$, equipped with the scalar product $\langle.\,,.\rangle_{p,p}$, such that $W=J(\mathbb R^{r,s})\subseteq \so(p,p)$ if $s>0$. If $s=0$, then we identify $V$ with the Euclidean space $\mathbb R^m$, and in this case $W=J(\mathbb R^{r,0})\subseteq \so(m)$. 
As it was observed in Remark~\ref{rem:neutral} the scalar product on $V$ should be neutral in the case $s>0$, that determines the choice of the scalar product $\langle.\,,.\rangle_{p,p}$ and the inclusion of $W=J(\mathbb R^{r,s})$ into the space $\so(p,p)$. \end{example}

%%%%%%%%%%%%%%%%%

\subsection{Reduction of a 2-step nilpotent Lie algebra to the standard pseudo-metric 2-step nilpotent Lie algebra}

%%%%%%%%%%%%%%%%%
We start from the following observation relating elements in $\so(m)$ and $\so(p,q)$ with $p+q=m$. 
Let $\eta_{p,q}=\diag(I_p,-I_q)$, $p+q=m$, and let $\nu_i=\nu_i(p,q)$ be defined by~\eqref{eq:nu}.
Then, for any $(m\times m)$ matrix $A=\{a_{ij}\}_{i,j=1}^{m}$, we have 
$$
(A\eta_{p,q})_{ij}=a_{ij}\nu_{j},\quad (\eta_{p,q} A)_{ij}=a_{ij}\nu_{i}.
$$

Let $C\in \so(m)$, and define $D=C\eta_{p,q}$ (or equivalently, $D_{ij}=\nu_jC_{ij}$). We claim that $D\in \so(p,q)$. Indeed, 
$$\eta_{p,q} D^{\mathbf t}\eta_{p,q}=\eta_{p,q}(C\eta_{p,q})^{\mathbf t}\eta_{p,q}=\eta_{p,q}\eta^{\mathbf t}_{p,q}C^{\mathbf t}\eta_{p,q}=-C\eta_{p,q}=-D.
$$
Analogously, we can show that $\widetilde D=\eta_{p,q}C\in\so(p,q)$ if $C\in\so(m)$, $m=p+q$.
Let us prove the following technical lemma.

\begin{lemma}\label{lem:lin_indep}
Let $\g$ be a 2-step nilpotent Lie algebra, such that $\dim([\g,\g])=n$, and let the complement $V$ to $[\g,\g]$ be of dimension $m$. Denote by $z_1,\ldots,z_n$ a basis of $[\g,\g]$, and by $v_1,\ldots,v_m$ a basis of $V$. Let $[v_i,v_j]=\sum_{k=1}^{n}C^k_{ij}z_k$ for $1\leq i,j\leq m$. Then the matrices $D^k=C^k\eta_{p,q}$ are linearly independent in any $\so(p,q)$, $p+q=m$.
\end{lemma}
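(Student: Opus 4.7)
The plan is to reduce the claim to the already-known linear independence of the matrices $C^k$ in $\so(m)$, which is recorded in Section~\ref{subseq:standard_metric} (citing~\cite{Eber03}). The only real ingredient beyond that fact is the invertibility of the diagonal sign matrix $\eta_{p,q}$.

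First I would verify, as a preliminary, that each $D^k=C^k\eta_{p,q}$ really does lie in $\so(p,q)$; this is already carried out in the paragraph immediately preceding the lemma (using $\eta_{p,q}^{\mathbf t}=\eta_{p,q}$ and $C^{k\,\mathbf t}=-C^k$), so I would just cite that computation.

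Next, to prove linear independence, suppose that scalars $\alpha_1,\dots,\alpha_n\in\mathbb R$ satisfy
\[
\sum_{k=1}^{n}\alpha_k D^k \;=\; \sum_{k=1}^{n}\alpha_k C^k\eta_{p,q} \;=\; 0.
\]
Since $\eta_{p,q}^2=I_m$, the matrix $\eta_{p,q}$ is invertible, so right-multiplying by $\eta_{p,q}$ yields $\sum_{k=1}^{n}\alpha_k C^k=0$. The structure matrices $C^1,\dots,C^n$ are linearly independent in $\so(m)$ (this is precisely the property recalled in Section~\ref{subseq:standard_metric}, and is ultimately a consequence of $z_1,\dots,z_n$ being a basis of $[\g,\g]$), whence $\alpha_1=\cdots=\alpha_n=0$.

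There is really no serious obstacle here; the whole point is that passage from $\so(m)$ to $\so(p,q)$ via right multiplication by $\eta_{p,q}$ is a linear isomorphism of vector spaces (not of Lie algebras!), so it preserves linear independence automatically. The only thing to be careful about is not to confuse this statement with a statement about the Lie brackets or the scalar product structure, both of which do change under this passage and will be treated elsewhere.
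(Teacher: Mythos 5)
Your proof is correct and is essentially the paper's own argument: both reduce the claim to the linear independence of $C^1,\dots,C^n$ in $\so(m)$ (cited from Eberlein) and use that right multiplication by the invertible matrix $\eta_{p,q}$ is a linear isomorphism. The only difference is cosmetic — you start from a vanishing linear combination of the $D^k$ and cancel $\eta_{p,q}$, whereas the paper writes the same equivalence in the opposite direction.
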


\begin{proof}
It was proved in~\cite{Eber03} that $C^1,\ldots, C^n$ are linearly independent in $\so(m)$. Thus for any real numbers $\alpha_1,\ldots,\alpha_n$ we have
$$
\sum_{k=1}^{n}\alpha_kC^k=0\quad\Longleftrightarrow\quad \alpha_k=0,\ k=1,2,\ldots,n.
$$
Then 
$$0=\Big(\sum_{k=1}^{n}\alpha_kC^k\Big)\eta_{p,q}=\sum_{k=1}^{n}\alpha_kC^k\eta_{p,q}=\sum_{k=1}^{n}\alpha_kD^k\quad\Longleftrightarrow\quad \alpha_k=0,\ k=1,2,\ldots,n.
$$
\end{proof}

Any 2-step nilpotent Lie algebra $\mathfrak g$ defines a subspace $\mathcal C\subset\so(m)$, where $\mathcal C=\spn_{\mathbb R}\{C^1,\ldots,C^k\}$ and moreover, this subspace is non-degenerate  in $\so(m)$. This fact allows to construct the isomorphism between $\mathfrak g$ and the corresponding standard metric Lie algebra $\mathcal G=\mathbb R^m\oplus \mathcal C$ with positive-definite scalar product, see~\cite{Eber03}. The space $\mathcal C$ also generates spaces $\mathcal D=\spn_{\mathbb R}\{D^1,\ldots,D^k\}$, $D^j=C^j\eta_{p,q}$, in each $\so(p,q)$. Moreover, if $\mathcal D\subset \so(p,q)$ is non-degenerate with respect to the restriction  of the indefinite trace metric to $\mathcal D$ in $\so(p,q)$, then there is an isomorphism between $\mathfrak g$ and the standard pseudo-metric Lie algebra $\mathcal G=\mathbb R^{p,q}\oplus \mathcal D$ as it is shown in the following theorem.

\begin{theorem}\label{th:isom}
Let $\g$ be a 2-step nilpotent Lie algebra such that $\dim([\g,\g])=n$, and the complement $V$ to $[\g,\g]$ is of dimension $m$. Then
there exists an $n$-dimensional subspace $\mathcal D$ of $\so(p,q)$, $p+q=m$, $n\leq\frac{m(m-1)}{2}$ such that if $\mathcal D$ is an $n$-dimensional non-degenerate subspace of $\so(p,q)$, then $\g$ is isomorphic as a Lie algebra to the standard pseudo-metric 2-step nilpotent Lie algebra $\mathcal G=\mathbb R^{p,q}\oplus_{\perp}\mathcal D$.
\end{theorem}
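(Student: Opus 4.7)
The plan is to construct $\mathcal D$ from an adapted basis of $\g$ and, under the non-degeneracy hypothesis, exhibit an explicit Lie algebra isomorphism $\varphi\colon \g\to\mathcal G$. I would fix an adapted basis $\{v_1,\ldots,v_m,z_1,\ldots,z_n\}$ of $\g$ with $\{z_1,\ldots,z_n\}$ a basis of $[\g,\g]$, and define the structure matrices $C^k\in\so(m)$ by $[v_i,v_j]=\sum_k C^k_{ij}z_k$. Then I set $D^k=C^k\eta_{p,q}\in\so(p,q)$; Lemma~\ref{lem:lin_indep} gives that the $D^k$ are linearly independent, so $\mathcal D:=\spn_{\mathbb R}\{D^1,\ldots,D^n\}$ is an $n$-dimensional subspace of $\so(p,q)$ (and $n\leq m(m-1)/2$ is automatic). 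When $\mathcal D$ is non-degenerate for the trace scalar product, Definition~\ref{def:pseudo_standard} produces the standard pseudo-metric 2-step nilpotent Lie algebra $\mathcal G=\mathbb R^{p,q}\oplus_{\perp}\mathcal D$, and the claim becomes that $\g\cong\mathcal G$.

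The map I would propose is
\[
\varphi(v_i)=\nu_i e_i,\qquad \varphi(z_k)=\tilde z_k,
\]
where $\{e_i\}$ is the standard basis of $\mathbb R^{p,q}$, $\nu_i$ is the sign from~\eqref{eq:nu}, and $\tilde z_k\in\mathcal D$ is the unique element determined by $\langle\tilde z_k,D^r\rangle_{\so(p,q)}=-\delta_{kr}$ for $r=1,\ldots,n$. The existence of $\tilde z_k$ uses exactly the non-degeneracy of $\langle\cdot\,,\cdot\rangle_{\so(p,q)}|_{\mathcal D}$, and $\{\tilde z_k\}$ is, up to sign, the dual basis of $\{D^r\}$ inside $\mathcal D$; in particular $\varphi$ is a linear bijection of vector spaces, since on the complement of $[\g,\g]$ it only changes signs of basis vectors, and on $[\g,\g]$ it sends one basis to another.

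To verify that $\varphi$ is a Lie algebra homomorphism, I would check the identity $\varphi([v_i,v_j])=[\varphi(v_i),\varphi(v_j)]_{\mathcal G}$ by pairing both sides against each $D^r$. The left-hand side equals $\sum_k C^k_{ij}\tilde z_k$, and pairing against $D^r$ produces $-C^r_{ij}$ by the defining property of $\tilde z_k$. The right-hand side is $\nu_i\nu_j[e_i,e_j]_{\mathcal G}$, and the defining identity of $\mathcal G$ together with a short computation using $D^r=C^r\eta_{p,q}$ yields
\[
\langle[\varphi(v_i),\varphi(v_j)]_{\mathcal G},D^r\rangle_{\so(p,q)}=\nu_i\nu_j\langle D^r e_i,e_j\rangle_{p,q}=\nu_i\nu_j\cdot(-\nu_i\nu_j C^r_{ij})=-C^r_{ij},
\]
since $\nu_i^2=1$. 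Since both sides of the homomorphism identity produce the same linear functional on $\mathcal D$ and $\langle\cdot\,,\cdot\rangle_{\so(p,q)}|_{\mathcal D}$ is non-degenerate, they agree as elements of $\mathcal D$, proving the homomorphism property and hence the isomorphism.

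The step I expect to be the main obstacle is the discovery of the rescaling $v_i\mapsto\nu_i e_i$ in place of the naive identification $v_i\mapsto e_i$. In Eberlein's positive-definite setting~\cite{Eber03} every $\nu_i=1$ and this subtlety is invisible, but in indefinite signature the computation $\langle D^r e_i,e_j\rangle_{p,q}=-\nu_i\nu_j C^r_{ij}$ carries an obstruction: without the rescaling, producing $\tilde z_k$ would require the matrix $-\eta_{p,q}C^r\eta_{p,q}$ to lie in $\mathcal C^n=\spn\{C^1,\ldots,C^n\}$, which in general it does not. Intertwining the identification of $V$ with $\mathbb R^{p,q}$ by $\eta_{p,q}$, i.e.\ inserting the sign $\nu_i$, cancels the offending factor $\nu_i\nu_j$; once this is in place, the remainder of the argument is bookkeeping with the trace form and the Gram matrix of $\{D^k\}$.
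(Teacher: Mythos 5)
Your proof is correct and follows essentially the same route as the paper: build $\mathcal D$ from the structure matrices $C^k$, take the (negative) dual basis of $\{D^k\}$ inside $\mathcal D$ using non-degeneracy, and verify the homomorphism identity by pairing against the $D^r$. The only difference is cosmetic: the paper's proof uses $D^k=\eta_{p,q}C^k$ so that $(D^k)^{\mathbf t}\eta_{p,q}=(C^k)^{\mathbf t}$ and the map $v_i\mapsto e_i$ needs no signs, whereas your choice $D^k=C^k\eta_{p,q}$ forces the twist $v_i\mapsto\nu_ie_i$; since $\eta_{p,q}(C^k\eta_{p,q})\eta_{p,q}=\eta_{p,q}C^k$ and $e_i\mapsto\nu_ie_i$ is an isometry of $\langle\cdot\,,\cdot\rangle_{p,q}$, the two constructions are conjugate and your observation about the sign obstruction is exactly the right diagnosis.
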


\begin{proof}
Let $\g=V\oplus[\g,\g]$, $v_1,\ldots,v_m$ be a basis of $V$, and let $z_1,\ldots,z_n$ be a basis of $[\g,\g]$.
Let $e_1,\ldots,e_{p+q}$ be the standard orthonormal basis in $\mathbb R^{p,q}$ with  the scalar product $\la .\,,.\ra_{p,q}$. 

Let $[v_i,v_j]_{\g}=\sum_{k=1}^{n}C^k_{ij}z_k$ for $1\leq i,j\leq m$ and  $D^k=\eta_{p,q}C^k$. 
Choose a pair $p,q\in \mathbb N$, $p+q=m$, such that the space $\mathcal D=\spn\{D^1,\ldots,D^n\}\subset \so(p,q)$ is non-degenerate with respect to the metric $\langle.\,,.\rangle_{\so(p,q)}$. Let ${\rho_1,\ldots,\rho_n}$ be a basis of $\mathcal D$, such that $\langle \rho_k,D^l\rangle_{\so(p,q)}=\delta_{kl}$ for $1\leq k,l\leq n$.

Define a linear isomorphism $T\colon \g\to\mathcal G$ by
$$
T(v_i)=e_i,\ \ i=1,\ldots, m,\qquad T(z_{k})=-\rho_k,\ \ k=1,\ldots,n.
$$
We claim that $T$ is a Lie algebra isomorphism and it suffices to show that
$$
T([v_i,v_j]_{\g})=[T(v_i),T(v_j)]_{\mathcal G}.
$$
Note that 
\begin{eqnarray*}
\langle[T(v_i),T(v_j)]_{\mathcal G},D^k\rangle_{\so(p,q)} & = & \langle[e_i,e_j]_{\mathcal G},D^k\rangle_{\so(p,q)}=\langle D^k(e_i),e_j\rangle_{p,q}
\\
& =& (e_i)^{\mathbf t}( D^k)^{\mathbf t}\eta_{p,q} e_j=((D^{k})^{\mathbf t}\eta_{p,q})_{ij}
\\
& = & ((C^k)^{\mathbf t})_{ij}=-C^k_{ij}=C^k_{ji}.
\end{eqnarray*} 
On the other hand,
\begin{eqnarray*}
\langle T([v_i,v_j]_{\g}),D^k\rangle_{\so(p,q)} & = & \langle \sum_{r=1}^{n}C^r_{ij}T(z_r),D^k\rangle_{\so(p,q)}=-\sum_{r=1}^{n}C^r_{ij}\langle \rho_r,D^k\rangle_{\so(p,q)}
\\
& = & -\sum_{r=1}^{n}C^r_{ij}\delta_{rk}
 = -C^k_{ij}=C^k_{ji},
\end{eqnarray*}
which finishes the proof. 
\end{proof}

%%%%%%%%%%%%%%%

\subsection{Examples of standard pseudo-metric algebras}

%%%%%%%%%%%%%%%

Let us consider three pseudo $H$-type Lie algebras $\n_{2,0}$, $\n_{1,1}$, and $\n_{0,2}$ and show that they can be realised as standard pseudo-metric algebras for some choice of $\mathfrak{so}(p,q)$.

\medskip
{\sc The pseudo $H$-type Lie algebra $\n_{2,0}$.} The algebra $\n_{2,0}$ is constructed from the Clifford algebra $\Cl_{2,0}$. Thus the centre of $\n_{2,0}$ is isomorphic to $\mathbb R^2$ and the complement to the centre is isomorphic to $\mathbb R^4$ with the standard Euclidean metrics. Let $(z_1,z_2)$ be the standard basis of $\mathbb R^2$, and let  $J_{z_1}$, $J_{z_2}\in\so(4)$ be such that 
$$
J_{z_1}^2=J_{z_2}^2=-\Id_{\mathbb R^4},\quad J_{z_1}J_{z_2}=-J_{z_2}J_{z_1}.
$$
We chose the following orthonormal basis in $\mathbb R^4$ constructed by
$$
v_1=e_1,\quad v_2=J_{z_2}J_{z_1}v_1,\quad v_3=J_{z_1}v_1,\quad v_4=J_{z_2}v_1.
$$
In the basis $\{v_1,v_2,v_3,v_4\}$ the matrices of the maps $J_{z_1},J_{z_2}$ take the following form:
$$
J_{z_1}=\begin{pmatrix}
0&0&-1&0
\\
0&0&0&-1
\\
1&0&0&0
\\
0&1&0&0
\end{pmatrix},
\qquad
J_{z_2}=\begin{pmatrix}
0&0&0&-1
\\
0&0&1&0
\\
0&-1&0&0
\\
1&0&0&0
\end{pmatrix}.
$$
Maps $J_{z_i}$ permute the basis of $\mathbb R^4$ by the rule:
$$
\begin{array}{lllllllllll}
& J_{z_1}v_1  =  v_3,\quad & J_{z_1}v_2  =  v_4,\quad  & J_{z_1}v_3  =-v_1,\quad & J_{z_1}v_4  =  -v_2,
\\
& J_{z_2}v_1  =  v_4,\quad & J_{z_2}v_2  =  -v_3,\quad & J_{z_2}v_3  =v_2,\quad & J_{z_2}v_4  =  -v_1.
\end{array}
$$
According to the equality $\langle[v_{\alpha},v_{\beta}],z_{i}\rangle_{2,0}=\langle J_{z_i}v_{\alpha},v_{\beta}\rangle_{4,0}$, we calculate the structural constants in
$[v_\alpha,v_{\beta}]=C^{1}_{\alpha\beta}z_1+C^{2}_{\alpha\beta}z_2$ as 
\begin{equation}\label{C20}
C^1=\begin{pmatrix}
0&0&1&0
\\
0&0&0&1
\\
-1&0&0&0
\\
0&-1&0&0
\end{pmatrix},
\qquad
C^{2}=\begin{pmatrix}
0&0&0&1
\\
0&0&-1&0
\\
0&1&0&0
\\
-1&0&0&0
\end{pmatrix}.
\end{equation}
We see that $C^{i}=-J_{z_i}$. This also follows from the choice of the orthonormal basis by
$$
C^i_{\alpha\beta}=\langle[v_{\alpha},v_{\beta}],z_{i}\rangle_{2,0}=
\langle J_{z_i}v_{\alpha},v_{\beta}\rangle_{4,0}=
v_{\alpha}^{\mathbf t}J_{z_i}^{\mathbf t}v_{\beta}=
(J_{z_i}^{\mathbf t})_{\alpha\beta}=-(J_{z_i})_{\alpha\beta}.
$$

%%%%%%%%%%%%
\medskip

{\sc The pseudo $H$-type Lie algebra $\n_{1,1}$.} The Lie algebra is constructed from the Clifford algebra $\Cl_{1,1}$, and therefore, the centre of $\n_{1,1}$ is isomorphic to $\mathbb R^{1,1}$ and the complement to the centre is isomorphic to $\mathbb R^{2,2}$ with the corresponding metric. We start from the basis $\{z_1,z_2\}$ of the centre and two skew-symmetric maps $J_{z_1}$, $J_{z_2}\in\so(2,2)$, satisfying 
$$
J_{z_1}^2=-\Id_{\mathbb R^{2,2}},\quad J_{z_2}^2=\Id_{\mathbb R^{2,2}},\quad J_{z_1}J_{z_2}=-J_{z_2}J_{z_1}.
$$
Choose the orthonormal basis in $\mathbb R^{2,2}$
$$
v_1=e_1,\quad v_2=J_{z_1}v_1,\quad v_3=J_{z_2}v_1,\quad v_4=J_{z_2}J_{z_1}v_1.
$$
The maps $J_{z_1},J_{z_2}$ take the form in the basis $\{v_1,v_2,v_3,v_4\}$:
$$
J_{z_1}=\begin{pmatrix}
0&-1&0&0
\\
1&0&0&0
\\
0&0&0&1
\\
0&0&-1&0
\end{pmatrix},
\qquad
J_{z_2}=\begin{pmatrix}
0&0&1&0
\\
0&0&0&1
\\
1&0&0&0
\\
0&1&0&0
\end{pmatrix},
$$
and they permute the basis vectors as
$$
\begin{array}{lllllllllll}
& J_{z_1}v_1  =  v_2,\quad & J_{z_1}v_2  =  -v_1,\quad  & J_{z_1}v_3  =-v_4,\quad & J_{z_1}v_4  =  v_3,
\\
& J_{z_2}v_1  =  v_3,\quad & J_{z_2}v_2  =  v_4,\quad & J_{z_2}v_3  =v_1,\quad & J_{z_2}v_4  =  v_2.
\end{array}
$$
We calculate the structural constants in
$[v_\alpha,v_{\beta}]=C^{1}_{\alpha\beta}z_1+C^{2}_{\alpha\beta}z_2$ according to the rule $\langle[v_{\alpha},v_{\beta}],z_{i}\rangle_{1,1}=\langle J_{z_i}v_{\alpha},v_{\beta}\rangle_{2,2}$ as 
\begin{equation}\label{C11}
C^1=\begin{pmatrix}
0&1&0&0
\\
-1&0&0&0
\\
0&0&0&1
\\
0&0&-1&0
\end{pmatrix},
\qquad
C^{2}=\begin{pmatrix}
0&0&1&0
\\
0&0&0&1
\\
-1&0&0&0
\\
0&-1&0&0
\end{pmatrix}.
\end{equation}
We see that $C^{1}=-\eta_{2,2} J_{z_1}$ and $C^2=\eta_{2,2} J_{z_2}$. They are also defined by the choice of the orthonormal basis as 
$$
\nu_i(1,1)C^i_{\alpha\beta}=\langle[v_{\alpha},v_{\beta}],z_{i}\rangle_{1,1}=\langle J_{z_i}v_{\alpha},v_{\beta}\rangle_{2,2}=-v_{\alpha}^{\mathbf t}\eta_{2,2} J_{z_i}v_{\beta}=-(\eta_{2,2} J_{z_i})_{\alpha\beta}.
$$
Recall the notation~\eqref{eq:nu} for $\nu_i(p,q)$.
%%%%%%%%

\medskip

{\sc The pseudo $H$-type Lie algebra $\n_{0,2}$.} This Lie algebra is related to the representation $J\colon \Cl_{0,2}\to \End(\mathbb R^{2,2})$. We start from an orthonormal basis $\{z_1,z_2\}$ of the centre isomorphic to $\mathbb R^{0,2}$ and skew-symmetric maps $J_{z_1}$, $J_{z_2}\in\so(2,2)$: 
$$
J_{z_1}^2=J_{z_2}^2=\Id_{\mathbb R^{2,2}},\quad J_{z_1}J_{z_2}=-J_{z_2}J_{z_1}.
$$
Choose the orthonormal basis for $\mathbb R^{2,2}$ as
$$
v_1=e_1,\quad v_2=J_{z_1}J_{z_2}v_1,\quad v_3=J_{z_1}v_1,\quad v_4=J_{z_2}v_1.
$$
The matrices of the maps $J_{z_1},J_{z_2}$ written in the basis $\{v_1,v_2,v_3,v_4\}$ are:
$$
J_{z_1}=\begin{pmatrix}
0&0&1&0
\\
0&0&0&1
\\
1&0&0&0
\\
0&1&0&0
\end{pmatrix},
\qquad
J_{z_2}=\begin{pmatrix}
0&0&0&1
\\
0&0&-1&0
\\
0&-1&0&0
\\
1&0&0&0
\end{pmatrix}
$$
and the permutation rule is
$$
\begin{array}{lllllllllll}
& J_{z_1}v_1  =  v_3,\quad & J_{z_1}v_2  =  v_4,\quad  & J_{z_1}v_3  =v_1,\quad & J_{z_1}v_4  =  v_2
\\
& J_{z_2}v_1  =  v_4,\quad & J_{z_2}v_2  =  -v_3,\quad & J_{z_2}v_3  =-v_2,\quad & J_{z_2}v_4  =  v_1.
\end{array}
$$
According to the relation $\langle[v_{\alpha},v_{\beta}],z_{i}\rangle_{0,2}=\langle J_{z_i}v_{\alpha},v_{\beta}\rangle_{2,2}$ we calculate the structure constants in
$[v_\alpha,v_{\beta}]=C^{1}_{\alpha\beta}z_1+C^{2}_{\alpha,\beta}z_2$ as 
$$
C^1=\begin{pmatrix}
0&0&1&0
\\
0&0&0&1
\\
-1&0&0&0
\\
0&-1&0&0
\end{pmatrix},
\qquad
C^{2}=\begin{pmatrix}
0&0&0&1
\\
0&0&-1&0
\\
0&1&0&0
\\
-1&0&0&0
\end{pmatrix}.
$$
We see that $C^{i}=\eta_{2,2} J_{z_i}$, or it can be found from
$$
\nu_i(0,2)C^i_{\alpha\beta}=\langle[v_{\alpha},v_{\beta}],z_{i}\rangle_{0,2}=-\langle v_{\alpha},J_{z_i}v_{\beta}\rangle_{2,2}=-v_{\alpha}^{\mathbf t}\eta_{2,2} J_{z_i}v_{\beta}=-(\eta_{2,2} J_{z_i})_{\alpha\beta}.
$$
Since $\nu_i(0,2)=-1$ for $i=1,2$, we obtain $C^{i}=\eta_{2,2} J_{z_i}$.

It follows from above that the pseudo $H$-type Lie algebras $\n_{2,0}$ and $\n_{0,2}$ coincide as Lie algebras. It can be interpreted as the following illustration to Theorem~\ref{th:isom}. The Lie algebra $\n_{2,0}$ is isomorphic to the standard metric Lie algebra $\mathcal G=\mathbb R^4\oplus\mathcal C$ with $\mathcal C=\spn\{C^1,C^2\}\subset\so(4)$ and with $C^1,C^2$ given by~\eqref{C20}. This standard metric Lie algebra is the $H$-type algebra because the skew-symmetric maps $J_{z_1}=-C^1$ and $J_{z_2}=-C^2$ satisfy the additional conditions $J^2_{z_i}=\Id_{\mathbb R^4}$ and $J_{z_1}J_{z_2}=-J_{z_2}J_{z_1}$. Let us check if the Lie algebra $\n_{2,0}$ can be isomorphic to the standard Lie algebra generated by other choices of $\mathbb R^{p,q}$, $p+q=4$.

\medskip

{\sc Cases $\mathbb R^{3,1}$ and $\mathbb R^{1,3}$.} We calculate the matrices
$$
D^1=C^1\eta_{3,1}=\begin{pmatrix}
0&0&1&0
\\
0&0&0&-1
\\
-1&0&0&0
\\
0&-1&0&0
\end{pmatrix},
\qquad
D^2=C_{2}\eta_{3,1}=\begin{pmatrix}
0&0&0&-1
\\
0&0&-1&0
\\
0&1&0&0
\\
-1&0&0&0
\end{pmatrix}.
$$

%%%%%%%

Since $\langle D^i,D^j\rangle_{\so(3,1)}=\tr(\eta_{3,1} (D^i)^{\mathbf t}\eta_{3,1} D^j)=0$, the subspace $\mathcal D=\spn\{D^1,D^2\}\subset \so(3,1)$ is degenerate, and actually, the scalar product $\langle .\,,.\rangle_{\so(3,1)}$ vanishes on $\mathcal D$, and therefore, the Lie algebra $n_{2,0}$ can not be realised as a standard pseudo-metric Lie algebra in $\mathbb R^{3,1}\oplus \mathcal D$. Recall, that the index of the space $\so(3,1)$ with respect to the trace metric $\langle .\,,. \rangle_{\so(3,1)}$ is $(3,3)$. The same calculations are valid for the case of $\mathbb R^{1,3}$, and we conclude that the Lie algebra $\n_{2,0}$ can  be realised as the standard pseudo-metric Lie algebra neither as $\mathbb R^{3,1}\oplus\mathcal D$, $\mathcal D\subset\so(3,1)$ nor as $\mathbb R^{1,3}\oplus\mathcal{\tilde D}$, $\mathcal{\tilde D}\subset\so(1,3)$.

\medskip

{\sc Case $\mathbb R^{2,2}$.} In this case we use $\eta_{2,2}$ and deduce the following matrices
$$
D^1=C^1\eta_{2,2}=\begin{pmatrix}
0&0&-1&0
\\
0&0&0&-1
\\
-1&0&0&0
\\
0&-1&0&0
\end{pmatrix},
\qquad
D^2=C_{2}\eta_{2,2}=\begin{pmatrix}
0&0&0&-1
\\
0&0&1&0
\\
0&1&0&0
\\
-1&0&0&0
\end{pmatrix},
$$
 from the matrices in~\eqref{C20}.
In this case $\langle D^1,D^1\rangle_{\so(2,2)}=-4$, $\langle D^2,D^2\rangle_{\so(2,2)}=-4$, and $\langle D^1,D^2\rangle_{\so(2,2)}=0$. The subspace $\mathcal D=\spn\{D^1,D^2\}\subset\so(2,2)$ is non-degenerate and has index $(r,s)=(0,2)$. Therefore, the Lie algebra $\mathfrak n_{2,0}$ can be realised as a standard metric Lie algebra $\mathbb R^{2,2}\oplus \mathcal D$, $\mathcal D\subset\so(2,2)$, and it gives the pseudo $H$-type Lie algebra $\mathfrak n_{0,2}$ constructed above. The last statement is valid due to the relations $J^2_{z_i}=\Id_{\mathbb R^{2,2}}$ and $J_{z_1}J_{z_2}=-J_{z_2}J_{z_1}$.

Now we turn to the Lie algebra $\mathfrak n_{1,1}$. The analogous calculations show that this Lie algebra can be realised in $\mathbb R^{4}\oplus\mathcal C$ with $\mathcal C=\spn\{C^1, C^2\}\subset\so(4)$, where $C^1, C^2$ are from~\eqref{C11}, but this is not an $H$-type Lie algebra (with a positive-definite scalar product), see Remark~\ref{rem:neutral}. The Lie algebra can be realised neither in $\so(3,1)$ nor in $\so(1,3)$, due to the degeneracy of the corresponding spaces $\mathcal D$. In the case $\so(2,2)$, the matrices 
$$
D^1=C^1\eta_{2,2}=\begin{pmatrix}
0&1&0&0
\\
-1&0&0&0
\\
0&0&0&-1
\\
0&0&1&0
\end{pmatrix},
\qquad
D^2=C_{2}\eta_{2,2}=\begin{pmatrix}
0&0&-1&0
\\
0&0&0&-1
\\
-1&0&0&0
\\
0&-1&0&0
\end{pmatrix}
$$
satisfying $\langle D^1,D^1\rangle_{\so(2,2)}=4$, $\langle D^2,D^2\rangle_{\so(2,2)}=-4$, and $\langle D^1,D^2\rangle_{\so(2,2)}=0$ span a two-dimensional non-degenerate space of index $(r,s)=(1,1)$ in $\so(2,2)$. Recall, that the index of the space $\so(2,2)$ is $(2,4)$. The standard metric Lie algebra $\mathbb R^{2,2}\oplus \mathcal D$, $\mathcal D\subset\so(2,2)$, in this case is the pseudo $H$-type Lie algebra $\n_{1,1}$.

Finally, we observe that $D^k=C^k\eta_{2,2}=-\eta_{2,2}\nu^k(1,1)J_{z_k}\eta_{2,2}$. Thus, we also have that $(D^1)^{\mathbf t}=-D^1$, $(D^2)^{\mathbf t}=D^2$ and $\mathcal D$ is closed under transposition.

%%%%%%%%%%%%%%%

\section{Isomorphism properties}\label{sec:isom}

%%%%%%%%%%%%%%%

%%%%%%%%%%%%%%%%%%%

\subsubsection{Isomorphism properties defined by skew-symmetric maps.}

%%%%%%%%%%%%%%%%%%%

Given a scalar product space $(V,\langle .\,,.\rangle_V)$ the space $\orth(V)$ of skew-symmetric maps has a scalar product defined by the trace. Let $J\colon U\to\orth(V,\langle .\,,.\rangle_V)$ be an injective
map, and let the space $J(U)$ be a non-degenerate subspace in $\orth(V,\langle .\,,.\rangle_V)$. Then, we
can pull back the trace metric from $\orth(V)$ to $U$. We write 
\begin{equation}\label{eq:tr_metric}
\langle z,z'\rangle_{U,c}=-c^2\tr(J_zJ_{z'}),\quad\text{for any}\quad z,z'\in U,
\end{equation}
and for any $c\neq 0$.
This scalar product has an index, which we denote by $(r,s)$, and it depends on the choice of the map $J\colon U\to\orth(V)$. The scalar product space $(U,\langle .\,,.\rangle_{U,c})$ is degenerate if $J(U)$ is degenerate with respect to the trace metric. Let us assume that $\big(U,\langle .\,,.\rangle_{U,c}\big)$ is a non-degenerate scalar product space, and let $[.\,,.]_c$ be the 2-step nilpotent Lie algebra structure on $\mathfrak G=V\oplus_{\perp} U$ defined by the map $J\colon U\to\orth(V)$ by means of~\eqref{eq:defJ}. The spaces $V$ and $U$ are orthogonal with respect to the scalar product $\langle .\,,.\rangle_{\g}=\langle .\,,.\rangle_V+\langle .\,,.\rangle_{U,c}$.

\begin{definition}\label{def:pseudo_induced}
The Lie algebra $\mathfrak G=(V\oplus_{\perp} U,[.\,,.]_c,\langle .\,,.\rangle_{\g}=\langle .\,,.\rangle_V+\langle .\,,.\rangle_{U,c})$ described above is called the standard pseudo-metric 2-step nilpotent Lie algebra {\bf induced by the map $J\colon U\to\orth(V,\langle .\,,.\rangle_V)$}.
\end{definition}

Diagonalising the matrix of the scalar product $\langle .\,,.\rangle_V$, we get the matrix $\eta=\diag(I_p,-I_q)$ defining the canonical scalar product $\langle u,v\rangle_{p,q}=\sum_{i=1}^pu_iv_i-\sum_{i=p+1}^{p+q}u_iv_i$ for $u=(u_1,\ldots, u_m)$, $v=(v_1,\ldots, v_m)$, $m=p+q$, and the matrix of the skew-symmetric map $J_z$ will satisfy the condition $\eta_{p,q} J_z^{\mathbf t}\eta_{p,q}=-J_z$. Since the trace does not depend on the choice of coordinates we get a symmetric bilinear form defining a scalar product on $U$, which also can be written as  $\langle z,z'\rangle_{U,c}=c^2\tr(\eta_{p,q} J_z^{\mathbf t}\eta_{p,q} J_{z'})=-c^2\tr(J_zJ_{z'})$. 

\begin{lemma}\label{lem:1}
In the notations above, if the scalar product $\langle z,z'\rangle_{U,c}$ is non-degenerate, then the standard pseudo-metric Lie algebra $\mathfrak G$ induced by $J$ has no abelian factor. If two scalar products $\langle .\,,.\rangle_V^1$ and $\langle .\,,.\rangle_V^2$ on $V$ have equal indices, and the sets of spacelike (timelike and null) vectors are the same, then the commutator $[.\,,.]_c$ does not depend on the choice of the scalar product $\langle .\,,.\rangle_V^i$ on~$V$, $i=1,2$.
\end{lemma}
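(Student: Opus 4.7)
My plan is to treat the two claims separately. For the first, I would invoke Corollary~\ref{cor:trivial_factor} to reduce the problem to the equality $[\mathfrak{G},\mathfrak{G}]=\mathcal{Z}(\mathfrak{G})$. Since $\mathfrak{G}$ is $2$-step nilpotent by construction, we automatically have $[\mathfrak{G},\mathfrak{G}]\subseteq U\subseteq\mathcal{Z}(\mathfrak{G})$, so only two sub-inclusions remain to check.

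The equality $[\mathfrak{G},\mathfrak{G}]=U$ is exactly what non-degeneracy of $\langle .\,,.\rangle_{U,c}$ supplies. Given $z_{0}\in U$ orthogonal to $[\mathfrak{G},\mathfrak{G}]$ with respect to $\langle .\,,.\rangle_{U,c}$, the defining relation~\eqref{eq:defJ} yields $\langle J_{z_{0}}v,w\rangle_{V}=\langle z_{0},[v,w]_{c}\rangle_{U,c}=0$ for all $v,w\in V$; non-degeneracy of $\langle .\,,.\rangle_{V}$ forces $J_{z_{0}}=0$, and the injectivity of $J$ (part of the standing hypothesis and equivalent to non-degeneracy of the trace form on $J(U)$) then gives $z_{0}=0$; non-degeneracy of $\langle .\,,.\rangle_{U,c}$ completes this step. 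The remaining inclusion $\mathcal{Z}(\mathfrak{G})\cap V=\{0\}$ is where I expect the main obstacle to lie: a vector $v\in V$ is central precisely when $J_{z}v=0$ for every $z\in U$, and mere injectivity of $J$ does not rule out a common kernel of the family $\{J_{z}\}_{z\in U}$ in $V$. My reading is that the convention ``Lie algebra induced by $J$'' in Definition~\ref{def:pseudo_induced} presupposes that $V$ has been restricted to the effective part of the representation, so that $\bigcap_{z\in U}\ker J_{z}=\{0\}$; under this reading the inclusion is immediate and Corollary~\ref{cor:trivial_factor} finishes the argument.

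For the second claim, the decisive observation is that the induced scalar product $\langle z,z'\rangle_{U,c}=-c^{2}\tr(J_{z}J_{z'})$ is a trace of a composition of linear endomorphisms of $V$ and is therefore intrinsic to $J$; in particular it takes identical values whether computed with $\langle .\,,.\rangle_{V}^{1}$ or with $\langle .\,,.\rangle_{V}^{2}$. Consequently both brackets $[.\,,.]^{1}_{c}$ and $[.\,,.]^{2}_{c}$ on $\mathfrak{G}$ are defined through~\eqref{eq:defJ} using one and the same non-degenerate scalar product on $U$, while by hypothesis each $\langle .\,,.\rangle_{V}^{i}$ is $W$-invariant in the sense of Definition~\ref{def:Winv} with $W=J(U)$. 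Combined with the equal-index and coincidence-of-cones assumptions, this places us precisely in the hypotheses of Lemma~\ref{lem:uniq1}; applying that lemma produces the Lie algebra isomorphism $\varphi\colon(\mathfrak{G},[.\,,.]^{2}_{c})\to(\mathfrak{G},[.\,,.]^{1}_{c})$ of the form~\eqref{eq:phi_isom}, which is the precise sense in which the commutator $[.\,,.]_{c}$ does not depend on the choice of scalar product on $V$.
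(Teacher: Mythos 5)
Your argument is correct and follows essentially the same route as the paper, whose entire proof of this lemma is a one-line appeal to Lemma~\ref{lem:uniq1} and Corollary~\ref{cor:trivial_factor}. The caveat you raise about a possible common kernel $\bigcap_{z\in U}\ker J_{z}\subset V$ is a genuine one: injectivity of $J$ together with non-degeneracy of $\langle\cdot\,,\cdot\rangle_{U,c}$ yields $[\mathfrak G,\mathfrak G]=U$ exactly as you argue, but does not by itself force $U$ to be the whole centre, and the paper's citation of Corollary~\ref{cor:trivial_factor} (which concerns the map $J$ defined on the \emph{centre}) silently makes the same effectiveness assumption that you make explicit; compare the remark following Definition~\ref{def:pseudo_standard}, where the condition ``for any $v\neq 0$ there is $z$ with $z(v)\neq 0$'' is stated but not carried into Definition~\ref{def:pseudo_induced}. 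Your treatment of the second claim --- observing that $\langle z,z'\rangle_{U,c}=-c^{2}\tr(J_{z}J_{z'})$ is intrinsic to $J$ and then applying Lemma~\ref{lem:uniq1} --- is precisely the paper's argument.
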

\begin{proof}
If the scalar product $\langle z,z'\rangle_{U,c}$ is non-degenerate and the map $J\colon U\to \orth(V)$ is injective, then the Lie algebra structure $(\mathfrak G,[.\,,.]_c)$ is unique up to an isomorphism and $\mathfrak G$ has the trivial abelian factor by Lemma~\ref{lem:uniq1}  and Corollary~\ref{cor:trivial_factor}.
\end{proof}

\begin{lemma}\label{lem:2}
Let $(V,\langle .\,,.\rangle_V)$ be a scalar product space, let $U_1$, $U_2$ be two finite dimensional vector spaces, and let $J_1\colon U_1\to\orth(V,\langle .\,,.\rangle_V)$, $J_2\colon U_2\to\orth(V,\langle .\,,.\rangle_V)$ be two injective skew-symmetric linear maps such that $J_1(U_1)=J_2(U_2)=W\subseteq\orth(V,\langle .\,,.\rangle_V)$. Let $\mathfrak G_1=(V\oplus U_1,[.\,,.]_1)$ and $\mathfrak G_2=(V\oplus U_2,[.\,,.]_2)$ be two pseudo-metric Lie algebras induced by the maps $J_1$ and $J_2$. Then $\mathfrak G_1$ and $\mathfrak G_2$ are isomorphic as Lie algebras. 
\end{lemma}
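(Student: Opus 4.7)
The plan is to construct the isomorphism explicitly from the fact that $J_1$ and $J_2$ share the same image $W$. Since both maps are injective linear maps onto $W$, the composition $\phi := J_2^{-1} \circ J_1 \colon U_1 \to U_2$ is a well-defined linear isomorphism characterised by the identity $J_2(\phi(z)) = J_1(z)$ for every $z \in U_1$. I would then define $\Phi \colon \mathfrak G_1 \to \mathfrak G_2$ by $\Phi|_V = \Id_V$ and $\Phi|_{U_1} = \phi$, and verify that $\Phi$ is a Lie algebra isomorphism.

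First I would check that $\phi$ is an isometry between $(U_1,\langle .\,,.\rangle_{U_1,c})$ and $(U_2,\langle .\,,.\rangle_{U_2,c})$. Using definition \eqref{eq:tr_metric} and $J_2 \circ \phi = J_1$ one obtains
\[
\langle \phi(z),\phi(z')\rangle_{U_2,c}
= -c^2\tr\bigl(J_2(\phi(z))\,J_2(\phi(z'))\bigr)
= -c^2\tr\bigl(J_1(z)\,J_1(z')\bigr)
= \langle z,z'\rangle_{U_1,c}.
\]
In particular, since $\langle .\,,.\rangle_{U_1,c}$ is assumed non-degenerate, so is $\langle .\,,.\rangle_{U_2,c}$, and $\phi$ is a linear bijection of non-degenerate scalar product spaces.

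Next I would verify the bracket-preserving property. Brackets with one factor in $U_i$ vanish, since $U_i$ is central in $\mathfrak G_i$, so the only non-trivial case is $v,w \in V$. For any $z \in U_1$, the defining relation \eqref{eq:defJ} for $[.\,,.]_1$ and $[.\,,.]_2$ gives
\[
\langle [\Phi(v),\Phi(w)]_2,\phi(z)\rangle_{U_2,c}
= \langle J_2(\phi(z))v,w\rangle_V
= \langle J_1(z)v,w\rangle_V
= \langle [v,w]_1,z\rangle_{U_1,c},
\]
and by the isometry property established above, the right-hand side equals $\langle \phi([v,w]_1),\phi(z)\rangle_{U_2,c} = \langle \Phi([v,w]_1),\phi(z)\rangle_{U_2,c}$. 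Since $\phi$ is surjective, $\phi(z)$ ranges over all of $U_2$, and the non-degeneracy of $\langle .\,,.\rangle_{U_2,c}$ forces $\Phi([v,w]_1) = [\Phi(v),\Phi(w)]_2$. This completes the argument.

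The proof is essentially mechanical once the correct candidate $\phi = J_2^{-1} \circ J_1$ is chosen; the only subtle point is that the identification on the central factor is not the identity but rather this induced isometry, which is why we must argue via the non-degeneracy of the induced trace metric on $U_2$ to extract equality from an inner product identity. The hypothesis that each $J_i$ is injective is precisely what makes $\phi$ well-defined, and the common image $W$ together with the pull-back definition of the metrics is what makes $\phi$ an isometry.
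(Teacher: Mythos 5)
Your proof is correct and follows essentially the same route as the paper: the paper first reduces to the case $U_2=W$, $J_2=\iota$, and then uses $J_1$ itself as the map on the central factor, which in the general case is exactly your $\phi=J_2^{-1}\circ J_1$; the verification via the isometry of the pulled-back trace metrics and the non-degeneracy argument is the same. No gaps.
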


\begin{proof}
It suffices to construct an isomorphism between the Lie algebras $\mathfrak G_1$ and $\mathfrak G_2$ only for the case, when $J_1(U_1)=W=U_2$ and $J_2=\iota\colon W\hookrightarrow\orth(V,\langle .\,,.\rangle_V)$ is the inclusion map. 

We define scalar products on $U_1$ and $U_2$ by
$$
\langle \zeta,\zeta'\rangle_{U_1}=-\tr(J_1(\zeta)J_1(\zeta')),\quad \zeta,\zeta'\in U_1,
$$
$$
\langle z,z'\rangle_{U_2}=-\tr\big(J_2(z)J_2(z')\big)=-\tr(zz'),\quad z,z'\in U_2=W\subseteq\orth(V,\langle .\,,.\rangle_V).
$$
Denote by $[.\,,.]_1$, $[.\,,.]_2$ the commutators constructed by means of these scalar products, respectively. Define the map $\varphi\colon V\oplus U_1\to V\oplus U_2=V\oplus W$ by 
$$
\varphi=\begin{cases}
\Id_V\quad&\text{on}\quad V,
\\
J_1\quad&\text{on}\quad U_1.
\end{cases}
$$
Then we need to show that $\varphi([v,w]^1)=[\varphi(v),\varphi(w)]^2$.
Let $v,w\in V$, $z\in W$ be arbitrarily chosen, and let $\zeta_0\in U_1$ be the unique element such that $J_1(\zeta_0)=z=J_2(z)$. Then,
\begin{eqnarray*}
\langle\varphi([v,w]^1),z\rangle_{U_2} & = &\langle J_1([v,w]^1),J_1(\zeta_0)\rangle_{U_2}=
-\tr(J_1([v,w]^1)J_1(\zeta_0))
\\
&=&
\langle [v,w]^1,\zeta_0\rangle_{U_1}=\langle J_1(\zeta_0)v,w\rangle_{V}
\\
&=&
\langle J_2(z)v,w\rangle_{V}=\langle [v,w]^2,z\rangle_{U_2}=\langle [\varphi(v),\varphi(w)]^2,z\rangle_{U_2},
\end{eqnarray*}
because $\varphi=\Id_V$. This finishes the proof because the scalar product is non-degenerate.
\end{proof}

%%%%%%%%%%%%%%%

\subsection{Action of $\GL(m)$ and $\mathfrak{gl}(m)$ on the Lie algebra $\so(p,q)$, $p+q=m$}

%%%%%%%%%%%%%%%
If we have two scalar product spaces $(U,(.\,,.)_U)$ and $(V,(.\,,.)_V)$ and an operator $A$ acts as $A\colon U\to V$,
we say that the formula $(A^Tx,y)_U= (x,Ay)_V$ defines the transpose $A^T$ to $A$ with respect to the scalar products $(.\,,.)_U$ and $(.\,,.)_V$. We call attention of the reader that the notation $A^{\mathbf t}$ is used for the transpose matrix $A$.

Let $\eta_{p,q}=\diag(I_p,-I_q)$, and let $A\in \GL(m)$. Define the action $\rho$ of $A$ on $\so(p,q)$ by
$$
Z\mapsto \rho(A)Z=AZA^{\eta_{p,q}},\quad\text{where}\quad A^{\eta_{p,q}}=\eta_{p,q} A^{\mathbf t}\eta_{p,q},\quad Z\in\so(p,q).
$$
Indeed, if $Z^{\eta_{p,q}}=-Z$, then $(AZA^{\eta_{p,q}})^{\eta_{p,q}}=AZ^{\eta_{p,q}}A^{\eta_{p,q}}=-AZA^{\eta_{p,q}}$. We remind that the operation $A^{\eta_{p,q}}$ gives us the transpose matrix to $A$ with respect to the scalar product $\langle.\,,.\rangle_{p,q}$. The action $\rho$ is a left action on $\so(p,q)$.
The map $\rho(A)$ is invertible and its inverse is given by $(\rho(A))^{-1}=\rho(A^{-1})$ which shows that $\rho(A)\in\Aut(\so(p,q))$. Thus, the map
$$
\rho\colon \GL(m)\to \Aut(\so(p,q))
$$ 
defines a group homomorphism.

The differential $d\rho$ of the map $\rho$ is the Lie algebra homomorphism
$$
d\rho\colon \mathfrak{gl}(m)\to\End(\so(p,q))
$$
defined by $\mathcal A\mapsto d\rho(\mathcal A)Z=\mathcal A Z+Z\mathcal A^{\eta_{p,q}}$, with $\mathcal A\in\mathfrak{gl}(m)$, $Z\in \so(p,q)$. 
Let us prove some properties of the maps $\rho$ and $d\rho$.

\begin{lemma}
Let $A\in \GL(m)$ and  $\mathcal A\in\mathfrak{gl}(m)$ be arbitrary  elements. Then
\begin{equation}\label{eq:transp}
\begin{array}{rrr}
 \langle \rho(A)Z,Z'\rangle_{\so(p,q)}& = & \langle Z,\rho(A^{\eta_{p,q}})Z'\rangle_{\so(p,q)},
\\
\langle d\rho(\mathcal A)Z,Z'\rangle_{\so(p,q)} & = & \langle Z,d\rho(\mathcal A^{\eta_{p,q}})Z'\rangle_{\so(p,q)}
\end{array}
\end{equation}
for any $Z,Z'\in\so(p,q)$. We can reformulate~\eqref{eq:transp} as
$$
\big(\rho(A)\big)^{T}=\rho(A^{\eta_{p,q}}),\quad \big(d\rho(\mathcal A)\big)^{T}=d\rho(\mathcal A^{\eta_{p,q}}),
$$
where the superscript $T$ stands for the  transpose map with respect to the scalar product  $\langle .\,,.\rangle_{\so(p,q)}$.
\end{lemma}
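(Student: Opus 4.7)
The plan is to verify both identities by direct computation using the trace-form definition $\langle Z, Z'\rangle_{\so(p,q)} = -\tr(ZZ')$, the cyclicity of the trace, and the involutive property $(A^{\eta_{p,q}})^{\eta_{p,q}} = A$ (respectively $(\mathcal A^{\eta_{p,q}})^{\eta_{p,q}} = \mathcal A$) which was already recorded in the preliminaries. There is no real obstacle; the lemma is essentially a bookkeeping exercise saying that the operator adjoints with respect to the trace form correspond under $\rho$ and $d\rho$ to the algebraic adjoint $A \mapsto A^{\eta_{p,q}}$.

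For the first identity, I expand
\[
\langle \rho(A)Z, Z'\rangle_{\so(p,q)} = -\tr\bigl(AZA^{\eta_{p,q}}Z'\bigr),
\]
and apply the cyclic property of the trace to obtain $-\tr\bigl(Z\,A^{\eta_{p,q}}Z'\,A\bigr)$. Writing $A = (A^{\eta_{p,q}})^{\eta_{p,q}}$ turns this into $-\tr\bigl(Z\,A^{\eta_{p,q}}Z'(A^{\eta_{p,q}})^{\eta_{p,q}}\bigr) = -\tr\bigl(Z \cdot \rho(A^{\eta_{p,q}})Z'\bigr) = \langle Z, \rho(A^{\eta_{p,q}})Z'\rangle_{\so(p,q)}$, as required.

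For the second identity, I expand the Leibniz-type derivative:
\[
\langle d\rho(\mathcal A)Z, Z'\rangle_{\so(p,q)} = -\tr\bigl((\mathcal A Z + Z\mathcal A^{\eta_{p,q}})Z'\bigr) = -\tr(\mathcal A Z Z') - \tr(Z\mathcal A^{\eta_{p,q}}Z').
\]
Cyclicity of the trace rewrites the first summand as $-\tr(Z Z' \mathcal A)$. On the other side, using $(\mathcal A^{\eta_{p,q}})^{\eta_{p,q}} = \mathcal A$ gives
\[
\langle Z, d\rho(\mathcal A^{\eta_{p,q}})Z'\rangle_{\so(p,q)} = -\tr\bigl(Z(\mathcal A^{\eta_{p,q}}Z' + Z'\mathcal A)\bigr) = -\tr(Z\mathcal A^{\eta_{p,q}}Z') - \tr(Z Z' \mathcal A),
\]
which matches the expression above. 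This finishes the proof, and the reformulation in terms of the transpose $T$ with respect to $\langle .\,,.\rangle_{\so(p,q)}$ is immediate from the definition of the transpose map. The one subtlety worth flagging in the writeup is simply that the trace form is non-degenerate on $\so(p,q)$, so the identity $\langle X, Z'\rangle = \langle Y, Z'\rangle$ for all $Z'$ does characterize the adjoint — but since the proof establishes the equality as bilinear forms rather than through uniqueness of adjoints, even this remark is optional.
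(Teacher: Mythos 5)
Your proof is correct and follows the same route as the paper: expand the trace form, use cyclicity of the trace, and identify the result via $(A^{\eta_{p,q}})^{\eta_{p,q}}=A$. The paper only writes out the first identity and dismisses the second with ``similarly,'' whereas you spell out both computations; otherwise the arguments are identical.
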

\begin{proof} We calculate
$$
\langle \rho(A)Z,Z'\rangle_{\so(p,q)}=-\tr(AZA^{\eta_{p,q}}Z')=-\tr(ZA^{\eta_{p,q}}Z'A)=\langle Z,\rho(A^{\eta_{p,q}})Z'\rangle_{\so(p,q)}
$$
by the property of the trace of the product. The other equality is obtained similarly. 
\end{proof}

\begin{lemma}\label{lem:free_isom}
All 2-step nilpotent free algebras $F_2(p,q)$ with $p+q=m$ are isomorphic.
\end{lemma}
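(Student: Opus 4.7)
The plan is to identify each $F_2(p,q)$ with the free 2-step nilpotent Lie algebra on $m=p+q$ generators, whose isomorphism class depends only on $m$. As a preliminary sanity check,
$$
\dim F_2(p,q) = \dim \mathbb{R}^{p,q} + \dim \so(p,q) = m + \tfrac{m(m-1)}{2},
$$
which is the same for every admissible split $(p,q)$.

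The key step is to show that the $\binom{m}{2}$ commutators $[e_i,e_j]_{F_2(p,q)}$, $1 \le i < j \le m$, form a basis of the centre $\so(p,q)$. From the explicit formula $[e_i,e_j]_{F_2(p,q)} = -\tfrac{1}{2}(E_{ij}-E_{ji})\eta_{p,q}$, the resulting matrix has the entry $-\tfrac{1}{2}\nu_j$ at position $(i,j)$ and $\tfrac{1}{2}\nu_i$ at position $(j,i)$ and vanishes elsewhere; since distinct pairs $(i,j)$ with $i<j$ produce matrices supported at disjoint positions, these $\binom{m}{2}$ elements are linearly independent in $\so(p,q)$, and a dimension count closes the argument. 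Together with $\{e_1,\ldots,e_m\}$ this yields an adapted basis of $F_2(p,q)$ whose bracket table, after identifying $[e_i,e_j]$ with the abstract symbol indexed by the pair $(i,j)$, does not depend on $(p,q)$.

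Finally, for any two splits $(p,q)$ and $(p',q')$ with $p+q=p'+q'=m$, I would define a linear map $\Phi\colon F_2(p,q)\to F_2(p',q')$ by
$$
\Phi(e_i)=e_i, \quad 1 \le i \le m, \qquad \Phi\bigl([e_i,e_j]_{F_2(p,q)}\bigr) = [e_i,e_j]_{F_2(p',q')}, \quad 1 \le i<j \le m,
$$
and extend linearly. Since both collections are bases, $\Phi$ is a linear isomorphism. By construction $\Phi$ preserves the brackets between any two generators $e_i,e_j$; all remaining brackets vanish on both sides because $\so(p,q)$ and $\so(p',q')$ are the centres of the respective Lie algebras. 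Bilinearity then promotes $\Phi$ to a Lie algebra isomorphism $F_2(p,q)\cong F_2(p',q')$. No serious obstacle is expected: the content of the lemma is really just that $F_2(p,q)$ is the free 2-step nilpotent Lie algebra on $m$ generators, and the only calculation that must actually be carried out is the linear independence check in the second step, which the explicit matrix form makes transparent.
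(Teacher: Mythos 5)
Your proposal is correct and follows essentially the same route as the paper: both arguments send generators to generators and the commutators $[e_i,e_j]=-\tfrac{1}{2}(E_{ij}-E_{ji})\eta_{p,q}$ to the corresponding commutators, using that these form a basis of the centre (the paper phrases this as an isomorphism with $F_2(m)$ via the bases $v_{ij}\mapsto\phi_{ij}$, which is the special case $(p',q')=(m,0)$ of your map). Your explicit disjoint-support argument for linear independence is a clean substitute for the paper's appeal to the standard basis of $\so(p,q)$.
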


\begin{proof}
To prove Lemma~\ref{lem:free_isom} we show that any 2-step nilpotent Lie algebra $F_2(p,q)=\mathbb R^{p,q}\oplus\so(p,q)$ with $p+q=m$ is isomorphic to $F_2(m)=\mathbb R^m\oplus\so(m)$. Recall the definition of the Lie bracket from Example~1 and formula~\eqref{eq:brack_R}.
Let 
$
v_{ij}=-\frac{1}{2}(E_{ij}-E_{ji})$, $i\leq j=1,\ldots,m$,
be a standard basis of the group $\so(m)$. Here $E_{ij}$ is $(m\times m)$-matrix having $1$ at the position $(ij)$ and $0$ everywhere else. Then the matrices 
$
\phi_{ij}=-\frac{1}{2}(E_{ij}-E_{ji})\eta_{p,q}$, $i\leq j=1,\ldots,m$, form a basis of the space $\so(p,q)$.
We define the isomorphism $ f\colon\so(m)\to\so(p,q)$ by $f(v_{ji})=\phi_{ji}$. 
Then we extend this isomorphism to the isomorphism $F_2(m)\to F_2(p,q)$ by
\begin{eqnarray*}
e_k \mapsto e_k, \qquad 
v_{ij} \mapsto \phi_{ij}, \quad \text{ for }\quad 0<k<m,\ 0<i\leq j\leq m=p+q.
\end{eqnarray*}
It follows that 
\begin{eqnarray*}
f([v_{jk}, e_i+v_{lr}])&=&0=[\phi_{jk} , e_i + \phi_{lr}]=[f(v_{jk}), f(e_i+v_{lr})], \\
f([e_i \,, e_j ])&=& f(v_{ij})=\phi_{ij}=-\frac{1}{2}(E_{ij} - E_{ji})\eta_{p,q}=[e_i, e_j]=[f(e_i), f(e_j)].
\end{eqnarray*}
Hence $f$ is a Lie algebra isomorphism.

At the end of the proof we observe that the orthogonal basis of $F_2(m)$ is mapped to the orthogonal basis of $F_2(p,q)$, $p+q=m$ under the isomorphism $f$. The equalities
$$
\langle E_{ij},E_{\alpha\beta}\rangle_{\so(m)} =-\tr(E_{ij}E_{\alpha\beta})=\delta_{i\alpha}\delta_{j\beta},
$$
show that the basis $v_{ij}-\frac{1}{2}(E_{ij}-E_{ji})$ is orthonormal with respect to the trace metric,
and the basis $\phi_{ij}=-\frac{1}{2}(E_{ij}-E_{ji})\eta_{p,q}$ of the space $\so(p,q)$ satisfies the relations
$$
\langle (\phi_{ij}),(\phi_{\alpha\beta})\rangle_{\so(p,q)} =-\tr\big(\phi_{ji}\phi_{\alpha\beta}\big)=\nu_{ij}\delta_{i\alpha}\delta_{j\beta},
$$
where 
$$
\nu_{ij}=\begin{cases}
1,\quad&\text{if}\quad i< j\leq p\ \text{or}\ i>p
\\
-1\quad&\text{if}\quad j>p\ \text{and}\ i\leq p.
\end{cases}
$$ 
\end{proof}

Lemma~\ref{lem:free_isom} allows us to reformulate some results proved in~\cite{Eber04,Eber03,Eber02} for the 2-step free Lie algebras $F_2(p,q)$. Let us denote by $\Aut(F_2(p,q))$ the group of automorphisms of $F_2(p,q)$. 

\begin{lemma}\label{lem:free_aut}
For any $\phi\in\Aut(F_2(p,q))$, there exists a unique element $A \in \GL(m)$, $m=p+q$ and $S \in \Hom(\mathbb{R}^{p,q} ,\so(p,q))$, such that
\begin{itemize}
\item[a)] $\phi(x)=Ax+S(x) \qquad \text{ for all } x \in \mathbb{R}^{p,q}$,
\item[b)]$ \phi(Z)=AZA^{\eta_{p,q}} \quad\qquad \text{ for all } Z \in \so(p,q).$
\end{itemize}
Conversely, given $(A,S) \in \GL(m) \times \Hom(\mathbb{R}^{p,q},\so(p,q))$, $m=p+q$, there is a unique automorphism $\phi \in \Aut(F_2(p,q))$ that satisfies $a)$ and $b)$. 
\end{lemma}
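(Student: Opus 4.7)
The strategy is to exploit that in the free $2$-step nilpotent Lie algebra $F_2(p,q)=\mathbb{R}^{p,q}\oplus\so(p,q)$, the subspace $\so(p,q)$ is simultaneously the centre and the commutator ideal $[F_2(p,q),F_2(p,q)]$. Any automorphism $\phi$ must preserve both characteristic subspaces, so $\phi\bigl(\so(p,q)\bigr)=\so(p,q)$. Consequently $\phi$ descends to a linear automorphism of the quotient $F_2(p,q)/\so(p,q)\cong\mathbb{R}^{p,q}$, which I will denote by $A\in\GL(m)$. Decomposing the value of $\phi$ on a vector $x\in\mathbb{R}^{p,q}$ according to the splitting $F_2(p,q)=\mathbb{R}^{p,q}\oplus\so(p,q)$ then yields $\phi(x)=Ax+S(x)$ with $S(x)\in\so(p,q)$; the linearity of $\phi$ forces $S\in\Hom(\mathbb{R}^{p,q},\so(p,q))$. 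This gives both the existence of $(A,S)$ and their uniqueness, since $A$ is forced to be the map induced on the quotient.

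Next I determine $\phi$ on $\so(p,q)$. Because every element $Z\in\so(p,q)$ can be written as a finite sum of brackets $[x_i,y_i]$ with $x_i,y_i\in\mathbb{R}^{p,q}$ (the matrices $(E_{ij}-E_{ji})\eta_{p,q}$ from formula~\eqref{eq:brack_R} span $\so(p,q)$), it suffices to compute $\phi([x,y])$. Using that $S(x),S(y)$ lie in the centre, one obtains
\[
\phi([x,y])=[\phi(x),\phi(y)]=[Ax+S(x),Ay+S(y)]=[Ax,Ay].
\]
Now I plug $Ax$ and $Ay$ into the bracket formula~\eqref{eq:brack_R} and rearrange:
\[
[Ax,Ay]=-\tfrac12\bigl(Ax(Ay)^{\mathbf t}-Ay(Ax)^{\mathbf t}\bigr)\eta_{p,q}=A\bigl(-\tfrac12(xy^{\mathbf t}-yx^{\mathbf t})\bigr)A^{\mathbf t}\eta_{p,q}.
\]
Inserting the identity $A^{\mathbf t}\eta_{p,q}=\eta_{p,q}A^{\eta_{p,q}}$ (which is just the definition $A^{\eta_{p,q}}=\eta_{p,q}A^{\mathbf t}\eta_{p,q}$ rewritten) produces $[Ax,Ay]=A[x,y]A^{\eta_{p,q}}$, which is precisely formula b) applied to $Z=[x,y]$. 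By linearity this extends to all $Z\in\so(p,q)$.

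For the converse, given $(A,S)\in\GL(m)\times\Hom(\mathbb{R}^{p,q},\so(p,q))$, define $\phi$ by a) and b). Linearity and bijectivity are immediate from $A\in\GL(m)$ together with the fact that $S(x)\in\so(p,q)$ only shifts $\phi$ by a central element. The only non-trivial Lie-bracket identity to check is $\phi([x,y])=[\phi(x),\phi(y)]$ for $x,y\in\mathbb{R}^{p,q}$, and this is exactly the computation of the preceding paragraph read backwards; brackets involving elements of $\so(p,q)$ vanish on both sides because $\so(p,q)$ is central. Uniqueness of the automorphism associated with a pair $(A,S)$ is clear since $A$ and $S$ determine $\phi$ on $\mathbb{R}^{p,q}$, and the bracket determines $\phi$ on $\so(p,q)=[F_2(p,q),F_2(p,q)]$.

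The main technical point — and the only step with real content — is the identity $[Ax,Ay]=A[x,y]A^{\eta_{p,q}}$; everything else is organisational. I expect it to follow cleanly from the bracket formula~\eqref{eq:brack_R} and the bookkeeping of $\eta_{p,q}$, but one has to be careful that the conjugation by $\eta_{p,q}$ on the right is what turns $A^{\mathbf t}$ into $A^{\eta_{p,q}}$, so that the image indeed lies in $\so(p,q)$ and matches the prescribed action $\rho(A)$ on the centre.
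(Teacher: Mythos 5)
Your proof is correct, but it takes a genuinely different route from the paper. The paper proves the lemma by transfer: it invokes the known analogue for the Euclidean free algebra $F_2(m)$ from Eberlein's work and conjugates by the isomorphism $f\colon F_2(m)\to F_2(p,q)$ of Lemma~\ref{lem:free_isom}, so that $\phi=f\circ\varphi\circ f^{-1}$ inherits the decomposition $(A,S')$ of $\varphi$ with $S=f\circ S'$. You instead give a direct, self-contained argument: $\so(p,q)$ is simultaneously the commutator ideal and the centre of $F_2(p,q)$, hence characteristic, so $\phi$ preserves it and induces $A\in\GL(m)$ on the quotient, with $S$ the central component of $\phi|_{\mathbb{R}^{p,q}}$; the action on the centre is then forced by the explicit bracket formula~\eqref{eq:brack_R}, and your computation $[Ax,Ay]=A\bigl(-\tfrac12(xy^{\mathbf t}-yx^{\mathbf t})\bigr)A^{\mathbf t}\eta_{p,q}=A[x,y]\eta_{p,q}A^{\mathbf t}\eta_{p,q}=A[x,y]A^{\eta_{p,q}}$ checks out (and the earlier observation in the paper that $(AZA^{\eta_{p,q}})^{\eta_{p,q}}=-AZA^{\eta_{p,q}}$ confirms the image stays in $\so(p,q)$). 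What each approach buys: the paper's proof is shorter and reuses the existing Euclidean result, at the cost of an external citation and the bookkeeping of the isomorphism $f$; yours avoids the reference entirely, makes the converse direction and the uniqueness statements transparent, and exhibits the centre action explicitly as the $\rho(A)$-action introduced in the same subsection. The only implicit hypothesis worth flagging in your version is $m\geq 2$, so that $\so(p,q)=[F_2(p,q),F_2(p,q)]$ is nontrivial and the decomposition is meaningful, which is the standing assumption throughout.
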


\begin{proof}
An analog of Lemma~\ref{lem:free_aut} for the free group $F_2(m)$ was proved in~\cite{Eber02}. Let $f$ be 
an isomorphism between $F_2(m)$ and $F_2(p,q)$, $m=p+q$, which exists by Lemma~\ref{lem:free_isom}. Then, for any $\varphi\in\Aut(F_2(m))$ the superposition
$\phi=f\circ \varphi \circ f^{-1}$ is an automorphism of $F_2(p,q)$. Thus, for every automorphism $\phi\in\Aut(F_2(p,q))$, there exists a unique $\varphi \in \Aut(F_2(m))$, $m=p+q$, with $\phi= f\circ \varphi \circ f^{-1}$, and moreover, a unique $A \in \GL(m)$, $S' \in \Hom(\mathbb{R}^{m} ,\so(m))$, such that the properties a) and b) are satisfied with $f \circ S'=S\in \Hom(\mathbb{R}^{p,q} ,\so(p,q))$.
The converse statement follows easily. 
\end{proof}

Let $\g$ be a $2$-step nilpotent Lie algebra with $\dim([\g,\g])=n$,  with $m$-dimensional complement $V$, 
and with the adapted basis $\{w_1, \dotso,w_{m},Z_1, \dotso, Z_n\}$, see Section~\ref{subseq:standard_metric}.
If $[w_i,w_j]=\sum_{k=1}^nC^k_{ij}Z_k$, then we call the space $\mathcal C=\spn\{C^1,\ldots,C^n\}\subset\so(m)$ the {\it structure space} and the spaces $\mathcal D_{p,q}=\spn \{C^1\eta_{p,q}, \dotso,C^{n} \eta_{p,q} \} \subset \so(p,q) $ are called the {\it structure $\eta_{p,q}$-spaces}.  In the following propositions, we aim at showing that the structure $\eta_{p,q}$-spaces of the $2$-step nilpotent Lie algebra $\g$ are orbits in the Grassmann manifold.

\begin{proposition}
Let $\{w_1, \dotso,w_{m},Z_1, \dotso, Z_n\}$ and $\{\hat w_1, \dotso,\hat w_{m},\hat Z_1, \dotso, \hat Z_n\}$ be two adapted bases of a 2-step nilpotent Lie algebra $\g$ with corresponding structure $\eta_{p,q}$-spaces $\mathcal D_{p,q}=\spn \{C^1 \eta_{p,q}, \dotso,C^{p} \eta_{p,q} \}$ and $\hat{\mathcal D}_{p,q}=\spn\{\hat C^1 \eta_{p,q}, \dotso,\hat C^{p} \eta_{p,q} \}$. Let $A \in \GL(m)$, $m=p+q$ be such that $\hat v_i=\sum_{j=1}^{m}{A_{ij}v_j}$. Then $A\mathcal D_{p,q}A^{\eta_{p,q}}=\hat{\mathcal D}_{p,q}$. 
\end{proposition}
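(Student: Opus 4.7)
The plan is to track how the structure constants transform under the basis change and then translate this into the stated orbit relation for the $\eta_{p,q}$-spaces. First, I would write the change of basis in both components: since both $\{w_i\}$ and $\{\hat w_i\}$ are complements to $[\g,\g]$ and $A$ relates them by $\hat w_i=\sum_j A_{ij}w_j$, while $\{Z_s\}$ and $\{\hat Z_s\}$ are bases of $[\g,\g]$, there is a unique $B\in\GL(n)$ with $\hat Z_s=\sum_t B_{st}Z_t$. (Actually, only $A$ is given; the matrix $B$ is determined by $A$ via the bracket, so I may define $B$ by the identity below.)

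Next I would compute $[\hat w_i,\hat w_j]$ in two ways:
\begin{equation*}
\sum_{s}\hat C^{s}_{ij}\,\hat Z_s = [\hat w_i,\hat w_j]=\sum_{k,l}A_{ik}A_{jl}[w_k,w_l]=\sum_{t}\Bigl(\sum_{k,l}A_{ik}C^{t}_{kl}A_{jl}\Bigr)Z_t.
\end{equation*}
Expanding $\hat Z_s=\sum_t B_{st}Z_t$ and comparing coefficients of each $Z_t$ (which is possible since the $Z_t$ are linearly independent) yields the matrix identity
\begin{equation*}
A\,C^{t}\,A^{\mathbf t}=\sum_{s}B_{st}\,\hat C^{s}\qquad\text{for every }t=1,\dots,n,
\end{equation*}
so in particular $A\mathcal{C}A^{\mathbf t}=\hat{\mathcal{C}}$ at the level of the structure spaces in $\so(m)$.

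The final step is to right-multiply the above identity by $\eta_{p,q}$ and insert $\eta_{p,q}^2=I_m$ to produce the $\rho$-action: using $A^{\eta_{p,q}}=\eta_{p,q}A^{\mathbf t}\eta_{p,q}$,
\begin{equation*}
A\,D^{t}\,A^{\eta_{p,q}}=A\,C^{t}\eta_{p,q}\,\eta_{p,q}A^{\mathbf t}\eta_{p,q}=A\,C^{t}A^{\mathbf t}\,\eta_{p,q}=\sum_{s}B_{st}\,\hat C^{s}\eta_{p,q}=\sum_{s}B_{st}\,\hat D^{s},
\end{equation*}
which shows $A\,D^{t}\,A^{\eta_{p,q}}\in\hat{\mathcal{D}}_{p,q}$ for every $t$, hence $A\mathcal{D}_{p,q}A^{\eta_{p,q}}\subseteq\hat{\mathcal{D}}_{p,q}$. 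Both spaces have dimension $n$ by Lemma~\ref{lem:lin_indep}, and $\rho(A)$ is an invertible linear map on $\so(p,q)$, so the inclusion is in fact an equality. No genuine obstacle is expected here; the only subtlety is simply making sure that $\eta_{p,q}^{2}=I_m$ is exploited to move the factor of $\eta_{p,q}$ out of the middle, which is precisely the reason the $\rho$-action (rather than the naive congruence $X\mapsto AXA^{\mathbf t}$) is the right one for $\so(p,q)$.
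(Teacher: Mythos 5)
Your proof is correct and is precisely the computation that the paper compresses into the one-line remark that the claim ``follows from the definition of the action of $\GL(m)$ on $\so(p,q)$'': you expand $[\hat w_i,\hat w_j]$ in both bases to get $AC^tA^{\mathbf t}=\sum_s B_{st}\hat C^s$, then insert $\eta_{p,q}^2=I_m$ to convert the congruence into the $\rho$-action $AD^tA^{\eta_{p,q}}=\sum_s B_{st}\hat D^s$, and close the argument with the dimension count from Lemma~\ref{lem:lin_indep}. No gap; your parenthetical worry about $B$ is harmless since $B$ is simply the (invertible) change of basis on $[\g,\g]$ and your final equality does not even need it.
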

\begin{proof}
The statement follows from the definition of the action of $\GL(m)$ on $\so(p,q)$.
\end{proof}

\begin{proposition}\label{eqg}
Let $d$ be an integer with $1 \leq d \leq \dim(\so(p,q))$. Let $W_1, W_2 \subset \so(p,q)$ be two $d$-dimensional non-degenerate with respect to $\la.\,,.\ra_{\so(p,q)}$ subspaces. Then, the following statements are equivalent:
\begin{itemize}
\item[1)] The Lie algebra $F_2(p,q)/ W_1$ is isomorphic to $F_2(p,q)/ W_2$; 
\item[2)] There exists an element $A\in \GL(m)$, $m=p+q$ such that $AW_1A^{\eta_{p,q}}=W_2$;
\item[3)] The Lie algebra $F_2(p,q)/ W_1^{\perp}$ is isomorphic to $F_2(p,q)/ W_2^{\perp}$.
\end{itemize}
\end{proposition}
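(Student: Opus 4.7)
\medskip

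\noindent\textbf{Proof plan for Proposition~\ref{eqg}.}

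The plan is to establish the cycle $1)\Leftrightarrow 2)$ directly from the structure of $\Aut(F_2(p,q))$ given in Lemma~\ref{lem:free_aut}, and then to derive $2)\Leftrightarrow 3)$ from $2)$ applied to the orthogonal complements, exploiting the adjoint identity $(\rho(A))^{T}=\rho(A^{\eta_{p,q}})$ proved in~\eqref{eq:transp}. Since $\la\cdot\,,\cdot\ra_{\so(p,q)}$ is non-degenerate and $W_i$ is non-degenerate, $W_i^{\perp}$ has dimension $\dim\so(p,q)-d$, is again non-degenerate, and satisfies $W_i^{\perp\perp}=W_i$, so applying $2)$ to $(W_1^{\perp},W_2^{\perp})$ is equivalent to applying it to $(W_1,W_2)$.

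For $2)\Rightarrow 1)$, if $AW_1A^{\eta_{p,q}}=W_2$, then the automorphism $\phi\in\Aut(F_2(p,q))$ associated to the pair $(A,0)\in\GL(m)\times\Hom(\mathbb R^{p,q},\so(p,q))$ via Lemma~\ref{lem:free_aut} restricts on $\so(p,q)$ to $\rho(A)$, hence carries the ideal $W_1$ onto the ideal $W_2$ and descends to an isomorphism of quotients. For $1)\Rightarrow 2)$, let $\psi\colon F_2(p,q)/W_1\to F_2(p,q)/W_2$ be an isomorphism. Choose the standard basis $e_1,\dots,e_m$ of $\mathbb R^{p,q}$ and pick lifts $f_i+z_i\in\mathbb R^{p,q}\oplus\so(p,q)$ of $\psi(\bar e_i)$. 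Because $\psi$ is an isomorphism and the centre of each quotient contains $\so(p,q)/W_i$, the classes $\bar f_i$ generate $\mathbb R^{p,q}$ modulo the centre, hence $\{f_1,\dots,f_m\}$ is a basis and the matrix $A$ with columns $f_i$ lies in $\GL(m)$. Defining $S\in\Hom(\mathbb R^{p,q},\so(p,q))$ by $S(e_i)=z_i$, Lemma~\ref{lem:free_aut} yields $\phi\in\Aut(F_2(p,q))$ with $\phi|_{\mathbb R^{p,q}}(x)=Ax+S(x)$ and $\phi|_{\so(p,q)}=\rho(A)$. By construction $\pi_2\circ\phi=\psi\circ\pi_1$, so $\phi(W_1)\subseteq W_2$, and an equal-dimension argument forces $\phi(W_1)=W_2$, i.e.\ $AW_1A^{\eta_{p,q}}=W_2$.

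For $2)\Leftrightarrow 3)$, it suffices to prove that $AW_1A^{\eta_{p,q}}=W_2$ for some $A\in\GL(m)$ if and only if $A'W_1^{\perp}(A')^{\eta_{p,q}}=W_2^{\perp}$ for some $A'\in\GL(m)$, since $3)$ is equivalent (by the already proved $1)\Leftrightarrow 2)$) to the existence of such an $A'$. Assume $\rho(A)W_1=W_2$ and take $Y\in W_2^{\perp}$. For every $X_0\in W_1$, using~\eqref{eq:transp},
\[
\la Y,\rho(A)X_0\ra_{\so(p,q)}=\la\rho(A^{\eta_{p,q}})Y,X_0\ra_{\so(p,q)}=0,
\]
so $\rho(A^{\eta_{p,q}})(W_2^{\perp})\subseteq W_1^{\perp}$, and a dimension count gives equality. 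Setting $A'=(A^{\eta_{p,q}})^{-1}$ one obtains $\rho(A')W_1^{\perp}=W_2^{\perp}$. The reverse implication is identical since $W_i^{\perp\perp}=W_i$.

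The mildly delicate step I expect to have to justify most carefully is the lift in $1)\Rightarrow 2)$: specifically, the verification that the matrix $A$ assembled from the lifts $f_i$ is actually invertible. That uses the fact that after modding out by the centre both quotients reduce to $\mathbb R^{p,q}$ and $\psi$ induces an isomorphism between these reductions, together with the non-degeneracy of $W_1,W_2$ that guarantees $\so(p,q)/W_i$ is genuinely the abelianisation data rather than collapsing part of $\mathbb R^{p,q}$; once this is in place, everything else reduces to bookkeeping with the explicit $\Aut(F_2(p,q))$ description of Lemma~\ref{lem:free_aut} and the adjoint formula~\eqref{eq:transp}.
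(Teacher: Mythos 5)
Your argument is correct, but it takes a genuinely different route from the paper's. The paper proves the proposition by transport to the Euclidean case: it sends each $W_i$ to $W_i\eta_{p,q}\subset\so(m)$, quotes Eberlein's versions of both equivalences for $F_2(m)$, and pulls the statements back along the explicit isomorphisms $f,f^{*}\colon F_2(m)\to F_2(p,q)$, the only substantive computation being $\eta_{p,q}(W_i\eta_{p,q})^{\perp}=W_i^{\perp}$. You instead work intrinsically in $F_2(p,q)$: for $1)\Leftrightarrow 2)$ you use the description of $\Aut(F_2(p,q))$ from Lemma~\ref{lem:free_aut} directly (descending the automorphism attached to $(A,0)$ in one direction, lifting an isomorphism of quotients to an automorphism in the other), and for $2)\Leftrightarrow 3)$ you use the adjoint identity $(\rho(A))^{T}=\rho(A^{\eta_{p,q}})$ from~\eqref{eq:transp} to get $\rho\big((A^{\eta_{p,q}})^{-1}\big)W_1^{\perp}=W_2^{\perp}$ and then apply the already-proved $1)\Leftrightarrow 2)$ to the complements, which are again non-degenerate of equal dimension. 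Your handling of $2)\Leftrightarrow 3)$ is arguably cleaner than the paper's, since it avoids the detour through $(W_i\eta_{p,q})^{\perp}$ in $\so(m)$; the cost is that you must reprove the Eberlein-level input $1)\Leftrightarrow 2)$ rather than cite it. One point to tighten in your $1)\Rightarrow 2)$: the invertibility of $A$ should be extracted from the commutator ideal, not the centre. An isomorphism $\psi$ carries commutator ideals onto each other, and the commutator ideal of $F_2(p,q)/W_i$ is exactly $\so(p,q)/W_i$, so $\psi$ induces an isomorphism of the abelianisations $\mathbb R^{p,q}\to\mathbb R^{p,q}$ sending each $\bar e_i$ to the class of $f_i$; that is what forces $A\in\GL(m)$. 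Arguing modulo the centre is not quite enough, because for large $d$ the centre of $F_2(p,q)/W_i$ may be strictly larger than $\so(p,q)/W_i$ (an element $v+W_i$ with $v\neq 0$ is central as soon as $[v,\mathbb R^{p,q}]\subset W_i$), and generating a proper quotient of $\mathbb R^{p,q}$ would not force the $f_i$ to be linearly independent. With that substitution the proof is complete.
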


\begin{proof} First we show that the statements 1) and 2) are equivalent.
Recall that for any pair $(p,q)$ with $p+q=m$ and $W_1, W_2 \subset \so(p,q)$ we have $W_1 \eta_{p,q}$, $W_2 \eta_{p,q}\in\so(m)$. The Lie algebras $F_2(m)/(W_1 \eta_{p,q})$ and $F_2(m)/(W_2 \eta_{p,q})$ are shown to be isomorphic~\cite{Eber02}, if and only if, there exists $A \in \GL(m)$, such that $A W_1 \eta_{p,q} A^{\mathbf t}=W_2 \eta_{p,q}$. The last equality can be written as $A W_1 A^{\eta_{p,q}} = W_2$. Let $f$ be an isomorphism between $F_2(m)$ and $F_2(p,q)$. Hence, $W_i  =f(W_i\eta_{p,q})$ and $F_2(p,q)/ W_i=f(F_2(m)/ (W_i \eta_{p,q}))$ for $i=1,2$. This implies that $F_2(m)/(W_1 \eta_{p,q})$ and $F_2(m)/(W_2 \eta_{p,q})$ are isomorphic, if and only if, $F_2(p,q)/ W_1$ is isomorphic to $F_2(p,q)/ W_2$.

Now we show that the statements 1) and 3) are equivalent. The arguments above illustrates that $F_2(p,q)/ W_1$ is isomorphic to $F_2(p,q)/ W_2$, if and only if, $F_2(m)/(W_1 \eta_{p,q})$ is isomorphic to $ F_2(m)/(W_2 \eta_{p,q})$. This is equivalent to the statement that $F_2(m)/ (W_1 \eta_{p,q})^{\perp}$ is isomorphic to $F_2(m)/ (W_2 \eta_{p,q})^{\perp}$ by~\cite{Eber02}. Define the map $f^* \colon F_2(m) \to F_2(p,q)$ by 
\begin{eqnarray*}
e_i \mapsto \begin{cases} e_i, & \text{ for } 1 \leq i \leq p, 
\\
-e_i, & \text{ for } p+1 \leq i \leq p+q, \end{cases} \qquad \frac{1}{2}(E_{ij}-E_{ji}) \mapsto \frac{1}{2}(E_{ij}-E_{ji})\eta_{p,q}. 
\end{eqnarray*}
Then $F_2(m)/ (W_1 \eta_{p,q})^{\perp}$ is isomorphic to the quotient $F_2(m)/ (W_2 \eta_{p,q})^{\perp}$, if and only if, $F_2(p,q)/ \eta_{p,q} (W_1 \eta_{p,q})^{\perp}$ is isomorphic to $F_2(p,q)/ \eta_{p,q} (W_2 \eta_{p,q})^{\perp}$. 

It only remains to prove that $W_i$, $i=1,2$, is orthogonal to $\eta_{p,q}(W_i \eta_{p,q})^{\perp}$ with respect to the metric $\la . \,,.\ra_{\so(p,q)}$. For any $w \in W_i$ and any $v \in (W_i \eta_{p,q})^{\perp}$ it follows that
\begin{eqnarray*}
\la w, \eta_{p,q} v \ra_{\so(p,q)}= - \tr(w\eta_{p,q} v)= \la w \eta_{p,q}, v \ra_{\so(m)}=0,
\end{eqnarray*} 
as $w \eta_{p,q} \in W_i \eta_{p,q}$ and $v\in(W_i \eta_{p,q})^{\perp}$. Since $\dim(\eta_{p,q}(W_i \eta_{p,q})^{\perp})=\dim(\so(p,q))-\dim(W_i)$ and $W_i$ non-degenerate, it follows that $ \eta_{p,q} (W_i \eta_{p,q})^{\perp} = W_i^{\perp}$.
\end{proof}

\begin{proposition}\label{structurespace}
Let $w_1, \dotso, w_{m},Z_1, \dotso,Z_n$ be an adapted basis for a $2$-step nilpotent Lie algebra $\g$ with the structure space $\mathcal C=\spn\{C^1, \dotso,C^n\}\subset \so(m)$. 

Let $\rho \colon F_2(p,q) \to \g$, $p+q=m$, be a unique Lie algebra homomorphism defined by $\rho(e_i)=w_i$ for $i=1, \dotso,m$. Then $\rho$ is surjective and if $\mathcal C\eta_{p,q} \subset \so(p,q)$ is non-degenerate, then $\ker(\rho)$ is the orthogonal complement $(\mathcal C\eta_{p,q})^{\perp}$ to $\mathcal C \eta_{p,q}$ in $\so(p,q)$ with respect to $\la .\,,. \ra_{\so(p,q)}$.
\end{proposition}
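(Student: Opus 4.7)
The plan proceeds in three steps: surjectivity, reduction to $\ker\rho\subseteq\so(p,q)$, and identification of the kernel by a dimension count combined with an explicit duality pairing.

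Surjectivity will follow immediately because $\rho$ is a Lie algebra homomorphism and therefore its image contains both spanning sets of $\g$: the images $\rho(e_i)=w_i$ form a basis of the chosen complement of $[\g,\g]$, while the commutators
\[
\rho\bigl([e_i,e_j]_{F_2(p,q)}\bigr)=[w_i,w_j]_\g=\sum_{k=1}^{n} C^k_{ij}Z_k
\]
span $[\g,\g]$ as $(i,j)$ varies.

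For the inclusion $\ker\rho\subseteq\so(p,q)$, decompose $F_2(p,q)=\mathbb R^{p,q}\oplus\so(p,q)$. Any $v+X$ in the kernel with $v\in\mathbb R^{p,q}$ and $X\in\so(p,q)=[F_2(p,q),F_2(p,q)]$ satisfies
\[
\rho(v+X)=\sum_i v_iw_i+\rho(X),
\]
where the first summand lies in the fixed complement of $[\g,\g]$ and $\rho(X)\in[\g,\g]$ because a Lie algebra homomorphism preserves derived subalgebras. Vanishing of each component forces $v=0$. Surjectivity plus rank–nullity then gives $\dim\ker\rho=\dim\so(p,q)-n$, which matches $\dim(\mathcal C\eta_{p,q})^\perp$ once Lemma~\ref{lem:lin_indep} is invoked to see that $\{C^k\eta_{p,q}\}_{k=1}^n$ is linearly independent, so that $\mathcal C\eta_{p,q}$ is an $n$-dimensional subspace of $\so(p,q)$ which is non-degenerate by hypothesis.

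To upgrade the dimension match to set-theoretic equality, it suffices to verify one inclusion. Expand $X\in\so(p,q)$ in the basis $\phi_{ij}=[e_i,e_j]_{F_2(p,q)}=-\tfrac12(E_{ij}-E_{ji})\eta_{p,q}$, $i<j$, as $X=\sum_{i<j}x_{ij}\phi_{ij}$; the homomorphism property gives
\[
\rho(X)=\sum_{k=1}^{n}\Bigl(\sum_{i<j}x_{ij}C^k_{ij}\Bigr)Z_k,
\]
so $X\in\ker\rho$ if and only if $\sum_{i<j}x_{ij}C^k_{ij}=0$ for every $k$. Using the free-algebra identity $\la[w,v]_{F_2(p,q)},Z\ra_{\so(p,q)}=\la Zw,v\ra_{p,q}$ from the example on the free standard pseudo-metric Lie algebra, and the non-degeneracy of $\mathcal C\eta_{p,q}$, choose a basis $\{Y_l\}\subset\mathcal C\eta_{p,q}$ dual to $\{C^k\eta_{p,q}\}_k$ with respect to $\la\cdot,\cdot\ra_{\so(p,q)}$; the orthogonality conditions $\la X,Y_l\ra_{\so(p,q)}=0$ then translate, after expanding in the $\phi_{ij}$'s, into the same linear system on $(x_{ij})$ that cuts out $\ker\rho$. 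The step I expect to be the main obstacle is the precise bookkeeping of the signs $\nu_i\nu_j$ that two-sided multiplication by $\eta_{p,q}$ injects into the trace pairing when one pairs $\phi_{ij}$ directly with $C^k\eta_{p,q}$; the role of the dual basis $\{Y_l\}$ is precisely to absorb those signs, in direct parallel with the dual vectors $\{\rho_k\}$ used in the proof of Theorem~\ref{th:isom}.
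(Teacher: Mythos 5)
Your surjectivity argument, the reduction of the kernel to $\so(p,q)$, and the dimension count are all correct, and they give a more self-contained route than the paper, which simply quotes the Euclidean statement for $\rho_1\colon F_2(m)\to\g$ from Eberlein and transports it through the linear map $f^*$ of Proposition~\ref{eqg}. Your identity
$X=\sum_{i<j}x_{ij}\phi_{ij}\in\ker\rho$ if and only if $\sum_{i<j}x_{ij}C^k_{ij}=0$ for all $k$ is also correct.

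The gap is the final identification, and it is not mere sign bookkeeping. Pairing directly via the free-algebra identity, with $\phi_{ij}=[e_i,e_j]_{F_2(p,q)}$ one gets
\[
\la\phi_{ij},C^k\eta_{p,q}\ra_{\so(p,q)}=\la C^k\eta_{p,q}e_i,e_j\ra_{p,q}
=e_i^{\mathbf t}\eta_{p,q}(C^k)^{\mathbf t}\eta_{p,q}e_j=-\nu_i\nu_j\,C^k_{ij},
\]
so $X\perp\mathcal C\eta_{p,q}$ is the system $\sum_{i<j}\nu_i\nu_j x_{ij}C^k_{ij}=0$, which differs from the system cutting out $\ker\rho$ by the weights $\nu_i\nu_j$. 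Your proposed fix --- replacing $\{C^k\eta_{p,q}\}$ by a dual basis $\{Y_l\}$ of $\mathcal C\eta_{p,q}$ --- cannot absorb these signs: for \emph{any} basis of $\mathcal C\eta_{p,q}$ the conditions $\la X,Y_l\ra_{\so(p,q)}=0$ cut out exactly $(\mathcal C\eta_{p,q})^{\perp}$, which is basis-independent. What your computation actually yields is $\ker\rho=(\eta_{p,q}\mathcal C)^{\perp}$, since $\la\phi_{ij},\eta_{p,q}C^k\ra_{\so(p,q)}=-C^k_{ij}$. The two candidate answers are exchanged by the involution $\theta(X)=\eta_{p,q}X\eta_{p,q}$ and coincide precisely when $\theta(\mathcal C)=\mathcal C$ (equivalently, when $\mathcal C\eta_{p,q}$ is closed under transposition, as in the $\n_{1,1}$ example); in general they are distinct subspaces of the same dimension. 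To close the argument you must either prove the statement for $\eta_{p,q}\mathcal C$ and reconcile it with the formulation, or impose the $\theta$-invariance of $\mathcal C$; note that the same left-versus-right ambiguity is already latent in the paper, which writes $\mathcal C\eta_{p,q}$ when defining the structure $\eta_{p,q}$-spaces but $D^k=\eta_{p,q}C^k$ in Theorem~\ref{th:isom}.
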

\begin{proof}
It is known that the Lie algebra homomorphism $\rho_1 \colon F_2(m) \to \g$ with $\rho_1(e_i)=w_i$ for  $i=1, \dotso,m$ is surjective and  $\ker(\rho_1)$ is the orthogonal complement to $\mathcal C$ in $\so(m)$ with respect to $\la .\,,.\ra_{\so(m)}$, see for instance~\cite{Eber02}. Then, we define the surjective linear map $\rho= \rho_1 \circ (f^*)^{-1} \colon F_2(p,q) \to \g$ with $f^*$ to be the isomorphism between $F_2(m)$ and $F_2(p,q)$ from the proof of Proposition~\ref{eqg}. 
Proposition~\ref{eqg} also shows that if $\mathcal C\eta_{p,q}$ is non-degenerate in $\so(p,q)$, then $(\mathcal C\eta_{p,q})^{\perp}=\eta_{p,q}( \mathcal C^{\perp})$. Since
$$(f^*)^{-1}((\mathcal C\eta_{p,q})^\perp)=(f^*)^{-1}(\eta_{p,q} (\mathcal C^{\perp}))=\eta_{p,q}^2 \mathcal C^{\perp}=\mathcal C^{\perp}=\ker(\rho_1),$$ 
it follows that $\ker(\rho)=(\mathcal C\eta_{p,q})^\perp$.
\end{proof}

\begin{corollary}\label{iso1}
Let $W_1$ and $W_2$ be non-degenerate $d$-dimensional subspaces of $\so(p,q)$, and let $\mathcal G_1=\mathbb{R}^{p,q} \oplus W_1$ and $\mathcal G_2=\mathbb{R}^{p,q} \oplus W_2$ be the corresponding standard pseudo-metric $2$-step nilpotent Lie algebras, then the following statements are equivalent.
\begin{itemize}
\item The Lie algebra $\mathcal G_1$ is isomorphic to $\mathcal G_2$.  
\item There exists $A \in \GL(m)$, such that $AW_1A^{\eta_{p,q}}=W_2$, $p+q=m$.
\end{itemize}
\end{corollary}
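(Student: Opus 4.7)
The plan is to identify each standard pseudo-metric Lie algebra $\mathcal G_i = \mathbb R^{p,q}\oplus W_i$ with a quotient of the free Lie algebra $F_2(p,q)$ and then invoke Proposition~\ref{eqg}. Concretely, I want to construct a surjective Lie algebra homomorphism $\rho_i \colon F_2(p,q)\to \mathcal G_i$ whose kernel is exactly $W_i^{\perp}$, so that $\mathcal G_i\cong F_2(p,q)/W_i^{\perp}$; once this is in hand, the equivalence $(2)\Leftrightarrow(3)$ in Proposition~\ref{eqg} finishes the proof.

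To build $\rho_i$, let $P_i\colon \so(p,q)\to W_i$ denote the orthogonal projection with respect to $\langle\cdot\,,\cdot\rangle_{\so(p,q)}$; this is well defined because $W_i$ is non-degenerate, so that $\so(p,q)=W_i\oplus_{\perp}W_i^{\perp}$. Define $\rho_i$ to be the identity on $\mathbb R^{p,q}$ and $P_i$ on the centre $\so(p,q)$ of $F_2(p,q)$. Surjectivity and the fact that $\ker\rho_i = W_i^{\perp}$ are then immediate. The content is checking that $\rho_i$ preserves brackets: given $v,w\in\mathbb R^{p,q}$, the element $X:=[v,w]_{F_2(p,q)}\in\so(p,q)$ is characterised by $\langle X,Z\rangle_{\so(p,q)}=\langle Zv,w\rangle_{p,q}$ for all $Z\in\so(p,q)$, while $Y:=[v,w]_{\mathcal G_i}\in W_i$ is characterised by $\langle Y,Z\rangle_{\so(p,q)}=\langle Zv,w\rangle_{p,q}$ for all $Z\in W_i$, by~\eqref{eq:standard}. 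Restricting the defining equation of $X$ to $Z\in W_i$ and using $\langle X,Z\rangle_{\so(p,q)}=\langle P_i(X),Z\rangle_{\so(p,q)}$ for such $Z$, we conclude $Y=P_i(X)$, hence $\rho_i([v,w]_{F_2(p,q)})=[\rho_i(v),\rho_i(w)]_{\mathcal G_i}$. On the remaining components the brackets vanish on both sides, so $\rho_i$ is a Lie algebra homomorphism and $\mathcal G_i\cong F_2(p,q)/W_i^{\perp}$.

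With the two identifications $\mathcal G_1\cong F_2(p,q)/W_1^{\perp}$ and $\mathcal G_2\cong F_2(p,q)/W_2^{\perp}$ established, the corollary follows directly from the equivalence $(2)\Leftrightarrow(3)$ of Proposition~\ref{eqg}: the existence of $A\in\GL(m)$ with $AW_1A^{\eta_{p,q}}=W_2$ is equivalent to $F_2(p,q)/W_1^{\perp}\cong F_2(p,q)/W_2^{\perp}$, which is equivalent to $\mathcal G_1\cong \mathcal G_2$ as Lie algebras.

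The only non-routine step is verifying that the projection $\rho_i$ is a Lie algebra homomorphism; this is where the non-degeneracy of $W_i$ (giving a well-defined orthogonal projection) and the definition~\eqref{eq:standard} of the bracket on a standard pseudo-metric algebra are both essential. Everything else is bookkeeping built on top of Proposition~\ref{eqg}.
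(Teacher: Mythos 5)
Your proof is correct and follows essentially the same route as the paper: identify each $\mathcal G_i$ with $F_2(p,q)/W_i^{\perp}$ and then apply the equivalence $(2)\Leftrightarrow(3)$ of Proposition~\ref{eqg}. The only difference is that the paper obtains the identification $\mathcal G_i\cong F_2(p,q)/W_i^{\perp}$ by citing Proposition~\ref{structurespace}, whereas you verify it directly by exhibiting the orthogonal projection onto $W_i$ as a surjective Lie algebra homomorphism with kernel $W_i^{\perp}$ --- a clean, self-contained check of the same fact.
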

\begin{proof}
The Lie algebras $\mathcal G_i$ are isomorphic to $F_2(p,q)/ W_i^{\perp}$ for $i=1,2$ by Proposition~\ref{structurespace}. The statement of the corollary follows now by using Proposition~\ref{eqg}.
\end{proof}
 
Assume that $\g$ is a $2$-step nilpotent Lie algebra with a $1$-dimensional commutator ideal $[\g,\g]$, and assume that there exist positive integers $p,q$ and a non-degenerate one-dimensional subspace $W$ in $\so(p,q)$, such that $\g$ is isomorphic to $\mathbb{R}^{p,q} \oplus W$ with $m=p+q \geq 2$. Let us define the set $\mathcal{A}_{p,q}=\{ Z \in \so(p,q) \vert \text{ rank Z is maximal} \}$. 

\begin{corollary}
The group $O(m)$ acts transitively by $\eta_{p,q}$-conjugation on $\mathcal{A}_{p,q}$, where $m=p+q$.
\end{corollary}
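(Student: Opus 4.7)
The plan is to reduce the claim to Corollary~\ref{iso1}. For any $Z\in\mathcal A_{p,q}$, the line $W_Z=\spn\{Z\}\subset\so(p,q)$ is non-degenerate with respect to $\langle.\,,.\rangle_{\so(p,q)}$, since for a maximal-rank $\eta_{p,q}$-skew matrix the invariant $\tr(Z^2)$ does not vanish. The associated standard pseudo-metric $2$-step nilpotent Lie algebra $\mathcal G_Z=\mathbb R^{p,q}\oplus W_Z$ is thus well defined and has a one-dimensional commutator ideal generated by an element whose induced skew bracket form has maximal rank. Over $\mathbb R$, such Lie algebras of a given total dimension are classified up to isomorphism (they are Heisenberg-type, possibly direct-summed with a one-dimensional abelian ideal when $m=p+q$ is odd), and the class depends only on $m$. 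Consequently $\mathcal G_{Z_1}\cong\mathcal G_{Z_2}$, and Corollary~\ref{iso1} provides some $A\in\GL(m)$ with $A W_{Z_1} A^{\eta_{p,q}}=W_{Z_2}$, equivalently $AZ_1 A^{\eta_{p,q}}=cZ_2$ for some nonzero $c\in\mathbb R$.

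To promote $A$ from $\GL(m)$ to $O(m)$ I would use the polar decomposition $A=UP$, with $U\in O(m)$ and $P$ symmetric positive-definite. Under the linear isomorphism $\so(p,q)\to\so(m)$, $Z\mapsto Z\eta_{p,q}$, the $\eta_{p,q}$-conjugation action of $\GL(m)$ on $\so(p,q)$ is intertwined with ordinary $\GL(m)$-conjugation on $\so(m)$ (since $A^{\eta_{p,q}}\eta_{p,q}=\eta_{p,q}A^{\mathbf t}$ and $U^{\mathbf t}=U^{-1}$ for $U\in O(m)$). The claim therefore translates into a transitivity question for $O(m)$-conjugation on lines in $\so(m)$ spanned by maximal-rank elements, which can be resolved by invoking the classical real canonical form of skew-symmetric matrices together with the one-dimensional scaling freedom inside the target line $W_{Z_2}$.

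The main obstacle is showing that the symmetric factor $P$ of the polar decomposition stabilizes the line $W_{Z_1}$ under $\eta_{p,q}$-conjugation, i.e., $P\spn\{Z_1\}P^{\eta_{p,q}}=\spn\{Z_1\}$. I would attack this by analyzing the centralizer of a canonical representative of $Z_1\eta_{p,q}\in\so(m)$ in $\GL(m)$: using the block-diagonal canonical form of a maximal-rank skew-symmetric matrix, this centralizer decomposes as a product of an orthogonal part and a commuting scaling piece, and so the symmetric positive-definite factor of any stabilizing element must act on the line $\spn\{Z_1\eta_{p,q}\}$ by a positive scalar. Once this is established, the scalar is absorbed into the one-dimensional scaling freedom inside $W_{Z_2}$, yielding a genuine $U\in O(m)$ with $U Z_1 U^{\eta_{p,q}}=Z_2$ and completing the transitivity proof.
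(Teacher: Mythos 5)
Your reduction breaks at the very first step. You assert that for a maximal-rank $Z\in\so(p,q)$ the line $W_Z=\spn\{Z\}$ is automatically non-degenerate because $\tr(Z^2)\neq 0$. This is false in indefinite signature, and the paper itself supplies a counterexample: the matrix $D^1=C^1\eta_{3,1}\in\so(3,1)$ appearing in the discussion of $\n_{2,0}$ is invertible (hence of maximal rank, so $D^1\in\mathcal A_{3,1}$), yet $\langle D^1,D^1\rangle_{\so(3,1)}=-\tr\big((D^1)^2\big)=0$, i.e.\ the line it spans is totally null. For such $Z$ the algebra $\mathcal G_Z=\mathbb R^{p,q}\oplus W_Z$ is not even defined in the paper's framework (the bracket~\eqref{eq:standard} requires the trace form to be non-degenerate on $W_Z$), and Corollary~\ref{iso1}, whose hypotheses demand non-degenerate subspaces, cannot be invoked. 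So the proposed reduction does not cover all of $\mathcal A_{p,q}$.

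The remaining steps are also not sound where they do apply. Corollary~\ref{iso1} yields some $A\in\GL(m)$ with $AZ_1A^{\eta_{p,q}}=cZ_2$, and your plan to pass to the orthogonal polar factor needs the positive-definite factor $P$ to stabilize $\spn\{Z_1\}$ under $\eta_{p,q}$-conjugation; but $A$ is not a stabilizing element (it carries $W_{Z_1}$ onto $W_{Z_2}$), so the centralizer analysis you sketch does not attach to it, and showing that the coset $\{A\in\GL(m):AW_{Z_1}A^{\eta_{p,q}}=W_{Z_2}\}$ meets $O(m)$ is essentially the statement to be proved. Moreover, $O(m)$-conjugation preserves eigenvalues, so the single scaling parameter inside the target line cannot absorb the $\lfloor m/2\rfloor$ spectral invariants of the real canonical form once $m\geq 4$; the concluding appeal to that canonical form cannot close the argument as written. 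The paper's own proof avoids all of this machinery: it cites Eberlein for the corresponding transitivity statement on $\mathcal A_m\subset\so(m)$, notes the bijection $\mathcal A_m\eta_{p,q}=\mathcal A_{p,q}$, and transfers the action through the identity $Z\eta_{p,q}=A\,(Y\eta_{p,q})\,A^{\eta_{p,q}}$, valid for $A\in O(m)$ with $Z=AYA^{\mathbf t}$ since $\eta_{p,q}^2=I_m$. No Lie-algebra isomorphisms or polar decompositions enter.
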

 
\begin{proof}
We define the set $\mathcal{A}_m=\{ Z \in \so(m) \vert \text{ rank Z is maximal} \}$ which is Zariski open in $\so(m)$. The group $O(m)$ acts transitively on it by conjugation, see~\cite{Eber02}. Notice that $\mathcal{A}_m \eta_{p,q}=\mathcal{A}_{p,q}$. For every $Z,Y\in \mathcal{A}_m$ there exists an $A\in O(m)$, such that $Z=AYA^{-1}=AYA^{\mathbf t}$. Then, 
$$Z \eta_{p,q}=A Y \eta_{p,q}^2A^{-1} \eta_{p,q}=A Y \eta_{p,q}^2A^{t} \eta_{p,q}=A Y \eta_{p,q} A^{\eta_{p,q}}$$ with $Z\eta_{p,q}, Y\eta_{p,q} \in \mathcal{A}_{p,q}$. This finishes the proof.
\end{proof}

%%%%%%%%%%%%%%%%%%%

\section{Lie triple system as a rational subspace}\label{sec:lts}

%%%%%%%%%%%%%%%%%%%

%%%%%%%%%%%%%%%%%%%

\subsection{Lie triple systems}

%%%%%%%%%%%%%%%%%%%

In the present section we collect some useful facts about the Lie triple system of an arbitrary Lie algebra $\g$. A reader familiar with this notion can skip this section. 

\begin{definition}\label{def:Lie_triple_syst}
A subspace $W$ of $\g$ is called a {\it Lie triple system} if $[W,[W,W]]\subset W$. 
\end{definition}
Define the centre $\mathfrak Z(W)$ of $W$ by
\begin{equation}\label{eq:centre_triple}
\mathfrak Z(W)=\{a\in W\mid\ [a,b]=0\ \ \text{for all}\ \ b\in W\}.
\end{equation}
We say that $\mathfrak Z(W)$ is compact if $\exp(\mathfrak Z(W))$ is a compact subgroup of the group $G$ corresponding to the Lie algebra $\g$. 
%Remind that $\SO(p)\times SO(q)$ is a maximal compact subgroup of $\SO(p,q)$. 

\begin{proposition}
The set $\exp(\mathfrak{Z}(W))$ is a connected abelian subgroup of the Lie group corresponding to the Lie algebra~$\g$.
\end{proposition}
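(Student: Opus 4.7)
The plan is to verify the three properties (subgroup, abelian, connected) by reducing everything to the observation that $\mathfrak{Z}(W)$ is an abelian Lie subalgebra of $\mathfrak{g}$ and then applying standard properties of the exponential map.

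First I would check that $\mathfrak{Z}(W)$ is a vector subspace: this is immediate from the bilinearity of $[\,\cdot\,,\,\cdot\,]$, since the defining condition $[a,b]=0$ for all $b\in W$ is linear in $a$. Next, I would observe that $\mathfrak{Z}(W)$ is not merely a subspace but an \emph{abelian} Lie subalgebra of $\mathfrak{g}$. Indeed, given $a_1,a_2\in\mathfrak{Z}(W)\subset W$, the element $a_2$ lies in $W$, hence by definition of $\mathfrak{Z}(W)$ we have $[a_1,a_2]=0$. So the bracket vanishes identically on $\mathfrak{Z}(W)$.

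With this in hand, the Baker--Campbell--Hausdorff formula collapses to $\exp(a)\exp(b)=\exp(a+b)$ for any $a,b\in\mathfrak{Z}(W)$, since all higher commutator terms vanish. From this I would deduce the three properties at once. Closure under the group operation follows from $\exp(a)\exp(b)=\exp(a+b)\in\exp(\mathfrak{Z}(W))$, where $a+b\in\mathfrak{Z}(W)$ because $\mathfrak{Z}(W)$ is a subspace; the inverse is handled by $\exp(a)^{-1}=\exp(-a)\in\exp(\mathfrak{Z}(W))$; and the identity is $\exp(0)$. Commutativity follows from $\exp(a)\exp(b)=\exp(a+b)=\exp(b+a)=\exp(b)\exp(a)$. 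Connectedness is then immediate: $\mathfrak{Z}(W)$, being a real vector space, is path-connected, and $\exp\colon\mathfrak{g}\to G$ is continuous, so its image on $\mathfrak{Z}(W)$ is connected (indeed path-connected via $t\mapsto\exp(ta)$ from $e$ to $\exp(a)$).

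There is no real obstacle in this argument; the only subtlety worth flagging is that $\exp(\mathfrak{Z}(W))$ need not be closed in $G$ in general (the compactness hypothesis in the paragraph preceding the statement is what would subsequently ensure this), but closedness is not required for the present statement. The whole proof is essentially the observation that the commutator vanishes on $\mathfrak{Z}(W)$ plus the standard behaviour of $\exp$ on an abelian subalgebra.
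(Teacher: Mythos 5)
Your argument is correct and is exactly the paper's approach: the paper's proof simply observes that $\mathfrak{Z}(W)$ is commutative and invokes the standard arguments, which are precisely the ones you spell out (abelian subalgebra, BCH collapsing to $\exp(a)\exp(b)=\exp(a+b)$, connectedness via the paths $t\mapsto\exp(ta)$). Your remark that closedness is not needed here but would follow from the compactness discussed in the surrounding text is a correct and sensible clarification.
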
 

\begin{proof}
We observe that the centre $\mathfrak{Z}(W)$ of the Lie triple system $W$ is commutative and the standard arguments finish the proof.
\end{proof}

\begin{proposition}\label{prop:use1} 
Let $(\mathfrak g,[.\,,.])$ be a Lie algebra, and let $W$ be its Lie triple system. Then, $[W,W]$ and $W+[W,W]$ are subalgebras of $\mathfrak g$.
\end{proposition}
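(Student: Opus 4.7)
The plan is to verify the two closure properties directly using the Jacobi identity and the defining inclusion $[W,[W,W]]\subset W$.

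First I would handle $[W,W]$. The task is to show $\bigl[[W,W],[W,W]\bigr]\subset[W,W]$. Pick generators $[a,b]$ and $[c,d]$ with $a,b,c,d\in W$ and apply the Jacobi identity to expand
\[
\bigl[[a,b],[c,d]\bigr]=\bigl[[[a,b],c],d\bigr]+\bigl[c,[[a,b],d]\bigr].
\]
Since $W$ is a Lie triple system, both $[[a,b],c]=-[c,[a,b]]$ and $[[a,b],d]=-[d,[a,b]]$ lie in $W$. Each of the two resulting brackets is therefore of the form $[W,W]$, and linearity finishes the argument.

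Next I would deal with $W+[W,W]$. Expanding bilinearly,
\[
\bigl[W+[W,W],\,W+[W,W]\bigr]\subset [W,W]+[W,[W,W]]+\bigl[[W,W],W\bigr]+\bigl[[W,W],[W,W]\bigr].
\]
The first summand is in $[W,W]$; the middle two are in $W$ by the Lie triple system property (using antisymmetry for the third); the last is in $[W,W]$ by what was just proved. Hence the whole sum is contained in $W+[W,W]$.

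There is no real obstacle here; the only subtlety is remembering that $[W,W]$ on its own is not obviously a subalgebra without invoking Jacobi, whereas closure of $W+[W,W]$ needs the $[W,W]$ subalgebra claim as an input. So I would write the two parts in the order above.
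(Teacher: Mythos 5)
Your proof is correct and follows essentially the same route as the paper's: for $[W,W]$ both arguments apply the Jacobi identity to $\bigl[[W,W],[W,W]\bigr]$ and reduce to the inclusion $[W,[W,W]]\subset W$ (the paper merely groups the terms the other way, keeping $[w_1',w_2']$ as the single element), and for $W+[W,W]$ both expand bilinearly and invoke the first part together with the Lie triple system property. No gaps.
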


\begin{proof}
To show that $[W,W]$ is a subalgebra, we need to check 
$$
\big[[W,W],[W,W]\big]\subset [W,W].
$$
Let $w_1,w_2,w'_1,w'_2\in W$, then with the notation $[w'_1,w'_2]=u$, we get 
$$
\big[[w_1,w_2],[w'_1,w'_2]\big]=\big[[w_1,w_2],u\big]=-[[w_2,u],w_1]-[[u,w_1],w_2]\in [W,W],
$$
by the Jacobi identity, because $[w_2,u],[u,w_1]\in W$ by the definition of the Lie triple system, $[W,[W,W]]\subset W$.

To prove the second statement we choose arbitrary $a,b,c,x,y,z\in W$ and write
$$
\big[a+[b,c],x+[y,z]\big]
=  [a,x]+[a,[y,z]]+[[b,c],x]+[[b,c],[y,z]]\in W+[W,W]
$$
by the first statement and by the definition of the Lie triple system.
\end{proof}

\begin{remark}
Let us denote the sets in Proposition~\ref{prop:use1} by $\mathfrak p=W$, $\mathfrak t=[W,W]$, and $\mathcal L=W+[W,W]$. Then Proposition~\ref{prop:use1} implies that the Lie algebra $\mathcal L$ admits the decomposition $\mathcal L=\mathfrak t+\mathfrak p$ with the Cartan pair $\mathfrak t,\mathfrak p$ satisfying the following properties
\begin{equation}\label{eq:Cartan}
[\mathfrak t,\mathfrak t]\subseteq \mathfrak t,\quad [\mathfrak t,\mathfrak p]\subseteq \mathfrak p,\quad [\mathfrak p,\mathfrak p]\subseteq \mathfrak t.
\end{equation}
Note that if a Lie algebra $\mathfrak h$ admits a direct sum decomposition $\mathfrak h=\mathfrak t\oplus\mathfrak p$ satisfying~\eqref{eq:Cartan}, then there is an involution $\theta\colon\mathfrak h\to \mathfrak h$ $(\theta^2=\Id_{\mathfrak h})$ possessing the following properties 
$$ \mathfrak t\subset\mathfrak h\quad\text{is such that}\quad \theta(t)=t,\ \ \forall\ t\in\mathfrak t,$$  
$$\mathfrak p\subset\mathfrak h\quad\text{is such that}\quad \theta(p)=-p,\ \ \forall\ p\in\mathfrak p.$$
\end{remark}

Given a Lie algebra $(\mathfrak g,[.\,,.])$, we denote by $\ad_v\colon \mathfrak g\to\mathfrak g$ the linear map defined by $\ad_v(u)=[v,u]$. The map $\ad\colon \mathfrak g\to\End(\mathfrak g)$ is a Lie algebra homomorphism, named the adjoint representation of the Lie algebra $\mathfrak g$. The kernel of the adjoint map $\ad$ is the centre of the Lie algebra. 

\begin{definition}
Let $\mathfrak g$ be a Lie algebra. A scalar product $\langle .\,,.\rangle$ on $\mathfrak g$ is called $\ad$-invariant if 
\begin{equation}\label{def:ad_inv}
\langle \ad_v(u),w\rangle=-\langle u,\ad_v(w)\rangle.
\end{equation}
\end{definition}
Equivalently, it can be stated that the map $\ad_v\colon \mathfrak g\to\mathfrak g$ is skew-symmetric with respect to the scalar product $\langle .\,,.\rangle$.

\begin{proposition}\label{prop:WL}
Let $(\mathfrak g,[.\,,.])$ be a Lie algebra, let $W$ its Lie triple system, and denote $\mathcal L=W+[W,W]$. Let $\mathfrak Z(W)$
be the centre of $W$, see~\eqref{eq:centre_triple}, and let
$(.\,,.)_{\mathcal L}$ an $\ad$-invariant inner product on $\mathcal L$.
Then the following statements hold.
\begin{itemize}
\item[1.] {Denote by $\mathfrak Z(\mathcal L)$ the centre of $\mathcal L$. Then the Lie algebra $\mathcal L$ is decomposed into the direct sum of two ideals
$
\mathcal L=\mathfrak Z(\mathcal L)\oplus_{\bot}[\mathcal L,\mathcal L]$,
where the decomposition is orthogonal with respect to $(.\,,.)_{\mathcal L}$.}
\item[2.] {$\mathfrak Z(W)\subseteq \mathfrak Z(\mathcal L)$.}
\item[3.] {The centre $\mathfrak Z([\mathcal L,\mathcal L])$ of $[\mathcal L,\mathcal L]$ is trivial.}
\item[4.] {If $\mathfrak Z(\mathcal L)\neq 0$, then $\mathfrak Z(W)\neq 0$.}
\item[5.] {If $\mathfrak Z(W)=0$, then $\mathcal L=[\mathcal L,\mathcal L]$.}
\end{itemize}
\end{proposition}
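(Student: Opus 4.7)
I would prove the five items in order, observing that items 1--3 are quick consequences of $\ad$-invariance and the Jacobi identity, while items 4 and 5 are equivalent via item 1 and carry the real weight. For item 1, I would invoke the standard fact that with respect to an $\ad$-invariant inner product the orthogonal complement of any ideal is again an ideal. Applying this to the commutator ideal $[\mathcal L,\mathcal L]$ and unwinding, $v\perp[\mathcal L,\mathcal L]$ means $(v,[x,y])_{\mathcal L}=0$ for all $x,y\in\mathcal L$; by $\ad$-invariance this rewrites as $([v,x],y)_{\mathcal L}=0$ for all $x,y$, which by non-degeneracy is equivalent to $v\in\mathfrak Z(\mathcal L)$. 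Item 2 is a one-line Jacobi computation: if $a\in\mathfrak Z(W)$ and $w_1,w_2\in W$, then $[a,[w_1,w_2]]=[[a,w_1],w_2]+[w_1,[a,w_2]]=0$, so $a$ commutes with $W$ and $[W,W]$, hence with all of $\mathcal L$. Item 3 then follows because any $a\in\mathfrak Z([\mathcal L,\mathcal L])$ automatically commutes with $\mathfrak Z(\mathcal L)$, so $a\in\mathfrak Z(\mathcal L)\cap[\mathcal L,\mathcal L]$, which equals $\{0\}$ by item 1 and positive-definiteness.

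Since item 1 gives the equivalence $\mathcal L=[\mathcal L,\mathcal L]$ if and only if $\mathfrak Z(\mathcal L)=0$, item 5 is the contrapositive of item 4, so it suffices to prove item 4. The first key step is to identify $K:=W\cap[W,W]$ as an ideal of $\mathcal L$: for $k\in K$ and $w\in W$ the bracket $[k,w]$ lies in $[W,W]$ (as $k,w\in W$) and also in $W$ (by the Lie triple property, since $k\in[W,W]$), so $[k,w]\in K$, and a Jacobi expansion extends this to $[k,[W,W]]\subseteq K$. Now $\ad$-invariance makes $K^{\perp}$ also an ideal, yielding a Lie-algebra direct sum $\mathcal L=K\oplus K^{\perp}$ with $[K,K^{\perp}]=0$. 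Writing $W=K\oplus W_1$ with $W_1:=W\cap K^{\perp}$, one verifies that $W_1$ is a Lie triple system in $K^{\perp}$ and that the further decomposition $K^{\perp}=W_1+[W_1,W_1]$ is actually \emph{direct}, because $W_1\cap[W_1,W_1]\subseteq W\cap[W,W]\cap K^{\perp}=K\cap K^{\perp}=0$.

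Assume now $\mathfrak Z(\mathcal L)\neq 0$. Splitting along the direct sum of ideals gives $\mathfrak Z(\mathcal L)=\mathfrak Z(K)\oplus\mathfrak Z(K^{\perp})$, so at least one summand is non-trivial. If $\mathfrak Z(K)\neq 0$, any $0\neq z\in\mathfrak Z(K)$ already lies in $K\subseteq W$ and centralises all of $\mathcal L$, so $z\in\mathfrak Z(W)$. If $\mathfrak Z(K^{\perp})\neq 0$, pick $0\neq z\in\mathfrak Z(K^{\perp})$ and use the direct sum $K^{\perp}=W_1\oplus[W_1,W_1]$ to write $z=w_1+t_1$ uniquely; then $0=[z,w']=[w_1,w']+[t_1,w']$ for $w'\in W_1$, together with $[w_1,w']\in[W_1,W_1]$ and $[t_1,w']\in W_1$ (Lie triple), forces $[w_1,W_1]=0$ by uniqueness of the decomposition, so $w_1\in\mathfrak Z(W_1)$. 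Non-vanishing of $w_1$ comes from item 1 applied inside $K^{\perp}$: were $w_1=0$, then $z=t_1\in[W_1,W_1]\subseteq[K^{\perp},K^{\perp}]$ would be orthogonal to itself. Finally $[W_1,K]=0$ shows $w_1$ centralises $W=K\oplus W_1$, i.e.\ $w_1\in\mathfrak Z(W)\setminus\{0\}$.

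The main obstacle I anticipate is establishing that $K=W\cap[W,W]$ is an ideal and organising the ensuing reduction to a genuine direct-sum Cartan pair in $K^{\perp}$; once that is in hand, the argument in each summand is controlled either by item 1 or by the short uniqueness-of-decomposition computation above.
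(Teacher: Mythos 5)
Your proof is correct. Items 1--3 and 5 follow essentially the paper's own route: item 1 via the identity $\mathfrak Z(\mathcal L)=[\mathcal L,\mathcal L]^{\perp}$ coming from $\ad$-invariance and non-degeneracy, item 2 via the Jacobi identity, item 3 by landing in $\mathfrak Z(\mathcal L)\cap[\mathcal L,\mathcal L]=\{0\}$, and item 5 as the contrapositive of item 4 combined with item 1. For item 4, however, you take a genuinely different and noticeably heavier path. The paper argues directly: $\mathfrak Z(\mathcal L)\neq 0$ forces $[\mathcal L,\mathcal L]=[W,W]+[W,[W,W]]$ to be proper in $\mathcal L$, hence $[W,[W,W]]$ to be proper in $W$, so the orthogonal complement $A=W\cap[W,[W,W]]^{\perp}$ is nonzero; then $([y,c],[a,b])_{\mathcal L}=\pm(y,[c,[a,b]])_{\mathcal L}=0$ shows $[y,W]\subseteq[W,W]^{\perp}\cap[W,W]=\{0\}$ for $y\in A$, so $A\subseteq\mathfrak Z(W)$ --- a three-line argument using only positive-definiteness. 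You instead identify $K=W\cap[W,W]$ as an ideal, split $\mathcal L=K\oplus_{\perp}K^{\perp}$ with $K^{\perp}=W_1\oplus[W_1,W_1]$, decompose the centre accordingly, and chase a central element through the uniqueness of the Cartan-type decomposition. All of these steps check out (in particular $[k,[a,b]]\in K$ via Jacobi, $W_1\cap[W_1,W_1]\subseteq K\cap K^{\perp}=\{0\}$, and the non-vanishing of the $W_1$-component via item 1 applied inside $K^{\perp}$), but what you are really doing is re-deriving, inside the proof of item 4, the structural facts that the paper postpones to item 3 of Proposition~\ref{prop:red} and to Proposition~\ref{prop:main}. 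The trade-off: your route yields more refined information (the splitting of $W$ along $K$ and the location of the central element inside $W_1$), while the paper's keeps Proposition~\ref{prop:WL} self-contained and short and defers the decomposition machinery to where it is actually needed.
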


\begin{proof} {\sc Proof of 1.}
Let $\mathfrak Z(\mathcal L)$ be the centre of the algebra $\mathcal L$. We will show that 
\begin{equation}\label{eq:2_1}
\mathfrak Z(\mathcal L)=[\mathcal L,\mathcal L]^{\bot}
\end{equation}
with respect to the inner product $(.\,,.)_{\mathcal L}$. Let $z\in[\mathcal L,\mathcal L]^{\bot} $ and let $u,v\in \mathcal L$ be arbitrarily chosen. Then,
$$
( [u,z],v)_{\mathcal L}=-( z,[u,v])_{\mathcal L}=0,
$$
because the inner product is $\ad$-invariant. It shows that $[u,z]=0$, and therefore, $z\in \mathfrak Z(\mathcal L)$, which implies that $\mathfrak Z(\mathcal L)\supset[\mathcal L,\mathcal L]^{\bot}$. Reversing the arguments we show the inverse inclusion, and conclude that $\mathcal L=\mathfrak Z(\mathcal L)\oplus_{\bot}[\mathcal L,\mathcal L]$ by~\eqref{eq:2_1}.

\medskip

{\sc Proof of 2.} Choose arbitrarily $z\in \mathfrak Z(W)$ and $u,v,w\in W$. Then we obtain
$\big[z,u+[v,w]\big]=[z,u]-[w,[z,v]]-[v,[w,z]]=0$ by the Jacobi identity. Thus, $z\in \mathfrak Z(\mathcal L)$.

\medskip

{\sc Proof of 3.} Let $z\in \mathfrak Z([\mathcal L,\mathcal L])$. Then for any $u\in\mathcal L$ and $a\in[\mathcal L,\mathcal L]$, we have
$$
0=(u,[z,a])_{\mathcal L}=(z,[u,a])_{\mathcal L}.
$$
Therefore, $z\in[\mathcal L,\mathcal L]^{\bot}=\mathfrak Z(\mathcal L)$, and simultaneously, $z\in \mathfrak Z([\mathcal L,\mathcal L])\subset[\mathcal L,\mathcal L]$. We conclude that $z=0$ by item 1.

\medskip

{\sc Proof of 4.} Let $z\in\mathfrak Z(\mathcal L)$ and $z\neq 0$. Then 
$$
[\mathcal L,\mathcal L]\subsetneq\mathcal L
\quad([\mathcal L,\mathcal L]\quad\text{is a proper subset of}\quad \mathcal L\quad\text{by item 1.})
$$ 
Since $[\mathcal L,\mathcal L]=[W,W]+[W,[W,W]]$, we conclude that 
$$[W,[W,W]]\subsetneq W\quad([W,[W,W]]\quad\text{is a proper subset of}\quad W).
$$
Let $[W,[W,W]]^{\bot}$ be the orthogonal complement to $[W,[W,W]]$ in $\mathcal L$ with respect to $(.\,,.)_{\mathcal L}$. Then, $A=W\cap [W,[W,W]]^{\bot}\neq\emptyset$. We claim that $A\subset \mathfrak Z(W)$. Pick arbitrarily $a,b,c\in W$ and $y\in A$, $y\neq 0$. Then,
$
[c,[a,b]]\subset [W,[W,W]],
$ and therefore,
$$
0=(y,[c,[a,b]])_{\mathcal L}=([y,c],[a,b])_{\mathcal L}\quad\Longrightarrow\quad [y,W]\subset[W,W]^{\bot}.
$$
On the other hand, $[y,W]\subset[W,W]$, which implies $[y,W]=0$ and thus $y\in \mathfrak Z(W)$. We conclude that $\mathfrak Z(W)\neq 0$.

\medskip

{\sc Proof of 5.} If $\mathfrak Z(W)= 0$, then we conclude that $\mathfrak Z(\mathcal L)= 0$ by item 4, and $\mathcal L=[\mathcal L,\mathcal L]$ by item 1.
\end{proof}

Next step is the study of irreducible Lie triple systems in $\g$. We recall some definitions and properties. 

\begin{definition}
The Killing form $B_{\mathfrak g}$ on a Lie algebra $\mathfrak g$ is the map
$B_{\mathfrak g}\colon \mathfrak g\times\mathfrak g\to\mathbb R$ defined by
$$
B_{\mathfrak g}(u,v):=\tr(\ad_u\circ \ad_v).
$$
\end{definition}

The kernel of the Killing form $B_{\mathfrak g}$ on a Lie algebra $\mathfrak g$ is defined as
$$
\ker(B_{\mathfrak g})=\{x\in\mathfrak g\mid \ B_{\mathfrak g}(x,u)=0\ \text{for all}\ u\in \mathfrak g\}.
$$
Notice that the kernel of a Killing form is always an ideal of $\mathfrak g$ due to the adjoint invariance of the Killing form. Indeed if $x\in \ker(B_{\mathfrak g})$, then for any $u,v\in \mathfrak g$
$$
B_{\mathfrak g}([x,v],u)=B_{\mathfrak g}(x,[v,u])=0\quad\Longrightarrow\quad [ \ker(B_{\mathfrak g}),\mathfrak g]\subset \ker(B_{\mathfrak g}).
$$ 
According to the Cartan criterion, a Lie algebra $\mathfrak g$ is semisimple if and only if the Killing form $B_{\mathfrak g}$ is non-denenerate on $\mathfrak g$, or equivalently the kernel $\ker(B_{\mathfrak g})$ is trivial. In particular, since the Lie algebra $\mathfrak{so}(p,q)$ is simple, the Killing form $B_{\mathfrak{so}(p,q)}$ is non-degenerate.

\begin{definition}
Let $\mathfrak g$ be a Lie algebra. A Lie triple system $W$ of $\mathfrak g$ is called irreducible if there are no Lie triple systems $W_1$ and $W_2$ of $\mathfrak g$ such that 
$$
W=W_1\oplus W_2,\qquad [W_1,W_2]=\{0\}.
$$
\end{definition}

\begin{proposition}\label{prop:red}
Let $W$ be a non-abelian Lie triple system of $\mathfrak g$,  let $\mathfrak Z(W)$ be its centre, let $\mathcal L=W+[W,W]$, and let $(.\,,.)_{\mathcal L}$ be an $\ad$-invariant inner product on $\mathcal L$. Then the following properties hold.
\begin{itemize}
\item[1.] {If $\mathfrak Z(W)\neq 0$, and if $W_1=\mathfrak Z(W)^{\bot}$ is its orthogonal complement in $W$ with respect to $(.\,,.)_{\mathcal L}$, then $W_1$ is a non-abelian Lie triple system and $W=\mathfrak Z(W)\oplus_{\bot} W_1$.}
\item[2.] {There are non-abelian irreducible Lie triple systems $W_j$ with $[W_i,W_j]=\{0\}$, $i\neq j$, such that $W=\mathfrak Z(W)\oplus\big(\oplus_{j=1}^{N}W_j\big)$.}
\item[3.] {If $\mathcal L=W+[W,W]$ and $W\cap[W,W]\neq\{0\}$, then $W$ is reducible, and $W=W_1\oplus_{\bot} W_2$, $[W_1,W_2]=\{0\}$, where $W_1=W\cap[W,W]$, $W_2$ is the orthogonal complement of $W_1$ in $W$ with respect to $(.\,,.)_{\mathcal L}$.}
\end{itemize}

If moreover, $W$ is an irreducible non-abelian Lie triple system of $\mathfrak g$, then
\begin{itemize}
\item[4.] {$
\mathcal L=W=[W,W]$ or $W\cap[W,W]=\{0\}$, and $\mathcal L=W\oplus[W,W];
$
furthermore, the Lie algebra $\mathcal L$ has trivial centre.}
\item[5.] {If $\mathcal L=W\oplus[W,W]$, then $B_{\mathcal L}(W,[W,W])=0$. Thus, the decomposition into the direct sum is orthogonal with respect to the Killing form $B_{\mathcal L}$.}
\end{itemize}
\end{proposition}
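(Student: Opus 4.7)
My plan is to deduce the five items in sequence, with ad-invariance of $(.,.)_{\mathcal L}$ and the subalgebra properties of $[W,W]$ and $\mathcal L = W + [W,W]$ from Proposition~\ref{prop:use1} as the main tools.

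For Part 1, set $W_1 = \mathfrak Z(W)^{\bot}$ in $W$. For $a,b,c \in W_1$ and $z \in \mathfrak Z(W)$, ad-invariance gives $(z,[a,[b,c]])_{\mathcal L} = -([a,z],[b,c])_{\mathcal L} = 0$ since $[a,z]=0$, so $[a,[b,c]] \in \mathfrak Z(W)^{\bot}\cap W = W_1$. Non-abelianness of $W_1$ follows because $\mathfrak Z(W)$ commutes with all of $W$ forces $[W,W] = [W_1,W_1]$, which would be zero if $W_1$ were abelian, contradicting the hypothesis. Part 2 then proceeds by induction on $\dim W$: first strip off $\mathfrak Z(W)$ via Part 1 (noting that $W_1$ inherits trivial centre as an LTS, since anything central in $W_1$ would commute with both $\mathfrak Z(W)$ and $W_1$, hence lie in $\mathfrak Z(W)\cap W_1 = 0$), then iteratively apply Part 3 below to split off a nontrivial summand whenever the current factor is reducible; dimensions strictly decrease, and the commutativity $[W_i,W_j]=0$ is preserved at every splitting.

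For Part 3, set $W_1 = W \cap [W,W]$ and $W_2 = W_1^{\bot}$ in $W$. That $W_1$ is itself an LTS uses both halves of its definition: $[W_1,[W_1,W_1]] \subset W$ from $W$ being an LTS, and $\subset [W,W]$ from $[W,W]$ being a subalgebra. The central claim $[W_1,W_2]=0$ is proved by a twofold use of ad-invariance. For $u \in W_1$ and $v \in W_2$ one first observes $[u,v] \in [[W,W],W]\cap[W,W] \subset W\cap[W,W] = W_1$; then for any $w \in W = W_1 \oplus W_2$ the same reasoning gives $[u,w] \in W_1$, so $([u,v],w)_{\mathcal L} = -(v,[u,w])_{\mathcal L} = 0$ because $v \perp W_1$. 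Non-degeneracy of $(.,.)_{\mathcal L}|_W$ then forces $[u,v]=0$. With $[W_1,W_2]=0$ in hand, $W_2$ is seen to be an LTS by a parallel ad-invariance computation in which $[a,u]=0$ makes the relevant pairing vanish.

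For Part 4, irreducibility together with non-abelianness forces $\mathfrak Z(W)=0$; otherwise Part 1 would supply a nontrivial decomposition. Proposition~\ref{prop:WL} then yields $\mathfrak Z(\mathcal L)=0$ and $\mathcal L = [\mathcal L,\mathcal L]$. Applying Part 3, irreducibility rules out both summands being nonzero, so either $W\cap[W,W]=0$ (giving the direct sum $\mathcal L = W \oplus [W,W]$) or $W \subseteq [W,W]$. In the latter case $\mathcal L = [W,W]$, and the LTS axiom reads $[\mathcal L,W] = [[W,W],W] \subset W$, so $W$ is an ideal of $\mathcal L$; then $[W,W] \subset W$, and $\mathcal L = [W,W] \subset W \subset \mathcal L$ collapses to $W = [W,W] = \mathcal L$.

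For Part 5, assume $\mathcal L = W \oplus [W,W]$. By Proposition~\ref{prop:use1} and the LTS axiom, the Cartan relations~\eqref{eq:Cartan} hold with $\mathfrak t = [W,W]$ and $\mathfrak p = W$, so the involution $\theta = \Id_{[W,W]} \oplus (-\Id_W)$ is a Lie algebra automorphism of $\mathcal L$. Automorphism invariance of the Killing form gives $B_{\mathcal L}(w,t) = B_{\mathcal L}(\theta w,\theta t) = -B_{\mathcal L}(w,t)$ for $w \in W$, $t \in [W,W]$, hence $B_{\mathcal L}(W,[W,W])=0$. The technically hardest step is Part 3: one must carefully track in which subspace each bracket lands, and the subalgebra property of $[W,W]$ from Proposition~\ref{prop:use1} is needed even to verify that $W_1$ is an LTS. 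Once Part 3 is established, Parts 4 and 5 follow by essentially formal manipulations.
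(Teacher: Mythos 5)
Your proof is correct, and for items 1--4 it follows essentially the same line as the paper: ad-invariance of $(.\,,.)_{\mathcal L}$ drives every orthogonality/vanishing argument, item 3 is the workhorse, and item 4 reduces to items 1--3 exactly as in the text. A few local differences are worth recording. In item 3 you verify that $W_1=W\cap[W,W]$ is a Lie triple system directly from the two inclusions $[W_1,[W_1,W_1]]\subset W$ and $\subset[W,W]$, and you prove $[W_1,W_2]=0$ by pairing $[u,v]\in W_1$ against all of $W$; the paper instead first establishes that $W_1$ is an ideal of $\mathcal L$ and pairs $[W_1,W_2]$ against $[W,W]$ --- both routes are valid and of comparable length. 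You are also more explicit than the paper about why $W_1$ in item 1 is non-abelian ($[W,W]=[W_1,W_1]$) and about why the summands in item 2 can be taken non-abelian (the stripped factor has trivial LTS-centre); note only that the splitting step in your induction for item 2 is supplied by the \emph{definition} of reducibility rather than by item 3, which applies only when the factor meets its derived bracket nontrivially --- this does not affect correctness. The one genuinely different argument is item 5: you invoke the Cartan relations to make $\theta=\Id_{[W,W]}\oplus(-\Id_W)$ a Lie algebra automorphism and use invariance of the Killing form under automorphisms, $B_{\mathcal L}(w,t)=B_{\mathcal L}(\theta w,\theta t)=-B_{\mathcal L}(w,t)$, whereas the paper computes directly that $\ad_y\ad_x$ interchanges $W$ and $[W,W]$ and hence has zero trace. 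The two are equivalent in substance, but your version is shorter and makes the structural reason (the grading by $\theta$) visible, at the cost of quoting the standard fact that $B_{\mathcal L}$ is automorphism-invariant.
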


\begin{proof}
{\sc Proof of 1.} Let $a,b,c\in W_1$ and $z\in\mathfrak Z(W)$ be arbitrary elements. Then
$$
([a,[b,c]],z)_{\mathcal L}=-([a,z],[b,c])_{\mathcal L}=0\quad\Longrightarrow\quad[W_1,[W_1,W_1]]\subset\mathfrak Z(W)^{\bot}=W_1,
$$
and we conclude that $W_1$ is a Lie triple system.
\\

{\sc Proof of 2.} Since $W$ is non-abelian, it follows that $\mathfrak Z(W)\neq W$, and we can write $W=\mathfrak Z(W)\oplus W_1$, where $W_1$ is the orthogonal complement of $\mathfrak Z(W)$ in $W$ with respect to $(.\,,.)_{\mathcal L}$. The set $\mathfrak Z(W)$ is obviously a Lie triple system. The set $W_1$ is also a Lie triple system by arguing as in the proof of item~1.

If $W_1$ is irreducible, then we finish the proof. Otherwise, we write $W_1=\oplus_{j=2}^{N}W_j$, where $W_j$ are non-abelian irreducible Lie triple systems such that $[W_i,W_j]=0$, $i\neq j$, that finishes the proof of item~2.
\\

{\sc Proof of 3.} We need to prove that $W_1$ and $W_2$ are Lie triple systems of $\mathfrak g$ such that $[W_1,W_2]=0$.

Claim 1: {\it $W_1$ is an ideal of $\mathcal L$.} Let $a,b,c\in W$ and $x\in W_1=W\cap[W,W]$ be arbitrary. Then $[a+[b,c],x]=[a,x]+[[b,c],x]$, and $[a,x]\in W\cap[W,W]$, since $x\in W\cap[W,W]$. Thus, $[a,W_1]\subset W_1$. Analogously, $[[b,c],x]\in[[W,W],W]\subseteq W$ by $x\in W$ and $[[b,c],x]\in[[W,W],[W,W]]\subset[W,W]$ since $x\in[W,W]$, and therefore, $[[b,c],W_1]\subset W_1$. This shows that $[\mathcal L,W_1]\subset W_1$. 

Claim 2: {\it $W_1$ and $W_2$ are $\ad_{[W,W]}$ invariant.} In particular, Claim 1 implies that
$[[W,W],W_1]\subset W_1$, i.e. $W_1$ is $\ad_{[W,W]}$-invariant. To show the same for $W_2$ we take an arbitrary $v\in W$ and write $v=v_1+v_2$, where $v_1\in W_1$, $v_2\in W_2$. Then
$$
W\supset \ad_{[W,W]}(v)=\ad_{[W,W]}(v_1)+\ad_{[W,W]}(v_2). 
$$
Because of $\ad_{[W,W]}(v_1)\subset W_1$, we conclude that $\ad_{[W,W]}(v_2)\subset W_2$ for any $v_2\in W_2$.

Claim 3: {\it $W_1$ and $W_2$ are Lie triple systems.} Note that $[W_1,W_1]\subset [W,W]$, since $W_1=W\cap[W,W]$ and $[W_2,W_2]\subset [W,W]$ by $W_2\subset W$. Then
$$
[[W_1,W_1],W_1]\subseteq[[W,W],W_1]\subseteq W_1
$$
because $W_1$ is $\ad_{[W,W]}$ invariant. The same reasons work for $W_2$. 

Claim 4: {\it $[W_1,W_2]=0$.} Notice
$$
([W,W],[W_1,W_2])_{\mathcal L}=(\underbrace{[[W,W],W_1]}_{\subset W_1},W_2)_{\mathcal L}=0\quad\text{by}\quad W_1=W_2^{\bot}.
$$ 
Thus, $[W_1,W_2]\subset [W,W]^{\bot}$, and on the other hand $[W_1,W_2]\subset [W,W]$, since both $W_1,W_2$ are subsets of $W$. We conclude that $[W_1,W_2]=0$.
\\

{\sc Proof of 4.} Let $W_1=W\cap[W,W]$, and let $W_2$ be the orthogonal complement to $W_1$ in $W$ with respect to the inner product $(.\,,.)_{\mathcal L}$. Then the consideration is reduced to two cases
$$
(a)\ \ W_1=0\qquad\text{or}\qquad (b)\ \ W_1\neq0.
$$
In the case $(a)$, we get $\mathcal L=W\oplus[W,W]$. In the case $(b)$ we obtain 
$
W=W_1\oplus_{\bot}W_2,
$
and by the assumption of the irreducibility we conclude that $W_2=\{0\}$. Thus,
$$
W_1=W\quad\Longrightarrow\quad W=W_1=W\cap[W,W]\subseteq[W,W].
$$
By taking $\ad_W$ of both sides of the latter equality, we obtain $[W,W]\subseteq[W,[W,W]]\subseteq W$. So we conclude that $W=[W,W]=\mathcal L$.

Let us show that the centre $\mathfrak Z(\mathcal L)$ is trivial. If $\mathfrak Z(\mathcal L)\neq 0$, then $\mathfrak Z(W)\neq 0$ by item~4 of Proposition~\ref{prop:WL}. If $\mathfrak Z(W)\neq 0$, then $W$ is reducible by the proofs of item~1 and~2 of Proposition~\ref{prop:red}. Thus, the centre $\mathfrak Z(\mathcal L)$ is trivial.
\\

{\sc Proof of 5.} Let $\mathcal L=W\oplus[W,W]$ and let $x\in W$, $y\in[W,W]$ arbitrarily chosen. Then
$$
\ad_y\ad_x([W,W])\subset \ad_y([W,[W,W]])\subset\ad_y(W)\subset[[W,W],W]\subset W
$$
and 
$$
\ad_y\ad_x(W)\subset\ad_y([W,W])\subset[[W,W],[W,W]]=[W,W].
$$
Thus, the operator $\ad_y\ad_x$ acts on $\mathcal L=W\oplus[W,W]$ by interchanging the spaces $W$ and $[W,W]$ in the direct sum $W\oplus[W,W]$, i.e.,
$\ad_y\ad_x(W\oplus[W,W])=[W,W]\oplus W$, and therefore, 
$$
0=\tr(\ad_y\ad_x)=B_{\mathcal L}(x,y)\quad\Longrightarrow\quad B_{\mathcal L}(W,[W,W])=0.
$$
\end{proof}

\begin{proposition}\label{prop:main} 
Let $W$ be a Lie triple system of $\mathfrak g$, let $\mathfrak Z(W)$ be the centre of $W$, and let $\mathcal L=W+[W,W]$. Then for any $\ad$-invariant inner product $(.\,,.)_{\mathcal L}$ we have
\begin{itemize}
\item[1)] {$\mathfrak Z(W)=\mathfrak Z(\mathcal L)$,  $\mathcal L=\mathfrak Z(W)\oplus[\mathcal L,\mathcal L]
$, and the direct sum is orthogonal with respect to $(.\,,.)_{\mathcal L}$;}
\item[2)]{ Let $W_1$ denote the orthogonal complement to $\mathfrak Z(W)$ in $W$ with respect to the inner product $(.\,,.)_{\mathcal L}$. Then $W_1$ is a Lie triple system of $\mathfrak g$ and the ideal $[\mathcal L,\mathcal L]$ of $\mathcal L$ can be written as $[\mathcal L,\mathcal L]=W_1+[W_1,W_1]$.}
\end{itemize}
\end{proposition}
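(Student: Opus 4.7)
My plan is to reduce both items to Propositions~\ref{prop:WL} and~\ref{prop:red} by splitting off $\mathfrak{Z}(W)$ orthogonally and checking that the complementary piece has trivial centre as a Lie triple system, which lets item~5 of Proposition~\ref{prop:WL} do the rest.

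Concretely, I would set $W_1:=\mathfrak{Z}(W)^{\bot}\cap W$, so $W=\mathfrak{Z}(W)\oplus_{\bot}W_1$; item~1 of Proposition~\ref{prop:red} then guarantees that $W_1$ is itself a Lie triple system. Since $\mathfrak{Z}(W)$ commutes with all of $W$, one has $[W,W]=[W_1,W_1]$, and hence $\mathcal L=\mathfrak{Z}(W)+\mathcal L_1$ where $\mathcal L_1:=W_1+[W_1,W_1]$ is a subalgebra. A short $\ad$-invariance computation, namely $(z,[a,b])_{\mathcal L}=-([z,a],b)_{\mathcal L}=0$ for $z\in\mathfrak{Z}(W)$ and $a,b\in W_1$, shows $\mathfrak{Z}(W)\perp\mathcal L_1$, so $\mathcal L=\mathfrak{Z}(W)\oplus_{\bot}\mathcal L_1$ is an orthogonal direct sum and $\mathcal L_1$ is an ideal of $\mathcal L$ inheriting an $\ad$-invariant inner product.

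The crucial observation is that $\mathfrak{Z}(W_1)=\{0\}$: any $z\in\mathfrak{Z}(W_1)$ commutes with $W_1$ and (trivially) with $\mathfrak{Z}(W)$, hence with all of $W$, so $z\in\mathfrak{Z}(W)\cap W_1=\{0\}$. This unlocks items~5 and~3 of Proposition~\ref{prop:WL} applied to the pair $W_1\subset\mathcal L_1$, yielding $\mathcal L_1=[\mathcal L_1,\mathcal L_1]$ and $\mathfrak{Z}(\mathcal L_1)=\{0\}$. For item~1), item~2 of Proposition~\ref{prop:WL} already gives $\mathfrak{Z}(W)\subseteq\mathfrak{Z}(\mathcal L)$; conversely, any $z\in\mathfrak{Z}(\mathcal L)$ decomposes uniquely as $z_0+\ell$ with $z_0\in\mathfrak{Z}(W)$ and $\ell\in\mathcal L_1$, and then $\ell=z-z_0\in\mathfrak{Z}(\mathcal L)\cap\mathcal L_1\subseteq\mathfrak{Z}(\mathcal L_1)=\{0\}$, forcing $z\in\mathfrak{Z}(W)$. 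The orthogonal splitting $\mathcal L=\mathfrak{Z}(W)\oplus[\mathcal L,\mathcal L]$ then follows from item~1 of Proposition~\ref{prop:WL} after substituting $\mathfrak{Z}(\mathcal L)=\mathfrak{Z}(W)$. For item~2), $W_1$ is already known to be a Lie triple system, and the chain $[\mathcal L,\mathcal L]=[\mathcal L_1,\mathcal L_1]=\mathcal L_1=W_1+[W_1,W_1]$ follows from centrality of $\mathfrak{Z}(W)$ in $\mathcal L$ together with the equality $\mathcal L_1=[\mathcal L_1,\mathcal L_1]$ established above. The main obstacle in the whole chain is really the bootstrap $\mathfrak{Z}(W_1)=\{0\}$—every other step is bookkeeping around orthogonal decompositions and the fact that an $\ad$-invariant inner product on an ambient algebra restricts to an $\ad$-invariant inner product on any subalgebra.
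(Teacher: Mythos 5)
Your proof is correct, and for item~1 it takes a genuinely different route from the paper's. The paper establishes $\mathfrak Z(W)=\mathfrak Z(\mathcal L)$ by first decomposing $W=\mathfrak Z(W)\oplus\big(\bigoplus_j W_j\big)$ into \emph{irreducible} non-abelian Lie triple systems (item~2 of Proposition~\ref{prop:red}), forming the subalgebras $\mathcal L_j=W_j+[W_j,W_j]$, and invoking item~4 of Proposition~\ref{prop:red} to see that each $\mathcal L_j$ has trivial centre; the equality $\mathcal L_0=[\mathcal L_0,\mathcal L_0]$ then comes from this irreducible structure. You bypass irreducibility entirely: you split off only $\mathfrak Z(W)$, and your key observation --- that the orthogonal complement $W_1$ automatically satisfies $\mathfrak Z(W_1)=\{0\}$ because an element of $W_1$ centralising $W_1$ also centralises $\mathfrak Z(W)$, hence lies in $\mathfrak Z(W)\cap W_1=\{0\}$ --- feeds directly into items~4 and~5 of Proposition~\ref{prop:WL} applied to the pair $W_1\subset\mathcal L_1$. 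This gives $\mathcal L_1=[\mathcal L_1,\mathcal L_1]$ and $\mathfrak Z(\mathcal L_1)=\{0\}$ with less machinery (only item~1 of Proposition~\ref{prop:red} is needed), and it simultaneously streamlines item~2: where the paper derives $[\mathcal L,\mathcal L]=W_1+[W_1,W_1]$ by comparing the two orthogonal decompositions $\mathfrak Z(W)\oplus_{\bot}\mathcal L_0$ and $\mathfrak Z(W)\oplus_{\bot}\mathcal L_0^*$ via an inclusion argument, you get it at once from $[\mathcal L,\mathcal L]=[\mathcal L_1,\mathcal L_1]=\mathcal L_1$. Two cosmetic remarks: the sign in your $\ad$-invariance computation should be $(z,[a,b])_{\mathcal L}=([z,a],b)_{\mathcal L}$, which is harmless since $[z,a]=0$ anyway; and you should note explicitly (as the paper does) that the abelian case and the case $\mathfrak Z(W)=0$ are trivial, since item~1 of Proposition~\ref{prop:red} is stated only for $W$ non-abelian with $\mathfrak Z(W)\neq 0$.
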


\begin{proof} {\sc Proof of 1.}
If $W$ is abelian, then it is nothing to prove. Let $W$ be a non-abelian Lie triple system of $\mathfrak g$. Then we can write
$$
W=\mathfrak Z(W)\oplus\big(\bigoplus_{j=1}^{N}W_j\big),
$$
where $W_j$ are irreducible Lie triple systems such that $[W_i,W_j]=0$, $i\neq j$, by item 2 of Proposition~\ref{prop:red}. We denote by $\mathcal L_j=W_j+[W_j,W_j]$, $j=1,\ldots, N$, Lie subalgebras of $\mathfrak g$. The algebras $\mathcal L_j$ have trivial centres by item 4 of Proposition~\ref{prop:red}. Moreover,
$
[\mathcal L_i,\mathcal L_j]=0
$ for $i\neq j$
by the Jacobi identity and by $[W_i,W_j]=0$. Thus, $\mathcal L=W+[W,W]=\mathfrak Z(W)\oplus\big(\bigoplus_{j=1}^{N}\mathcal L_j\big)$. The Lie algebra $\mathcal L_0=\oplus_{j=1}^{N}\mathcal L_j$ has a trivial centre $\mathfrak Z(\mathcal L_0)$ because each of the Lie algebras $\mathcal L_j$ has a trivial centre, and they mutually commute. Since we have $\mathfrak Z(W)\subseteq\mathfrak Z(\mathcal L)$ by item 2 of Proposition~\ref{prop:WL}, we conclude that $\mathfrak Z(W)=\mathfrak Z(\mathcal L)$. Indeed, if we assume that there is $x\in\mathfrak Z(\mathcal L)$ and $x\notin\mathfrak Z(W)$, then $x\in\mathcal L_0$ due to the decomposition $\mathcal L=\mathfrak Z(W)\oplus\mathcal L_0$. But then $
[x,y]=0
$ for any $y\in\mathcal L$, and in particular,  $[x,y_0]=0$ for any $y_0\in\mathcal L_0\subset\mathcal L$. It follows that $x\in\mathfrak Z(\mathcal L_0)$ and since $\mathfrak Z(\mathcal L_0)=\{0\}$, we conclude that $x=0$.

Now we show that the decomposition $\mathcal L=\mathfrak Z(W)\oplus[\mathcal L,\mathcal L]$ is orthogonal with respect to the inner product $(.\,,.)_{\mathcal L}$. From
$$
\mathcal L=\mathfrak Z(W)\oplus\mathcal L_0=\mathfrak Z(\mathcal L)\oplus\mathcal L_0
$$
we deduce that $[\mathcal L,\mathcal L]=[\mathcal L_0,\mathcal L_0]=\mathcal L_0$, because the Lie algebra $\mathcal L_0$ has a trivial centre. It is also clear that $\mathcal L_0=[\mathcal L,\mathcal L]$ is an ideal of $\mathcal L$. Thus, the decomposition $\mathcal L=\mathfrak Z(W)\oplus[\mathcal L,\mathcal L]$ will be orthogonal with respect to any $\ad$-invariant inner product 
by item 1 of Proposition~\ref{prop:WL}.

{\sc Proof 2.} 
If $\mathfrak Z(W)=0$, then there is nothing to prove. If $\mathfrak Z(W)\neq 0$, then the orthogonal complement $W_1$ to $\mathfrak Z(W)$ in $W$ with respect to the inner product is a Lie triple system by item 1 of Proposition~\ref{prop:red}. We only need to show that $\mathcal L_0=[\mathcal L,\mathcal L]=W_1+[W_1,W_1]$. Denote $\mathcal L_0^*=W_1+[W_1,W_1]$. Since $W=\mathfrak Z(W)\oplus W_1$, we have 
$$
\mathcal L=W+[W,W]=\mathfrak Z(W)+ W_1+[W_1,W_1]=\mathfrak Z(W)+\mathcal L_0^*.
$$

Claim 1: {\it $\mathcal L=\mathfrak Z(W)\oplus_{\bot}\mathcal L_0^*$.} Since $[\mathfrak Z(W),W_1]=0$, we obtain
$$
(\mathfrak Z(W),[W_1,W_1])_{\mathcal L}=(W_1,[W_1,\mathfrak Z(W)])_{\mathcal L}=0.
$$
Together with $(\mathfrak Z(W),W_1)_{\mathcal L}$ the latter equalities imply  claim 1.

Claim 2: {\it $[\mathfrak Z(W),\mathcal L_0^*]=0$.} This follows from the equality $[\mathfrak Z(W),W_1]=0$ and from the Jacobi identity. 

Now the chain of inclusions
$$
\mathcal L_0=[\mathcal L,\mathcal L]=[(\mathfrak Z(W)+\mathcal L_0^*),(\mathfrak Z(W)+\mathcal L_0^*)]=[\mathcal L_0^*,\mathcal L_0^*]\subseteq \mathcal L_0^*\quad\Longrightarrow\quad \mathcal L_0\subseteq \mathcal L_0^*
$$
follows from item 1 of Proposition~\ref{prop:main}, from claim 2, and from the fact that $\mathcal L_0^*$ is a Lie algebra constructed from a Lie triple system, see Proposition~\ref{prop:use1}. Finally, 
we conclude that
$$
\mathfrak Z(W)\oplus_{\bot} \mathcal L_0^*=\mathcal L=\mathfrak Z(W)\oplus_{\bot} \mathcal L_0\subseteq \mathfrak Z(W)\oplus_{\bot} \mathcal L_0^*
$$
by making use of claim 1. This implies $\mathcal L_0^*=\mathcal L_0$, which finishes the proof.
\end{proof}

\begin{definition}
We say that a Lie algebra $\mathfrak g$ is reductive if for each ideal $\mathfrak a$ in $\mathfrak g$, there is an ideal $\mathfrak b$ in $\mathfrak g$ with $\mathfrak g=\mathfrak a\oplus\mathfrak b$.
\end{definition}
Let us recall the following statement: {\it a Lie algebra $\mathfrak g$ is semisimple, if and only if, $\mathfrak g=\mathfrak a_1\oplus\ldots\oplus\mathfrak a_j$ where $\mathfrak a_j$ are ideals,  each of which is a simple Lie algebra. In this case the decomposition is unique, and the only ideals of $\mathfrak g$ are the sum of various
$\mathfrak a_j$}, see~\cite[Theorem 1.54]{Knapp}. So if a Lie algebra $\mathfrak g$ is a direct sum of a semisimple Lie algebra  and an abelian Lie algebra, then $\mathfrak g$ is reductive. The following proposition shows that there are no other reductive Lie algebras.

\begin{proposition}\cite[Corollary 1.56]{Knapp}
If $\mathfrak g$ is reductive, then $\mathfrak g=\mathfrak a\oplus [\mathfrak g,\mathfrak g]$ with $[\mathfrak g,\mathfrak g]$ semisimple and $\mathfrak a$ abelian.
\end{proposition}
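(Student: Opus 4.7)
The plan is to take $\mathfrak{a}=\mathfrak Z(\g)$ to be the centre of $\g$, produce an ideal complement $\g'$ using reductivity, and then show that $\g'$ is semisimple and coincides with $[\g,\g]$. Since $\mathfrak{a}=\mathfrak Z(\g)$ is an ideal, the reductivity hypothesis supplies an ideal $\g'$ with $\g=\mathfrak{a}\oplus\g'$. The subalgebra $\g'$ has trivial centre: any $x\in\mathfrak Z(\g')$ commutes with all of $\g'$ by definition and also with $\mathfrak{a}$ (since $\mathfrak{a}\subseteq\mathfrak Z(\g)$), hence with all of $\g=\mathfrak{a}\oplus\g'$, so $x\in\mathfrak Z(\g)\cap\g'=\mathfrak{a}\cap\g'=\{0\}$.

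The crucial step is to show that $\g'$ itself is reductive. Given an ideal $\mathfrak{c}$ of $\g'$, from $[\mathfrak{a},\mathfrak{c}]=\{0\}$ one gets $[\g,\mathfrak{c}]=[\g',\mathfrak{c}]\subseteq\mathfrak{c}$, so $\mathfrak{c}$ is in fact an ideal of $\g$. Applying reductivity of $\g$, there is an ideal $\mathfrak{d}$ of $\g$ with $\g=\mathfrak{c}\oplus\mathfrak{d}$. Writing any $g'\in\g'$ as $c+d$ with $c\in\mathfrak{c}\subseteq\g'$ forces $d=g'-c\in\g'$, so $d\in\mathfrak{d}\cap\g'$, yielding $\g'=\mathfrak{c}\oplus(\mathfrak{d}\cap\g')$ with $\mathfrak{d}\cap\g'$ an ideal of $\g'$. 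Hence $\g'$ is reductive, and being reductive with trivial centre it is semisimple: any abelian ideal $\mathfrak{r}$ of $\g'$ has an ideal complement $\mathfrak{s}$ with $[\mathfrak{r},\mathfrak{s}]\subseteq\mathfrak{r}\cap\mathfrak{s}=\{0\}$, which together with $[\mathfrak{r},\mathfrak{r}]=\{0\}$ places $\mathfrak{r}$ inside $\mathfrak Z(\g')=\{0\}$.

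Finally, a semisimple Lie algebra is perfect, so $\g'=[\g',\g']$; and since $\mathfrak{a}$ is central one has $[\g,\g]=[\g',\g']=\g'$. Combining these facts yields the decomposition $\g=\mathfrak{a}\oplus[\g,\g]$ with $\mathfrak{a}$ abelian and $[\g,\g]$ semisimple. The main obstacle is the inheritance of reductivity by the complement $\g'$; once that has been established, the remaining points, namely $\mathfrak Z(\g')=\{0\}$, the passage from reductive-with-trivial-centre to semisimple, and the identification $\g'=[\g,\g]$, are all short and essentially mechanical.
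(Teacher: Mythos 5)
Your argument is correct and complete. The paper itself gives no proof of this proposition (it simply cites Knapp, Corollary~1.56), so the relevant comparison is with Knapp's standard argument, which decomposes a reductive $\g$ into a direct sum of minimal ideals, each of which is shown to be either simple or one-dimensional central, and then identifies $[\g,\g]$ with the sum of the simple summands. Your route is different and arguably cleaner for this statement: you split off the centre $\mathfrak a=\mathfrak Z(\g)$ first, prove the key lemma that the complementary ideal $\g'$ inherits reductivity (the observation that an ideal of $\g'$ is automatically an ideal of $\g$ because $\mathfrak a$ is central, together with intersecting the $\g$-complement back into $\g'$, is exactly right), and then use the standard criterion that a Lie algebra with no nonzero abelian ideal is semisimple. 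All the individual steps check out: $\mathfrak Z(\g')=\mathfrak Z(\g)\cap\g'=\{0\}$; for ideals $\mathfrak r,\mathfrak s$ with $\g'=\mathfrak r\oplus\mathfrak s$ one indeed has $[\mathfrak r,\mathfrak s]\subseteq\mathfrak r\cap\mathfrak s=\{0\}$; semisimple implies perfect gives $\g'=[\g',\g']=[\g,\g]$. The only external inputs are the two standard facts that a semisimple algebra is perfect and that absence of nonzero abelian ideals characterises semisimplicity; both are legitimate to invoke. What your approach buys is that it avoids the existence of a decomposition into minimal ideals (no induction or Zorn-type argument), at the cost of the small extra lemma that reductivity passes to the ideal complement of the centre.
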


An important example of reductive Lie algebras is given in the following statement.

\begin{proposition}\cite[Proposition 1.59]{Knapp}
Let $\mathfrak g$ be a real Lie algebra of matrices over $\mathbb R$, $\mathbb C$ or $\mathbb H$, which is closed under the operation of conjugate transpose, then $\mathfrak g$ is reductive.
\end{proposition}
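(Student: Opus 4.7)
The plan is, for each ideal $\mathfrak a\subseteq \mathfrak g$, to produce a complementary ideal $\mathfrak b$ as the orthogonal complement of $\mathfrak a$ inside $\mathfrak g$ with respect to a well-chosen positive-definite inner product on the ambient matrix algebra. The natural candidate is the real bilinear form
\[
\langle X, Y\rangle := \re\tr(XY^*)
\]
on $M_n(\mathbb F)$ viewed as a real vector space, where $X^*$ denotes the conjugate transpose (with quaternionic conjugation when $\mathbb F = \mathbb H$). Since $\langle X, X\rangle = \sum_{i,j}|X_{ij}|^2$, this form is $\mathbb R$-bilinear, symmetric, and positive definite; its restriction to $\mathfrak g$ is therefore a genuine inner product. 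The hypothesis that $\mathfrak g$ is closed under $X \mapsto X^*$ is precisely what will allow this orthogonal complement to inherit the ideal property.

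The crucial algebraic identity I would establish is
\[
\langle [X,Y], Z\rangle = \langle Y, [X^*, Z]\rangle \quad \text{for all } X,Y,Z \in M_n(\mathbb F).
\]
This rests on two facts: the cyclicity $\re\tr(AB) = \re\tr(BA)$, which, importantly, holds even over $\mathbb H$ although $\tr$ itself is not cyclic there, and the involution rule $(AB)^* = B^* A^*$. Expanding
\[
\re\tr((XY - YX)Z^*) = \re\tr(Y(Z^*X - XZ^*))
\]
via cyclicity, and recognising $Z^*X - XZ^* = [X^*, Z]^*$, yields the desired identity.

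With these tools, given any ideal $\mathfrak a \subseteq \mathfrak g$, I would define
\[
\mathfrak b := \{Y \in \mathfrak g : \langle Y, A\rangle = 0 \text{ for all } A \in \mathfrak a\}.
\]
Positive definiteness on $\mathfrak g$ gives $\mathfrak g = \mathfrak a \oplus \mathfrak b$ as real vector spaces at once. It only remains to check that $\mathfrak b$ is a Lie ideal: for arbitrary $X \in \mathfrak g$, $Y \in \mathfrak b$, $A \in \mathfrak a$, the identity yields $\langle [X, Y], A\rangle = \langle Y, [X^*, A]\rangle$. The hypothesis that $\mathfrak g$ is $*$-closed places $X^*$ in $\mathfrak g$, so $[X^*, A]$ lies in $\mathfrak a$ by the ideal property of $\mathfrak a$; since $Y \in \mathfrak b$, the right-hand side vanishes. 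Thus $[X,Y] \in \mathfrak b$ and $\mathfrak b$ is an ideal, completing the reductive decomposition.

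The only subtle point is the quaternionic case: $\tr$ itself fails to be cyclic over $\mathbb H$, so one must be careful to use $\re\tr$ throughout and verify its cyclicity directly; once this is done, the argument is uniform across $\mathbb F \in \{\mathbb R, \mathbb C, \mathbb H\}$. The overall strategy is a classical instance of the observation that an $\ad^*$-invariant inner product (in the sense that $\ad_X^{\mathrm{T}} = -\ad_{X^*}$) forces every ideal to split, yielding reductivity automatically.
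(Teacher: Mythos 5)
Your proof is correct, and it is essentially the argument of the cited reference: the paper itself gives no proof, only the citation to Knapp, whose Proposition 1.59 is proved in exactly this way, by taking the positive-definite form $\re\tr(XY^*)$ and showing via the identity $\langle [X,Y],Z\rangle=\langle Y,[X^*,Z]\rangle$ that the orthogonal complement of an ideal is again an ideal. Your care with $\re\tr$ versus $\tr$ in the quaternionic case is exactly the right subtlety to flag.
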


\begin{corollary}
If $W$ is a Lie triple system of $\so(m)$, then the Lie algebra $\mathcal L=W+[W,W]$ is reductive.
\end{corollary}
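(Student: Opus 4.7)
The plan is to invoke Proposition 1.59 of Knapp (quoted just before the corollary), which says that a real Lie algebra of matrices closed under conjugate transpose is reductive. Since we are working inside $\so(m)$, and its elements are real matrices, conjugate transpose coincides with ordinary transpose. So the entire task reduces to verifying that $\mathcal L=W+[W,W]$ is stable under the transpose operation $A\mapsto A^{\mathbf t}$.

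First I would record the key observation that $\so(m)$ itself is closed under transposition in a trivial way: for $A\in\so(m)$ one has $A^{\mathbf t}=-A$, so the transpose acts on $\so(m)$ simply as multiplication by $-1$. Consequently, for any linear subspace $W\subseteq\so(m)$, the subspace is automatically closed under transposition, because $A\in W$ implies $-A\in W$, i.e.\ $A^{\mathbf t}\in W$.

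Next I would check stability of $[W,W]$. For $A,B\in W\subseteq\so(m)$ one computes
\[
[A,B]^{\mathbf t}=B^{\mathbf t}A^{\mathbf t}-A^{\mathbf t}B^{\mathbf t}=(-B)(-A)-(-A)(-B)=[B,A]=-[A,B],
\]
so transposition sends $[A,B]$ into $[W,W]$. By linearity this extends to all elements of $[W,W]$. Consequently $\mathcal L=W+[W,W]$ is a linear subspace of real matrices that is simultaneously a Lie subalgebra (by Proposition~\ref{prop:use1}) and closed under conjugate transpose.

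Applying Proposition~1.59 of~\cite{Knapp} then yields immediately that $\mathcal L$ is reductive, which concludes the argument. There is no genuine obstacle here: once one notices that the conjugate-transpose hypothesis is verified for free inside $\so(m)$ (because transposition is just sign reversal on skew-symmetric matrices), the corollary is an essentially one-line consequence of the cited proposition.
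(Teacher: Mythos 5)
Your proof is correct and follows essentially the same route as the paper: both arguments reduce to the observation that every element $C$ of $\mathcal L\subseteq\so(m)$ satisfies $C^{\mathbf t}=-C\in\mathcal L$, so $\mathcal L$ is closed under (conjugate) transpose, and then invoke Proposition~1.59 of Knapp. Your separate verification that $[W,W]$ is transpose-stable is fine but not needed, since $\mathcal L$ is already a linear subspace of $\so(m)$ and hence automatically closed under $C\mapsto -C$.
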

\begin{proof}
Since $C^{\mathbf t}=-C$ for any $C\in\so(m)$, we conclude that $C\in\mathcal L$ implies $C^{\mathbf t}=-C\in \mathcal L$, and therefore, the Lie algebra $\mathcal L$ is reductive.
\end{proof}

%%%%%%%%%%%%%%%%%%%%%%%%%%%%

\subsection{Lie triple system of $\so(p,q)$ and representations of Clifford algebras}

%%%%%%%%%%%%%%%%%%%%%%%%%%%%

We start from an example of the Lie triple system of $\so(l,l)$ related to the representation of Clifford algebras $\Cl_{r,s}$.
Let us recall Example~\ref{ex:Cl}, where the subspace $W=J(\mathbb R^{r,s})\subset\so(l,l)$ was defined by the Clifford algebra representation $J\colon \Cl_{r,s}\to\End(\mathbb R^{l,l})$. The case $s=0$ was studied in~\cite{Eber03}.

\begin{proposition}\label{prop:tr_Cl}
The space $W$ is a Lie triple system of $\so(l,l)$ with a trivial centre. 
\end{proposition}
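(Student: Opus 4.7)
The proof has two parts: (i) verifying the triple inclusion $[W,[W,W]] \subseteq W$, and (ii) showing that $\mathfrak{Z}(W)$ is trivial. Both arguments rest on the defining Clifford anticommutation relation
$$J(u)J(v) + J(v)J(u) = -2(u,v)_{r,s}\Id_{\mathbb{R}^{l,l}}, \qquad u,v \in \mathbb{R}^{r,s},$$
which I would first reorganise into the commutator formula $[J(u),J(v)] = 2J(u)J(v) + 2(u,v)_{r,s}\Id$. The $\Id$ term shows that $W$ is generally not closed under brackets, but it is killed the next time a commutator is taken, which is why the triple identity survives.

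For (i), I would begin by observing $[J(u),[J(v),J(w)]] = 2[J(u), J(v)J(w)]$, since $\Id$ is central. Then I would push $J(u)$ through the product $J(v)J(w)$ using the anticommutation twice — rewriting $J(w)J(u)$ first, and then $J(v)J(u)$ — after which the triple products $J(u)J(v)J(w)$ cancel and only the scalar-multiple terms survive, yielding
$$[J(u),[J(v),J(w)]] = 4\bigl((u,w)_{r,s}J(v) - (u,v)_{r,s}J(w)\bigr) = J\!\bigl(4(u,w)_{r,s}v - 4(u,v)_{r,s}w\bigr) \in W.$$
This is the only nontrivial computation in the proof and is the main technical step; the rest is bookkeeping.

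For (ii), suppose $J(u) \in \mathfrak{Z}(W)$, so the commutator formula above gives $J(u)J(v) = -(u,v)_{r,s}\Id$ for every $v \in \mathbb{R}^{r,s}$. Fix an orthonormal basis $\{e_1,\ldots,e_{r+s}\}$ with $(e_i,e_i)_{r,s}=\epsilon_i \in \{\pm 1\}$, write $u=\sum_i c_i e_i$, and specialise to $v=e_j$ to get $J(u)J(e_j) = -c_j\epsilon_j\Id$. Multiplying on the right by $J(e_j)$ and using $J(e_j)^2 = -\epsilon_j\Id$ yields $J(u) = c_j J(e_j)$ for every index $j$. The injectivity of $J$ on $\mathbb{R}^{r,s}$ follows directly from the anticommutation relation: if $J(u_0)=0$, then $0 = -2(u_0,v)_{r,s}\Id$ for all $v$, forcing $u_0 = 0$ by non-degeneracy. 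Equating $c_j J(e_j) = c_i J(e_i)$ for $i \ne j$ (which is possible as soon as $r+s \geq 2$, the case of interest) gives $c_i e_i = c_j e_j$, and linear independence of the basis forces $c_i=c_j=0$. Hence $u=0$ and $J(u)=0$, so $\mathfrak{Z}(W) = \{0\}$.
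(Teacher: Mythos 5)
Your proof is correct, and while it runs on the same engine as the paper's --- the Clifford anticommutation relation $J(u)J(v)+J(v)J(u)=-2(u,v)_{r,s}\Id$ --- it is organised differently at both stages. For the triple-system property the paper expands everything in an orthonormal basis and does a case analysis on the index pattern of $[J_{Z_j},[J_{Z_k},J_{Z_l}]]$ (all indices distinct, $j=k$, $k=l$), whereas you derive the single closed-form identity $[J(u),[J(v),J(w)]]=4\bigl((u,w)_{r,s}J(v)-(u,v)_{r,s}J(w)\bigr)$, which subsumes all of those cases at once and makes membership in $W=J(\mathbb{R}^{r,s})$ manifest; I checked the push-through computation and it is right. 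For the triviality of the centre the paper splits into the cases $\langle Z,Z\rangle_{r,s}\neq 0$ and $\langle Z,Z\rangle_{r,s}=0$ and argues by invertibility of the operators $J_Z$, $J_{Z'}$ to reach a contradiction; you instead extract from $[J(u),J(e_j)]=0$ the identity $J(u)=c_jJ(e_j)$ for every basis index $j$ and finish with injectivity of $J$ on $\mathbb{R}^{r,s}$ together with linear independence, which avoids the null/non-null dichotomy altogether and is arguably cleaner. Both arguments implicitly require $r+s\geq 2$ (for $r+s=1$ the space $W$ is one-dimensional abelian and equals its own centre); you flag this explicitly, while the paper only does so implicitly when it selects a nonzero $Z'$ orthogonal to $Z$. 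No gaps.
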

\begin{proof}
First we show that the vector space $W$ is a Lie triple system. For any $X_1, X_2, X_3 \in W$, with $X_i= \sum_{j=1}^{r+s}\lambda_{ij} J_{Z_j}$, $\lambda_{ij} \in \mathbb{R}$, where $\{Z_1, \dotso, Z_{r+s} \}$ is an orthonormal basis of $\mathbb R^{r,s}$, it follows that 
\begin{eqnarray}
[X_1, [ X_2, X_3]]= \sum_{j,k,l=1}^{r+s}\lambda_{1j} \lambda_{2k} \lambda_{3l} [J_{Z_j}, [ J_{Z_k} , J_{Z_l} ]].
\end{eqnarray}
	If we prove that $[J_{Z_j}, [ J_{Z_k}, J_{Z_l}]] \in W$ for all $j,k,l \in \{1, \dotso, r+s\}$, then it will follow that $[X_1, [ X_2, X_3]] \in W$. We recall that $J_{Z_j}J_{Z_k}=-J_{Z_k}J_{Z_j}$ for all $j \not = k$. If all indices $j,k,l$ are different, then
\begin{eqnarray*}
[J_{Z_j}, [ J_{Z_k}, J_{Z_l}]] &=& [J_{Z_j}, J_{Z_k}J_{Z_l}] - [J_{Z_j}, J_{Z_l}J_{Z_k}] 
= J_{Z_j}J_{Z_k}J_{Z_l}- J_{Z_k} J_{Z_l} J_{Z_j} 
\\ &-& J_{Z_j} J_{Z_l}J_{Z_k} + J_{Z_l} J_{Z_k} J_{Z_j}
=
0 \in W.
\end{eqnarray*} 
If $j=k$, then
$
[J_{Z_j}, [ J_{Z_j}, J_{Z_l} ]] = -4\langle Z_j, Z_j \rangle_{r,s} J_{Z_l} \in W$.   
If $k=l$ or $j=k=l$, then $[J_{Z_j}, [ J_{Z_k}, J_{Z_k}]]=0 \in W$. So we conclude that $W=J(\mathbb R^{r,s})$ is a Lie triple system. 

Let us show that the centre of $W$ defined by~(\ref{eq:centre_triple})
is trivial.
For any $Z,Z^{'} \in \mathbb R^{r,s}$ we obtain
\begin{equation*}
[ J_{Z}, J_{Z'} ] = J_{Z}J_{Z'} - J_{Z'}J_{Z} = 
\begin{cases} 
2 J_{Z}J_{Z'} & \text{ if } \langle Z, Z' \rangle_{r,s} =0, 
\\
 -2J_{Z'} J_{Z} + \langle Z, Z' \rangle_{r,s} \Id_{V}, & \text{ if  } \langle Z, Z' \rangle_{r,s} \not =0. 
 \end{cases}
\end{equation*} 
Let us assume that the centre $\mathfrak Z(W)$ is non-trivial and that there is  $Z \in \mathbb R^{r,s}$, $Z \not =0$, such that $[J_{Z}, J_{Z'}] =0$ for all $Z' \in \mathbb R^{r,s}$. There are two possible cases $\langle Z, Z \rangle_{r,s} \not = 0$, and $\langle Z, Z \rangle_{r,s} = 0$.

{\it Case $\langle Z, Z \rangle_{r,s} \not = 0$}.  Then $J_{Z}^2=-\langle Z, Z \rangle_{r,s} \Id_V$ implies that $J_{Z}$ is invertible. The orthogonal complement to $\spn\{Z\}$ is a non-degenerate scalar product space, and there is $Z' \in (\spn\{Z\})^{\bot}$, such that $\langle Z',Z' \rangle_{r,s} \not =0 $. Then $J_{Z'}$ is also invertible and so is $J_{Z}J_{Z'}$, that  yields $J_{Z}J_{Z'} \not =0$. It follows that $[J_{Z}, J_{Z'}] = 2 J_{Z}J_{Z'} \not =0$, which is a contradiction to the assumption that $J_{Z} \in \mathfrak Z(W)$ with $Z \not =0$. 

{\it Case $\langle Z,Z \rangle_{r,s} = 0$}. First we notice that $J_{Z}$ can not be invertible since $J_{Z}^2=0$. Let $Z'$ be an element of $\mathbb R^{r,s}$ such that $\langle Z,Z' \rangle_{r,s} \not =0$, which exists because $\langle .\,, . \rangle_{r,s}$ is non-degenerate. Then, since $J_{Z}\in\mathfrak Z(W)$, we obtain
$$[J_{Z}, J_{Z'}]= -2J_{Z'}J_{Z} + \langle Z, Z' \rangle_{r,s} \Id_{V} =0,$$
which is equivalent to $ J_{Z'}J_{Z}=2\langle Z,Z'\rangle_{r,s}\Id_{V}$. But this implies that $J_{Z}$ is invertible with the inverse $(2\langle Z,Z'\rangle_{r,s})^{-1}J_{Z'}$. We again come to the contradiction.
\end{proof}
\begin{proposition}\label{cor:tr_Cl}
Let $W$ be a Lie triple system of $\so(l,l)$ defined by a representation of the Clifford algebra. Then 
$
\mathcal L=W+[W,W]=[\mathcal L,\mathcal L]$.
\end{proposition}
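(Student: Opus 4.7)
The conclusion to prove is $\mathcal L=[\mathcal L,\mathcal L]$, and since $[\mathcal L,\mathcal L]\subseteq\mathcal L$ is automatic, the task reduces to showing $\mathcal L\subseteq[\mathcal L,\mathcal L]$. Because $[W,W]\subseteq[\mathcal L,\mathcal L]$ trivially, the crux is to prove $W\subseteq[\mathcal L,\mathcal L]$, after which the decomposition $\mathcal L=W+[W,W]$ forces the desired equality. One is tempted to invoke Proposition~\ref{prop:main} item~1 directly, since $W$ has trivial centre by Proposition~\ref{prop:tr_Cl}; however this requires an $\ad$-invariant inner product on $\mathcal L$, and because $\mathcal L\subset\so(l,l)$ sits inside a simple non-compact Lie algebra, no canonical positive-definite such form is available. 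The plan is therefore to bypass Proposition~\ref{prop:main} and give a direct constructive argument using the Clifford relations.

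The key tool is the double bracket identity that was already established inside the proof of Proposition~\ref{prop:tr_Cl}. Fix an orthonormal basis $\{Z_1,\ldots,Z_{r+s}\}$ of $\mathbb R^{r,s}$ and use the anticommutation $J_{Z_j}J_{Z_l}=-J_{Z_l}J_{Z_j}$ for $j\neq l$ together with $J_{Z_j}^2=-\langle Z_j,Z_j\rangle_{r,s}\Id_V$ to obtain
\[
[J_{Z_j},[J_{Z_j},J_{Z_l}]]=-4\langle Z_j,Z_j\rangle_{r,s}\,J_{Z_l}, \qquad j\neq l.
\]
Since $\langle Z_j,Z_j\rangle_{r,s}=\pm 1\neq 0$, I can solve this identity for $J_{Z_l}$ and exhibit each basis generator of $W$ as a double bracket of elements of $W$. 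Hence $J_{Z_l}\in [W,[W,W]]\subseteq [W,W]+[[W,W],W]\subseteq [\mathcal L,\mathcal L]$ for every $l$, so $W\subseteq [\mathcal L,\mathcal L]$ and consequently $\mathcal L=W+[W,W]\subseteq [\mathcal L,\mathcal L]$.

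The only apparent obstacle is the need for an index $j\neq l$, that is, $r+s\geq 2$. This is not a genuine obstruction: if $r+s=1$, then $\dim W=1$ and $W$ is abelian, whence $\mathfrak Z(W)=W\neq 0$, contradicting the trivial-centre statement of Proposition~\ref{prop:tr_Cl}. Consequently the case $r+s=1$ never occurs under the hypotheses of the corollary, and the above argument covers all remaining cases.
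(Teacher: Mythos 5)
Your proof is correct, and it takes a genuinely different route from the paper's. The paper's own argument is one line: it combines Proposition~\ref{prop:tr_Cl} (the centre of $W$ is trivial) with item~1 of Proposition~\ref{prop:main}, which gives $\mathcal L=\mathfrak Z(W)\oplus[\mathcal L,\mathcal L]$ and hence the claim. You instead re-derive the only nontrivial inclusion, $W\subseteq[\mathcal L,\mathcal L]$, directly from the Clifford relations by exhibiting each generator $J_{Z_l}$ as a scalar multiple of the double bracket $[J_{Z_j},[J_{Z_j},J_{Z_l}]]$ with $j\neq l$; since $[J_{Z_j},J_{Z_l}]\in[W,W]\subseteq\mathcal L$, this puts $J_{Z_l}$ in $[\mathcal L,\mathcal L]$. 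What your route buys is independence from the $\ad$-invariant \emph{inner} product that Proposition~\ref{prop:main} presupposes, and your hesitation on that point is well founded: for $s>0$ the algebra $\mathcal L$ is a non-compact semisimple subalgebra of $\so(l,l)$ (your own identity shows $\ad_{J_Z}$ has real eigenvalues $\pm 2$ on $J(\spn\{Z\}^{\perp})$ when $\langle Z,Z\rangle_{r,s}=-1$, so it cannot be skew-symmetric with respect to any positive-definite form), hence no $\ad$-invariant inner product on $\mathcal L$ exists at all and the paper's citation of Proposition~\ref{prop:main} invokes a vacuous hypothesis precisely in the pseudo case; your computation is self-contained and avoids this. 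The only cost is the side condition $r+s\geq 2$, and your disposal of $r+s=1$ is consistent with the paper's conventions, since Proposition~\ref{prop:tr_Cl} itself tacitly assumes $r+s\geq 2$ (its proof needs a nonzero orthogonal complement of $\spn\{Z\}$ in $\mathbb R^{r,s}$), so both statements must be read with that restriction.
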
 
\begin{proof}
It was shown in Proposition~\ref{prop:tr_Cl} that the centre of $W$ is trivial. Then applying item~1 of Proposition~\ref{prop:main} we finish the proof.
\end{proof}

Working with a subalgebra $\mathcal L$ of $\so(p,q)$ we use the following definition of the transpose: $D^{\mathbf t}=-\eta_{p,q} D\eta_{p,q}$, $\eta_{p,q}=\diag(I_p,-I_q)$. It is not true in general, that  $D\in \mathcal L$ implies $D^{\mathbf t}\in \mathcal L$. 
Any vector subspace $\mathcal C\subset\so(m)$ is closed under  transposition, because  $C\in \mathcal C$ implies $C^{\mathbf t}=-C\in \mathcal C$. It is not generally true for vector subspaces of $\so(p,q)$. They are only closed under the $\eta_{p,q}$-transposition: $\mathcal D^{\eta_{p,q}}=\eta_{p,q}\mathcal D^{\mathbf t}\eta_{p,q}=-\mathcal D$. 

\begin{proposition}\label{prop:D}
Let $\mathcal C\subset\orth(m)$ and $\eta_{p,q}=\diag(I_p,-I_q)$. Define 
$$
\mathcal D_1=\mathcal C\eta_{p,q}=\{C\eta_{p,q}\mid\ C\in \mathcal C\}\subset\so(p,q),
$$
$$
\mathcal D_2=\eta_{p,q}\mathcal C=\{\eta_{p,q} C\mid\ C\in \mathcal C\}\subset\so(p,q).
$$ Then, if the indefinite scalar product $\langle.\,,.\rangle_{\so(p,q)}$ is non-degenerate on $\mathcal D_1$, then it is non-degenerate on $\mathcal D_2$ and on $\mathcal D_1+\mathcal D_2$. Moreover, the space $\mathcal D_1+\mathcal D_2$ is invariant under transposition and involution
$$
\begin{array}{llcccc}
&\theta\colon &\so(p,q)&\to &\so(p,q)
\\
&&X&\mapsto &\eta_{p,q} X\eta_{p,q} 
\end{array}
$$
\end{proposition}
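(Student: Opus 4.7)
The cornerstone of my plan is the map $\theta$ from the statement, $\theta(X)=\eta_{p,q}X\eta_{p,q}$ on $\so(p,q)$. I will begin by recording three basic facts: (i) $\theta$ is a well-defined involution of $\so(p,q)$, since $\eta_{p,q}^2=I_m$ and $\theta(X)^{\eta_{p,q}}=\eta_{p,q}\theta(X)^{\mathbf t}\eta_{p,q}=X^{\mathbf t}$ preserves the defining relation of $\so(p,q)$; (ii) $\theta$ is an isometry of $\langle.\,,.\rangle_{\so(p,q)}$, because $-\tr(\eta_{p,q}X\eta_{p,q}\cdot\eta_{p,q}Y\eta_{p,q})=-\tr(XY)$ by cyclicity of the trace and $\eta_{p,q}^2=I_m$; and (iii) $\theta$ exchanges $\mathcal{D}_1$ and $\mathcal{D}_2$, directly from $\theta(C\eta_{p,q})=\eta_{p,q}C$ and $\theta(\eta_{p,q}C)=C\eta_{p,q}$. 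Combining (ii) and (iii), $\theta|_{\mathcal{D}_1}\colon\mathcal{D}_1\to\mathcal{D}_2$ is a linear isometry of scalar product spaces, so the assumed non-degeneracy of $\langle.\,,.\rangle_{\so(p,q)}|_{\mathcal{D}_1}$ passes at once to $\mathcal{D}_2$.

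For the sum $\mathcal{D}_1+\mathcal{D}_2$, I will exploit that (iii) makes the sum $\theta$-invariant, so it decomposes into the $\pm1$-eigenspaces of $\theta$,
$$
\mathcal{D}_1+\mathcal{D}_2=\mathcal{D}^+\oplus\mathcal{D}^-,
$$
and this direct sum is orthogonal with respect to $\langle.\,,.\rangle_{\so(p,q)}$ by (ii). The eigenspaces admit a transparent description: $\theta(X)=X$ is equivalent to $X\eta_{p,q}=\eta_{p,q}X$, i.e.\ to $X$ being block-diagonal with blocks of sizes $p$ and $q$, so $\mathcal{D}^+\subset\so(p)\oplus\so(q)$; analogously $\mathcal{D}^-$ lies in the off-diagonal part of $\so(p,q)$. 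The trace-form calculation already carried out in Section~\ref{sec:pseudo_metric} shows that $\langle.\,,.\rangle_{\so(p,q)}$ restricts to a positive-definite form on $\so(p)\oplus\so(q)$ and to a negative-definite form on its off-diagonal complement. Since every subspace of a definite scalar product space is non-degenerate, both $\mathcal{D}^+$ and $\mathcal{D}^-$ are non-degenerate, and therefore so is their orthogonal sum $\mathcal{D}_1+\mathcal{D}_2$.

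It remains to confirm the two invariance statements. The $\theta$-invariance of $\mathcal{D}_1+\mathcal{D}_2$ was already used, and the transposition-invariance follows from $C^{\mathbf t}=-C$ for $C\in\so(m)$: indeed, $(C\eta_{p,q})^{\mathbf t}=\eta_{p,q}C^{\mathbf t}=-\eta_{p,q}C\in\mathcal{D}_2$ and, symmetrically, $(\eta_{p,q}C)^{\mathbf t}=-C\eta_{p,q}\in\mathcal{D}_1$, so transposition also exchanges the two subspaces and stabilises their sum. I do not anticipate a serious obstacle: the only delicate point is the eigenspace characterisation of $\mathcal{D}^\pm$ via (anti-)commutation with $\eta_{p,q}$, together with the cross-reference to the definiteness of the block-diagonal versus off-diagonal parts of $\so(p,q)$. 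The argument incidentally shows that non-degeneracy of $\mathcal{D}_1+\mathcal{D}_2$ follows from the intrinsic structure of $\so(p,q)$ alone; the hypothesis on $\mathcal{D}_1$ is really only used to transfer non-degeneracy to $\mathcal{D}_2$ through the isometry $\theta$.
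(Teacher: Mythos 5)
Your argument is correct, and for the parts where it overlaps with the paper it follows the same route: you transfer non-degeneracy from $\mathcal D_1$ to $\mathcal D_2$ via the identity $\theta(C\eta_{p,q})=\eta_{p,q}C$ together with the fact that $\theta$ preserves the trace form, and you obtain the transposition- and $\theta$-invariance of $\mathcal D_1+\mathcal D_2$ from $C^{\mathbf t}=-C$, exactly as the paper does. Where you genuinely add something is the non-degeneracy of $\mathcal D_1+\mathcal D_2$: the paper's proof in fact never addresses this clause of the statement, whereas you supply a clean argument by splitting the $\theta$-invariant subspace $\mathcal D_1+\mathcal D_2$ into the $\pm1$-eigenspaces of $\theta$, observing that these land in the block-diagonal part $\so(p)\oplus\so(q)$ (where the trace form is positive definite) and the off-diagonal part (where it is negative definite) respectively, and that the two eigenspaces are orthogonal because $\theta$ is an isometry. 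This is the standard Cartan-involution mechanism that the paper only invokes later, in Section~\ref{sec:lts}, and your closing remark is accurate: it shows that any $\theta$-invariant subspace of $\so(p,q)$ is automatically non-degenerate, so the hypothesis on $\mathcal D_1$ is needed only to handle $\mathcal D_2$ (and there only because $\mathcal D_2$ alone need not be $\theta$-invariant). All the individual steps check out: the eigenspace characterisation via commutation or anticommutation with $\eta_{p,q}$, the orthogonality of the eigenspaces, and the definiteness computation $-\tr(X^2)=-\tr(a_p^2+a_q^2)-2\tr(bb^{\mathbf t})$ are all consistent with what the paper records in its discussion of $\so(p,q)$.
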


\begin{proof} We can show that the vectors $D_i=\eta_{p,q}C_i\in \mathcal D_2$, are linearly independent if the vectors $C_i\in \mathcal C$ are linearly independent by the same arguments as in Lemma~\ref{lem:lin_indep}.
Observe that the equalities
$$
\theta(\mathcal D_1)=\eta_{p,q}\mathcal D_1\eta_{p,q}=\eta_{p,q}\mathcal C\eta_{p,q}^2=\eta_{p,q}\mathcal C=\mathcal D_2
$$
imply $\mathcal D_1^{\mathbf t}=-\theta(\mathcal D_1)=-\mathcal D_2$. The space $\mathcal D_1+\mathcal D_2$ is invariant under  the transposition and  involution $\theta$, because 
$$
(\mathcal D_1+\mathcal D_2)^{\mathbf t}=-(\mathcal D_1+\mathcal D_2),\qquad
\theta(\mathcal D_1+\mathcal D_2)=\mathcal D_1+\mathcal D_2.
$$

If the metric $\langle .\,,.\rangle_{\so(p,q)}$ is non-degenerate on $\mathcal D_1$, then for any $X\in\mathcal D_1$, there is $Y\in\mathcal D_1$, such that 
\begin{equation*}
\langle X,Y\rangle_{\so(p,q)}=-\tr(XY)\neq 0.
\end{equation*}
Then,
$$
\langle \eta_{p,q} X\eta_{p,q},\eta_{p,q} Y\eta_{p,q}\rangle_{\so(p,q)}=-\tr(\eta_{p,q} XY\eta_{p,q})=-\tr(XY)\neq 0
$$
and $\langle .\,,.\rangle_{\so(p,q)}$ is non-degenerate on $\mathcal D_2$.
\end{proof}

\begin{corollary}
Under the assumptions of Proposition~\ref{prop:D}, the subspaces $\mathcal D_1$ and $\mathcal D_2$ are isometric.
\end{corollary}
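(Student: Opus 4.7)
The plan is to exhibit an explicit linear isometry between $\mathcal D_1$ and $\mathcal D_2$, namely the restriction of the involution $\theta$ from Proposition~\ref{prop:D}. By that proposition we already know $\theta(\mathcal D_1)=\mathcal D_2$, so $\theta|_{\mathcal D_1}\colon\mathcal D_1\to\mathcal D_2$ is a well-defined linear bijection (its inverse is $\theta|_{\mathcal D_2}$, since $\theta^2=\Id$ because $\eta_{p,q}^2=I_m$).

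It then remains to verify that $\theta$ preserves the scalar product $\langle\cdot\,,\cdot\rangle_{\so(p,q)}=-\tr(\cdot\,\cdot)$. For arbitrary $X,Y\in\mathcal D_1$ one computes
\begin{equation*}
\langle\theta(X),\theta(Y)\rangle_{\so(p,q)}=-\tr\bigl(\eta_{p,q}X\eta_{p,q}\cdot\eta_{p,q}Y\eta_{p,q}\bigr)=-\tr(\eta_{p,q}XY\eta_{p,q})=-\tr(XY)=\langle X,Y\rangle_{\so(p,q)},
\end{equation*}
where the second equality uses $\eta_{p,q}^2=I_m$ and the third uses the cyclic invariance of the trace. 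This finishes the argument.

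There is essentially no obstacle: the whole content is the observation that $\theta$ is a trace-preserving involution. The only subtlety is making sure the image is indeed $\mathcal D_2$ and that non-degeneracy transfers correctly, but both of these have already been established in Proposition~\ref{prop:D}, so the corollary is a one-line consequence.
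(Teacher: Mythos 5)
Your proposal is correct and follows exactly the paper's own argument: the isometry is the restriction of the involution $\theta(X)=\eta_{p,q}X\eta_{p,q}$ to $\mathcal D_1$, whose image is $\mathcal D_2$ by Proposition~\ref{prop:D}, and the computation $-\tr(\theta(D)\theta(D'))=-\tr(\eta_{p,q}DD'\eta_{p,q})=-\tr(DD')$ is precisely the one the paper gives. Nothing is missing.
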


\begin{proof} 
Since $\theta(\mathcal D_1)=\mathcal D_2$, then
$$
-\tr(DD')=-\tr(\eta_{p,q} D \eta_{p,q} \eta_{p,q} D'\eta_{p,q})=-\tr(\theta(D)\theta(D')),
$$
 for $D,D'\in\mathcal D_1$.
\end{proof}

The Lie triple systems $W$ associated with a representation of a Clifford algebra form simple or semisimple  subalgebras $\mathcal L=W+[W,W]$ of $\so(l,l)$. Before we formulate a precise statement and a proof we show the following lemma.

\begin{lemma}\label{lem:Lbasis}
Let a Lie triple system $W$ be associated with a representation of a Clifford algebra $J\colon \Cl_{r,s}\to\so(l,l)$. Then the Lie algebra $\mathcal L=W+[W,W]$ is generated by the basis 
$$
\{J_{Z_i}, J_{Z_j}J_{Z_k},\ i,j,k,=1,\ldots,r+s, j<k\},
$$
where $Z_1,\ldots,Z_{r+s}$ is an orthonormal basis of $\mathbb R^{r,s}.$
\end{lemma}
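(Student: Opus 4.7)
The plan is to reduce everything to elementary computations with the Clifford relation applied to an orthonormal basis of $\mathbb{R}^{r,s}$.

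First, I would show that the given set spans $\mathcal L=W+[W,W]$. By definition $W=\spn\{J_{Z_1},\dots,J_{Z_{r+s}}\}$, so $[W,W]$ is spanned by the commutators $[J_{Z_j},J_{Z_k}]$. The defining Clifford relation $J_{Z_j}J_{Z_k}+J_{Z_k}J_{Z_j}=-2\langle Z_j,Z_k\rangle_{r,s}\Id_V$ combined with the orthonormality $\langle Z_j,Z_k\rangle_{r,s}=0$ for $j\ne k$ forces the anticommutativity $J_{Z_j}J_{Z_k}=-J_{Z_k}J_{Z_j}$. Consequently $[J_{Z_j},J_{Z_k}]=2J_{Z_j}J_{Z_k}$ when $j\ne k$, whereas $[J_{Z_j},J_{Z_j}]=0$. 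Since $J_{Z_k}J_{Z_j}=-J_{Z_j}J_{Z_k}$, only the indices $j<k$ give independent products, and we obtain $[W,W]=\spn\{J_{Z_j}J_{Z_k}:\ j<k\}$. Combined with $W$, this yields that $\mathcal L$ is spanned by the family $\{J_{Z_i}\}_{i=1}^{r+s}\cup\{J_{Z_j}J_{Z_k}:\ j<k\}$.

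Second, I would establish linear independence of this spanning set. The representation $J$ extends to a unital algebra homomorphism $\widetilde J\colon \Cl_{r,s}\to\End(V)$, and the canonical basis $\{1,Z_i,Z_jZ_k\;(j<k),\dots\}$ of $\Cl_{r,s}$ separates into even and odd parts under the grading automorphism $\alpha\colon Z\mapsto -Z$. The single-index elements $J_{Z_i}$ lie in the odd eigenspace of $\alpha$ on the image, while the two-fold products $J_{Z_j}J_{Z_k}$ lie in the even eigenspace, so a linear dependence among the full family would split independently into the two parts. Within the odd part, I would use the trace pairing $(A,B)\mapsto -\tr(AB)$: cyclicity and anticommutativity give $\tr(J_{Z_i}J_{Z_k})=0$ for $i\ne k$, while $J_{Z_i}^2=-\langle Z_i,Z_i\rangle_{r,s}\Id_V$ shows $\tr(J_{Z_i}^2)\ne 0$, proving linear independence of $\{J_{Z_i}\}$. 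Within the even part, the same pairing applied to the products $J_{Z_j}J_{Z_k}$ yields $\tr\bigl((J_{Z_j}J_{Z_k})(J_{Z_{j'}}J_{Z_{k'}})\bigr)=\pm\dim V\cdot \langle Z_j,Z_j\rangle\langle Z_k,Z_k\rangle$ when $\{j,k\}=\{j',k'\}$ and zero otherwise, again via repeated applications of anticommutativity and the squaring relation.

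The main obstacle is the independence argument; the spanning step is immediate from the Clifford relation, but getting a clean independence proof requires either the grading/trace pairings sketched above or, equivalently, a direct invocation of the structure of the minimal admissible $\Cl_{r,s}$-module ensuring that $\widetilde J$ does not identify degree-$1$ and degree-$2$ monomials on the canonical basis of the Clifford algebra. Once both the spanning and the independence are in place, the listed family is a basis of $\mathcal L$ as asserted.
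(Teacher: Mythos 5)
Your spanning argument is the same as the paper's and is fine: orthonormality plus the Clifford relation gives $[J_{Z_j},J_{Z_k}]=2J_{Z_j}J_{Z_k}$ for $j\neq k$, so $\mathcal L$ is spanned by the listed family. The gap is in the linear independence step, and it sits exactly where you yourself flag "the main obstacle". Your plan splits a putative dependence relation into an odd and an even part by appealing to the grading automorphism $\alpha\colon Z\mapsto -Z$ of $\Cl_{r,s}$. But $\alpha$ is an automorphism of the abstract Clifford algebra, not of $\End(V)$; its eigenspace decomposition descends to the image $\widetilde J(\Cl_{r,s})$ only if the representation does not identify odd and even monomials, which is precisely the statement you need to prove. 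Concretely, whenever a triple product $Z_iZ_jZ_k$ is sent to a scalar multiple of $\Id_V$ (this happens, e.g., for the irreducible modules of $\Cl_{3,0}$, and can happen for $\Cl_{1,2}$ depending on the module, since in those signatures the central volume element squares to $+1$), one gets $J_{Z_k}$ proportional to $J_{Z_i}J_{Z_j}$ — exactly the cross-degree dependence your argument must exclude. The trace pairing does not rescue this: while it correctly shows that $\{J_{Z_i}\}$ is independent and that $\{J_{Z_j}J_{Z_k}\}_{j<k}$ is independent (your Gram-matrix computations there are right, including the vanishing of $\tr(J_{Z_j}J_{Z_k}J_{Z_{j'}}J_{Z_{k'}})$ for disjoint index pairs, by cyclicity plus an odd number of anticommutations), it does not separate degree $1$ from degree $2$, because $\tr(J_{Z_i}J_{Z_j}J_{Z_k})$ for three distinct indices is invariant under the cyclic shift (an even number of swaps) and so need not vanish. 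Deferring this to "the structure of the minimal admissible module" leaves the essential content unproved.

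For comparison, the paper avoids any appeal to the grading and instead runs an induction on the number of generators: assuming the family built from $Z_1,\ldots,Z_d$ is independent, it writes a hypothetical relation expressing $J_{Z_{d+1}}$ (and then $J_{Z_t}J_{Z_{d+1}}$) in terms of the old family, brackets it repeatedly with the generators $J_{Z_s}$ and with products $J_{Z_l}J_{Z_m}$, and uses the invertibility $J_{Z}^2=\pm\Id_V$ to force the coefficients to vanish family by family. That commutator bookkeeping is exactly the replacement for the grading claim you are missing, so to complete your proof you would either have to carry out such an explicit elimination or prove directly that $\spn\{J_{Z_i}\}\cap\spn\{J_{Z_j}J_{Z_k}\}=\{0\}$ for the admissible modules under consideration.
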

\begin{proof}
Recall that the representations of the Clifford algebra $\Cl_{r,s}$ satisfy the relations
$$
J_{Z}J_{Z'}+J_{Z'}J_{Z}=-2\la Z,Z'\ra_{r,s}\Id_{\mathbb R^{l,l}}, \quad Z,Z'\in\mathbb R^{r,s}.
$$
Let $\{Z_1,\ldots,Z_{n}\}$, $n=r+s$, be an orthonormal basis of $\mathbb R^{r,s}$. Then the following commutation relations 
$$
[J_{Z_i},J_{Z_j}]=2J_{Z_i}J_{Z_j}, \quad [J_{Z_i},[J_{Z_i},J_{Z_j}]]=-4\la Z_i,Z_i\ra_{r,s}J_{Z_j},\quad [J_{Z_i},[J_{Z_j},J_{Z_k}]]=0,
$$
hold.
Thus, the Lie algebra 
$
\mathcal L=W+[W,W]$ is generated by the set $\{J_{Z_k},J_{Z_i}J_{Z_j}\}$, $i,j,k=1,\ldots,n=r+s$.

In order to show that $\{J_{Z_k},J_{Z_i}J_{Z_j}\}$ for $ i,j,k=1,\ldots,n$ with $i<j$ is a basis, we proceed by induction. Recall that $J_{Z_1}, \dotso,J_{Z_n}$ are orthogonal to each other, hence linear independent. The orthogonality we understand in the following sense
$$
\langle J_{Z_i}v,J_{Z_j}v\rangle_{l,l}=\langle v,v\rangle_{l,l}\langle Z_i,Z_j\rangle_{r,s}=0\quad\text{for any}\quad v\in \mathbb R^{l,l},\ \ i,j=1,\ldots,n=r+s,
$$
see discussions in Section~\ref{subseq:pseudoH}.

Let $r+s=2$, then we have  
$$
\langle J_{Z_1}v,J_{Z_1}J_{Z_2}v\rangle_{l,l}=\langle v,J_{Z_2}v\rangle_{l,l}\langle Z_1,Z_1\rangle_{r,s}=0\quad\text{for any}\quad v\in \mathbb R^{l,l}.
$$
Analogously $\langle J_{Z_2}v,J_{Z_1}J_{Z_2}v\rangle_{l,l}=0$ for any $v\in \mathbb R^{l,l}$. We conclude that
$J_{Z_1},J_{Z_2}$ are orthogonal to $J_{Z_1}J_{Z_2}$ and hence $\{J_{Z_1},J_{Z_2},J_{Z_1}J_{Z_2}\}$ is a linear independent system. 

Let $n=r+s \geq 3$. For the induction step we assume that we are given a set of linearly independent operators $\{J_{Z_k},J_{Z_i}J_{Z_j}\}$ for $ i,j,k=1,\ldots,d < n$ with $i<j$. We are adding one operator $J_{Z_{d+1}}\not=0$ with $\la Z_{d+1} \,, Z_{d+1} \ra = \pm 1$ to the set and prove that this is still a set of linearly independent operators. By contradiction, assume that there exist $\lambda_1, \dotso, \lambda_{d}, \mu_{1,2}, \dotso, \mu_{d-1,d} \in \mathbb{R}$ such that
\begin{equation}\label{eq:d1}
J_{Z_{d+1}}=\sum_{k=1}^{d} \lambda_k J_{Z_k} + \sum_{1\leq i<j\leq d} \mu_{i,j}J_{Z_i}J_{Z_j}.
\end{equation}
We calculate
\begin{eqnarray*}
0&=&[J_{Z_{d+1}} \,, J_{Z_{d+1}}]=2 \left (\sum_{k=1}^{d} \lambda_k J_{Z_k} \right ) J_{Z_{d+1}} ,
\end{eqnarray*}
and obtain that $\sum_{k=1}^{d} \lambda_k J_{Z_k}=0$ as $J_{Z_{d+1}}$ is invertible. It follows that $\lambda_1=\dotso=\lambda_{d}=0$ by the induction assumption. Substituting the values of $\lambda_k$ into~\eqref{eq:d1} we obtain 
$$J_{Z_{d+1}}=\sum_{1\leq i<j\leq d} \mu_{i,j} J_{Z_i}J_{Z_j}.$$
We choose now any pair of indices $l,m\in \{1, \dotso,d\}$ such that $l<m$ and calculate
\begin{eqnarray*}
0=[J_{Z_{d+1}} \,, J_{Z_{l}}J_{Z_m}]&=& 2\la Z_l \,, Z_l\ra \sum_{k \in \{1, \dotso, d\} \setminus \{l\}} \alpha_{k} \mu_{l,k} J_{Z_m}J_{Z_k} 
\\&+& 2\la Z_m \,, Z_m \ra \sum_{s=\{1, \dotso, d\} \setminus \{m\}} \beta_{s} \mu_{m,s} J_{Z_l}J_{Z_s},
\end{eqnarray*}
with $\alpha_k, \beta_s =\pm 1$. This implies that $\mu_{l,m}=0$ for all $l,m \in \{1, \dotso,d\}$, $l<m$, since the operators $\{J_{Z_i}J_{Z_j}, i,j \in \{1, \dotso,d\}, i<j\}$ are linearly independent by assumption of the induction. But then $J_{Z_{d+1}}=0$ in~\eqref{eq:d1}, which yields the contradiction. Thus we conclude that the operators $\{J_{Z_k},J_{Z_{d+1}},J_{Z_i}J_{Z_j}\}$ for $ i,j,k=1,\ldots,d$, $i<j$, are linearly independent. By this method we can add any operator of the form $J_{Z_{q}}$, $q=d+1,\ldots, n$, with $\la Z_q \,, Z_q \ra = \pm 1$, and obtain a set of linearly independent operators.

Now we assume that we are given a set 
$$\left \{J_{Z_k},J_{Z_{d+1}},J_{Z_i}J_{Z_j} \mid\  k,i,j \in \{1, \dotso,d\},\ i<j \right \}$$
of linearly independent elements and prove that adding an element of the form $J_{Z_t}J_{Z_{d+1}}$ with a fixed $t \in \{1, \dotso,d\}$, we obtain a new set 
$$\left \{J_{Z_k},J_{Z_{d+1}},J_{Z_i}J_{Z_j},J_{Z_t}J_{Z_{d+1}} \mid\  k,i,j \in \{1, \dotso,d\},\ i<j \right \}$$
of linearly independent operators. 
Assume that there exist $\lambda_1, \dotso, \lambda_{d+1}, \mu_{1,2}, \dotso, \mu_{d-1,d} \in \mathbb{R}$ such that
\begin{equation}\label{eq:dd1}
J_{Z_{t}}J_{Z_{d+1}}=\sum_{k=1}^{d+1} \lambda_k J_{Z_k} + \sum_{1\leq i<j\leq d} \mu_{i,j}J_{Z_i}J_{Z_j}.
\end{equation}
We calculate 
\begin{eqnarray*}
0=[J_{Z_{t}}J_{Z_{d+1}} \,, J_{Z_s}]&=&2\sum_{k \in \{1, \dotso,d+1\} \setminus \{s\}} \lambda_k J_{Z_k}J_{Z_s} - 2\sum_{i \in \{1, \dotso,s-1\}} \mu_{i,s} \la Z_s \,, Z_s \ra J_{Z_i} \\&+& 2\sum_{i \in \{s+1, \dotso,d\}} \mu_{i,s} \la Z_s \,, Z_s \ra J_{Z_i}
\end{eqnarray*}
for any $s \in \{1, \dotso,d\}\setminus\{t\}$ and arrive at
\begin{eqnarray}\label{lambdan+1}
-\lambda_{d+1} J_{Z_{d+1}}J_{Z_s}
&=& 
\sum_{k \in \{1, \dotso,d\} \setminus \{s\}} \lambda_k J_{Z_k}J_{Z_s} - \sum_{i \in \{1, \dotso,s-1\}} \mu_{i,s} \la Z_s, Z_s \ra J_{Z_i}\nonumber
\\
&+& 
\sum_{i \in \{s+1, \dotso,d\}} \mu_{i,s} \la Z_s \,, Z_s \ra J_{Z_i}.
\end{eqnarray}
If $\lambda_{d+1}=0$, then it follows that $\lambda_k=\mu_{i,s}=0$ for all $k \in \{1, \dotso,d\} \setminus \{s\}$, $i \in \{1, \dotso,d\} \setminus \{s\}$ by the induction assumption. Since $s$ was chosen arbitrarily we can continue the proof and assume that $\lambda_{d+1} \not=0$. Then, for any $a \in \{1, \dotso, d\} \setminus \{s\}$
\begin{eqnarray*}
0&=&
-\lambda_{d+1} [J_{Z_{d+1}}J_{Z_s}, J_{Z_{a}}]
=
-2\lambda_a \la Z_a, Z_a \ra J_{Z_s} 
\\
&- & 2\sum_{i \in \{1, \dotso,s-1\} \setminus \{a\}} \mu_{i,s} \la Z_s, Z_s \ra J_{Z_i}J_{Z_a}
+ 
2\sum_{i \in \{s+1, \dotso,d\}\setminus \{a\}} \mu_{i,s} \la Z_s, Z_s \ra J_{Z_i}J_{Z_a}.
\end{eqnarray*}
As $\{J_{Z_s}, J_{Z_i}J_{Z_a} \vert i \in \{1, \dotso,d\}\setminus \{a,s\} \}$ are linearly independent, it follows that $\lambda_a=\mu_{i,s}=0$ for all $s \in \{1, \dotso,d\}\setminus\{t\}$, for any choice of  $a \in \{1, \dotso,d\} \setminus \{s\}$, and for $i \in \{1, \dotso,d\}\setminus \{a,s\}$. Hence, adding the element $J_{Z_t}J_{Z_{d+1}}$, $t=1,\ldots,d+1$, we again obtain a linear independent set.
This implies that
$$\{J_{Z_k},J_{Z_i}J_{Z_j} \vert \quad i,j,k=1,\ldots,n, \quad i<j \}$$ 
is a basis of $\mathcal L$.
\end{proof}

\begin{theorem}\label{th:trans}
Let $J\colon\Cl_{r,s}\to\so(l,l)$ be a representation, and let $W=J(\mathbb R^{r,s})\subset\so(l,l)$. Then the Lie algebra $\mathcal L=W+[W,W]$ is simple if $(r,s) \not \in \{(3,0),(1,2)\}$, and it is semisimple if $(r,s) \in \{(3,0),(1,2)\}$.  
\end{theorem}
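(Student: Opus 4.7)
The plan is to exhibit an explicit Lie algebra isomorphism $\mathcal L\cong\so(r+1,s)$ and then invoke the classical classification of real orthogonal Lie algebras.

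By Lemma~\ref{lem:Lbasis} the set
$$
\mathcal B=\{\,J_{Z_i},\ J_{Z_j}J_{Z_k}\mid 1\leq i\leq r+s,\ 1\leq j<k\leq r+s\,\}
$$
is a basis of $\mathcal L$, and its cardinality $(r+s)+\binom{r+s}{2}=\binom{r+s+1}{2}$ equals $\dim\so(r+1,s)$; this dimensional match suggests the correct signature. To realise the isomorphism, I would consider the Clifford algebra $\Cl_{r+1,s}$ generated by an orthonormal basis $\tilde e_0,\tilde e_1,\ldots,\tilde e_{r+s}$ of $\mathbb R^{r+1,s}$ with $\tilde e_0$ chosen spacelike, so that $\tilde e_0^{\,2}=-\mathds 1$ and $\tilde e_i^{\,2}=J_{Z_i}^{\,2}=-\langle Z_i,Z_i\rangle_{r,s}\mathds 1$ for $i\geq 1$. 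It is classical that the linear span of the degree-two monomials $\tilde e_a\tilde e_b$, $0\leq a<b\leq r+s$, is a Lie subalgebra of $\Cl_{r+1,s}$ under the commutator, and this subalgebra is the standard realisation of $\so(r+1,s)$.

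I would then define the linear map $\Phi\colon\mathcal L\to\so(r+1,s)$ on the basis $\mathcal B$ by
$$
\Phi(J_{Z_i})=\tilde e_0\tilde e_i,\qquad \Phi(J_{Z_j}J_{Z_k})=\tilde e_j\tilde e_k,
$$
and verify that $\Phi$ preserves the bracket. There are essentially four types of pairs to examine: $(\tilde e_0\tilde e_i,\tilde e_0\tilde e_j)$ with $i\neq j$, $(\tilde e_0\tilde e_i,\tilde e_i\tilde e_j)$, $(\tilde e_i\tilde e_j,\tilde e_j\tilde e_k)$ with $i,j,k$ pairwise distinct, and the disjoint-support case, in which the bracket vanishes on both sides. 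Each identity reduces to a short Clifford computation that mirrors term-by-term the corresponding identity among the $J$-operators already isolated in the proof of Lemma~\ref{lem:Lbasis}, precisely because $\tilde e_i^{\,2}=J_{Z_i}^{\,2}$ and the anti-commutation relations coincide. Since $\Phi$ carries a basis to a basis and preserves brackets, it is a Lie algebra isomorphism.

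The conclusion then follows from the classification of the real orthogonal Lie algebras: $\so(p,q)$ is simple whenever $p+q\geq 3$, with the only exceptions occurring in signature four, namely $\so(4,0)\cong\so(3)\oplus\so(3)$ and $\so(2,2)\cong\so(2,1)\oplus\so(2,1)$, which are semisimple but decompose into the sum of two simple ideals. Translating through $(p,q)=(r+1,s)$, the two relevant exceptional cases are exactly $(r,s)=(3,0)$ and $(r,s)=(1,2)$, while in all the remaining cases (with $r+s\geq 2$) the Lie algebra $\mathcal L\cong\so(r+1,s)$ is simple. The main obstacle I anticipate is the careful bookkeeping of signs in the four bracket computations needed to show that $\Phi$ is a homomorphism; once these are verified, the remainder of the argument is a direct appeal to the classification.
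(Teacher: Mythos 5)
Your argument is correct, but it takes a genuinely different route from the paper's. The paper proves the theorem by a direct ideal analysis: starting from a nonzero ideal $\h\subseteq\mathcal L$ it examines, case by case, which kind of element $\h$ contains (a vector $J_Z$, a pure bivector $\sum\mu_{ij}J_{Z_i}J_{Z_j}$, or a mixed combination) and brackets repeatedly with the generators of Lemma~\ref{lem:Lbasis} until the whole basis lies in $\h$; the signatures $(3,0)$ and $(1,2)$ emerge as exactly those for which the mixed elements $h^{\pm}=J_{Z_1}\pm J_{Z_2}J_{Z_3}$ generate proper ideals $\h_{\pm}$ with $\mathcal L=\h_+\oplus\h_-$. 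You instead pin down the isomorphism type of $\mathcal L$ outright. Your map $\Phi$ does work: the only structure constants to check are $[J_{Z_i},J_{Z_j}]=2J_{Z_i}J_{Z_j}$, $[J_{Z_i},J_{Z_i}J_{Z_j}]=-2\la Z_i,Z_i\ra_{r,s}J_{Z_j}$, $[J_{Z_i}J_{Z_j},J_{Z_j}J_{Z_k}]=-2\la Z_j,Z_j\ra_{r,s}J_{Z_i}J_{Z_k}$, together with the vanishing of brackets with disjoint index sets, and each has the identical form on the bivector side of $\Cl_{r+1,s}$ precisely because $\tilde e_0^{\,2}=-\mathds 1$ and $\tilde e_i^{\,2}=J_{Z_i}^{\,2}$; since $\Phi$ carries the basis of Lemma~\ref{lem:Lbasis} onto the bivector basis, $\mathcal L\cong\so(r+1,s)$. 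This is a strictly stronger conclusion than the theorem states, and it makes the dichotomy transparent: among the real orthogonal algebras $\so(p,q)$ with $p+q\geq 3$ the only non-simple ones are $\so(4)$ and $\so(2,2)$ (note $\so(3,1)$ \emph{is} simple as a real Lie algebra), giving exactly $(r,s)\in\{(3,0),(1,2)\}$; under $\Phi$ the paper's ideals $\h_{\pm}$ become the self-dual and anti-self-dual bivectors. What your route buys is brevity and the identification of $\mathcal L$; what the paper's buys is self-containedness (no appeal to the classification of real forms) and explicit generators for the exceptional ideals. Two points to tighten before this replaces a proof: the four sign verifications must actually be written out, since the whole argument rests on them; and for $r+s=1$ the algebra $\mathcal L\cong\so(2)$ or $\so(1,1)$ is one-dimensional abelian, hence not simple in the standard sense --- your restriction to $r+s\geq 2$ is therefore necessary and should be stated as a hypothesis (the paper's own formulation glosses over this edge case as well).
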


\begin{proof}
Let us assume that $\h\subset \mathcal L$ is an ideal: $[\h,\mathcal L]\subset\h$. 
We aim at showing that the only possible ideal is either trivial or the whole $\mathcal L$, unless $(r,s) \not \in \{(3,0),(1,2)\}$. In the last case we show that $\mathcal L$ is the direct sum of two ideals. 
\\

{\sc Case 1}. Let us suppose that $J_Z\in\h$, with $Z\neq 0$ and $\la Z,Z\ra_{r,s}\neq 0$. Then, normalising $Z$, we can assume that there exists an orthonormal basis $\{Z_1,\ldots,Z_{n}\}$ with $Z=Z_1$. So,
\begin{equation*}
\begin{array}{rcll}
\h\ni [J_{Z_1},J_{Z_j}] & = & 2J_{Z_1}J_{Z_j}, &\quad j=2,\ldots,n,
\\
\h\ni [J_{Z_1},[J_{Z_1},J_{Z_j}]]& = & -4 \la Z_1,Z_1\ra_{r,s}J_{Z_j} ,&\quad j=2,\ldots,n,
\\
\h\ni [J_{Z_j},J_{Z_i}] & = & 2J_{Z_j}J_{Z_i}, &\quad i,j=1,\ldots,n, \ i\neq j.
\end{array}
\end{equation*}
We see that all  generators of $\mathcal L$ are contained in $\h$, which implies that $\h=\mathcal L$.
\\
 
Let us assume now that $J_Z\in\h$, with $Z\neq 0$ and $\la Z,Z\ra_{r,s}= 0$. Choose an orthonormal basis $\{Z_1,\ldots,Z_{n}\}$, such that $Z=\sum_{j=1}^{n}\lambda_kZ_k$ with $\lambda_1\neq 0$.  Note that there is at least one more coefficient $\lambda _k\neq 0$. Then 
$$
\h\ni[J_Z,J_{Z_1}]=2\sum_{k=2}^{n}\lambda_kJ_{Z_k}J_{Z_1}=2J_YJ_{Z_1},
$$
where we set $Y=\sum_{k=2}^{n}\lambda_kJ_{Z_k}$. Note that $\la Y,Y \ra_{r,s}\not=0$, $\la Y,Z_1 \ra_{r,s}=0$. Then 
$$
\h\ni[J_{Y}J_{Z_1},J_{Z_1}]=-2\la Z_1, Z_1 \ra_{r,s}J_{Y}.
$$
So, we reduce the problem to the previous case, concluding that $\h=\mathcal L$.
\\

{\sc Case 2}. In this case, we assume that $h=\sum_{i<j}\lambda_{ij}J_{Z_i}J_{Z_j}\in\h$ and $\lambda_{12}\neq 0$, otherwise we can change the numeration of the basis.  Then
$$\h\ni[J_{Z_1},h]=2\langle Z_1,Z_1\rangle_{r,s}\sum_{j=2}^{r+s}\lambda_{1j}J_{Z_j}\neq 0,$$
because $[J_{Z_i},[J_{Z_j},J_{Z_k}]]=0$ for $i\neq j\neq k$. We apply now Case 1.
\\

{\sc Case 3}.
We assume now that $h \in \h$ is a linear combination of $J_{Z_k}$ and $J_{Z_i}J_{Z_j}$ for some $k,i,j=1,\ldots, r+s$. Consider three cases: $r+s= 2$, $r+s=3$, and $r+s\geq 4$.
\\

 Let $\mathbf {r+s=2}$. Let $\{Z_1,Z_2\}$ be an orthonormal basis for $\mathbb R^{r,s}$ such that $h=\lambda_1J_{Z_1}+\lambda_2J_{Z_2}+\lambda_3J_{Z_1}J_{Z_2}$, where at least $\lambda_1$ and $\lambda_3$ are different from zero. Then
 \begin{eqnarray*}
\h\ni [\lambda_1J_{Z_1}+\lambda_2J_{Z_2}+\lambda_3J_{Z_1}J_{Z_2}, J_{Z_1}]= 2\lambda_2 J_{Z_2}J_{Z_1}+2\lambda_3\langle Z_1,Z_1\rangle_{r,s} J_{Z_2}.
\end{eqnarray*}
If $\lambda_2=0$, then we apply the arguments of Case 1. If $\lambda_2\not=0$, then 
\begin{eqnarray*}
\h\ni[\lambda_3\langle Z_1,Z_1\rangle_{r,s}J_{Z_2}+\lambda_2J_{Z_2}J_{Z_1}, J_{Z_2}]=2\lambda_2\langle Z_2,Z_2\rangle_{r,s}J_{Z_1},
\end{eqnarray*}
and we again reduce the proof to the Case 1.
\\

Let $\mathbf {r+s\geq 4}$. Let $\h\ni h=\sum_{k=1}^{r+s}\lambda_kJ_{Z_k}+\sum_{i<j}\mu_{i,j}J_{Z_i}J_{Z_j}$, where the basis $\{Z_1,\ldots,Z_{r+s}\}$ is orthonormal and at least two coefficients do not non-vanish, say $\lambda_2\neq 0$ and $\mu_{1,2}\neq 0$.
Then  
\begin{equation}\label{eq:rs4}
\h\ni h_1=[h, J_{Z_1}] =\sum_{k=2}^{r+s}\lambda_kJ_{Z_k}J_{Z_1}+2\langle Z_1,Z_1\rangle_{r,s}\sum_{j\geq 2}\mu_{1,j}J_{Z_j},
\end{equation}
We have $h_1\neq 0$ since otherwise it contradict the assumption $\lambda_2\neq 0$ and $\mu_{1,2}\neq 0$.
Taking the commutator with $J_{Z_2}$ we obtain
$$
\h  \ni  h_2=[h_1, J_{Z_2}]
= 2\lambda_2\langle Z_2,Z_2\rangle_{r,s}J_{Z_1}+4\langle Z_1,Z_1\rangle_{r,s}\sum_{j\geq 3}\mu_{1,j}J_{Z_j}J_{Z_2}.
$$
The vector $h_2\neq 0$ since $\lambda_2\neq 0$. We take the commutator with $J_{Z_3}$ and obtain
$$
\h \ni h_3=[h_2,J_{Z_3}]
= 4\lambda_2\langle Z_2,Z_2\rangle_{r,s}J_{Z_1}J_{Z_3}+8\langle Z_1,Z_1\rangle_{r,s}\langle Z_3,Z_3\rangle_{r,s}\mu_{1,3}J_{Z_2}.
$$
If we are still not in Case 2, we take the commutator with $J_{Z_4}$ and obtain
$$
\h \ni h_4=[h_3,J_{Z_4}]
= 16\langle Z_1,Z_1\rangle_{r,s}\langle Z_3,Z_3\rangle_{r,s}\mu_{1,3}J_{Z_2}J_{Z_4}.
$$
Thus the proof reduces to Case~2.
\\

Let $\mathbf {r+s=3}$. We start as in the previous case and obtain the vector 
$$
\h \ni h_3=[h_2,J_{Z_3}]
= 4\lambda_2\langle Z_2,Z_2\rangle_{r,s}J_{Z_1}J_{Z_3}+8\langle Z_1,Z_1\rangle_{r,s}\langle Z_3,Z_3\rangle_{r,s}\mu_{1,3}J_{Z_2}.
$$
Since we do not have an element $J_{Z_4}$, we can only take the commutators with $J_{Z_k}$ or $J_{Z_i}J_{Z_j}$, $k,i,j=1,2,3$. Anyway we are able to produce either zero vectors or an element of the same type as $h_3$, namely a linear combination of $J_{Z_k}$ and $J_{Z_i}J_{Z_j}$ for $i\neq j\neq k$, $i,j,k=1,2,3$.

Thus, without loss of generality, we can assume that the ideal $\h$ of $\mathcal L$ contains an element $h=J_{Z_1}+\lambda J_{Z_2}J_{Z_3}$, $\lambda\neq 0$. We calculate
\begin{equation*}
\begin{split}
& [h,J_{Z_1}]=0,
\\
&h_1= [h,J_{Z_2}]=2J_{Z_1}J_{Z_2}+2\lambda \langle Z_2,Z_2\rangle_{r,s}J_{Z_3},
\\
& h_2=[h,J_{Z_3}]=2J_{Z_1}J_{Z_3}-2\lambda \langle Z_3,Z_3\rangle_{r,s}J_{Z_2},
\\
& h_3= [h,J_{Z_1}J_{Z_2}]=-2\langle Z_1,Z_1\rangle_{r,s}J_{Z_2}+2\lambda \langle Z_2,Z_2\rangle_{r,s}J_{Z_1}J_{Z_3},
\\
& h_4= [h,J_{Z_1}J_{Z_3}]=-2\langle Z_1,Z_1\rangle_{r,s}J_{Z_3}-2\lambda \langle Z_3,Z_3\rangle_{r,s}J_{Z_1}J_{Z_2},
\\
& [h,J_{Z_2}J_{Z_3}]=0.
\end{split}
\end{equation*}
If $h_1$ and $h_4$ are linearly independent, then their span in $\h$ contains $J_{Z_3}$ and $J_{Z_1}J_{Z_2}$ and we continue the proof as in Cases 1 or 2. The same arguments are applied  when $h_2$ and $h_3$ are linearly independent. 

We assume that neither $h_1,h_4$ nor $h_2,h_3$ form a linear independent pair of vectors. Since the basis $\{Z_1,Z_2,Z_3\}$ is orthonormal, the vectors $h_1,h_4$ can be linearly dependent only if $\lambda=\pm 1$. To distinguish the values of the vectors, we write the superscript $^+$ for the case $\lambda=1$ and the superscript $^-$ for the case $\lambda=-1$. 

Assume now that $\lambda=1$. We write $h=h^+=J_{Z_1}+J_{Z_2}J_{Z_3}$ and obtain
$$
h_{1}^+=2(\langle Z_2,Z_2\rangle_{r,s}J_{Z_3}+J_{Z_1}J_{Z_2}),
\quad h_4^+=2(-\langle Z_1,Z_1\rangle_{r,s}J_{Z_3}-\langle Z_3,Z_3\rangle_{r,s}J_{Z_1}J_{Z_2})
$$
It suffices to consider the following different cases.
If
\begin{equation}\label{eq:30}
\begin{split}
&\langle Z_1,Z_1\rangle_{r,s} = \langle Z_2,Z_2\rangle_{r,s}=\langle Z_3,Z_3\rangle_{r,s}=1 \quad\text{and}\quad 
\\
&-\langle Z_1,Z_1\rangle_{r,s}  = \langle Z_2,Z_2\rangle_{r,s}=-\langle Z_3,Z_3\rangle_{r,s}=1,
\end{split}
\end{equation} 
then $h_1^+=-h_4^+=2(J_{Z_3}+J_{Z_1}J_{Z_2})$ or $h_1^+=h_4^+=2(J_{Z_3}+J_{Z_1}J_{Z_2})$, respectively,  and 
$$
\h\ni [h^+,h_1^+]=4\Big(2\langle Z_2,Z_2\rangle_{r,s}J_{Z_1}J_{Z_3}-(\langle Z_1,Z_1\rangle_{r,s}+\langle Z_2,Z_2\rangle_{r,s}\langle Z_3,Z_3\rangle_{r,s})J_{Z_2}\Big)
$$
for this choice of signatures of the scalar product $\langle.\,.\rangle_{r,s}$. We see that $\h\ni [h^+,h_1^+]=4h_2^+=4h_3^+$. 
In the cases 
\begin{equation}\label{eq:12}
\begin{split}
&\langle Z_1,Z_1\rangle_{r,s} = -\langle Z_2,Z_2\rangle_{r,s}=-\langle Z_3,Z_3\rangle_{r,s}=1\quad\text{and}\quad 
\\
&-\langle Z_1,Z_1\rangle_{r,s}  =-\langle Z_2,Z_2\rangle_{r,s}=\langle Z_3,Z_3\rangle_{r,s}=1,
\end{split}
\end{equation} 
we have $h_1^+=h_4^+=2(-J_{Z_3}+J_{Z_1}J_{Z_2})$ or $h_1^+=-h_4^+=2(-J_{Z_3}+J_{Z_1}J_{Z_2})$, respectively and $\h\ni [h^+,h_1^+]=-4h_2^+=4h_3^+$. 

Analogously, we consider possibility when $\lambda=-1$. We use the notation $h^-=J_{Z_1}-J_{Z_2}J_{Z_3}$ and obtain
$$
h_1^-=2(-\langle Z_2,Z_2\rangle_{r,s}J_{Z_3}+J_{Z_1}J_{Z_2}),
\quad h_4^-=2(-\langle Z_1,Z_1\rangle_{r,s}J_{Z_3}+\langle Z_3,Z_3\rangle_{r,s}J_{Z_1}J_{Z_2}).
$$
If the signature of the scalar product $\langle.\,.\rangle_{r,s}$ satisfies~\eqref{eq:30},
then $h_1^-=h_4^-=2(-J_{Z_3}+J_{Z_1}J_{Z_2})$ or $h_1^-=-h_4^-=2(-J_{Z_3}+J_{Z_1}J_{Z_2})$, respectively. Since
$$
\h\ni[h^-,h_1^-]=4\Big(-2\langle Z_2,Z_2\rangle_{r,s}J_{Z_1}J_{Z_3}+(\langle Z_1,Z_1\rangle_{r,s}-\langle Z_2,Z_2\rangle_{r,s}\langle Z_3,Z_3\rangle_{r,s})J_{Z_2}\Big),
$$
we obtain $\h\ni [h^+,h_1^-]=-4h_2^-=4h_3^-$. In the case~\eqref{eq:12} we have $h_1^-=-h_4^-=2(J_{Z_3}+J_{Z_1}J_{Z_2})$ or $h_1^-=h_4^-=2(J_{Z_3}+J_{Z_1}J_{Z_2})$, respectively and $\h\ni [h^+,h_1^-]=4h_2^-=4h_3^-$. 

Let us make the following conclusion. The cases of the signatures considered in~\eqref{eq:30} and~\eqref{eq:12} correspond to the cases $\mathbb R^{r,s}=\mathbb R^{3,0}$ and $\mathbb R^{r,s}=\mathbb R^{1,2}$. It is easy to see that in both cases the Lie algebra $\mathcal L=W+[W,W]$ is the direct sum of two ideals $\h_+=\spn\{h_+,h_1^+,h_2^+\}$ and $\h_-=\spn\{h_-,h_1^-,h_2^-\}$ and the Lie algebra $\mathcal L=W+[W,W]$ is semisimple.

\end{proof}

\begin{corollary}\label{cor:reductive}
If $W$ is a Lie triple system defined in Theorem~\ref{th:trans}, then $\mathcal L = W + [W, W]$ is reductive.
\end{corollary}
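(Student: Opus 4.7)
The plan is to obtain the result as an immediate consequence of Theorem~\ref{th:trans}, which already classifies $\mathcal{L} = W + [W,W]$ as simple in all cases except $(r,s) \in \{(3,0),(1,2)\}$, where it is semisimple. Since every simple (resp.\ semisimple) Lie algebra is automatically reductive, the corollary follows.

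More precisely, first I would recall the definition of reductive from the excerpt: for every ideal $\mathfrak{a} \subset \mathcal{L}$, there must exist a complementary ideal $\mathfrak{b}$ with $\mathcal{L} = \mathfrak{a} \oplus \mathfrak{b}$. In the simple case, the only ideals are $\{0\}$ and $\mathcal{L}$ itself, so one may pair them as complements of each other, and reductivity is trivial. In the semisimple case, I would invoke Theorem~1.54 of \cite{Knapp} (already cited in the excerpt), which states that a semisimple Lie algebra decomposes uniquely as a direct sum of simple ideals $\mathfrak{a}_1 \oplus \cdots \oplus \mathfrak{a}_j$, and every ideal of $\mathcal{L}$ is a sum of some of the $\mathfrak{a}_i$; hence an arbitrary ideal $\mathfrak{a}$ admits the sum of the remaining $\mathfrak{a}_i$ as its complementary ideal, which yields reductivity.

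The only real step is the case split from Theorem~\ref{th:trans}, which has already been done. There is no genuine obstacle here; the corollary is essentially a restatement of the observation that simple and semisimple Lie algebras satisfy the reductivity condition. One could even present it in one line by citing the discussion in the excerpt that a direct sum of a semisimple Lie algebra and an abelian one is reductive (here the abelian summand is trivial).
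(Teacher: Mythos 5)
Your proof is correct and follows exactly the route the paper intends: the corollary is an immediate consequence of Theorem~\ref{th:trans} together with the fact, already recalled in the paper via \cite[Theorem 1.54]{Knapp}, that a semisimple (in particular simple) Lie algebra is reductive. The paper leaves this argument implicit, so nothing is missing from your version.
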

As a consequence we also obtain a new proof of Proposition~\ref{cor:tr_Cl}.

%%%%%%%%%%%%%%%

\subsection{Lie triple system $W$ is a rational subspace of $\mathcal L$}

%%%%%%%%%%%%%%%

The main aim of this section is to show that in the case of the trivial centre $\mathfrak Z(W)$ of the Lie triple system $W$ of $\so(p,q)$, the set $W$ is a rational subspace of $\mathcal L$.
Let us assume that $\mathcal L=W+[W,W]$ is reductive. So $\mathcal L=\mathfrak Z(\mathcal L)\oplus [\mathcal L,\mathcal L]$. Since $\mathfrak Z(\mathcal L)=\mathfrak Z(W)$ by Proposition~\ref{prop:main}, we obtain that $\mathcal L=[\mathcal L,\mathcal L]$ is a semisimple Lie algebra in the case  $\mathfrak Z(W)=0$. As we saw in Corollary~\ref{cor:reductive}, it is the case when the Lie triple system $W$ is defined by the Clifford algebra representation. Let us recall some definitions.

\begin{definition}
Let $\mathfrak g$ be a real semisimple Lie algebra, and let $B_{\mathfrak g}$ be its Killing form. An involution $\theta$ $(\theta^2=\Id_{\mathfrak g})$ is called a Cartan involution on $\mathfrak g$ if $C_{\theta}(X,Y):=-B_{\mathfrak g}(X,\theta(Y))$ is a positive definite bilinear form.
\end{definition}

As it was observed before, the bilinear form  on $\so(p,q)$
$$
\la X,Y\ra_{\so(p,q)}=\tr(X^{\eta_{p,q}}Y)=-\tr(XY),\quad X^{\eta_{p,q}}=\eta_{p,q} X^{\mathbf t}\eta_{p,q}=-X,
$$ 
is a (positive) scalar multiple of the Killing form $B_{\so(p,q)}$, because the Lie algebra $\so(p,q)$ is simple. Define the involution $\theta$ on $\so(p,q)$ by 
\begin{equation}\label{eq:Cartan_inv}
X\mapsto \theta(X)=\eta_{p,q} X \eta_{p,q}.
\end{equation} 
We claim that $\theta$ is the Cartan involution on $\so(p,q)$. Indeed, if $X\in \so(p,q)$ and $X\neq 0$, then
\begin{eqnarray*}
C_{\theta}(X,X)&:= &B_{\so(p,q)}(X,\theta(X))=c\,\la X,\theta(X)\ra_{\so(p,q)}=-c\,\tr(X\eta_{p,q} X\eta_{p,q})
\\
&= &-c\,\tr(X\eta_{p,q})^2>0,
\end{eqnarray*}
because if $X\in \so(p,q)$, then
$X\eta_{p,q}\in\so(m)$, $p+q=m$ with $\tr(X\eta_{p,q})^2<0$.

It is a well known fact that any complex semisimple Lie algebra $\g$ admits a basis in which the structure constants are integer. The real basis for the compact real form can be easily recovered, and the structure constants are half integers, see, for instance,~\cite{Chev,Knapp}. Recently an analogous result for an arbitrary real semisimple Lie algebra $\g$ was obtained in~\cite[Theorem 4.1]{HolgerKammeyer}. By making use of the Cartan involution $\theta$ and of the non-degenerate Killing form on $\g$, an explicit form of the real basis was recovered from the complexification of $\g$. We will not use the exact form of this basis, the only important fact for our purpose is that the structure constants are rational, more precisely, they belong to $\frac{1}{2}\mathbb Z$, for more details see~\cite{HolgerKammeyer}. We denote the basis by the letter $\mathcal C_{\mathcal L}$ and called it {\it Chevalley basis} referring to C.~Chevalley, who constructed this basis for real compact forms.

\begin{definition}\label{def:rs_lie}
Let $\mathfrak g$ be a Lie algebra such that  the Lie algebra $\mathfrak g$ has the rational structure constants with respect to a Chevalley basis $\mathcal C_{\mathfrak g}$. Then the set $\spn_{\mathbb Q}\{\mathcal C_{\mathfrak g}\}$ is called the rational structure of the Lie algebra $\mathfrak g$. A subspace $U$ of $\mathfrak g$ is called the rational subspace with respect to the rational structure $\spn_{\mathbb Q}\{\mathcal C_{\mathfrak g}\}$, if there is a basis $B_{U}$ such that $B_U\subset \spn_{\mathbb Q}\{\mathcal C_{\mathfrak g}\}$. 
\end{definition}

\begin{example}\label{prop:dir_sum}
Let $\mathcal L=\mathfrak p\oplus\mathfrak t$ be the Cartan decomposition of a semisimple Lie algebra. Then $\mathfrak p$ and $\mathfrak t$ are rational subspaces of $\mathcal L$ with respect to the rational structure $\spn_{\mathbb Q}\{\mathcal C_{\mathcal L}\}$ by~\cite{HolgerKammeyer}. The subspaces  $\mathfrak p$ and $\mathfrak t$ are contained in $\spn_{\frac{1}{2}\mathbb Z}\{\mathcal C_{\mathcal L}\}\subset\spn_{\mathbb Q}\{\mathcal C_{\mathcal L}\}$.
\end{example}
 
 \begin{proposition}\label{prop1:dir_sum}
Let $\mathcal L$ be a semisimple Lie subalgebra of $\so(p,q)$, such that $\mathcal L=\mathfrak p\oplus\mathfrak t$ is a semisimple Lie algebra, where $\mathfrak p$ and $\mathfrak t$ form the Cartan pair
$$
[\mathfrak t,\mathfrak t]\subseteq \mathfrak t,\quad [\mathfrak t,\mathfrak p]\subseteq \mathfrak p,\quad [\mathfrak p,\mathfrak p]\subseteq \mathfrak t.
$$
Then $\mathfrak p$ and $\mathfrak t$ are rational subspaces of $\mathcal L$ with respect to the rational structure $\spn_{\mathbb Q}\{\mathcal C_{\mathcal L}\}$. 
\end{proposition}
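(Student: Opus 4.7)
The plan is to recognise the decomposition $\mathcal L=\mathfrak p\oplus\mathfrak t$ as a Cartan decomposition of $\mathcal L$ and then quote Example~\ref{prop:dir_sum}, i.e.\ \cite{HolgerKammeyer}, which says that the two summands of a Cartan decomposition of a real semisimple Lie algebra are rational subspaces with respect to the associated Chevalley basis $\mathcal C_{\mathcal L}$. So the real content is to verify that the given Cartan pair comes from a Cartan involution.

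First I would produce from the Cartan pair relations $[\mathfrak t,\mathfrak t]\subseteq\mathfrak t$, $[\mathfrak t,\mathfrak p]\subseteq\mathfrak p$, $[\mathfrak p,\mathfrak p]\subseteq\mathfrak t$ the involutive Lie algebra automorphism $\theta_{\mathcal L}\colon \mathcal L\to\mathcal L$ defined by $\theta_{\mathcal L}|_{\mathfrak t}=\Id$ and $\theta_{\mathcal L}|_{\mathfrak p}=-\Id$. The Cartan pair axioms are precisely what is needed for $\theta_{\mathcal L}$ to respect the bracket, and $\theta_{\mathcal L}^2=\Id$ is immediate.

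Next I would identify $\theta_{\mathcal L}$ with the restriction of the ambient Cartan involution $\theta(X)=\eta_{p,q}X\eta_{p,q}$ of $\so(p,q)$ introduced in~\eqref{eq:Cartan_inv}. For this one must show that $\mathcal L$ is $\theta$-invariant and that $\mathfrak t$ lies in the $(+1)$-eigenspace $\so(p)\oplus\so(q)$ of $\theta$ while $\mathfrak p$ lies in its $(-1)$-eigenspace (the off-diagonal block). Once this is established, the Cartan form on $\mathcal L$ is simply
\[
-B_{\mathcal L}(X,\theta_{\mathcal L}(Y))=-c\,\tr\bigl(X\,\eta_{p,q}Y\,\eta_{p,q}\bigr),\qquad c>0,
\]
which is the restriction of a positive-definite form on $\so(p,q)$ (as already observed in the discussion after~\eqref{eq:Cartan_inv}, using that $X\eta_{p,q}\in\so(p+q)$), and hence is positive definite on $\mathcal L$. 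This makes $\theta_{\mathcal L}$ a Cartan involution of the real semisimple Lie algebra $\mathcal L$, so $\mathcal L=\mathfrak t\oplus\mathfrak p$ is a bona fide Cartan decomposition. Applying Example~\ref{prop:dir_sum} then yields that $\mathfrak p$ and $\mathfrak t$ are rational subspaces of $\spn_{\mathbb Q}\mathcal C_{\mathcal L}$ in the sense of Definition~\ref{def:rs_lie}.

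The main obstacle will be the verification that $\mathcal L$ is $\theta$-invariant and that the given pair matches the $\pm 1$-eigenspaces of $\theta$. In the setting envisioned by the paper, $\mathfrak p=W$ is a Lie triple system arising from a representation of a Clifford algebra; in that case one sees directly from the skew-symmetry of the generators $J_{Z_i}$ with respect to $\langle.\,,.\rangle_{l,l}$ that the $J_{Z_i}$ sit in the $(-1)$-eigenspace of $\theta$, so that $\mathfrak t=[W,W]$ automatically falls in the $(+1)$-eigenspace by the bracket relation. In the general statement, the corresponding verification has to be extracted from the hypothesis that $\mathfrak p\oplus\mathfrak t$ is ``the'' Cartan pair in $\so(p,q)$, and is the only point where the embedding $\mathcal L\hookrightarrow\so(p,q)$ really enters the argument.
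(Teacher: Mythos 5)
Your opening move coincides with the paper's: from the Cartan-pair relations one builds the involutive automorphism $\theta_{\mathcal L}$ of $\mathcal L$ with $\theta_{\mathcal L}|_{\mathfrak t}=\Id$ and $\theta_{\mathcal L}|_{\mathfrak p}=-\Id$. But from there the paper simply feeds this involution (its complex-linear extension $\Theta$ to $\mathcal L\otimes\mathbb C$ together with the conjugation with respect to the real form $\mathcal L$) into the construction of~\cite{HolgerKammeyer} to produce a basis $\mathcal C_{\mathcal L}$ adapted to the splitting $\mathfrak t\oplus\mathfrak p$ with structure constants in $\frac12\mathbb Z$; it never compares $\theta_{\mathcal L}$ with the ambient involution $X\mapsto\eta_{p,q}X\eta_{p,q}$ of $\so(p,q)$ from~\eqref{eq:Cartan_inv}. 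Your plan hinges on precisely that comparison, you flag it yourself as ``the main obstacle'', and you do not carry it out. It also cannot be carried out from the stated hypotheses: the proposition only assumes the three bracket inclusions, and says nothing about how $\mathfrak p$ and $\mathfrak t$ sit relative to the $\pm1$-eigenspaces of $\eta_{p,q}\,\cdot\,\eta_{p,q}$.

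Moreover, the sanity check you offer for the motivating case is incorrect. Skew-symmetry of $J_{Z_i}$ with respect to $\langle\cdot\,,\cdot\rangle_{l,l}$ only says $J_{Z_i}\in\so(l,l)$; inside $\so(l,l)$ the $(-1)$-eigenspace of $\theta(X)=\eta_{l,l}X\eta_{l,l}$ is the set of \emph{symmetric} matrices (the off-diagonal block), while the $(+1)$-eigenspace is $\so(l)\oplus\so(l)$. In the paper's worked example $\n_{1,1}$, the generator $J_{z_1}$ (with $J_{z_1}^2=-\Id$) is a skew-symmetric matrix and hence lies in the $(+1)$-eigenspace, whereas $J_{z_2}$ (with $J_{z_2}^2=\Id$) is symmetric and lies in the $(-1)$-eigenspace; so $W=J(\mathbb R^{r,s})$ straddles both eigenspaces and $\theta_{\mathcal L}$ is \emph{not} the restriction of the ambient Cartan involution. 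Consistently with this, your candidate Cartan form fails to be positive definite on $\mathfrak p$: $-B_{\mathcal L}(J_{z_1},\theta_{\mathcal L}J_{z_1})=B_{\mathcal L}(J_{z_1},J_{z_1})$ is a positive multiple of $\tr(J_{z_1}^2)=-2l<0$. To align with the paper you should drop the reduction to $\eta_{p,q}\,\cdot\,\eta_{p,q}$ entirely and run Kammeyer's construction directly on the involution $\theta_{\mathcal L}$ determined by the given Cartan pair.
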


\begin{proof}
We define the involution by the rule 
$\theta (p+k)=-p+k$, for all $p\in \mathfrak p$, $k\in\mathfrak t$. It is an isomorphism for the real semisimple Lie algebra~$\mathcal L$. Construct the complex Chevalley basis on the complexification $\mathcal L\otimes \mathbb C$ of $\mathcal L$ by making use of the unique complex extension $\Theta$ of the involution $\theta$ and the conjugation on $\mathcal L\otimes \mathbb C$ with respect to the real form $\mathcal L$. Then applying the construction of~\cite{HolgerKammeyer} we recover the real basis $\mathcal C_{\mathcal L}$ for the real form $\mathcal L$. Hence $\mathfrak p$ and $\mathfrak t$ have the basis in $\spn_{\frac{1}{2}\mathbb Z}\{\mathcal C_{\mathcal L}\}\subset\spn_{\mathbb Q}\{\mathcal L\}$. 
 \end{proof}
 
\begin{corollary}\label{cor:dir_sum}
If $W$ is a Lie triple system of $\so(p,q)$ with a trivial centre, and if $\mathcal L=W\oplus[W,W]=[\mathcal L,\mathcal L]$ is a semisimple Lie algebra, then $W$ is a rational subspace of $\mathcal L$ with respect to the rational structure $\spn_{\mathbb Q}\{\mathcal C_{\mathcal L}\}$. 
\end{corollary}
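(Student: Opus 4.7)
The plan is to reduce this corollary directly to Proposition~\ref{prop1:dir_sum} by verifying that the hypothesised decomposition $\mathcal L = W \oplus [W,W]$ is precisely of the Cartan-pair form required there. Set $\mathfrak{p} := W$ and $\mathfrak{t} := [W,W]$; the hypotheses immediately give that $\mathcal L$ is a semisimple Lie subalgebra of $\so(p,q)$ and that $\mathcal L = \mathfrak{p} \oplus \mathfrak{t}$, so the only remaining task is to verify the three bracket inclusions.

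First, $[\mathfrak{p},\mathfrak{p}] \subseteq \mathfrak{t}$ is immediate from the definition $\mathfrak{t} = [W,W]$. Second, $[\mathfrak{t},\mathfrak{p}] \subseteq \mathfrak{p}$ is exactly the defining property of a Lie triple system, namely $[W,[W,W]] \subseteq W$. Third, $[\mathfrak{t},\mathfrak{t}] \subseteq \mathfrak{t}$ is the statement that $[W,W]$ is a subalgebra of $\mathfrak g$, which was established in Proposition~\ref{prop:use1}.

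With these three inclusions in hand, Proposition~\ref{prop1:dir_sum} applies to $\mathcal L = \mathfrak p \oplus \mathfrak t$ and yields that both $\mathfrak{p}$ and $\mathfrak{t}$ are rational subspaces of $\mathcal L$ with respect to the rational structure $\spn_{\mathbb Q}\{\mathcal C_{\mathcal L}\}$. In particular $W = \mathfrak{p}$ is rational, which is the desired conclusion. I do not expect a genuine obstacle: the substantive content—constructing the Chevalley-type basis from the complexification of $\mathcal L$ using a Cartan involution compatible with the decomposition, via~\cite{HolgerKammeyer}—has already been absorbed into Proposition~\ref{prop1:dir_sum}, and the trivial-centre assumption is only used to guarantee $\mathcal L = [\mathcal L,\mathcal L]$ and hence semisimplicity, which is supplied directly as a hypothesis.
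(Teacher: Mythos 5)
Your proof is correct and follows essentially the same route as the paper: identify $\mathfrak p=W$, $\mathfrak t=[W,W]$ as a Cartan pair (using the Lie triple system property and Proposition~\ref{prop:use1}) and then invoke Proposition~\ref{prop1:dir_sum}. Your version merely spells out the three bracket inclusions that the paper summarises in one sentence.
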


\begin{proof}
If $W$ is a Lie triple system of $\so(p,q)$ with a trivial centre $\mathfrak Z(W)$, then $\mathcal L=[\mathcal L,\mathcal L]$ by item 1 of Proposition~\ref{prop:main}. The pair $W\oplus[W,W]=\mathfrak p\oplus\mathfrak t=\mathcal L$ is the Cartan pair because $W$ is a Lie triple system. Then we finish the proof by applying Proposition~\ref{prop1:dir_sum}.
 \end{proof}

Now let us assume that $\mathcal L_1=W\cap[W,W]\neq\{0\}$. We need to show that $W$ has a basis in the rational structure $\spn_{\mathbb Q}\{\mathcal C_{\mathcal L}\}$. 
Note that $\mathcal L_1$ is an ideal of $\mathcal L=[W,W]+W$ because $\mathcal L_1$ is $\ad_W$ and $\ad_{[W,W]}$ invariant, see the proof of item 3 of Proposition~\ref{prop:red}. Let $\mathcal L_2$ be the orthogonal complement to $\mathcal L_1$ with respect to any $\ad$-invariant inner product $(.\,,.)_{\mathcal L}$ on $\mathcal L$. Then $\mathcal L_2$ is also an ideal of $\mathcal L$. Indeed
$$
([X,\mathcal L_2],\mathcal L_1)_{\mathcal L}=-(\mathcal L_2,[X,\mathcal L_1])_{\mathcal L}\subseteq -(\mathcal L_2,\mathcal L_1)_{\mathcal L}=0.
$$
Thus, we have two ideals $\mathcal L_1$, $\mathcal L_2$ of $\mathcal L$, such that $\mathcal L_1\cap\mathcal L_2=\{0\}$. This implies that they are also orthogonal with respect to the Killing form $B_{\mathcal L}$, and therefore, assuming  $\mathcal L=[\mathcal L,\mathcal L]$ to be semisimple, we obtain that $B_{\mathcal L}$ is non-degenerate on both $\mathcal L_1$ and $\mathcal L_2$. (If it were degenerate on one of them, then it would be degenerate on the other one too, and then, it would be degenerate on $\mathcal L$, which is a contradiction.) Moreover, the restrictions of $B_{\mathcal L}$ on the ideals $\mathcal L_1$ and $\mathcal L_2$ define the Killing forms $B_{\mathcal L_1}$ and $B_{\mathcal L_2}$ of them.

\begin{proposition}
The Lie triple system $W$ of $\so(p,q)$ with a trivial centre is a rational subspace of a semisimple Lie algebra $\mathcal L=W+[W,W]=[\mathcal L,\mathcal L]$ with respect to the rational structure $\spn_{\mathbb Q}\{\mathcal C_{\mathcal L_1}\}\oplus \spn_{\mathbb Q}\{\mathcal C_{\mathcal L_2}\}$.
\end{proposition}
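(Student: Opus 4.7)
The plan is to use the direct sum decomposition $\mathcal{L} = \mathcal{L}_1 \oplus \mathcal{L}_2$ into ideals (established in the discussion preceding the statement) in order to split $W$ into two pieces, one sitting inside each ideal, and then treat the second piece with the previous Corollary~\ref{cor:dir_sum}. Since $\mathcal{L}_1 = W\cap[W,W] \subseteq W$ by definition, write $W = \mathcal{L}_1 \oplus_{\perp} W_2$ where $W_2$ is the orthogonal complement of $\mathcal{L}_1$ inside $W$ with respect to the chosen $\ad$-invariant inner product $(\cdot,\cdot)_{\mathcal{L}}$. The proof of item~3 of Proposition~\ref{prop:red} already tells us that $W_2$ is a Lie triple system and that $[\mathcal{L}_1,W_2]=0$.

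First I would establish the key inclusion $W_2 \subseteq \mathcal{L}_2$. Every $w\in W_2$ decomposes as $w=a+b$ with $a\in\mathcal{L}_1$ and $b\in\mathcal{L}_2$; since $\mathcal{L}_1\subseteq W$ and $w$ is orthogonal to $\mathcal{L}_1$ in $\mathcal{L}$, while $b$ is orthogonal to $\mathcal{L}_1$ because $\mathcal{L}_2 = \mathcal{L}_1^{\perp}$, we get $(a,\mathcal{L}_1)_{\mathcal{L}}=0$. Non-degeneracy of $(\cdot,\cdot)_{\mathcal{L}}$ on $\mathcal{L}_1$ (which is exactly the setup used to produce $\mathcal{L}_2$ as a complementary ideal in the paragraphs preceding the statement) then forces $a=0$, so $w = b\in\mathcal{L}_2$.

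Next I would check that $W_2$ verifies all the hypotheses of Corollary~\ref{cor:dir_sum} when viewed as a Lie triple system of $\so(p,q)$ sitting in the semisimple Lie algebra $\mathcal{L}_2$. Triviality of the centre $\mathfrak{Z}(W_2)$ is automatic: any $z\in W_2$ with $[z,W_2]=0$ also satisfies $[z,\mathcal{L}_1]=0$ (by $[\mathcal{L}_1,W_2]=0$), hence $[z,W]=0$, so $z\in\mathfrak{Z}(W)=\{0\}$. The intersection $W_2\cap[W_2,W_2]$ lies in $W\cap[W,W]=\mathcal{L}_1$ but also, by the previous step, in $\mathcal{L}_2$, and therefore vanishes. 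Finally, using $[W,W]=[\mathcal{L}_1,\mathcal{L}_1]+[W_2,W_2]$ together with $W=\mathcal{L}_1\oplus W_2$, I would show that $\mathcal{L}_2 = W_2+[W_2,W_2]=[\mathcal{L}_2,\mathcal{L}_2]$, and semisimplicity of $\mathcal{L}_2$ follows since it is an ideal of the semisimple $\mathcal{L}$. Corollary~\ref{cor:dir_sum} then yields that $W_2$ has a basis in $\spn_{\mathbb{Q}}\{\mathcal{C}_{\mathcal{L}_2}\}$.

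To finish, I would take a Chevalley basis $\mathcal{C}_{\mathcal{L}_1}$ of $\mathcal{L}_1$, which serves trivially as a rational basis of $\mathcal{L}_1\subseteq W$, and unite it with the rational basis of $W_2$ obtained above to produce a basis of $W = \mathcal{L}_1\oplus W_2$ contained in $\spn_{\mathbb{Q}}\{\mathcal{C}_{\mathcal{L}_1}\}\oplus\spn_{\mathbb{Q}}\{\mathcal{C}_{\mathcal{L}_2}\}$. I expect the main obstacle to be the clean verification that $W_2\subseteq \mathcal{L}_2$ and that the restriction of $(\cdot,\cdot)_{\mathcal{L}}$ to $\mathcal{L}_1$ is non-degenerate: once these are in hand, the rest of the argument follows from a routine bookkeeping with the already established Propositions~\ref{prop:WL} and~\ref{prop:red} and the previous corollary.
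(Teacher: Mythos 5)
Your proposal is correct and follows essentially the same route as the paper: decompose $W=\mathcal L_1\oplus_{\perp}W_2$, show $W_2\subseteq\mathcal L_2$ and that $\mathcal L_2=W_2\oplus[W_2,W_2]$ with $W_2$ a Lie triple system of trivial centre, apply the earlier rationality result (the paper invokes Proposition~\ref{prop1:dir_sum} directly on the Cartan pair, while you route through Corollary~\ref{cor:dir_sum}, which is the same thing), and finally adjoin a Chevalley basis of $\mathcal L_1$. The only cosmetic difference is that the paper works with $V_2$, the orthogonal complement of $\mathcal L_1$ in $[W,W]$, in place of $[W_2,W_2]$; these coincide, so no substantive gap remains.
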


\begin{proof}
Let us denote $\mathcal L_1=W_1$, and let $W_2$ be the orthogonal complement to $W_1$ in $W$ with respect to the $\ad$-invariant inner product $(.\,,.)_{\mathcal L}$, which we have used before for the definition of $\mathcal L_2$. Let $V_2$ be the orthogonal complement to $W_1$ in $[W,W]$ with respect to the  same inner product $(.\,,.)_{\mathcal L}$. Then, 
$$
W=W_1\oplus W_2,\qquad [W,W]=W_1\oplus V_2.
$$
Obviously,  $W_2\cap V_2=\{0\}$ and $W_2\oplus V_2\subseteq \mathcal L_2=\mathcal L_1^{\bot}$. Thus, 
$$
\mathcal L_1\oplus\mathcal L_2=\mathcal L=W+[W,W]=(W_1\oplus W_2)+(W_1\oplus V_2)=W_1\oplus(W_2\oplus V_2)\subseteq \mathcal L_1\oplus\mathcal L_2
$$
and we conclude that $\mathcal L_2=W_2\oplus V_2$.

Observe also that $\mathcal L_1=W_1$ and $W_2$ are Lie triple systems of $\so(p,q)$, satisfying $[W_1,W_2]=0$, see the proof of item 3 of Proposition~\ref{prop:red}.
So, 
\begin{equation}\label{eq:Lie}
[V_2,W_2]\subset[[W,W],W_2]\subset W_2,\quad\text{and}\quad [V_2,W_1]\subset[[W,W],W_1]\subset W_1,
\end{equation}
$$
[W_2,W_2]\subset[W,W]=W_1\oplus V_2,\quad\text{and}\quad([W_2,W_2],W_1)_{\mathcal L}=-(W_2,[W_2,W_1])_{\mathcal L}=0,
$$
which implies that $[W_2,W_2]\subset V_2$. Finally, 
$$
[V_2,V_2]\subset[[W,W],[W,W]]\subset[W,W]=W_1\oplus V_2\quad\text{and}
$$
$$
([V_2,V_2],W_1)_{\mathcal L}=-(V_2,[V_2,W_1])_{\mathcal L}\subset-(V_2,W_1)_{\mathcal L}=0
$$ 
by~\eqref{eq:Lie}. Thus,
the semisimple Lie algebra $\mathcal L_2$ admits a decomposition $\mathcal L_2=W_2\oplus V_2$, where $W_2$ and $V_2$ form a Cartan pair.  Moreover, there is a Chevalley basis $\mathcal C_{\mathcal L_2}$ such that $W_2$ is a rational subspace of $\mathcal L_2$ with respect to $\spn_{\mathbb Q}\{\mathcal C_{\mathcal L_2}\}$ by Proposition~\ref{prop1:dir_sum}.

As a semisimple Lie algebra $\mathcal L_1$ admits the Chevalley basis $\mathcal C_{\mathcal L_1}$, the basis $\mathcal C=\mathcal C_{\mathcal L_1}\cup\mathcal C_{\mathcal L_2}$ is a Chevalley basis of the Lie algebra $\mathcal L=\mathcal L_1\oplus \mathcal L_2$. We define the rational structure of $\mathcal L$ by 
$\spn_{\mathbb Q}\{\mathcal C\}=\spn_{\mathbb Q}\{\mathcal C_{\mathcal L_1}\}\oplus \spn_{\mathbb Q}\{\mathcal C_{\mathcal L_2}\}$. Now $W=\mathcal L_1\oplus W_2$ is a rational subspace of $\mathcal L$ with respect to $\spn_{\mathbb Q}\{\mathcal C\}$.
\end{proof}

% {\red For the completeness we can check the case when $\mathfrak Z(W)$ is compact.}
 
%%%%%%%%%%%%%%%

\section{2-step nilpotent Lie algebras admitting rational structure constants}\label{sec:int_str}

%%%%%%%%%%%%%%%%%%

In this section we show that the standard pseudo-metric Lie algebra $\mathcal G=V\oplus W$ admits rational structure constants if $W$ is a Lie triple system of $\orth(V)$ being a rational subspace of the Lie algebra $\mathcal L=W+[W,W]\subset\orth(V)$. Another important result is that the Lie algebras $\mathfrak G$ from Definition~\ref{def:pseudo_induced} also admit rational structure constants under a special condition on the map inducing the standard pseudo-metric form on~$\mathfrak G$.  We start from some general properties of subspaces of $\End(V)$.

%%%%%%%%%%%%%%%%%%%%

\subsection{More about rational structures}

%%%%%%%%%%%%%%%%%%%%

Let us formulate a generalisation of Definition~\ref{def:rs_lie}. Let $V$ be an $m$-dimensional vector space, and let $W$ be a $k$-dimensional subspace of $\End(V)$.

\begin{definition}
A subspace $W$ is called the rational subspace of $\End(V)$ if there are bases $B_V=\{v_1,\ldots,v_m\}$ of $V$, and $B_W=\{\zeta_1,\ldots,\zeta_k\}$ of $W$, such that
$$
\zeta_j(\spn_\mathbb Q\{B_V\})\subseteq\spn_\mathbb Q\{B_V\}\quad\text{for all}\quad \zeta_j\in B_{W}.
$$ 
\end{definition}
Thus the basis elements of $W$ leave  the rational combinations of $B_V$ invariant. It is equivalent to say that any $\zeta_j\in B_{W}$ written as a matrix in the basis $B_V$ has rational entries.  

\begin{example}
Let $V=\mathbb R^{p,q}$ and $\la .\,,.\ra_{p,q}$ be the standard scalar product in $\mathbb R^{p,q}$. Let $A_1,\ldots, A_k\in\so(p,q)$ be arbitrary matrices with rational entries and $W=\spn_{\mathbb R}\{A_1,\ldots, A_k\}$. Then $W$ is a rational subspace of $\so(p,q)\subset\End(\mathbb R^m)$, $m=p+q$.
\end{example}

Let $\mathcal Z$ be an $n$-dimensional vector space, and let $J\colon \mathcal Z\to \End(V)$ be a linear map.

\begin{definition}
A map $J\colon \mathcal Z\to \End(V)$ is called rational if there are bases $B_V=\{v_1,\ldots,v_m\}$ of $V$ and $B_{\mathcal Z}=\{z_1,\ldots,z_n\}$ of $\mathcal Z$ such that
$$
J_{z_j}(\spn_\mathbb Q\{B_V\})\subseteq\spn_\mathbb Q\{B_V\}\quad\text{for all}\quad z_j\in B_{\mathcal Z}.
$$ 
\end{definition}
Thus, if the map $J\colon \mathcal Z\to \End(V)$ is rational, then the space $W=J(\mathcal Z)$ is a rational subspace of $\End(V)$ with respect to the bases
$$
B_V=\{v_1,\ldots,v_m\},\quad\text{and}\quad B_W=\{\zeta_1=J_{z_1},\ldots,\zeta_n=J_{z_n}\}.
$$
If moreover, the map $J\colon \mathcal Z\to \End(V)$ is injective and $W=J(\mathcal Z)$ is a rational subspace of $\End(V)$ with respect to the bases $B_V$ and $B_W$, then $J$ is a rational map with respect to the bases $B_V$ and $B_{\mathcal Z}=\{z_1,\ldots,z_n\}$ with $J_{z_i}=\zeta_i\in B_W$.

\begin{example}
Let $W$ be a rational subspace of $\so(p,q)\subset\End(\mathbb R^m)$, $p+q=m$. Then the inclusion $\iota\colon W\to\End(\mathbb R^m)$ is an injective and  rational linear map. Moreover, it is skew-symmetric in the following sense,
$$
\la \iota(\zeta)v,w\ra_{p,q}=\la \zeta(v),w\ra_{p,q}=-\la v,\zeta(w)\ra_{p,q}=-\la v,\iota(\zeta)w\ra_{p,q}.
$$

Let now $A\in\GL(\mathbb R^m)$, and let $W$ be a rational subspace of $\End(\mathbb R^{m})$. Then the inclusion map $\iota\colon AWA^{-1}\to\End(\mathbb R^m)$ is also injective and rational linear. To see that $\iota$ is rational we choose the bases $B_{\mathbb R^{m}}$ and $B_{W}$ such that the space $W$ is a rational subspace of $\End(\mathbb R^{m})$. Then the matrices $\zeta\in W$ written in the basis $B_{\mathbb R^{m}}$ have the same entries as matrices $A\zeta A^{-1}\in AWA^{-1}$ written in the basis $AB_{\mathbb R^{m}}$. So, all matrices from $AB_{W}A^{-1}$ has rational entries written in the basis $AB_{\mathbb R^{m}}$, and therefore, the space $AWA^{-1}$ is a rational subspace of $\End(\mathbb R^m)$ relatively to the bases $B_{AWA^{-1}}=AB_{W}A^{-1}$ and $AB_{\mathbb R^{m}}$. 

Now we want to show that any map from $AWA^{-1}$ is skew-symmetric with respect to some scalar product if $W\subset\so(p,q)$. Let us recall the notation $A^{\eta_{p,q}}=\eta_{p,q}A^{\mathbf t}\eta_{p,q}$. Then,
$$
(A^{-1})^{\eta_{p,q}}=\eta_{p,q}(A^{-1})^{\mathbf t}\eta_{p,q}=\eta_{p,q}(A^{\mathbf t})^{-1}\eta_{p,q}=\big(\eta_{p,q}(A^{\mathbf t})\eta_{p,q}\big)^{-1}=(A^{\eta_{p,q}})^{-1}.
$$ 
We define a matrix $M=(A^{\eta_{p,q}})^{-1}A^{-1}$, and recall the scalar product $\la v,w\ra_M=\la v,Mw\ra_{p,q}$. Then we obtain for any $\zeta\in W\subset\so(p,q)$, $A\in\GL(\mathbb R^m)$, $m=p+q$ and for arbitrary $\hat v,\hat w\in\spn_{\mathbb R}\{AB_{\mathbb R^{m}}\}$, that
$$
\begin{array}{llll}
&\la A\zeta A^{-1}\hat v,\hat w\ra_{M} 
=
\la A\zeta A^{-1} Av,MAw\ra_{p,q}=\la \zeta v,A^{\eta_{p,q}}MAw\ra_{p,q}=-\la v,\zeta w\ra_{p,q}
\\
&=-\la A^{-1}\hat v,A^{\eta_{p,q}}MA\zeta A^{-1}\hat w\ra_{p,q}
=-\la \hat v,(A^{\eta_{p,q}})^{-1}A^{\eta_{p,q}}MA\zeta A^{-1}\hat w\ra_{p,q}
\\
& =-\la \hat v,A\zeta A^{-1}\hat w\ra_{M}.
\end{array}
$$
\end{example}

\begin{definition}
A vector space $V$ is called $W$-irreducible for $W\subseteq\End(V)$ if no proper subspace of $V$ is invariant under all elements of $W$.
\end{definition}

Let $\la.\,,.\ra_V$ be a scalar product (non-degenerate bilinear form) on $V$. We recall that if a vector space $W$ is a subspace of $\orth(V,\la.\,,.\ra_V)\subset\End(V)$, then the scalar product $\la.\,,.\ra_V$ is called $W$-invariant, see Definition~\ref{def:Winv}. Notice an analogue with the invariant scalar product on Lie algebras, where it is equivalent to the fact that the adjoint map $\ad_v$ is skew-symmetric with respect to this scalar product. Having in mind Definition~\ref{def:pseudo_induced}, let us formulate the following statement.

\begin{theorem}\label{th:J_rat}
Let $\mathcal Z$ be a finite dimensional vector space, let $(V,\la.\,,.\ra_V)$ be a scalar product space, and let the map $J\colon \mathcal Z\to \orth(V,\la.\,,.\ra_V)\subset\End(V)$ be injective and rational. Let $V$ be $W$-irreducible for $W=J(\mathcal Z)$. Then the standard pseudo-metric Lie algebra $\mathfrak G=(\tilde V\oplus Z,[.\,,.])$ induced by $J$ admits a basis with rational structure constants. Here $\tilde V=(V,c\la.\,,.\ra_V)$, $c\neq0$. 
\end{theorem}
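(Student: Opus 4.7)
The plan is to exploit the rationality of $J$ and the $W$-irreducibility of $V$ to pin the $W$-invariant scalar product $\langle.\,,.\rangle_V$ down to a one-parameter family, then absorb the undetermined scalar factor into $c$ so that the defining relation for the bracket becomes a rational linear system in the structure constants.

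Fix bases $B_V=\{v_1,\ldots,v_m\}$ and $B_{\mathcal Z}=\{z_1,\ldots,z_n\}$ witnessing the rationality of $J$: each $J_{z_i}$ is represented by a rational matrix $M_i$ in $B_V$. Let $A$ denote the Gram matrix of $\langle.\,,.\rangle_V$ in $B_V$. The $W$-invariance of $\langle.\,,.\rangle_V$ translates to the rational linear system $M_i^{\mathbf t}A+AM_i=0$, $i=1,\ldots,n$, on symmetric matrices, and its real solution space $\mathcal S$ is therefore spanned over $\mathbb R$ by its rational points. The key step is to show $\dim_{\mathbb R}\mathcal S=1$: writing $b\in\mathcal S$ as $b(v,w)=\langle Sv,w\rangle_V$, one verifies that $b$ is $W$-invariant and symmetric if and only if $S\in\End_W(V)$ and $S$ is $\langle.\,,.\rangle_V$-self-adjoint. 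The $W$-irreducibility of $V$ together with the real Schur lemma forces $\End_W(V)\in\{\mathbb R,\mathbb C,\mathbb H\}$, and in each case the involution on $\End_W(V)$ induced by $\langle.\,,.\rangle_V$ is the standard conjugation, whose fixed subspace is $\mathbb R\cdot\Id_V$. Hence $\mathcal S=\mathbb R A_0$ for some rational symmetric matrix $A_0$, and $A=c_0 A_0$ for a unique $c_0\in\mathbb R^{\times}$.

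Now specialise to $c=c_0$ (any element of $c_0\mathbb Q^{\times}$ would serve). Writing $[v_\alpha,v_\beta]=\sum_l C^l_{\alpha\beta}z_l$, the defining relation $\langle J_{z_k}v_\alpha,v_\beta\rangle_{\tilde V}=\langle z_k,[v_\alpha,v_\beta]\rangle_{\mathcal Z,c}$ reduces to
$$-c^2\sum_l T_{kl}C^l_{\alpha\beta}=c\,c_0\,(M_k^{\mathbf t}A_0)_{\alpha\beta},\qquad T_{kl}=\tr(M_kM_l)\in\mathbb Q.$$
The matrix $T$ is rational and invertible by the non-degeneracy built into Definition~\ref{def:pseudo_induced}, so $C^l_{\alpha\beta}=-(c_0/c)\sum_k(T^{-1})_{lk}(M_k^{\mathbf t}A_0)_{\alpha\beta}\in\mathbb Q$. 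Since $[v_\alpha,z_k]=[z_k,z_l]=0$ in $\mathfrak G$, the basis $B_V\cup B_{\mathcal Z}$ has rational structure constants.

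The main obstacle is the Schur-type step $\dim_{\mathbb R}\mathcal S=1$: in the positive-definite case it is classical, but for a general indefinite $\langle.\,,.\rangle_V$ one must verify that the adjoint involution on the division algebra $\End_W(V)$ remains the standard conjugation — a case analysis across $\mathbb R,\mathbb C,\mathbb H$ requiring an identification of how $\langle.\,,.\rangle_V$ interacts with the non-scalar elements of $\End_W(V)$.
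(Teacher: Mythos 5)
Your overall architecture coincides with the paper's: reduce everything to showing that the $W$-invariant symmetric bilinear forms on $V$ form a one-dimensional space $\mathcal S$, observe that $\mathcal S$ is the kernel of a linear system with rational coefficients and hence is spanned by a rational point, and then solve a rational linear system for the structure constants (your explicit inversion of $T_{kl}=\tr(M_kM_l)$ replaces the paper's Steps~1--2, and your rational-solution-space observation is the paper's Step~5). The one genuine divergence is the uniqueness step, and there your proposal has a gap that you yourself flag: you assert, but do not prove, that the adjoint involution induced by $\langle.\,,.\rangle_V$ on the Schur commutant $\End_W(V)\in\{\mathbb R,\mathbb C,\mathbb H\}$ is the standard conjugation. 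For an indefinite $\langle.\,,.\rangle_V$ this is not automatic, and it can in fact fail. Take $V=\mathbb C^n$ viewed as $\mathbb R^{2n}$, let $h(z,w)=\sum z_jw_j$ be the complex bilinear form, put $\langle.\,,.\rangle_V=\re h$ (signature $(n,n)$), and let $W=\so(n,\mathbb C)$ regarded as a real subspace of $\orth(V,\re h)\cong\so(n,n)$, with $J$ the inclusion (rational in the real basis $e_j, ie_j$). For $n=3$ the space $V$ is $W$-irreducible over $\mathbb R$, $\End_W(V)=\mathbb C$, the complex structure is self-adjoint for $\re h$, and $\re h$ and $\im h$ are two linearly independent non-degenerate $W$-invariant symmetric forms; so $\dim\mathcal S=2$, and your Schur step (and with it the deduction $A=c_0A_0$) breaks down.

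You may take some comfort in the fact that the paper's own Step~3 suffers from the same lacuna in disguise: there the intertwiner $S$ defined by $\langle v,w\rangle^2_V=\langle Sv,w\rangle^1_V$ is claimed to equal $c\,\Id_V$ because its eigenspaces are $W$-invariant, but an operator that is self-adjoint for an indefinite form need not have a real eigenvalue (in the example above $S$ is precisely the complex structure, with $S^2=-\Id_V$), so the eigenspace argument does not close either. You have therefore isolated exactly the critical point of the proof; but as a self-contained argument your proposal is incomplete there, and the missing claim cannot be supplied by the routine case analysis you envisage. Closing the gap requires either an additional hypothesis (positive-definiteness, or that the given form be rational in the chosen basis, or that the complex structure in the commutant be anti-self-adjoint) or an argument that treats the case $\dim\mathcal S>1$ directly.
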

\begin{proof} We give the proof in several steps.

{\sc Step 1}. Let $(V,\la.\,,.\ra_V)$ be an $m$-dimensional scalar product space with a basis $B_V^{\mathbb Q}=\{v_1,\ldots,v_m\}$ such that $\upsilon_{ij}=\la v_i,v_j\ra_V\in\mathbb Q$, for instance we can take an orthonormal basis. Now we claim {\it if $v\in V$ is such that $\la v,v_i\ra_V\in\mathbb Q$ for all $v_i\in B_V^{\mathbb Q}$, then $v\in\spn_{\mathbb Q}\{B_V^{\mathbb Q}\}$.} Indeed, if we write $v=\sum_kx_k v_k$, then the linear system 
$$
y_i=\la v,v_i\ra_V=\sum_{k}x_k\la v_k,v_i\ra_V
$$
has rational coefficients $\la v_k,v_i\ra_V$ and $y_i\in\mathbb Q$. It is clear that the solutions $x_i$ are rational numbers. 
\\

%{\sc Claim 1.} {\red I do not see where we need the orthogonal basis?} There exists an orthonormal basis $B_V^*\{v_1^*,\ldots,v_m^*\}$ such that 
%\begin{equation}
%\spn_{\mathbb Q}(v_1,\ldots,v_r)=\spn_{\mathbb Q}(v_1^*,\ldots,v_r^*)\quad\text{for any}\quad r=1,\ldots,m.
%\end{equation}\label{eq_orthog}
%We apply the orthogonalisation procedure, choosing $v_1^*=v_1$ that is non-null vector and assume that for an integer $r\geq 1$ we have found orthogonal non-null vectors $\{v_1^*,\ldots,v_r^*\}$ such that~\eqref{eq_orthog} holds. Then we define
%$$
%v_{r+1}^*=v_{r+1}-\sum_{i=1}^{r}\frac{\la v_{r+1},v_i^*\ra_V}{\la v_i^*,v_i^*\ra_V}v_i^*.
%$$
%It is easy to see that $\la v_{r+1}^*,v_i^*\ra_V=0$ for all $i=1,\ldots,r$. If for some $i=1,\ldots,m$ we could have $\la v_{i}^*,v_i^*\ra_V=0$, then it would contradicts to the fact that the scalar product is non-degenerate. Thus we can normalise the basis $B_V^*\{v_1^*,\ldots,v_m^*\}$ and obtain an orthonormal basis. 
{\sc Step 2}.
Consider now an arbitrary $k$-dimensional rational subspace $W\subset\End(V)$ with respect to bases
$$
B_V=\{v_1,\ldots,v_m\} \quad \text{and}\quad B_{W}=\{\zeta_1,\ldots,\zeta_k\}.
$$
Let us also assume that there is a scalar product $\la.\,,.\ra_V$, such that
$$
W\subseteq\orth(V,\la.\,,.\ra_V)\subset\End(V)\quad\text{and }\quad \la v_i,v_j\ra_V\in\mathbb Q, \ \ i,j=1,\ldots,m.
$$ 
Consider a Lie algebra $\mathcal G=(V\oplus W,[.\,,.]_{\mathcal G})$ with the Lie bracket defined by
$$
\la \zeta,[v,v']_{\mathcal G}\ra_{\orth(V,\la.\,,.\ra_V)}=\la\zeta(v),v'\ra_V.
$$
We claim: {\it the Lie algebra $\mathcal G$ has rational structure constants with respect to the basis $\{v_1,\ldots,v_m,\zeta_1,\ldots,\zeta_k\}$.} Indeed, $W\subset\End(V)$ is a rational subspace,  the matrices of all $\zeta_i\in B_{W}$ written in the basis $B_V$ have rational entries, and therefore, $\la \zeta_i,\zeta_j\ra_{\orth(V,\la.\,,.\ra_V)}=-\tr(\zeta_i\zeta_j)\in\mathbb Q$, $i,j=1,\ldots,k$. Moreover, since $\la v_{\alpha},v_{\beta}\ra_V\in\mathbb Q$, $\alpha,\beta=1,\ldots,m$,
 we also have $\la\zeta_i(v_{\alpha}),v_{\beta}\ra_V\in\mathbb Q$ for all $i=1,\ldots,k$, $\alpha,\beta=1,\ldots,m$. Thus,
$$
 \la \zeta_i,[v_{\alpha},v_{\beta}]_{\mathcal G}\ra_{\orth(V,\la.\,,.\ra_V)}=\la\zeta_i(v_{\alpha}),v_{\beta}\ra_V\in \mathbb Q,
$$
which implies $[v_{\alpha},v_{\beta}]_{\mathcal G}\in\spn_{\mathbb Q}\{B_{W}\}$ by Step~1. 

So it is left to show that we can modify a given scalar product $\la.\,,.\ra_V$ to a new one $\la.\,,.\ra_V^*$ such that all hypothesis of Theorem~\ref{th:J_rat} are still satisfied, and moreover, $\la v_{\alpha},v_{\beta}\ra_V^*\in \mathbb Q$. Then by Lemma~\ref{lem:2} we conclude that the Lie algebras $\mathcal G$ and $\mathfrak G$ are isomorphic, and therefore, $\mathfrak G$ admits rational structure constants. We still need some auxiliary results.
\\

{\sc Step 3}. {\it If there are two $W$-invariant scalar products $\la.\,,.\ra_V^1$ and $\la.\,,.\ra_V^2$ on $V$, 
and moreover, $V$ is $W$-irreducible, then $\la.\,,.\ra_V^1=c\la.\,,.\ra_V^2$ for some $c\neq 0$.} Indeed we define a map $S\colon V\to V$ by $\la v,w\ra_V^2=\la Sv,w\ra_V^1$, and the transformation $S$ is symmetric with respect to both scalar products and commutes with all elements of $W$ as it was shown in~\eqref{eq:symmetric} and~\eqref{eq:commute}. Thus, the elements of $W$ leave invariant the eigenspaces of $S$, and the irreducibility of $V$ implies that $S=c\,\Id_{V}$, $c\neq 0$.
\\

{\sc Step 4}.
Let us set our considerations in a more general perspective. Let us denote any bilinear symmetric form on a space $V$ by $b$, and the space of all possible bilinear symmetric forms on a space $V$ by $\mathcal B$. So $\mathcal B$ is a real vector space. We define the action of $\End(V)$ on $\mathcal B$ by 
$$
Xb(v,w)=b(Xv,w)+b(v,Xw),\quad X\in\End(V),\quad v,w\in V,\quad\text{and}\quad b\in\mathcal B.
$$
Thus, if $X\in\End(V)$ is skew-symmetric with respect to $b$, then $Xb=0$, and we say that $b$ is $X$-invariant. A symmetric bilinear $b$ is $W$-invariant if $Xb=0$ for all $X\in W\subset\End(V)$.

 Let $b_1=\la.\,,.\ra_V^1$ be a non-degenerate $W$-invariant bilinear symmetric form on $V$, and let $V$ be $W$-irreducible. Let $
\mathcal K=\{b\in \mathcal B,\mid\  b\ \ \text{is}\ \ W\text{-invariant}\}$. 
Then, $\mathcal K\neq\emptyset$, since $b_1\in \mathcal K$. The space $\spn_{\mathbb R}\{b_1\}$ belongs to $ \mathcal K$, because if $b_1$ is $W$-invariant, then $c b_1$ is $W$-invariant for any $c$. If we assume that there is $\tilde b\in \mathcal B$ linearly independent of $b_1$, then $\tilde b$ is not $W$-invariant by Step~3. Thus, we conclude that $\dim{\mathcal K}=1$. For the future purpose we only need the fact $\dim{\mathcal K}\geq 1$.
\\

{\sc Step 5}. Let $V$ be $W$-irreducible relatively to $W\subset\orth(V,\la.\,,.\ra_V)$, and let $W$ be a rational subspace of $\End(V)$ with respect to the bases
$$
B_V=\{v_1,\ldots,v_m\},\qquad B_W=\{\zeta_1,\ldots,\zeta_n\}.
$$
We claim: {\it there exists a constant $c\neq 0$ such that for $\la.\,,.\ra_V^*=c\la.\,,.\ra_V$ the inclusions
$$
W\in\orth(V,\la.\,,.\ra_V^*)\quad\text{and}\quad \la v_i,v_j\ra_V^*\in\mathbb Q,\ \ \text{for all}\ \ v_i\in B_{V},
$$
hold.
} To show this, we start by taking the dual $V^*$ of $V$ and choosing the basis $B_{V^*}=\{v_1^*,\ldots,v_m^*\}$, such that $v_i^*(v_j)=\delta_{ij}$. It allows us to choose the basis $\{b_{ij}\}_{1\leq i\leq j\leq m}$ of $\mathcal B$ by setting
$
b_{ij}=\frac{1}{2}(v_i^*\otimes v_j^*+v_j^*\otimes v_i^*)$.
Then 
$$
b_{ij}(v_{\alpha},v_{\beta})=\begin{cases}
1\quad&\text{if}\quad i=j=\alpha=\beta,
\\
\frac{1}{2}\quad&\text{if}\quad i=\alpha,\ j=\beta,\ i\neq j,
\\
0.\quad&\text{othewise}
\end{cases}
$$
We observe that since the action of $\End(V)$ on $\mathcal B$ is linear, we obtain
\begin{equation}\label{eq:basis_form}
\zeta_k\big(\spn_{\mathbb Q}\big\{ \{b_{ij}\}_{1\leq i\leq j\leq m}\big\}\big)\subseteq\spn_{\mathbb Q}\big\{\{b_{ij}\}_{1\leq i\leq j\leq m}\big\}\ \ \text{for all}\ \ \zeta_k\in B_W.
\end{equation}

Now we define the map 
$$
\begin{array}{lllllll}
\Xi\colon &\mathcal B &\to &\mathcal B^n
\\
&b&\mapsto&\Xi(b)=(\zeta_1(b),\ldots,\zeta_n(b))
\end{array}
$$
for $\zeta_k\in B_W$. Then, it is clear that 
$$
\Xi\big(\spn_{\mathbb Q}\big\{ \{b_{ij}\}_{1\leq i\leq j\leq m}\big\}\big)\subseteq\spn_{\mathbb Q}\big\{\{b_{ij}\}_{1\leq i\leq j\leq m}\big\}
$$
by~\eqref{eq:basis_form} and $\ker(\Xi)=\mathcal K$. We need only to find a non-zero form $b\in \mathcal P=\ker(\Xi)\cap\spn_{\mathbb Q}\big\{ \{b_{ij}\}_{1\leq i\leq j\leq m}\big\}$. Let us assume that $\tilde b\in \mathcal P$. Then we can write $\tilde b=\sum_{i\leq j}q_{ij}b_{ij}$ with $q_{ij}\in\mathbb Q$ and $\tilde b(v_{\alpha},v_{\beta})\in\mathbb Q$. Then, $\tilde b=cb$, where $b(.\,,.)=\la.\,,.\ra_{V}$, $c\neq 0$ by Step~3. If $c>0$, then the form $b^*$ has the same index $(p,q)$ as the original scalar product, and if $c<0$, then the index is $(q,p)$.

Denote $N=\dim(\mathcal B)$. The map $\Xi\colon\mathcal B\to\mathcal B^n$ and the basis $\{b_{ij}\}_{i\leq j}$ define the basis on $\mathcal B^n$ in a natural way. Then the $(nN\times N)$-matrix $A$ for the map $\Xi$ has rational entries by~\eqref{eq:basis_form}, and therefore, the determinant of any $(k\times k)$ sub-matrix belongs to $\mathbb Q$. Hence, $\rank_{\mathbb Q}(A)=\rank_{\mathbb R}(A)$ and $\ker_{\mathbb Q}(A)=\ker_{\mathbb R}(A)$. Because of $\dim(\mathcal K)=\dim(\ker(\Xi))=\ker_{\mathbb R}(A)=1$, we can find a non-zero element in $\ker (\Xi)\cap\spn_{\mathbb Q}\big\{ \{b_{ij}\}_{1\leq i\leq j\leq m}\big\}$ by Step~4. 
This proves the theorem.
\end{proof}

Applying the Mal'cev criterion we obtain the following corollary.

\begin{corollary}
If $\mathbb G$ is a simply connected 2-step nilpotent Lie group with the Lie algebra $\mathfrak G$, then the group $\mathbb G$ admits a lattice. 
\end{corollary}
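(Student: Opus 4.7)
The plan is to derive this corollary as an immediate consequence of Theorem~\ref{th:J_rat} combined with the Mal'cev criterion (Theorem~\ref{MC}). The notation $\mathfrak G$ in the statement refers back to the standard pseudo-metric 2-step nilpotent Lie algebra induced by the injective rational skew-symmetric map $J\colon\mathcal Z\to\orth(V,\langle.\,,.\rangle_V)$, under the irreducibility hypothesis of the preceding theorem. So the content really is: the hypotheses of Theorem~\ref{th:J_rat} already force the Lie algebra $\mathfrak G$ to carry a basis with rational structure constants, and the Mal'cev criterion then converts this rational structure into a lattice on~$\mathbb G$.

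Concretely, I would argue as follows. By Theorem~\ref{th:J_rat}, there is a basis $\mathcal B=\{v_1,\ldots,v_m,z_1,\ldots,z_n\}$ of $\mathfrak G=\tilde V\oplus\mathcal Z$ such that all structure constants $C^k_{ij}$ defined by $[b_i,b_j]=\sum_k C^k_{ij}b_k$ lie in $\mathbb Q$. This is precisely the hypothesis of Theorem~\ref{MC}. Therefore, by the Mal'cev criterion, the simply connected Lie group $\mathbb G$ with Lie algebra $\mathfrak G$ admits a lattice.

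For the explicit construction of the lattice, I would follow the recipe indicated just after Theorem~\ref{MC}: form the rational Lie subalgebra $\mathfrak G_{\mathbb Q}=\spn_{\mathbb Q}\mathcal B$ (which is closed under brackets since the $C^k_{ij}$ are rational), pick a full-rank vector lattice $\Lambda\subset\mathfrak G_{\mathbb Q}$, and define $K:=\langle\exp(\Lambda)\rangle\subset\mathbb G$. The Baker--Campbell--Hausdorff formula, which is polynomial of degree at most two in the 2-step nilpotent setting, stays inside $\mathfrak G_{\mathbb Q}$ (after possibly scaling $\Lambda$ by an integer to absorb the factor $\tfrac12$ appearing in the BCH formula for 2-step groups), so $K$ is discrete, and co-compactness is guaranteed by $\Lambda$ having full real rank in $\mathfrak G$.

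There is essentially no obstacle here, since all the real work was done in Theorem~\ref{th:J_rat}; the only subtle point is the routine check that scaling $\Lambda$ by a suitable integer clears denominators in the BCH formula so that $K$ is genuinely a subgroup and not merely a subset of $\mathbb G$. This is the standard argument in Mal'cev's theorem and does not require any additional input beyond the rationality of the structure constants.
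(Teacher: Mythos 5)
Your proposal is correct and follows exactly the paper's route: Theorem~\ref{th:J_rat} supplies a basis of $\mathfrak G$ with rational structure constants, and the Mal'cev criterion (Theorem~\ref{MC}) then yields the lattice. The extra remarks on the explicit construction via $\exp(\Lambda)$ and the BCH formula are the standard content of Mal'cev's theorem already sketched in the preliminaries and add nothing problematic.
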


%Recall the data that we have. Let $\mathcal L\subset\orth(V,\la.\,,.\ra_V)$ be a subalgebra that admits a basis $B_{\mathcal L}$ such that the structure constants with respect to this basis are rational and any finite dimensional $\mathcal L$-module $U$ admits a basis $B_U$ such that elements of $B_{\mathcal L}$ leaves invariant $\spn_{\mathbb Q}\{B_U\}$. 
%{\red Particularly for the case when $\mathcal L=[\mathcal L,\mathcal L]$ the first is true and we need to understand why the second is also true in the case of Lie triple systems. Always left to prove something!}. 
%Let $W$ be a rational subspace of $\mathcal L$ relative to the rational structure $\spn_{\mathbb Q}\{B_{\mathcal L}\}$, that is there exists a the basis $B_W=\{\zeta_1,\ldots, \zeta_n\}\subset\spn_{\mathbb Q}\{B_{\mathcal L}\}$. Let $\mathcal G=(V\oplus W,[.\,,.],\la.\,,.\ra_{\n})$ be the standard pseudo-metric Lie algebra. 

Let $W\subset\End(V_j)$ for  $j=1,\ldots, n$, where each $V_j$ is $W$-irreducible finite dimensional space, which admits a $W$-invariant scalar product $\la.\,,.\ra_{V_j}$. Define the vector space $V=\bigoplus_{j=1}^{n} V_j$ and the scalar product $\la.\,,.\ra_{V}=\bigoplus_{j=1}^{n}\la.\,,.\ra_{V_j}$. Then the direct sum $V=\bigoplus_{j=1}^{n} V_j$ is orthogonal with respect to $\la.\,,.\ra_{V}$. Let $W$ be a rational subspace of a semisimple Lie algebra $\mathcal L$ having the following property:
\begin{itemize}
\item[($\mathfrak P$)] each vector space $V_j$ admits a basis $B_{V_j}$ and the Lie algebra admits a basis $\mathcal C_{\mathcal L}$ (for instance the Chevalley basis) such that  $\mathcal C_{\mathcal L}$ leaves the rational structure $\spn_{\mathbb Q}\{B_{V_j}\}$ invariant, $$\zeta_k(\spn_{\mathbb Q}\{B_{V_j}\})\subset\spn_{\mathbb Q}\{B_{V_j}\}$$ for all $\zeta_k\in \mathcal C_{\mathcal L}$.
\end{itemize} 
Let us remark, that if $\mathcal L$ is a semisimple Lie algebra of a compact subgroup $\mathbb G$ of $\GL(V_j)$ (which is the case when the Killing form on $\mathcal L$ is positive-definite), then the representation $\rho\colon \mathbb G\to GL(V_j)$ has the property that the vector space $V_j$ admits the basis $B_{V_j}$, such that $d\rho(\mathcal C_{\mathcal L})$ leaves  the integer structure $\spn_{\mathbb Z}\{B_{V_j}\}$  invariant, and as a consequence, leaves  the rational structure $\spn_{\mathbb Q}\{B_{V_j}\}$ invariant. Thus, for the Lie algebras of compact Lie  groups
the property (1) always holds. 

Now we are ready to prove the following theorem. 

\begin{theorem}\label{th:7}
Let $\mathcal L\subset\orth(V,\la.\,,.\ra_V)$ be a subalgebra that admits a Chevalley basis $\mathcal C_{\mathcal L}$ such that the structure constants with respect to this basis are rational, let $W$ be a rational subspace of $\mathcal L$ relatively to the rational structure $\spn_{\mathbb Q}\{\mathcal C_{\mathcal L}\}$. Assume also that $V=\oplus V_j$, where each $V_j$ is $W$-irreducible and admits $W$-invariant scalar product $\la.\,,.\ra_{V_j}$. Here $W\subset\End(V_j)$ for each $j=1,\ldots, n$. Moreover, the basis $\mathcal C_{\mathcal L}$ is such that for any $V_j$ there is a basis $B_{V_j}$ such that $\zeta_k(\spn_{\mathbb Q}\{B_{V_j}\})\subset\spn_{\mathbb Q}\{B_{V_j}\}$ for all $\zeta_k\in \mathcal C_{\mathcal L}$.
Then there exists a scalar product $\la.\,,.\ra_{\mathcal G}$ on $V\oplus W$, such that the standard pseudo-metric Lie algebra $\mathcal G=(V\oplus W,[.\,,.],\la.\,,.\ra_{\mathcal G})$ admits rational structure constants. 
\end{theorem}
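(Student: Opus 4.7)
The plan is to reduce to the $W$-irreducible case already handled by Theorem~\ref{th:J_rat} and then patch the irreducible pieces together orthogonally. The hypotheses are set up so that on each summand $V_j$ one can reuse almost verbatim the construction from Steps~3--5 of the proof of Theorem~\ref{th:J_rat}: the space of $W$-invariant symmetric bilinear forms on $V_j$ is one-dimensional by irreducibility (Schur-type argument of Step~3 there), so in particular every nonzero $W$-invariant form on $V_j$ is a real scalar multiple of $\langle .\,,.\rangle_{V_j}$. Because the elements $\zeta_k\in\mathcal C_{\mathcal L}$ act on $V_j$ by matrices with rational entries in the basis $B_{V_j}$, the induced action of $\mathcal C_{\mathcal L}$ on the space of symmetric bilinear forms on $V_j$ has rational matrix in the dual basis $\{b_{\alpha\beta}^{(j)}\}$; then the argument of Step~5 produces a constant $c_j\neq 0$ such that $\langle .\,,.\rangle^*_{V_j}:=c_j\langle .\,,.\rangle_{V_j}$ takes rational values on all pairs of elements of $B_{V_j}$.

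Next I would set $\langle .\,,.\rangle^*_V:=\bigoplus_j\langle .\,,.\rangle^*_{V_j}$, so that $V=\bigoplus V_j$ is an orthogonal decomposition and the union $B_V:=\bigcup_j B_{V_j}$ is a basis of $V$ whose Gram matrix has entries in $\mathbb Q$. Since each $\zeta\in W\subset\End(V_j)$ preserves every $V_j$ and is skew-symmetric with respect to $\langle .\,,.\rangle^*_{V_j}$ (being a scalar multiple of the original $W$-invariant form), we have $W\subset\orth(V,\langle .\,,.\rangle^*_V)$. Moreover, for every $\zeta\in\spn_{\mathbb Q}\{\mathcal C_{\mathcal L}\}$, the matrix of $\zeta$ in the basis $B_V$ lies in $\mathfrak{gl}(m,\mathbb Q)$, because by hypothesis each $\mathcal C_{\mathcal L}$-generator stabilises every $\spn_{\mathbb Q}\{B_{V_j}\}$.

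For the scalar product on $W$ I take the restriction to $W$ of the trace form $\langle \zeta,\zeta'\rangle_{\orth(V)}=-\tr(\zeta\zeta')$, as in Definition~\ref{def:pseudo_standard}. Pick a rational basis $B_W=\{\xi_1,\dots,\xi_n\}\subset\spn_{\mathbb Q}\{\mathcal C_{\mathcal L}\}$ of $W$; by the previous paragraph, each $\xi_i$ has rational matrix in $B_V$, so the Gram matrix $G_W=\bigl(-\tr(\xi_i\xi_j)\bigr)$ lies in $\mathrm{Mat}_n(\mathbb Q)$. Assuming $\langle .\,,.\rangle_{\orth(V)}$ restricts non-degenerately to $W$ (otherwise one falls back on the Killing-form-based $\ad$-invariant form on $\mathcal L$, which is rational on $\mathcal C_{\mathcal L}$ by the rational structure constants), $G_W$ is invertible over $\mathbb Q$. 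Set $\langle .\,,.\rangle_{\mathcal G}:=\langle .\,,.\rangle^*_V\oplus\langle .\,,.\rangle_{\orth(V)}\vert_W$ and define the bracket on $\mathcal G=V\oplus_{\perp} W$ by~\eqref{eq:standard}.

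It remains to read off the structure constants. For $v_\alpha,v_\beta\in B_V$, the defining relation
\[
\langle \xi_i,[v_\alpha,v_\beta]_{\mathcal G}\rangle_{\orth(V)}=\langle \xi_i(v_\alpha),v_\beta\rangle^*_V
\]
has rational right-hand side, since $\xi_i$ has rational matrix in $B_V$ and the Gram matrix of $\langle .\,,.\rangle^*_V$ on $B_V$ is rational. Writing $[v_\alpha,v_\beta]_{\mathcal G}=\sum_k C^k_{\alpha\beta}\xi_k$, this is a linear system with matrix $G_W\in\mathrm{Mat}_n(\mathbb Q)$ and rational right-hand side, so all $C^k_{\alpha\beta}\in\mathbb Q$; brackets within $W$ vanish, so the Lie algebra $\mathcal G$ has rational structure constants in $B_V\cup B_W$. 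The main obstacle in executing this plan is the rationality step on each irreducible piece: verifying that the one-dimensional space of $W$-invariant symmetric bilinear forms on $V_j$ actually intersects the rational lattice $\spn_{\mathbb Q}\{b^{(j)}_{\alpha\beta}\}$ non-trivially---this is precisely what Step~5 of the proof of Theorem~\ref{th:J_rat} achieves via the rank equality $\rank_{\mathbb Q}=\rank_{\mathbb R}$ for the rational matrix of the action on forms, and it is this lemma that one must cite (or repeat) for each $V_j$.
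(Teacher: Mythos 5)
Your proposal is correct and follows essentially the same route as the paper: rationalise each $\la.\,,.\ra_{V_j}$ by the Step~5 argument of Theorem~\ref{th:J_rat} applied per irreducible summand, take the orthogonal direct sum, and then conclude by the Step~2 computation (which you spell out as a rational linear system with Gram matrix $G_W$). The only addition is your explicit remark on the non-degeneracy of the trace form on $W$, which the paper leaves implicit in Definition~\ref{def:pseudo_standard}.
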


\begin{proof}
Let $B_W=\{\zeta_1,\ldots,\zeta_k\}\subset \spn_{\mathbb Q}\{\mathcal C_{\mathcal L}\}$. By the hypothesis of the theorem $W\subset \orth(V_j,\la.\,,.\ra_{V_j})\subset\End(V_j)$ and for any $V_j$ we can find a basis $B_{V_j}$ such that matrices for all $\zeta_j\in \mathcal C_{\mathcal L}$ have rational entries when they are written in the bases $B_{V_j}$ for all $j$. Since $B_{W}\subset\spn_Q\{\mathcal C_{\mathcal L}\}$, the subspace $W\subset\End(V_j)$ is the rational  relatively to the bases $B_W$ and $B_{V_j}$ for each $j$. Then, for any $V_j$ we can modify the scalar products $\la.\,,.\ra_{V_j}$ such that $\la v_{\alpha},v_{\beta}\ra_{V_j}\in\mathbb Q$ for any two elements $v_{\alpha},v_{\beta}\in B_{V_j}$. Let $B_V=\{v_1,\ldots,v_m\}$ be a union of bases $\{B_{V_j}\}$, and let the scalar product $\la.\,,.\ra_V^*$ be the sum of modified scalar products, that makes the direct sum $\oplus V_j$ orthogonal. Then the bases $B_V$ and $B_W$ satisfy the conditions of Step 2 of the previous theorem, and therefore, the Lie algebra $V\oplus W$ has rational structure constants.
\end{proof}

\begin{theorem}\label{th:8}
Let $(V,\la.\,,.\ra_V)$ be a finite dimensional scalar product space. Let $W$ be a Lie triple system in $\orth(V,\la.\,,.\ra_V)$ that has a trivial centre. If the vector space $V$ and the Lie algebra $\mathcal L=W+[W,W]$ satisfy condition ($\mathfrak P$) described before Theorem~\ref{th:7}, then the standard pseudo-metric 2-step nilpotent Lie algebra $\mathcal G=V\oplus W$ admits rational structure constants. 
\end{theorem}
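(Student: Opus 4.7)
The plan is to reduce Theorem~\ref{th:8} to a direct application of Theorem~\ref{th:7}. Hypothesis ($\mathfrak P$) already delivers the decomposition $V=\bigoplus_{j=1}^{n}V_j$ with each $V_j$ being $W$-irreducible and carrying a $W$-invariant scalar product $\langle.\,,.\rangle_{V_j}$, together with a Chevalley basis $\mathcal C_{\mathcal L}$ of $\mathcal L=W+[W,W]$ that leaves each rational structure $\spn_{\mathbb Q}\{B_{V_j}\}$ invariant. The only remaining hypothesis of Theorem~\ref{th:7} to verify is that $W$ itself is a rational subspace of $\mathcal L$ with respect to $\spn_{\mathbb Q}\{\mathcal C_{\mathcal L}\}$. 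Once this is done, the conclusion of Theorem~\ref{th:7} gives at once a scalar product on $\mathcal G=V\oplus W$ under which the 2-step nilpotent Lie algebra has rational structure constants.

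To establish the rationality of $W$ in $\mathcal L$, I would first invoke Proposition~\ref{prop:main} with any $\ad$-invariant inner product on $\mathcal L$; such a product exists because the existence of a Chevalley basis of $\mathcal L$ with rational structure constants forces $\mathcal L$ to be semisimple. The assumption $\mathfrak Z(W)=\{0\}$ then implies $\mathfrak Z(\mathcal L)=\mathfrak Z(W)=\{0\}$ and $\mathcal L=[\mathcal L,\mathcal L]$. I would then split into two cases exactly as in Section~\ref{sec:lts}. If $W\cap[W,W]=\{0\}$, then $\mathcal L=W\oplus[W,W]$ is a Cartan-type decomposition (since $W$ is a Lie triple system), and Corollary~\ref{cor:dir_sum} gives that $W$ is rational in $\mathcal L$ relative to a Chevalley basis. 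If $W\cap[W,W]\neq\{0\}$, the last proposition of Section~\ref{sec:lts} decomposes $\mathcal L$ into two Killing-orthogonal ideals $\mathcal L_1\oplus\mathcal L_2$ with $\mathcal L_1=W\cap[W,W]$ and a Cartan pair $W_2\oplus V_2$ inside $\mathcal L_2$, so that $W=\mathcal L_1\oplus W_2$ is rational in $\mathcal L$ relative to the combined Chevalley basis $\mathcal C_{\mathcal L_1}\cup\mathcal C_{\mathcal L_2}$.

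With $W$ now a rational subspace of $\mathcal L$, and noting that $\mathcal L=W+[W,W]\subset\orth(V,\langle.\,,.\rangle_V)$ because $W\subset\orth(V,\langle.\,,.\rangle_V)$ and $\orth(V,\langle.\,,.\rangle_V)$ is closed under the bracket, all hypotheses of Theorem~\ref{th:7} are in place: the decomposition of $V$ into $W$-irreducible pieces with $W$-invariant scalar products comes from ($\mathfrak P$), and the Chevalley basis produced in the previous step can be chosen to be the one whose existence is asserted by ($\mathfrak P$) (or else one verifies that the two bases can be aligned, since the Cartan decomposition used in Section~\ref{sec:lts} is precisely the type of decomposition compatible with the Kammeyer construction in~\cite{HolgerKammeyer}). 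Theorem~\ref{th:7} then directly yields the desired rational structure constants on $\mathcal G=V\oplus W$.

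The main obstacle has in fact already been absorbed by the rationality results of Section~\ref{sec:lts}, which invoked Kammeyer's construction of an integer basis on an arbitrary real semisimple Lie algebra from its complexification and Cartan involution. The genuinely new work in Theorem~\ref{th:8} is the alignment step: ensuring that a single Chevalley basis of $\mathcal L$ witnesses both the rationality of $W$ (from the Cartan-type decomposition $\mathcal L=W+[W,W]$) and the $V_j$-invariance of $\spn_{\mathbb Q}\{B_{V_j}\}$ demanded by ($\mathfrak P$). This is precisely the compatibility that the hypothesis ($\mathfrak P$) of Theorem~\ref{th:8} postulates, so that the reduction to Theorem~\ref{th:7} is formally legitimate.
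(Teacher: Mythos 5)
Your proposal follows essentially the same route as the paper: the paper's proof of Theorem~\ref{th:8} simply cites the results of Section~\ref{sec:lts} (Corollary~\ref{cor:dir_sum} for $W\cap[W,W]=\{0\}$ and the final proposition of that section for $W\cap[W,W]\neq\{0\}$) to conclude that $\mathcal L$ admits a basis with rational structure constants relative to which $W$ is a rational subspace, and then invokes Theorem~\ref{th:7}, exactly as you do. The one slip is your claim that a Chevalley basis with rational structure constants \emph{forces} $\mathcal L$ to be semisimple (false in general --- abelian algebras have rational structure constants); semisimplicity here instead follows from $\mathfrak Z(W)=\{0\}$ together with the reductivity of $\mathcal L$ that the Section~\ref{sec:lts} machinery and condition ($\mathfrak P$) presuppose, so the argument stands as the paper's does.
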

\begin{proof}
The Lie algebra $\mathcal L=W+[W,W]$ is a subalgebra of $\orth(V,\la.\,,.\ra_V)$. We have shown in Section 6 that the Lie algebra $\mathcal L$ has a basis $B_{\mathcal L}$ such that the structure constants of $\mathcal L$ are rational, and moreover, $W$ is a rational subspace of $\mathcal L$ with respect to the rational structure $\spn_{\mathbb Q}\{B_{\mathcal L}\}$.  Then we apply Theorem~\ref{th:7} and finish the proof.
\end{proof}
 \begin{corollary}
If $G$ is a simply connected 2-step nilpotent Lie group with the Lie algebra $\mathcal G$ described in Theorem~\ref{th:8}, then $G$ admits a lattice.
 \end{corollary}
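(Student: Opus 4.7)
The plan is to derive the corollary directly from the combination of Theorem~\ref{th:8} and the Mal'cev criterion (Theorem~\ref{MC}), since essentially all the work has been done upstream. I would first note that Theorem~\ref{th:8} supplies, under the stated hypotheses on $(V,\langle.\,,.\rangle_V)$ and on the Lie triple system $W\subset\orth(V,\langle.\,,.\rangle_V)$ with trivial centre, a basis $\mathcal B=\{b_1,\dots,b_n\}$ of the standard pseudo-metric Lie algebra $\mathcal G=V\oplus W$ for which the structure constants $C_{ij}^k$ defined by $[b_i,b_j]=\sum_{k}C_{ij}^k\,b_k$ all lie in $\mathbb Q$. This is precisely the input required by the Mal'cev criterion.

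Next, I would invoke Theorem~\ref{MC} verbatim: a simply connected nilpotent Lie group $G$ admits a lattice if and only if its Lie algebra admits a basis with rational structure constants. Since $\mathcal G$ is 2-step nilpotent and simply connected $G$ has $\mathcal G$ as its Lie algebra, the existence of $\mathcal B$ above supplies the ``only if'' direction's hypothesis and thus produces a lattice $K\subset G$.

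For completeness I would recall the concrete construction of the lattice from the rational basis, as sketched in the Preliminaries after Theorem~\ref{MC}: take any vector lattice $\Lambda\subset \spn_{\mathbb Q}\mathcal B$ inside $\mathcal G$, and let $K$ be the subgroup of $G$ generated by $\exp(\Lambda)$; then $K$ is discrete, $K\backslash G$ is compact, and one recovers $\spn_{\mathbb Q}(\log K)=\spn_{\mathbb Q}\mathcal B$, so that $\mathcal G_{\mathbb Q}=\spn_{\mathbb Q}\mathcal B$ is a rational form of $\mathcal G$.

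There is no real obstacle here: the corollary is purely a packaging statement. All the substantive content has already been established---rationality of the Chevalley-type basis $\mathcal C_{\mathcal L}$ of $\mathcal L=W+[W,W]$ via~\cite{HolgerKammeyer} (Proposition~\ref{prop1:dir_sum} and Corollary~\ref{cor:dir_sum}), the fact that $W$ sits as a rational subspace of $\mathcal L$, the simultaneous $\mathbb Q$-rationalisation of the scalar product on $V$ compatible with a $W$-invariant structure (Steps 1--5 in the proof of Theorem~\ref{th:J_rat} reused inside Theorem~\ref{th:7}), and finally the rational structure constants of $\mathcal G$ in Theorem~\ref{th:8}. The corollary is thus a one-line consequence once Theorem~\ref{th:8} and Theorem~\ref{MC} are available.
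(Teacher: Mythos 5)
Your proposal is correct and matches the paper's (implicit) argument exactly: the corollary is stated without proof precisely because it is the immediate combination of Theorem~\ref{th:8} with the Mal'cev criterion (Theorem~\ref{MC}), just as the analogous corollary after Theorem~\ref{th:J_rat} is obtained. Your optional recollection of the explicit lattice construction via $\exp(\Lambda)$ is consistent with the sketch in the Preliminaries and adds nothing problematic.
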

 
Now we are ready to show an important consequence of the theory developed above. 
Let us make the following observation. It was shown that any pseudo $H$-type Lie algebra $\mathfrak n_{r,s}$ arises from representation of the Clifford algebra $\Cl_{r,s}$. Thus, if there are $2l\times 2l$-matrices $J_j$, $j=1,\ldots,r+s$ satisfying the condition 
\begin{itemize}
\item {$J_j^2=-\Id_{\mathbb R^{2l}}$, $j=1,\ldots,r$; }
\item {$J_j^2=\Id_{\mathbb R^{2l}}$, $j=r+1,\ldots,r+s$; }
\item {$J_jJ_i=-J_iJ_j$, $j\neq i$; }
\end{itemize}  
then the corresponding pseudo $H$-type algebra $\mathfrak n_{r,s}$ exists. It is known~\cite{Wolfe} that the matrices satisfying the above conditions exist and moreover, they can be chosen with integer entries. Thus, the space $W$ has the basis $(J_1,\ldots, J_{r+s})$ and the space $[W,W]$ is spanned by $J_iJ_j$, $i,j=1,\ldots,r+s$. Thereby, we see that the Lie algebra $\mathcal L=W+[W,W]$ admits a basis of $(2l\times 2l)$-matrices having integer entries, and moreover, the Lie algebra in the basis $\{J_j\}_{j=1}^{r+s}$ admits integer structure constants, and the space $W$ is a rational subspace of the Lie algebra $\mathcal L$, see Theorem~\ref{th:trans}. This basis leaves invariant the rational span of the standard Euclidean basis of $\mathbb R^{2l}$, and therefore, satisfies condition ($\mathfrak P$) before Theorem~\ref{th:7}. Here we substitute the Chevalley basis by the basis directly related to the representation of the Clifford algebras and the representation space is considered to be $\mathbb R^{2l}$. Since all pseudo $H$-type algebras are isomorphic to pseudo $H$-type algebras $\mathfrak n_{r,s}$ arising from the representation of the Clifford algebras $\Cl_{r,s}$, we only need to prove the following theorem.

\begin{theorem}\label{th:int_str_const}
Let $\n_{r,s}$ be a pseudo $H$-type Lie algebra, and let $\mathfrak N_{r,s}$ be the corresponding pseudo $H$-type Lie group. Then $\mathfrak N_{r,s}$ admits a lattice.
\end{theorem}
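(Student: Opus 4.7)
The plan is to reduce to Mal'cev's criterion (Theorem~\ref{MC}) and then apply Theorem~\ref{th:8} to exhibit a basis of $\n_{r,s}$ with rational structure constants. First, I would recall that every pseudo $H$-type Lie algebra is isomorphic as a Lie algebra to a standard pseudo-metric 2-step nilpotent Lie algebra $\mathcal G = V \oplus W$ arising from a representation $J\colon \Cl_{r,s}\to\End(V)$, with $W = J(\mathbb R^{r,s})\subset \orth(V)$ and brackets defined by~\eqref{eq:defJ}; this combines Theorem~\ref{th:isom} with the equivalence between pseudo $H$-type algebras and admissible Clifford modules recorded in Section~\ref{subseq:pseudoH}. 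Thus it is enough to produce rational structure constants on $\mathcal G$.

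Next, I would feed $\mathcal G$ into the machinery of Sections~\ref{sec:lts} and~\ref{sec:int_str}. By Proposition~\ref{prop:tr_Cl} the space $W$ is a Lie triple system in $\orth(V)$ with trivial centre, and by Theorem~\ref{th:trans} the Lie algebra $\mathcal L = W + [W,W]$ is (semi)simple. Corollary~\ref{cor:dir_sum} (and the proposition preceding it) therefore guarantees that $W$ is a rational subspace of $\mathcal L$ with respect to a suitable Chevalley basis $\mathcal C_{\mathcal L}$. Theorem~\ref{th:8} will then immediately deliver rational structure constants for $\mathcal G$, once I verify condition ($\mathfrak P$) stated before Theorem~\ref{th:7}: namely, a splitting $V = \bigoplus V_j$ into $W$-irreducible summands, each carrying a $W$-invariant scalar product, and a basis of $\mathcal L$ whose elements preserve a rational structure $\spn_{\mathbb Q}\{B_{V_j}\}$ on each $V_j$.

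To verify ($\mathfrak P$), I would invoke the result of Wolfe~\cite{Wolfe} that provides integer-entry matrices $J_1,\ldots,J_{r+s}\in\End(\mathbb R^{2l})$ satisfying the Clifford relations with the appropriate signs $J_k^2 = \mp \Id$ and anticommutation $J_iJ_j = -J_jJ_i$. Decomposing $V$ into irreducible admissible $\Cl_{r,s}$-submodules $V_j$ and picking on each $V_j$ a basis $B_{V_j}$ in which the $J_k$ are realised by these integer matrices, Lemma~\ref{lem:Lbasis} tells me that the collection $\{J_k\}\cup\{J_iJ_j : i<j\}$ is a basis of $\mathcal L$ consisting of matrices with integer entries. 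Each $V_j$ is $W$-irreducible (since $W$ generates $\Cl_{r,s}$) and carries the $W$-invariant scalar product inherited from the admissible-module structure; moreover, the integer entries of every basis element of $\mathcal L$ give $\zeta(\spn_{\mathbb Q}\{B_{V_j}\})\subseteq \spn_{\mathbb Q}\{B_{V_j}\}$, which is exactly condition ($\mathfrak P$) with the Clifford basis playing the role of $\mathcal C_{\mathcal L}$.

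The main technical obstacle is a bookkeeping one: ensuring that Wolfe's integer matrices can be produced in \emph{every} dimension and every signature that actually arises, and that the decomposition into irreducible admissible submodules can be chosen simultaneously with the integer-entry bases, so that the rationality on each $V_j$ matches up with the rational-span property on the whole $\mathcal L$. Granting this, Theorem~\ref{th:8} yields a basis of $\mathcal G$ with rational structure constants, the isomorphism above transports this basis to $\n_{r,s}$, and Mal'cev's criterion (Theorem~\ref{MC}) gives a lattice in the simply connected pseudo $H$-type group $\mathfrak N_{r,s}$.
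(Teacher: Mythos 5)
Your proposal is correct and follows essentially the same route as the paper: verify condition ($\mathfrak P$) via Wolfe's integer-entry Clifford matrices and Lemma~\ref{lem:Lbasis}, apply Theorem~\ref{th:8} to the standard pseudo-metric algebra $\mathcal G=\mathbb R^{l,l}\oplus W$ with $W=J(\mathbb R^{r,s})$, transport the rational basis to $\n_{r,s}$, and invoke Mal'cev's criterion. The one place where you are lighter than the paper is the isomorphism $\n_{r,s}\cong\mathcal G$: rather than citing Theorem~\ref{th:isom}, the paper constructs an intermediate algebra $\mathfrak G=\mathbb R^{l,l}\oplus\mathbb R^{r+s}$ and uses Proposition~\ref{prop:uniq1} and Lemma~\ref{lem:uniq1} to absorb the factor $2l$ by which the trace metric on $W$ differs from $\la .\,,.\ra_{r,s}$ on the centre --- a verification (together with the non-degeneracy of $W$ inside $\so(l,l)$) that you would still need to make your appeal to Theorem~\ref{th:isom} airtight.
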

\begin{proof}
Let $\n_{r,s}=(\mathbb R^{l,l}\oplus\mathbb R^{r,s},[.\,,.],\la.\,,.\ra_{\n}=\la.\,,.\ra_{l,l}+\la.\,,.\ra_{r,s})$ be a pseudo $H$-type Lie algebra, let $\Cl_{r,s}$ be the Clifford Lie algebra, and let $J\colon \Cl_{r,s}\to\End(\mathbb R^{l,l})$ be a representation defining the commutators in $\n_{r,s}$: $\la Z,[v,v']\ra_{r,s}=\la J_Zv,v'\ra_{l,l}$. Then $W=J(\mathbb R^{r,s})\subseteq\so(l,l)\subset\End(\mathbb R^{l,l})$ is the Lie triple system of $\so(l,l)$ having  a trivial centre. 
Let now $\mathcal G=(\mathbb R^{l,l}\oplus W,[.\,,.]^*,\la.\,,.\ra_{\mathcal G})$ be a standard pseudo-metric 2-step nilpotent Lie algebra
with $\la.\,,.\ra_{\mathcal G}=\la.\,,.\ra_{l,l}+\la.\,,.\ra_{\so(l,l)}$, and $\la \zeta,[v,v']^*\ra_{\so(l,l)}=\la \zeta(v),v'\ra_{l,l}$ for any $\zeta\in W$. The Lie algebra $\mathcal G$ admits rational structure constants, see Theorem~\ref{th:8}. We need to show that the Lie algebras $\n_{r,s}=(\mathbb R^{l,l}\oplus\mathbb R^{r,s},[.\,,.])$ and $\mathcal G=(\mathbb R^{l,l}\oplus W,[.\,,.]^*)$ are isomorphic. To achieve the goal we will construct auxiliary Lie algebra $\mathfrak G$ that will be isomorphic to both Lie algebras $\n_{r,s}$ and $\mathcal G$.

In order to construct the Lie algebra $\mathfrak G$ we use the injectivity property of the Clifford representations $J\colon \mathbb R^{r,s}\to\End(\mathbb R^{l,l})$. We disregard the standard scalar product $\la.\,,.\ra_{r,s}$ on $\mathbb R^{r,s}$ and simply write $\mathbb R^{r+s}$. Pullback the metric $\la .\,,.\ra_{\so(l,l)}$ to the space $\mathbb R^{r+s}$ by defining the scalar product $2l\la Z,Z'\ra^{'}_{\mathbb R^{r+s}}=\la J_Z,J_{Z'}\ra_{\so(l,l)}$. Let $\mathfrak G=\mathbb R^{2l}\oplus \mathbb R^{r+s}$ as a vector space and the commutator $[.\,,.]'$ defined by $\la Z,[v,w]'\ra'_{\mathbb R^{r+s}}=\la J_Zv,w\ra_{l,l}$. Let $\phi\colon \mathfrak G\to \mathcal G$ be the map constructed by $\phi(v+Z)=v+J_Z$ for all $v\in\mathbb R^{2l}$, $Z\in \mathbb R^{r+s}$. The map $\phi$ is the Lie algebra isomorphism $\mathfrak G=(\mathbb R^{l,l}\oplus \mathbb R^{r+s},[.\,,.]')$ and $\mathcal G=(\mathbb R^{l,l}\oplus W,[.\,,.]^*)$. Indeed for any $\xi\in W$ and any $v,w\in\mathbb R^{l,l}$ we obtain
\begin{eqnarray*}
\la \xi,[v,w]^*\ra_{\so(l,l)} & = & \la\xi(v),w\ra_{l,l}=\la J_Z(v),w\ra_{l,l}=\la Z,[v,w]'\ra'_{\mathbb R^{r+s}}
\\
&=&\la J_Z,J_{[v,w]'}\ra_{\so(l,l)}
= \la \xi,\phi([v,w]')\ra_{\so(l,l)}.
\end{eqnarray*}

Now we show that the Lie algebras $\mathfrak G$ and $\mathfrak n_{r,s}$ are isomorphic.
Observe that since
$$\la J_{Z_i},J_{Z_i}\ra_{\so(l,l)}=-\tr(J_{Z_i}^2)=-2l\,\nu_i(r,s),$$
where $\{Z_1,\ldots, Z_{r+s}\}$ is an orthonormal basis of $\mathbb R^{r,s}$ with respect to $\la .\,,.\ra_{r,s}$, the set $\{J_{Z_1},\ldots,J_{Z_{r+s}}\}$ forms an orthogonal basis of $W$ with respect to the restriction of the trace metric. Thus, the index of the spaces $W$ and $\mathbb R^{r,s}$ coincides. This also shows that the collection $Z_1,\ldots, Z_{r+s}$ is also orthogonal in $\mathbb R^{r+s}$ with respect to the metric $\la .\,,.\ra^{'}_{\mathbb R^{r+s}}$. Therefore, the scalar products $\la .\,,.\ra^{'}_{\mathbb R^{r+s}}$ and $\la .\,,.\ra_{r,s}$ differs by the positive multiple $2l$. Now the Lie algebra $\mathfrak G$ with the metric $\la .\,,.\ra_{l,l}+\la .\,,.\ra^{'}_{\mathbb R^{r+s}}=\la .\,,.\ra_{l,l}+2l\la .\,,.\ra_{r,s}$ has the same Lie brackets as the Lie algebra $\mathfrak G$ with the scalar product $(2l)^{-1}\la .\,,.\ra_{l,l}+\la .\,,.\ra_{r,s}$ by Proposition~\ref{prop:uniq1}. The Lie brackets of $\mathfrak G$ and  $\n_{r,s}$ are defined by the scalar product $\la .\,,.\ra_{l,l}+\la .\,,.\ra_{r,s}$ and $(2l)^{-1}\la .\,,.\ra_{l,l}+\la .\,,.\ra_{r,s}$ respectively. Thus, $\n_{r,s}=(\mathbb R^{l,l}\oplus \mathbb R^{r,s},[.\,,.])$ and $\mathfrak G=(\mathbb R^{l,l}\oplus \mathbb R^{r+s},[.\,,.]')$ are isomorphic by Lemma~\ref{lem:uniq1}. 

Finally we conclude that the Lie algebras $\n_{r,s}=(\mathbb R^{l,l}\oplus \mathbb R^{r,s},[.\,,.])$ and $\mathcal G=(\mathbb R^{l,l}\oplus W,[.\,,.]^*)$ are isomorphic, and therefore, the Lie algebra $\n_{r,s}$ has rational structure constants. Applying the Mal'cev criterium we finish the proof. 
\end{proof}
Other proofs of Theorem~\ref{th:int_str_const} can be found in~\cite{FM,FMV}

Let us make the last observation. Let $\g$ be a $2$-step nilpotent Lie algebra such that $\dim([\g, \g ])=n$, and the complement $V$ to $[\g, \g]$ has dimension $m$. As we showed in Theorem~\ref{th:isom}, there exist $p,q \in \mathbb{N}$, $p+q=m$, and an $n$-dimensional subspace $\mathcal{D}$ of $\mathfrak{so}(p,q)$, such that $\g$ is isomorphic as a Lie algebra to the standard metric $2$-step nilpotent Lie algebra $\g^*=\mathbb{R}^{p,q}\oplus_{\bot}\mathcal{D}$. Now we state the following theorem.
 
 \begin{theorem}\label{th:last}
If $\g$ admits a basis with rational structure constants, then we may choose $\mathcal{D}$ having a basis whose matrices only have entries in $\mathbb{Z}$ relatively to the standard basis $e_1, \dotso,e_m$ of $\mathbb{R}^{p,q}$.
\end{theorem}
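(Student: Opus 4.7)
The plan is to reduce from rational to integer structure constants by a simple rescaling of the central basis, after which the construction of $\mathcal{D}$ recorded inside the proof of Theorem~\ref{th:isom} automatically produces a basis of $\mathcal{D}$ with entries in $\mathbb{Z}$. No serious obstacle is anticipated; the $\mathbb{Q}$-to-$\mathbb{Z}$ passage takes place entirely at the level of $\g$, not at the level of $\mathcal{D}$.

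First, starting from a $\mathbb{Q}$-basis of $\g$, standard linear algebra over $\mathbb{Q}$ supplies an adapted rational basis $\{w_1,\ldots,w_m,Z_1,\ldots,Z_n\}$, with $\{Z_1,\ldots,Z_n\}$ spanning $[\g,\g]$, whose structure constants $C^k_{ij}$ determined by $[w_i,w_j]_{\g}=\sum_k C^k_{ij}Z_k$ lie in $\mathbb{Q}$. Choosing a positive integer $N$ clearing all denominators of the finite set $\{C^k_{ij}\}$ and replacing each $Z_k$ by $\widetilde{Z}_k:=Z_k/N$, the new basis is still rational and adapted, and its structure constants $NC^k_{ij}$ now lie in $\mathbb{Z}$. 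One may therefore assume from the outset that $C^k_{ij}\in\mathbb{Z}$ for all $i,j,k$.

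Second, Theorem~\ref{th:isom} applied to this adapted basis produces a pair $(p,q)$ with $p+q=m$ and an $n$-dimensional non-degenerate subspace $\mathcal{D}=\spn\{D^1,\ldots,D^n\}\subset\so(p,q)$, where $D^k=\eta_{p,q}C^k$, together with an explicit Lie algebra isomorphism $\g\simeq\R^{p,q}\oplus_{\perp}\mathcal{D}$. By Lemma~\ref{lem:lin_indep} the matrices $D^1,\ldots,D^n$ are linearly independent and hence form a basis of $\mathcal{D}$. Since $(D^k)_{ij}=\nu_i(p,q)\,C^k_{ij}=\pm C^k_{ij}$ relative to the standard basis $e_1,\ldots,e_m$ of $\R^{p,q}$, every entry of every $D^k$ is an integer, which is the required conclusion.

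The only point deserving care in a written-up proof, and the closest thing to a genuine difficulty, is to verify that the freedom implicit in the phrase \emph{``we may choose $\mathcal{D}$''} corresponds precisely to the freedom of selecting the adapted rational basis of $\g$, and that the rescaling $Z_k\mapsto Z_k/N$ (i) preserves rationality and the adapted property, (ii) multiplies the structure constants by $N$, and (iii) correspondingly multiplies each $D^k$ by $N$, so that integrality of the $C^k_{ij}$ and integrality of the entries of the $D^k$ are equivalent. Once this bookkeeping is recorded, the theorem reduces to a direct invocation of Theorem~\ref{th:isom} on a judiciously chosen rational basis.
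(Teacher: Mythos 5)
Your proposal is correct and follows essentially the same route as the paper: clear the denominators of the rational structure constants by a rescaling of the adapted basis (the paper scales the $v_i$ by $\sqrt{d}$, you scale the $z_k$ by $1/N$ — the effect on the $C^k_{ij}$ is the same), then observe that the resulting $D^k=\eta_{p,q}C^k$ have entries $\pm C^k_{ij}\in\mathbb{Z}$ and span the same subspace $\mathcal{D}$ furnished by Theorem~\ref{th:isom}. No gap.
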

\begin{proof}
We assume that there exists a basis $\mathcal{B}=\{v_1, \dotso, v_m,z_1, \dotso,z_n\}$ of $\g=V \oplus_{\perp}[\g \,, \g]$, with $v_1, \dotso, v_m$ being a basis of $V$, and $z_1, \dotso,z_n$ being a basis of $[\g \,, \g]$ such that the structur constants $C_{ij}^k$ with respect to $\mathcal{B}$ are in $\mathbb{Q}$. We write $C_{ij}^k=\frac{a_{ij}^k}{b_{ij}^k}$ with $a_{ij}^k \in \mathbb{Z}$ and $b_{ij}^k \in \mathbb{N} \setminus \{0\}$. We define a natural number $d$ as the least common multiple of the collection $\{b_{ij}^k \vert i,j=1, \dotso,m, \quad k=1, \dotso,n\}$, and define the basis $\mathcal{B}_{d}=\{\sqrt{d}v_1, \dotso,\sqrt{d}v_m,z_1, \dotso,z_n\}$. It follows that the structure constants $\tilde C _{ij}^k$ with respect of $\mathcal B_d$ are given by $d C_{ij}^k$ as
$$ \sum_{k=1}^n \tilde C_{ij}^k z_k= [\sqrt{d}v_i \,, \sqrt{d} v_j]=d[v_i \,, v_j]=d\sum_{k=1}^n C_{ij}^k z_k=\sum_{k=1}^n dC_{ij}^k z_k.$$ 
Hence, $\tilde C _{ij}^k$ are  natural numbers such that the matrix $\tilde C^k=d C^k$ only has entries in $\mathbb{Z}$. As we know from Theorem~\ref{th:isom}, there exist $p,q \in \mathbb{N}$, $p+q=m$ such that the $n$-dimensional subspace $\mathcal{D}=\spn\{C^1 \eta_{p,q}, \dotso,C^k \eta_{p,q}\}$ is a non-degenerate subspace of $\mathfrak{so}(p,q)$ such that $\g \cong \mathbb{R}^{p,q} \oplus \mathcal{D}$. As $\tilde C^k \eta_{p,q}=d C^k \eta_{p,q} \in \mathcal{D}$, and the entries of $\tilde C^k \eta_{p,q}$ lie obviously in $\mathbb{Z}$, it follows that there exists a basis of $\mathcal D$ whose matrices only have entries in $\mathbb{Z}$, relatively to the standard basis $e_1, \dotso,e_m$ of $\mathbb{R}^{p,q}$.
\end{proof}

%%%%%%%%%%%%%%%

\end{document}